\newtheorem{thm}{Theorem} \newtheorem{coro}[thm]{Corollary} \newtheorem{lemma}[thm]{Lemma}
\newtheorem{propo}[thm]{Proposition} 
\title{The K\"unneth formula for graphs}
\author{Oliver Knill}
\date{May 27, 2015}
\address{ Department of Mathematics \\ Harvard University \\ Cambridge, MA, 02138 }
\subjclass{Primary: 05Cxx, 57M15, 55U10, 55N10  }
\keywords{Discrete Kuenneth, discrete de Rham, Cartesian product, dimension, chromatology, homotopy and algebraic topology for graphs}
\begin{document}
\maketitle

\begin{abstract}
We define a Cartesian product $G \times H$ for finite simple graphs which satisfies the
K\"unneth formula $H^k(G \times H) = \oplus_{i+j=k} H^i(G) \otimes H^j(G)$ and so
$p_{G \times H}(x) = p_G(x) p_H(y)$ for the Poincar\'e polynomial $p_G(x)=\sum_{k=0} {\rm dim}(H^k(G)) x^k$
and $\chi(G \times H) = \chi(G) \chi(H)$ for the Euler characteristic $\chi(G)=p_G(-1)$. 
The graph $G_1=G \times K_1$ is homotopic to $G$, has a digraph structure and
satisfies the inequality ${\rm dim}(G_1) \geq {\rm dim}(G)$ and $G_1$.
Hodge theory leads to the K\"unneth identity
using the product $f g$ of harmonic forms of $G$ and $H$. A discrete de Rham cohomology 
and ``partial derivatives" emerge on the product graphs. We show that de Rham cohomology is
equivalent to graph cohomology by constructing a chain homotopy. 
The dimension relation ${\rm dim}(G \times H) = {\rm dim}(G) + {\rm dim}(H)$ holds point-wise
${\rm dim}(G \times H)(x,y) = {\rm dim}(G_1)(x) + {\rm dim}(H_1)(y)$ and implies
the inequality ${\rm dim}(G \times H) \geq  {\rm dim}(G) + {\rm dim}(H)$,
mirroring a Hausdorff dimension inequality dimension in the continuum. The chromatic
number $c(G_1)$ of $G_1$ is smaller or equal than $c(G)$ and $c(G \times H) \leq c(G)+c(H)-1$.
Indeed, $c(G \times H)$ is the maximal $n$ for which there is a
$K_n$ subgraph of $G \times H$. The automorphism group of $G \times H$ contains ${\rm Aut}(G) \times {\rm Aut}(H)$.
If $G \sim H$ and $U \sim V$ are homotopic, then $G \times U$ and $H \times V$ are homotopic, leading
to a product on homotopy classes. If $G$ is $k$-dimensional geometric meaning that all unit spheres
$S(x)$ in $G$ are $(k-1)$-discrete spheres, then $G_1$ is $k$-dimensional geometric.
And if $H$ is $l$-dimensional geometric, then $G \times H$ is geometric 
of dimension $(k+l)$. Because the product writes a graph as a polynomial $f_G$ of $n$ variables 
for which the Euler polynomial $e(x) = \sum_k v_k x^k$ is $e_G(x) = f_G(x,\dots,x)$ and $\chi(G)=e_G(-1)$,
the product extends to a ring of chains which unlike graphs is closed under the boundary operation $\delta$ 
defining the exterior derivative $df(x) = f(\delta x)$ and closed under quotients $G/A$ with $A \subset {\rm Aut}(G)$.
By gluing graphs, joins or fibre bundles are defined with the same
features as in the continuum, allowing to build isomorphism classes of bundles. 
\end{abstract}

\begin{figure}[h]
\scalebox{0.12}{\includegraphics{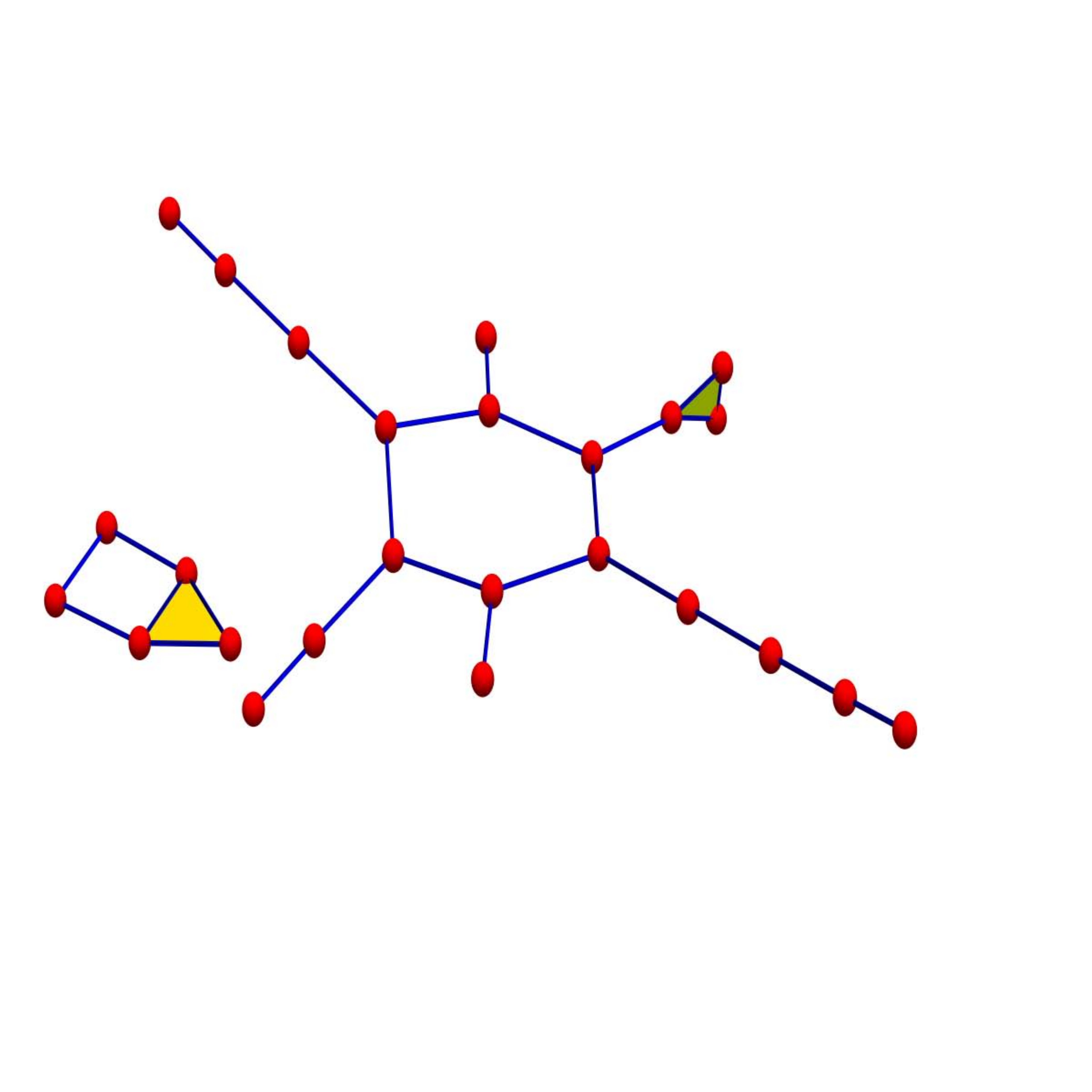}}
\scalebox{0.12}{\includegraphics{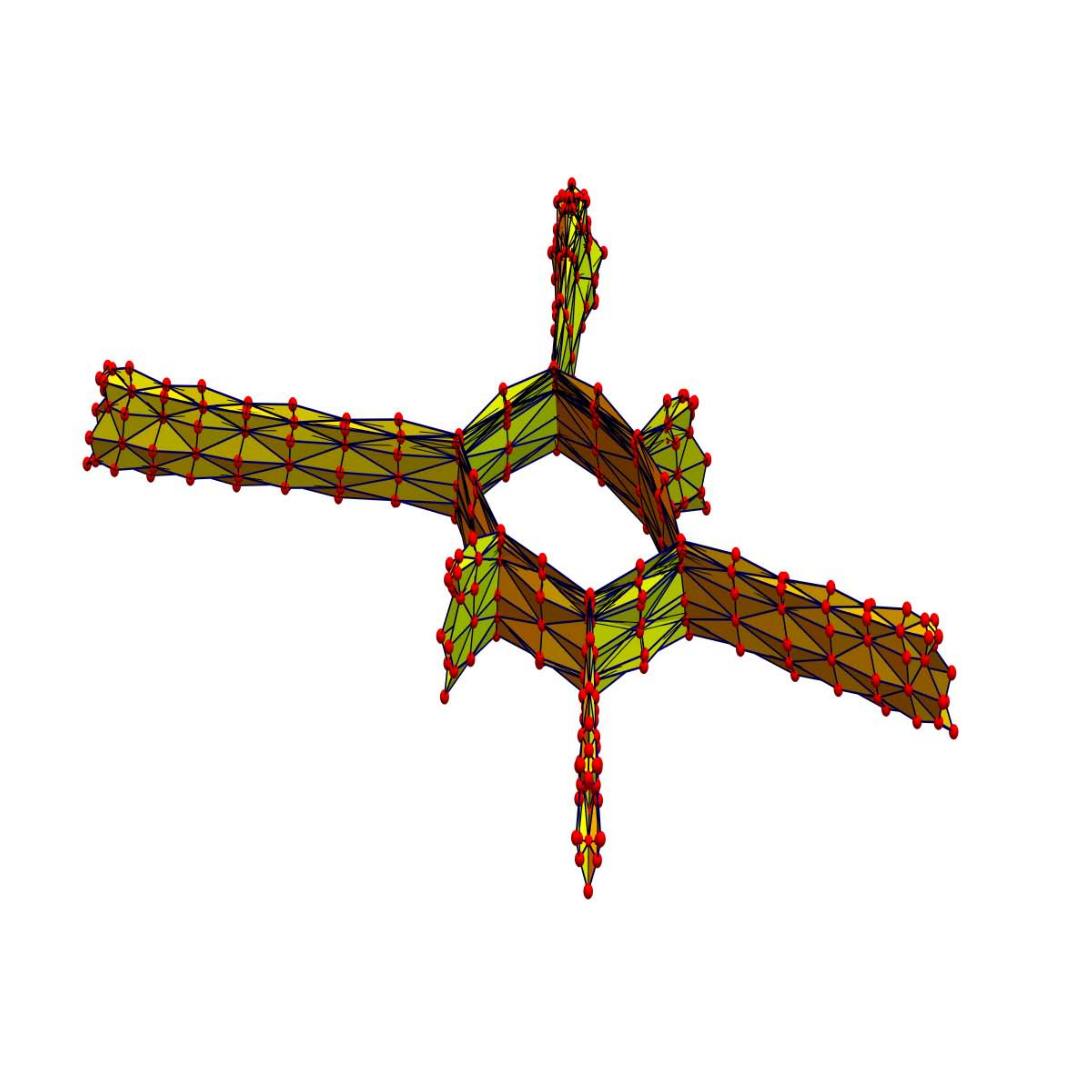}}
\caption{
\label{poster}
The product $G \times H$ of the house graph $G$ of dimension $1.4666 \dots$ and a second graph $H$
of dimension $1.133 \dots$ which is also homotopic to a circle
produces a product graph of dimension $2.70238 \dots$ which is homotopic to a 2-torus. Like Hausdorff
dimension in the continuum, the dimension of the product is larger or equal than the sum $2.6$ of the dimension of
the two factors. The graph $G \times H$ is homotopic to a torus, has Poincar\'e polynomial $1+2x+x^2$
which is the product of the Poincar\'e polynomials $p_G(x)=1+x=p_H(x)$ of the factors. The K\"unneth
theorem tells that if $f,g$ are harmonic $1$-forms representing a nontrivial cohomology class in $H^1(G)$ or
$H^1(H)$ respectively, then $f(x)*1, 1*g(y)$ can be used to construct a basis for $H^1(G \times H)$
and $f(x) g(y)$ can be used to build a $2$-form spanning the $1$-dimensional space $H^2(G \times H)$. The discrete
de Rham theorem tells how to get from the de Rham picture to the simplicial cohomology picture of $G \times H$.
While the linear space of $1$-forms on $G$ and $H$ is $6$ or $21$-dimensional respectively,
the de Rham $1$-forms in $\Omega^1(G) \otimes \Omega^0(H)  \oplus \Omega^0(G) \otimes \Omega^1(H)$ build
a $6*20 + 5*21=225$ dimensional space, while $\Omega^1(G \times H)$ has dimension $2196$, which is almost
$10$ times more. And $\Omega^2(G) \otimes \Omega^0(H) \oplus \Omega^0(G) \otimes \Omega^2(H) \oplus
 \Omega^1(G) \otimes \Omega^1(H)$ is $6*1+5*1+6*21=137$-dimensional while $\Omega^2(G \times H)$ has
dimension $2880$, which is the number of triangles in $G \times H$ and 
more than 20 times the dimension $137$ in the de Rham case.
The discrete Eilenberg-Zilber theorem~(\ref{eilenbergzilber}) assures that $H^1(G \times H)$ is isomorphic to
$H^1(G) \otimes H^0(H) \oplus H^0(G) \otimes H^1(H)$ and
that $H^2(G \times H)$ is isomorphic to $H^1(G) \otimes H^1(H)$ here as $H^2(H)$ and $H^2(G)$ are
$0$-dimensional. The chromatic numbers of $G$ and $H$ are $3$. The chromatic number of $G \times H$ is
$5$ as it contains a $4$-dimensional clique $K_5$ and Theorem~(\ref{chromaticnumberofproduct}).
}
\end{figure}

\section{Introduction}

As acknowledged by nomenclature, Descartes concept of coordinates depends on the notion of a Cartesian product.
Omnipresent in mathematics, it is already used in basic arithmetic to build number systems like 
the field of complex numbers or to define exterior bundles on manifolds. The Cartesian product
allows to build and access higher dimensional features of geometric spaces. Many constructions
in topology like suspensions, joins, fibre bundles, de Rham cohomology or homotopy deformations would not 
work without the concept of a Cartesian product. Of course, we would like to have a product in graph theory 
which shares the properties from the continuum. The new graph product will achieve that. 
It will allow to use ``coordinates" similarly as they are used in the continuum. 
For two arbitrary networks $G,H$ - that is $G$ and $H$ are finite simple graphs -
the coordinates of the product graph $G \times H$ consists of all pairs of complete subgraphs of $G$ and $H$. 
The exterior derivatives in $G$ and $H$ will 
play the role of the ``partial derivatives" in the product and allow to build an exterior de Rham derivative
on the product graph $G \times H$. The actual exterior derivative on the product graph $G \times H$ 
operates on a much larger complex. The later is called the Whitney complex and is 
defined by the simplices in the product.
Having a product allows to work with ``rectangular boxes"s in the product space rather than with 
simplices. Figure~(\ref{poster}) illustrates this for a small example.
To take a picture from the continuum: the curl in the plane is an infinitesimal line integral along a rectangle 
and uses the Leibnitz rule to relate the exterior derivative of the factors with the exterior derivative 
$df g - f dg$ of the product. A simplicial point of view of cohomology integrates around 
infinitesimal triangles to get the curl, ignoring the product structure. 
De Rham establishes equivalence of the two pictures 
on any smooth manifold. In order to prove the K\"unneth formula, we will need to emulate the de Rham theorem
combinatorially.  To do so, we explicitly construct a $k$-form on the Whitney complex of $G \times H$ 
from a $k$-form on the de Rham complex defined by the two graphs $G,H$. The chain map is concrete as
we can from this construct explicit cohomology classes of the product from the cohomology classes in $G$ or $H$. 
The relation is what one calls a chain homotopy. 
The de Rham picture  will be useful if we work with ``discrete n-manifolds" obtained by gluing together local charts of 
products $U_i = G_{i1} \times \cdots \times G_{in}$ of networks or when working with ``fibre bundles" 
$\pi: E \to M$ obtained by gluing locally trivial charts $U_i \times G$ of networks above a discrete manifold 
covered with charts $U_i$. The automorphism group of the fibre $G$ will then play the role of a gauge group on 
$E$, as it does in the continuum. \\

When looking at graphs as geometric structures without dimension restriction,
an amazing similarity with the continuum emerges. It turns out that in the discrete, one can access 
the local structure of space $G$ directly as the complete subgraphs of $G$. These simplices
can serve as fundamental entities playing the role of ``points". Such insight has been promoted 
already in \cite{forman95} but most of the time still graphs are treated as one dimensional 
simplicial complexes. An illustration on how the change of view point allows to emulate results from
the continuum is the fixed point theorem of Brouwer and Lefschetz which looks identical
to the result in the continuum \cite{brouwergraph}.  
Many concepts become elementary: cohomology is part of 
finite dimensional linear algebra, to compute valuations, generalized volumes, one needs integral geometric 
tools in the continuum, while in the discrete is is just count of complete subgraphs.
The discrete Hadwiger theorem \cite{KlainRota} is much easier than the continuum version: the numbers $v_i(G)$ of
$i$-dimensional simplices is a basis for the linear space of valuations.
Differential forms are just functions on a simplex graph and Stokes theorem in the Whitney complex 
is a tautology; it becomes only less obvious when Stokes is considered in a de Rham setup. 
Much Intuition about higher dimensions can be obtained inductively. There are notions of
dimension, cohomology, homotopy, cobordisms, ramified covers, degree and index, spheres, geodesic 
lines and curvature which lead to results mirroring the results in the
continuum. This is not only true for nice geometric graphs but for general undirected networks or finite simple graphs - 
and much works without any exceptions. 
The notion of homotopy of graphs for example immediately leads to the homotopy of a graph
embedded in an other graph and so to homotopy groups for general finite simple graphs. 
The product defined here will actually help to define the homotopy groups as graphs might 
be too small at first to have spheres embedded, so that the graph should 
first be refined.  But the Hurewicz homomorphisms
from the homotopy groups $\pi_k(G)$ to the cohomology groups $H^k(G)$ are then so explicit that one can 
even watch it happen: just apply the heat flow to a 
$k$-forms with support on the $k$-simplices on the embedded $k$-sphere. It converges to a harmonic form which 
by Hodge theory represents a cohomology classes. In the continuum, such a 
proof requires de Rham currents, generalized differential forms which require some
functional analysis. In the graph case, the heat flow is just
a linear ordinary differential equation of the type studied in introductory linear algebra courses.
The definition of homotopy groups which are relevant in coloring questions for graphs 
\cite{knillgraphcoloring,knillgraphcoloring2}
allows to work with spheres in graph theory in the same way as in the continuum. \\

The Euclidean product is not only essential for defining fundamental objects like fibre bundles, 
it is also needed to construct spaces which have the same properties than classical manifolds. 
Examples of such properties are dimension, homotopy, cohomology or Euler characteristic. 
The goal of this note is to give such a product, allowing the use tools like discrete fibre 
bundles in graph theory. We will see that this can be done purely algebraically: 
as we can glue graphs together, this gluing carries over to the product allowing to build discrete bundles.
And if the fibres carry an automorphism group $A$, we get discrete analogues of principle 
bundles on which an enlarged gauge group acts. \\

\begin{figure}[h]
\scalebox{0.22}{\includegraphics{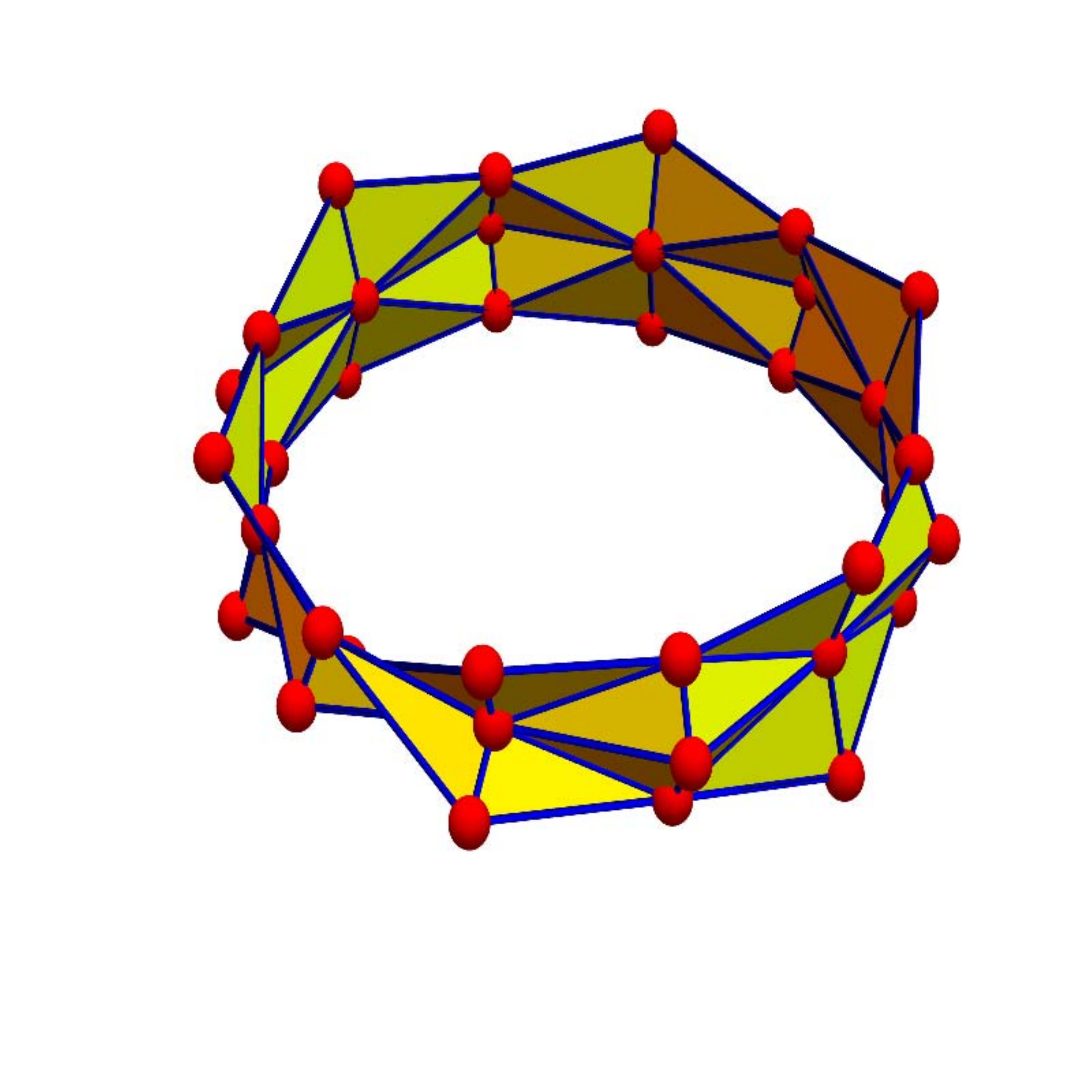}}
\caption{
The M\"obius bundle is an example of a nontrivial bundle. Locally, it is a Cartesian product
of $P_l \times P_k$ where $P_k$ is the $1$-dimensional
line graph with $k+1$ elements. But the graph itself is not orientable.
Its cohomology has the Betti vector $b=(1,1,0)$, like the circle.
While not distinguishable from the cylinder by cohomology nor homotopy
nor dimension, its topology is different as its boundary (the subgraph generated by the vertices for which the
unit sphere is not a sphere) is not connected and it is not orientable.
As in the continuum, a bundle can be constructed by taking a base graph $G$, cover it
with a nice open cover such that its nerve is homotopic to $G$. Now build the product
graphs and make sure that the transition maps are homeomorphisms in the sense  of
\cite{KnillTopology}. }
\end{figure}

A Cartesian product for a category of a geometry needs to be dimension-additive, it needs to 
induce a product on the homotopy classes, it needs to be Euler characteristic multiplicative, 
it must satisfy the K\"unneth formula equating 
the tensor product of the cohomology rings with the cohomology ring of the product and it must have the property 
that the automorphism group contains the automorphism groups of the factors. \\

As for graphs, no previously defined product shares these properties. 
The standard Cartesian product $``\times"$  of the cyclic graph $C_4$ with $C_4$ for example is a 
graph of dimension $1$. Its vertices are the Cartesian product of the vertices and two
points $(x,y),(u,v)$ are connected, if $(x,u) \in E$ or $(y,v) \in E$. The 
cohomology of the standard Cartesian product has little to do with the cohomology of the 
factors: the Betti numbers of $C_4 ``\times" C_4$ for example is $(b_0,b_1)=(1,17)$ while the Betti 
number of our product is $(1,2,1)$, which is identical to the one of the two-dimensional torus in classical
topology. The dimension of the traditional product is $1$ while the dimension of our product is $2$.

\begin{figure}[h]
\scalebox{0.22}{\includegraphics{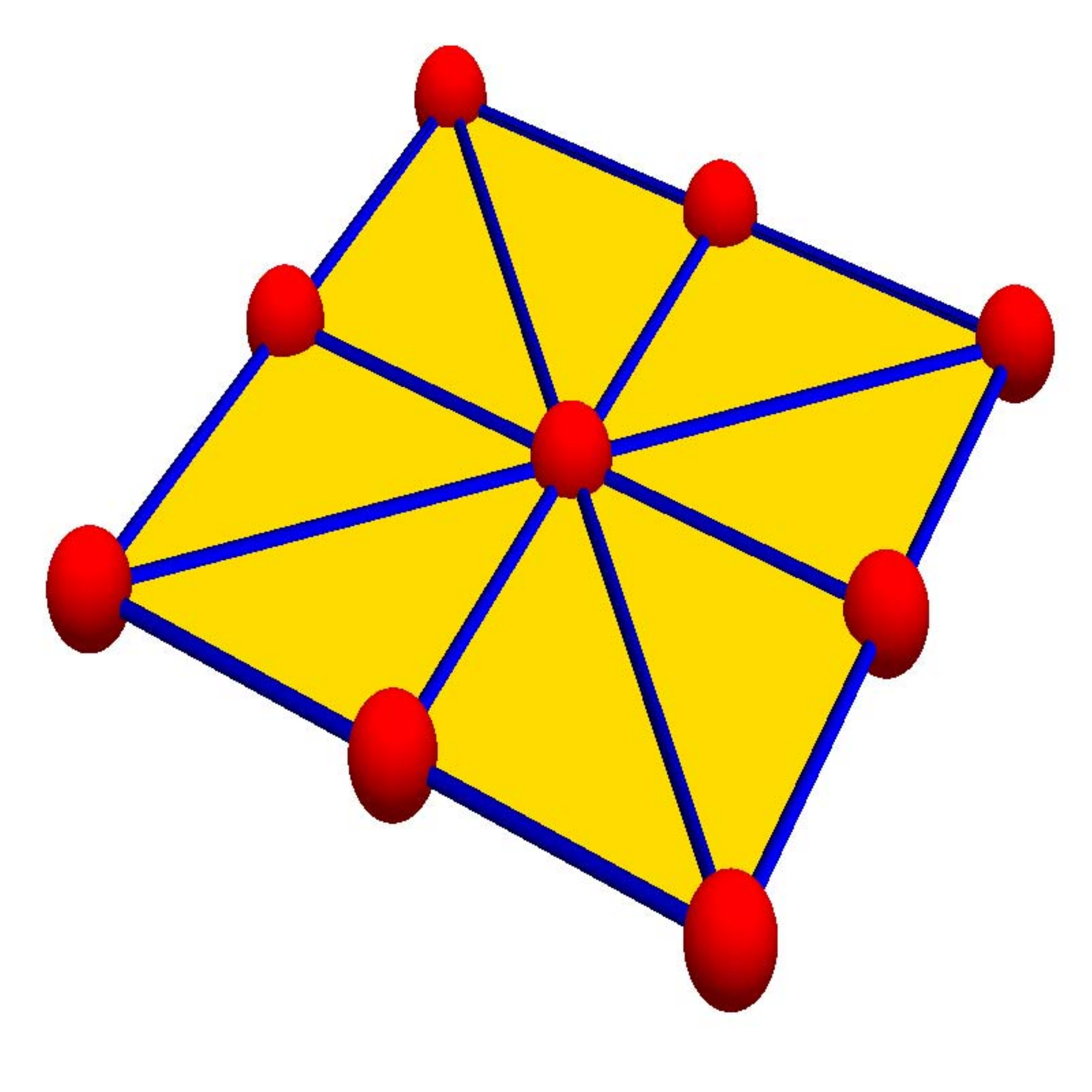}}
\scalebox{0.22}{\includegraphics{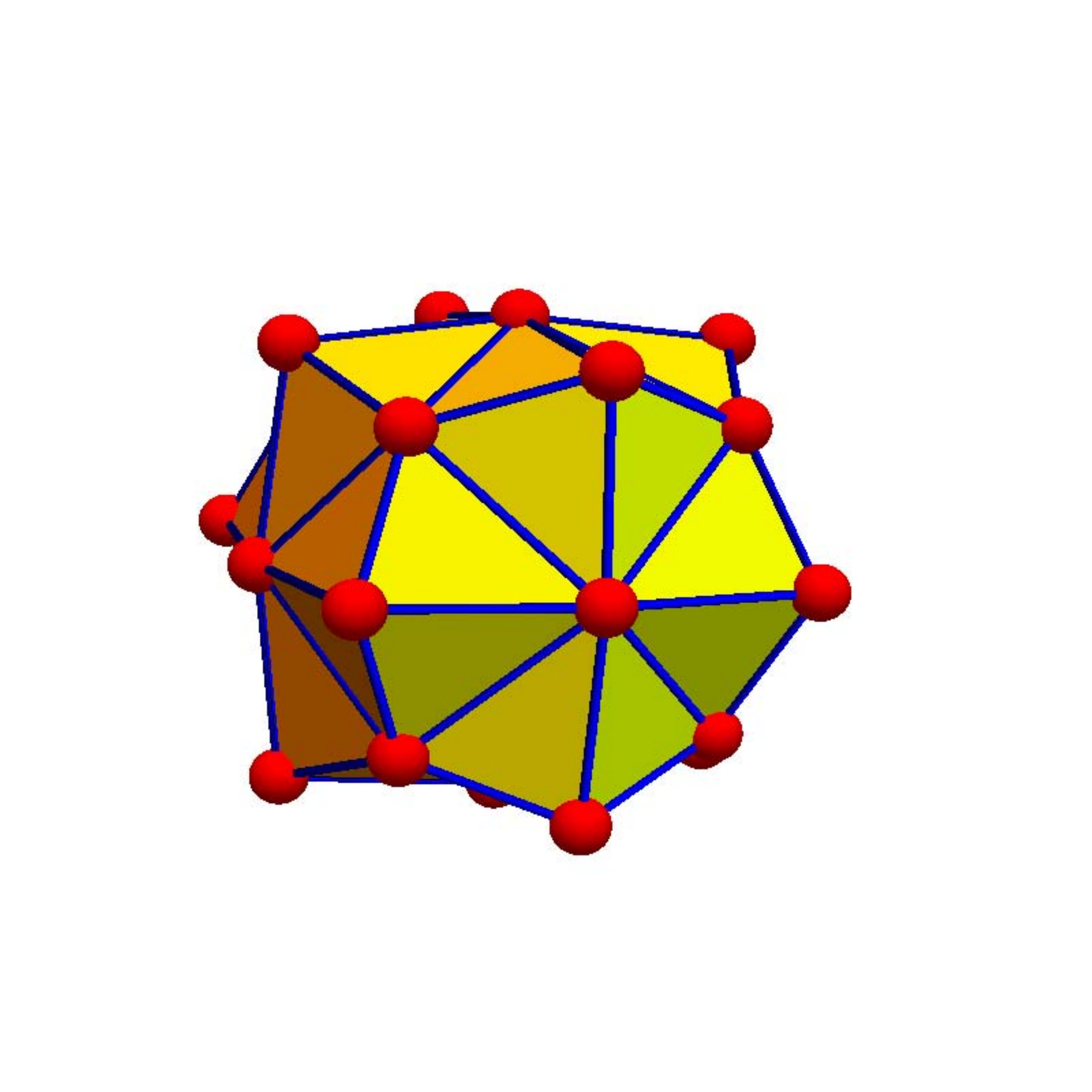}}
\scalebox{0.22}{\includegraphics{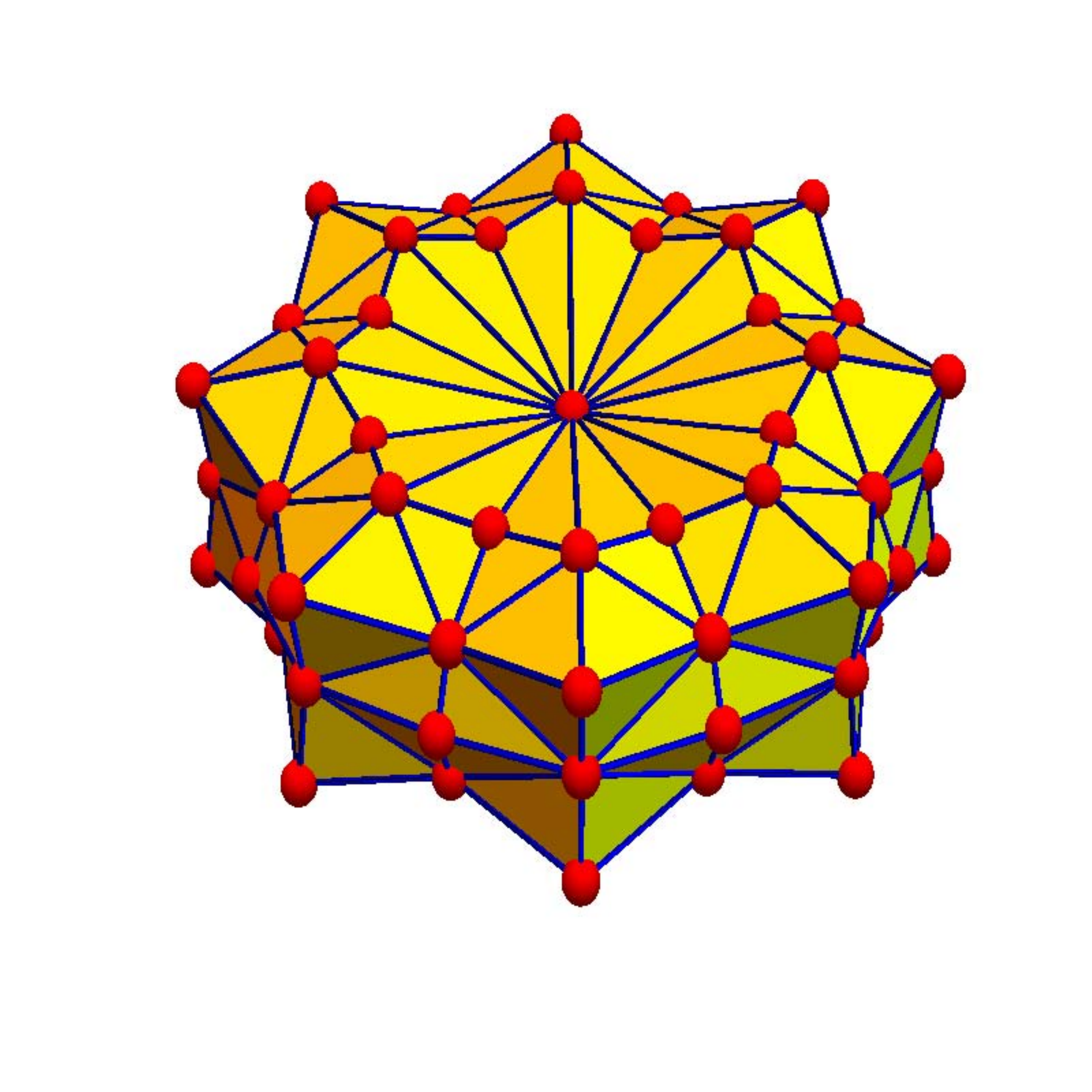}}
\caption{
The product $K_2 \times K_2$ is the wheel graph $W_8$. Its boundary is a $1$-sphere.
The graph $K_2 \times K_2 \times K_2$ is a $3$-dimensional ball with $27$ vertices and $98$ edges.
It resembles a cube as it is $3$-dimensional and has six $2$-dimensional faces of the form $K_2 \times K_2$.
The product $(K_2 \times K_2) \times K_2$ is a $3$-dimensional ball with $99$ vertices and $466$ 
edges. Its boundary is a 2-sphere. As $(K_2 \times K_2)$ has been obtained by returning back to geometry 
before computing the new product $(K_2 \times K_2) \times K_2$ is a refinement of $K_2 \times K_2 \times K_2$. 
similarly as $G \times K_1$ is a refinement of $G$. The boundary of $K_2 \times K_2 \times K_2$ by the way 
agrees with the enhancement $O \times K_1$ of the octahedron. 
}
\end{figure}

Also other constructions like the ``tensor product", the ``direct product" or the ``strong product" of 
graphs do not have the topological properties we want. The reason for the shortcomings of all these products is 
that they do not tap into the lower dimensional building blocks of space: these are the simplices = complete subgraphs 
in the graph case. What in the continuum has to be done with sheaf theoretical constructs, is already 
pre-wired in the graph as we can access the lower dimensional simplices as ``points". The new product 
has the property for example that $C_4 \times C_8 \times C_{11}$ is a $3$-dimensional torus of Euler 
characteristic $0$ and Betti vector $(1,3,3,1)$. It is a triangularization of $T^3$ and
each unit sphere $S(x)$ is a $2$-dimensional sphere of Euler characteristic $2$. The product has practical
use as it allows to construct high dimensional geometric spaces from smaller dimensional ones.
The product works for any pair of graphs and
leaves geometric $d$-dimensional graphs invariant, graphs for which all unit spheres are
$(d-1)$-dimensional homotopy spheres defined in \cite{knillgraphcoloring2}. 
The product will again have this property. Cohomology, dimension and homotopy properties of the product are
identical to the properties in the continuum. Even in the case of fractal dimension, the 
dimension formula matches the corresponding product formula in the continuum 
\cite{Falconer} (formula 7.2) for the Hausdorff dimension of arbitrary sets
in Euclidean space. 
There are more analogues: for Hausdorff dimension, there are sets of dimension zero for which the
product has dimension $1$. While graphs of dimension $0$ are geometric,
so that one gets equality, there are sequences of graphs like 
$G_n=x_0 x_1 +x_0+x_1 + \cdots + x_n$ for which ${\rm dim}(G_n) \to 0$ and 
${\rm dim}(G_n \times G_n) \to 1$, mirroring the continuum again. 

An other useful feature of the product is that it allows to 
refine graphs: the enhanced graph $G \times K_1$ is a barycentric refinement of $G$ and this can be repeated: 
the sequence $G \times K_1, (G \times K_1) \times K_1, \cdots$ produces a finer and finer mesh whose dimension converges
to an integer and honor the original symmetries if $A \subset {\rm Aut}(G)$ is a subgroup of the automorphism
group then $A$ still acts on the refinement. The later is important when looking at discrete principal bundles.  
Moreover, while $G/A$ is in general no more a graph, the quotient $G \times K_1^k/A$ is, so 
that we can as in the continuum form covering spaces ramified over rather general
subgraphs. For example, while $A=Z_n$ acting on $C_n$ has a quotient $C_n/A$ which is only a chain and no more a finite simple
graph, the quotient $(C_n \times K_1 \times K_1)/A$ is a graph. As an other example, the quotient
$O/A$ of the octahedron modulo the $Z_2$ action given by the antipodal reflection group $A=Z_2$ 
is no more a geometric graph without boundary as it is the wheel graph $W_4$ for which we can 
see the octahedron as a double cover ramified over the equator (Riemann-Hurwitz in the discrete
is just the Burnside lemma considered simultaneously for the various simplex sceletons as noted
in \cite{TuckerKnill}),
but $(O \times K_1 \times K_1)/A$ is a geometric graph, a discrete projective plane. 
As unit spheres of geometric graphs are spheres, the product can be used to construct new spheres. By taking refinements and then 
taking quotients, one can get more general discrete graphs like the projective plane in the simplest case.
The product construction can also help for other constructions, like constructing joins or building discrete Hopf fibrations
in arbitrary dimensions. As every unit sphere in the product has the structure $\delta B_1 \times B_2 
\cup B_1 \times \delta B_2$ which is the union of two solid tori glued along the torus $\delta B_1 \times \delta B_2$.
In the case when taking the product of $2$-dimensional geometric graphs, we get so $3$-dimensional 
spheres which have naturally the same Hopf fibration structure as in the continuum. \\

Classically, the notion of a homotopy of two continuous maps $f,g: X \to Y$ is defined using the product: 
if there is a continuous map $F: X \times [0,1] \to Y$, such that $F(x,0)=f(x)$ and $F(x,1) = g(x)$, then $f,g$
are called homotopic. This could be done now also for graphs: two graph homomorphisms $f,g: G \to H$ are
homotopic, if there exists a line graph $L_n$ and a graph homomorphism $F$ from $G \times L_n$ to $H$ such 
that $F(x,0)=f(x)$ and $F(x,n) = g(x)$. We have suggested in \cite{knillgraphcoloring} an other definition: 
two graph homomorphisms $f,g: G \to H$ are homotopic if the ``graph of the graph homomorphisms" are homotopic
graphs. These graphs of homomorphisms have as vertices the union $V(G) \cup V(H)$ and as edges 
all pairs $(x_1,x_2) \in E(G), (y_1,y_2) \in E(H)$ and pairs $(x,y)$ with $y=f(x)$. 
Two graph homomorphisms are now homotopic, if the corresponding graphs are 
homotopic as graphs. We believe that these two definitions are equivalent, but have not yet proven this. 
In any case, we have the Whiteheads theorem that if there is a graph homomorphism $f:G \to H$ which induces
isomorphisms on homotopy groups $\pi_n(G) \to \pi_n(H)$, then $G,H$ are homotopic. And as in the
continuum, having isomorphic homotopy groups does not force a homotopy equivalence. Now, with a product
we can take the same standard counter example $G=P^3 \times S^2$ and $H=P^2 \times S^3$, where $P^k$ are
graph implementations of the $k$-dimensional projective space and $S^k$ are $k$-spheres. Since they have the
same universal cover $S^3 \times S^2$ and the same fundamental group $Z_2$, they have the same homotopy groups
but the K\"unneth formula implies that the Poincar\'e polynomial of $G=P^3 \times S^2$ is $p_G(x) = 1 (1+x^2)$
while the Poincar\'e polynomial of $H=P^2 \times S^3$ is $1 (1+x^3)$. Having different cohomology groups
prevents the two graphs $G,H$ to be homotopic. We see in this example, how useful it is to have a product
in graph theory which shares the properties from the continuum. The point is that one does not have to 
reinvent the wheel in graph theory but that one can piggy-pack on known topology. \\

\begin{figure}[h]
\scalebox{0.16}{\includegraphics{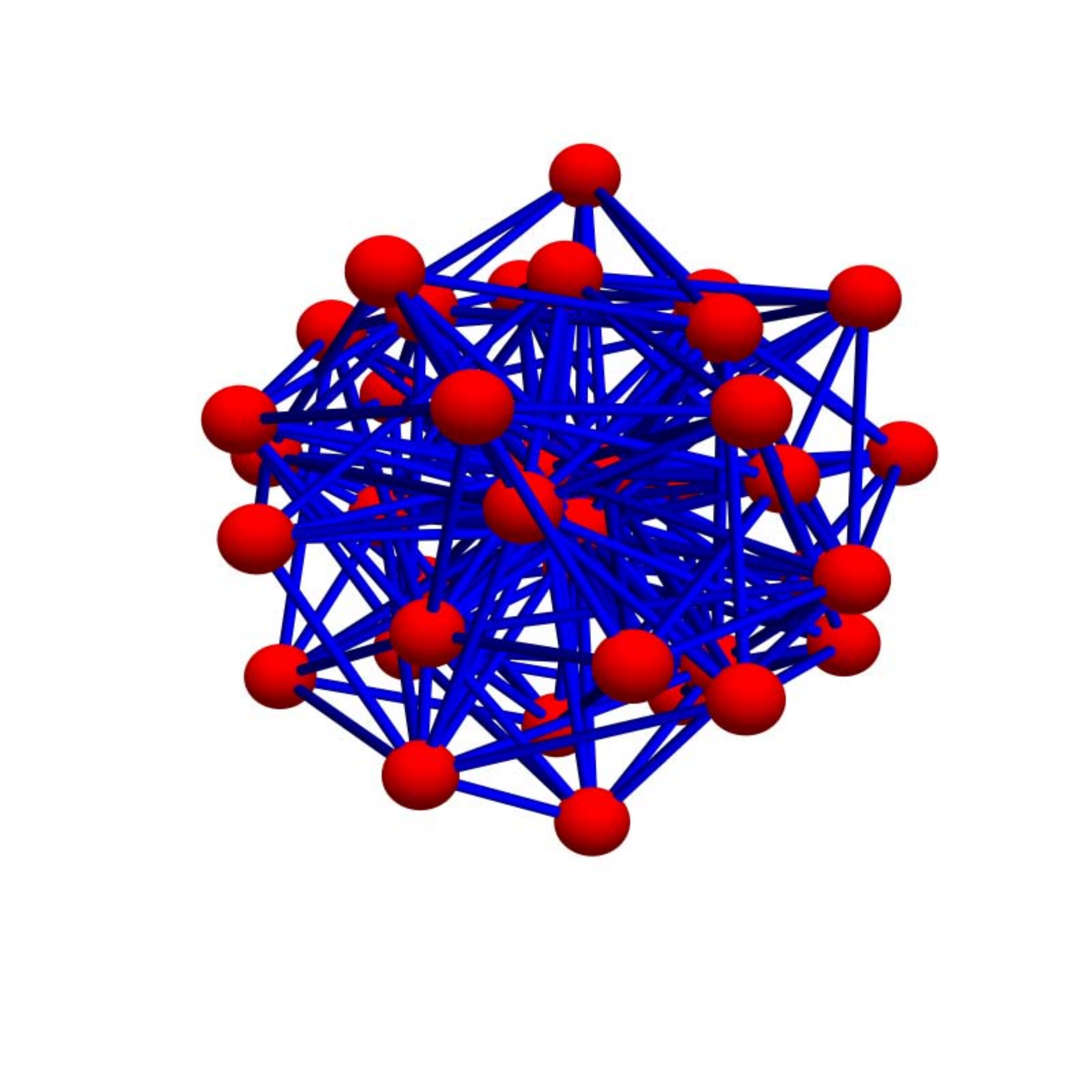}}
\scalebox{0.16}{\includegraphics{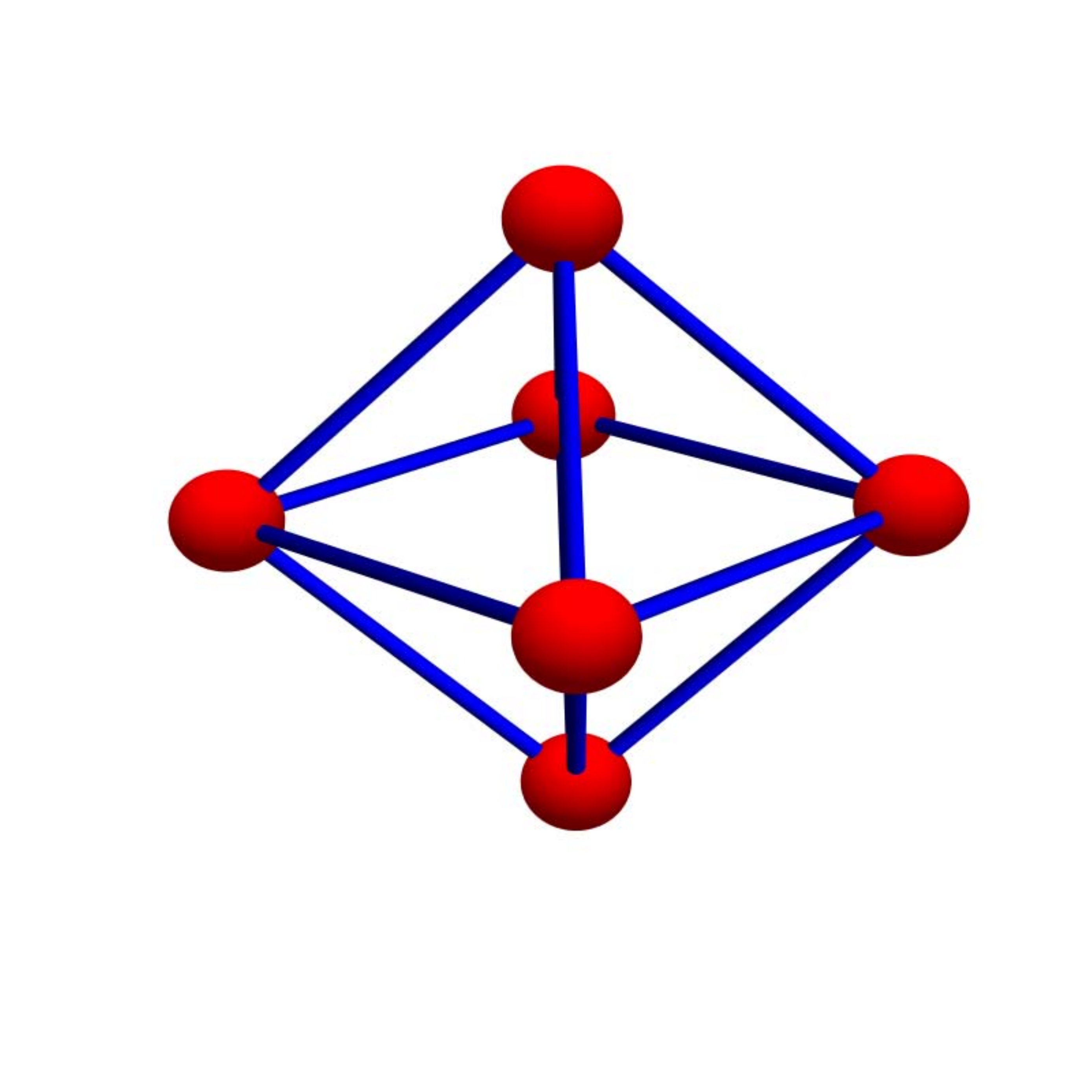}} 

\scalebox{0.16}{\includegraphics{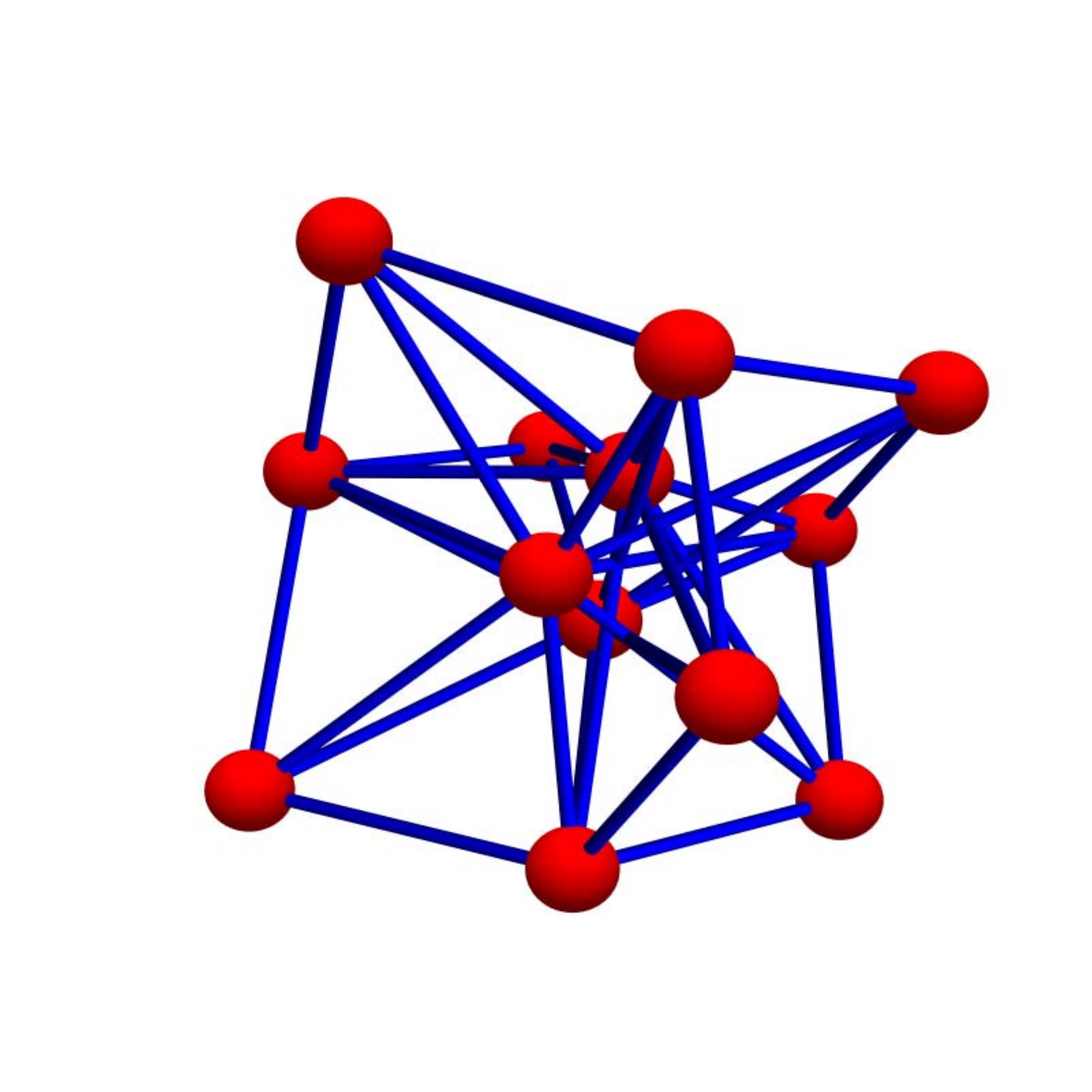}}
\scalebox{0.16}{\includegraphics{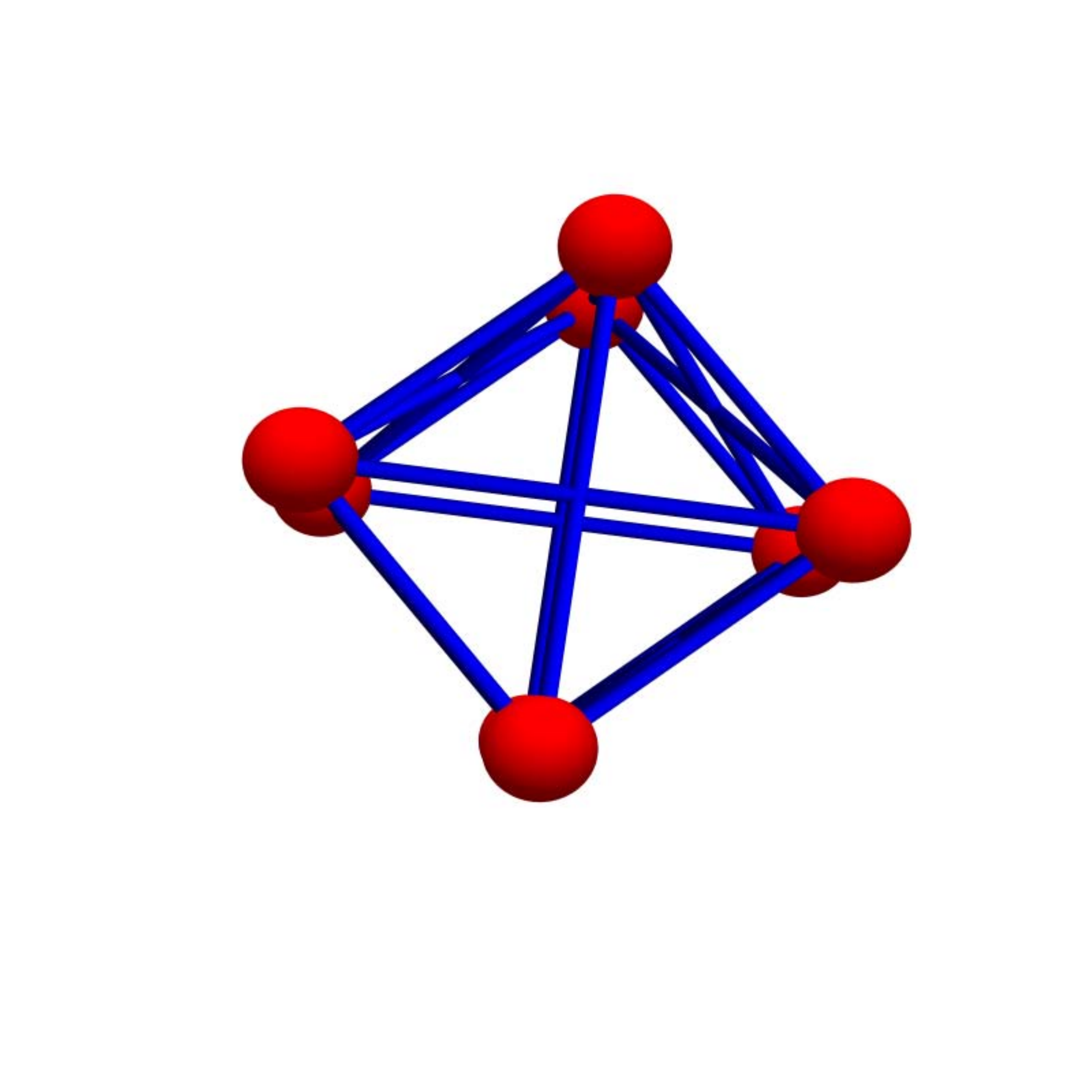}}
\caption{
The product $G$ of $P^3$ with $S^2$ (the two factors are seen in the upper row) 
gives a $5$-dimensional graph which has the same homotopy group 
than the product $H$ of $P^2$ with $S^3$ (the two factors are seen in the lower row)
which is also 5 dimensional.
The graph $G$ has $26*848=22048$ vertices while 
$H$ has $73*80= 5840$ vertices. Both graphs have $Z_2$ as fundamental
group and $S^2 \times S^3$ as universal cover. But the cohomology of
$G$ is the one of $S^2$ and the cohomology of $H$ is the one of $S^3$
by the K\"unneth formula. By the way, we constructed the
projective spaces $P^k$ by taking a refined cross polytop $O_k$ which 
is a $k$-dimensional sphere, then forming $O_k \times K_1$ which is
large enough so that we can factor out the antipodal involution obtaining
the projective space. This example illustrates the use of the product
in concrete graph theoretical constructions shadowing exactly the theory
in the continuum. 
}
\end{figure}

From a practical point of view, the construction of the product only needs a few lines of code for a 
standard computer algebra system. The full computer code is given in detail later on. The product 
works for all finite simple graphs: first construct the ring elements 
$f_G(x),f_H(y)$ which are polynomial in $x_1,\dots,x_n$ representing the vertices of $G$
and $y_1,\dots,y_m$, the vertices of $H$, then take the product $f_G f_H$ and construct
from this a polynomial in the variables $x_1,\dots,x_n,y_1,\dots,y_m$, the new graph $G \times H$ by connecting two monomial 
terms of a polynomial if one divides the other. For example, if $G=K_2$ then $f_G=x+y+xy$ and if $H=K_2$ then
$f_H = u+v+uv$, we get $f_G f_H = ux+vx+uvx+uxy+vxy+uvxy+uy+vy+uvy$ which encodes a wheel graph $W_8$ with central 
vertex represented by $uvxy$. We see that the product naturally extends to chains, group elements in the ring 
and that it corresponds to the product in the tensor product of the two rings. Every
polynomial defines back a graph, but the later does in general not not agree with the original host graph. 
For example, take $G=K_2$ and the chain element $3x + 5 y + 10 xy$, which defines in turn a graph which is the
disjoint union of $K_1$ (represented by $3x$) and $K_2$ represented by $5y+10 xy$. Chains are useful constructs
because natural operations escape the class of graphs: examples are forming the boundary $\delta G$ or
taking quotients $G/A$ with a subgroup $A$ of the automomorphism group of $G$. Chains form a ring
and have properties wanted from a geometric object: a dimension, a cohomology, a notion of homotopy, an 
Euler characteristic, a notion of curvature and these notions match the results we expect from the familiar cases:
Results like Gauss-Bonnet \cite{cherngaussbonnet}, McKean Singer and Hodge-De-Rham \cite{knillmckeansinger}, 
Poincar\'e-Hopf \cite{poincarehopf}, Brower-Lefschetz \cite{brouwergraph}, 
Riemann-Roch (\cite{BakerNorine2007} is a strong enough theory so that it can be extended to higher dimensions),
integrable geometric evolutions \cite{IsospectralDirac,IsospectralDirac2},
Riemann-Hurwitz \cite{KnillBaltimore}, Lusternik-Schnirelmann \cite{josellisknill}, 
mirror the results in the continuum. See also \cite{KnillILAS} for an overview from the linear algebra point
of view. These results are more limited, if one considered graphs as one-dimensional simplicial complexes only, 
a common assumption taken in the 20th century.
As graph theory is a discrete theory, it can surprise at first that the situation parallels the continuum so well
but there are conceptual reasons for that, an example is non-standard analysis, an other is 
integral geometry. Some notions go over pretty smoothly, like cohomology or homotopy: others, 
like the notion of homeomorphisms for graphs needs more adaptations \cite{KnillTopology}. \\

About the history: K\"unneth found the formula in 1921 \cite{KuennethGedenken}, 
where it was his dissertation under the guidance of Heinrich Tietze. The paper published in 1923 
\cite{Kuenneth} is a linear algebra analysis which could probably be simplified considerably 
using Hodge theory. K\"unneth attributes the Cartesian product of manifolds to Steinitz (1908) 
(The definition is indeed given in \cite{Steinitz} p.44 is probably the first appearance of the
simplicial product which our definition is based on).
The notion of manifolds was initiated by Poincar\'e (1895), Weyl (1912),
Veblen and Alexander \cite{VeblenAlexander} (1913) and Whitney \cite{WhitneyManifolds} (1936). 
It was Herbert Seifert who introduced fibre bundles in 1933 \cite{Seifert}. The Eilenberg-Zilber
theorem of 1953 \cite{EilenbergZilber}. (Joseph Abraham Zilber was a Boston born 
Harvard graduate (1943) and PhD (1963). Interestingly, the Eilenberg-Zilber theorem
was authored 10 years before Zilber got his PhD degree under the guidance of Andrew Gleason. 
More information about the history of algebraic topology, see \cite{Dieudonne1989,Scholz}
or the introduction to \cite{Milnor}. 
The classical Cartesian product of graphs was introduced by 
Whitehead and Russell in Principia Mathematica 1912 (it is \cite{ImrichKlavzar} who spotted the construction
on page 384 in Volume 2 of that epic work). It was introduced there in the context of logical relations, 
not so much graph theory. 
This historical observation and many properties of the classical product product are discussed in 
\cite{ImrichKlavzar}. \\

As far as we know, our present paper is the first establishing a K\"unneth formula for finite simple graphs. 
There is a functorial approach to K\"unneth for digraphs, where
\cite{HuangYau2014} use path cohomology to get a functor from digraphs to CW complexes, so that
one can then use the continuum result for the CW complexes. Note however that in their case,
one gets K\"unneth only indirectly by constructing a CW-complex, take the product using the Cartesian
embedding and then pulling the result again to graphs. In particular, the notion of dimension is also
borrowed from the continuum as CW-complexes are topological spaces.
A similar thing could be done for geometric graphs, graphs for which the 
unit spheres $S(x)$ are homotopy $n$-spheres. One can then build for each ball $B(x)$ an open set 
in ${\bf R}^n$ and use this to build a manifold $M$. The open sets form then a nice good cover and the 
topology of the manifold is the same than the topology of the geometric graph. This construction is
restricted however to geometric graphs. \\

The missing Cartesian product bothered us for a while so that we decided to make a targeted search over several
weeks (while procrastinating from an urgent programming job still in need to be finished in geometric graph coloring), 
trying several possibilities and checking with the computer whether the construction works. 
The product described here was obtained by trying random things which look ``beautiful".
The current product is attractive when seen algebraically because it becomes associative on the algebraic level. 
Until now, we would just take the usual product and then fill out the chambers. When working with graphs, 
this is cumbersome, even when staying within the discrete, as it costs programming effort to build 
examples, like stellated higher dimensional cubes. The product which we propose here for graphs 
seems not to be known in the language of graphs. One could of course take the product $|X| \times |Y|$ 
of the corresponding topological spaces as K\"unneth did. After finding the product, we looked around whether it 
already exists: closest to what we do here appeared as an 
exercise in an algebraic topology course by Peter Tennant Johnstone at Cambridge \cite{PTJ}. 
More digging revealed that this simplicial product has been used by Eilenberg and Zilber in 
\cite{EilenbergZilber} (page 204), by De Rham \cite{DeRham1931} (page 191) and earlier 
by Steinitz in \cite{Steinitz} (page 44). 
But this product never made it to graph theory. One could also get a product by escaping to the Euclidean
space. Our reluctance to use Euclidean stuff is not because we feel like Brouwer (who would even refuse to
accept the infinity of natural numbers), 
but simply out of pragmatism: we want to build the structures fast on a computer without having to 
use Euclidean parts. Graphs are natural structures built into computer algebra languages
and the Euclidean embeddings are not needed, when doing computations; only if we want to see the graphs visualized
or modeling traditional geometric objects, the Euclidean embedding is helpful.  
Here is a self-contained full implementation of the new graph product in ``Mathematica", a computer algebra system
which uses graphs as fundamental objects in its core language, structures which are dispatched from 
Euclidean embeddings, unless they are drawn. One procedures allow to translate a graph into a ring element
and an other allows to get from a ring element back a graph. The graph product is the product in the polynomial ring.  

\vspace{12mm}

\begin{tiny}
\lstset{language=Mathematica} \lstset{frameround=fttt}
\begin{lstlisting}[frame=single]
Cliques[s_,k_]:=Module[{n,t,m,u,q,V=VertexList[s],W=EdgeList[s],l}, 
 n=Length[V]; m=Length[W]; u=Subsets[V,{k,k}]; q=Length[u]; l={};
 W=Table[{W[[j,1]],W[[j,2]]},{j,m}];If[k==1,l=Table[{V[[j]]},{j,n}],
 If[k==2,l=W,Do[t=Subgraph[s,u[[j]]]; If[Length[EdgeList[t]]==
 Binomial[k,2],l=Append[l,VertexList[t]]], {j,q}]]];l];

Ring[s_,a_]:=Module[{v,n,m,u,X},v=VertexList[s]; n=Length[v];
 u=Table[Cliques[s,k],{k,n}] /. Table[k->a[[k]],{k,n}];m=Length[u];
 X=Sum[Sum[Product[u[[k,l,m]],
   {m,Length[u[[k,l]]]}],{l,Length[u[[k]]]}],{k,m}]];

GR[f_]:=Module[{s={}},Do[Do[If[Denominator[f[[k]]/f[[l]]]==1 && k!=l,
 s=Append[s,k->l]],{k,Length[f]}],{l,Length[f]}]; 
 UndirectedGraph[Graph[s]]];

GraphProduct[s1_,s2_]:=Module[{f,g,i,fc,tc}, 
 fc=FromCharacterCode; tc=ToCharacterCode;
 i[l_,n_]:=Table[fc[Join[tc[l],IntegerDigits[k]+48]],{k,n}];
 f=Ring[s1,i["a",Length[VertexList[s1]]]];
 g=Ring[s2,i["b",Length[VertexList[s2]]]]; GR[Expand[f*g]]];

NewGraph[s_]:=GraphProduct[s,CompleteGraph[1]];

example = GraphProduct[CompleteGraph[3],StarGraph[4]]
\end{lstlisting}
\end{tiny}

\vspace{12mm}

\begin{figure}[h]
\scalebox{0.46}{\includegraphics{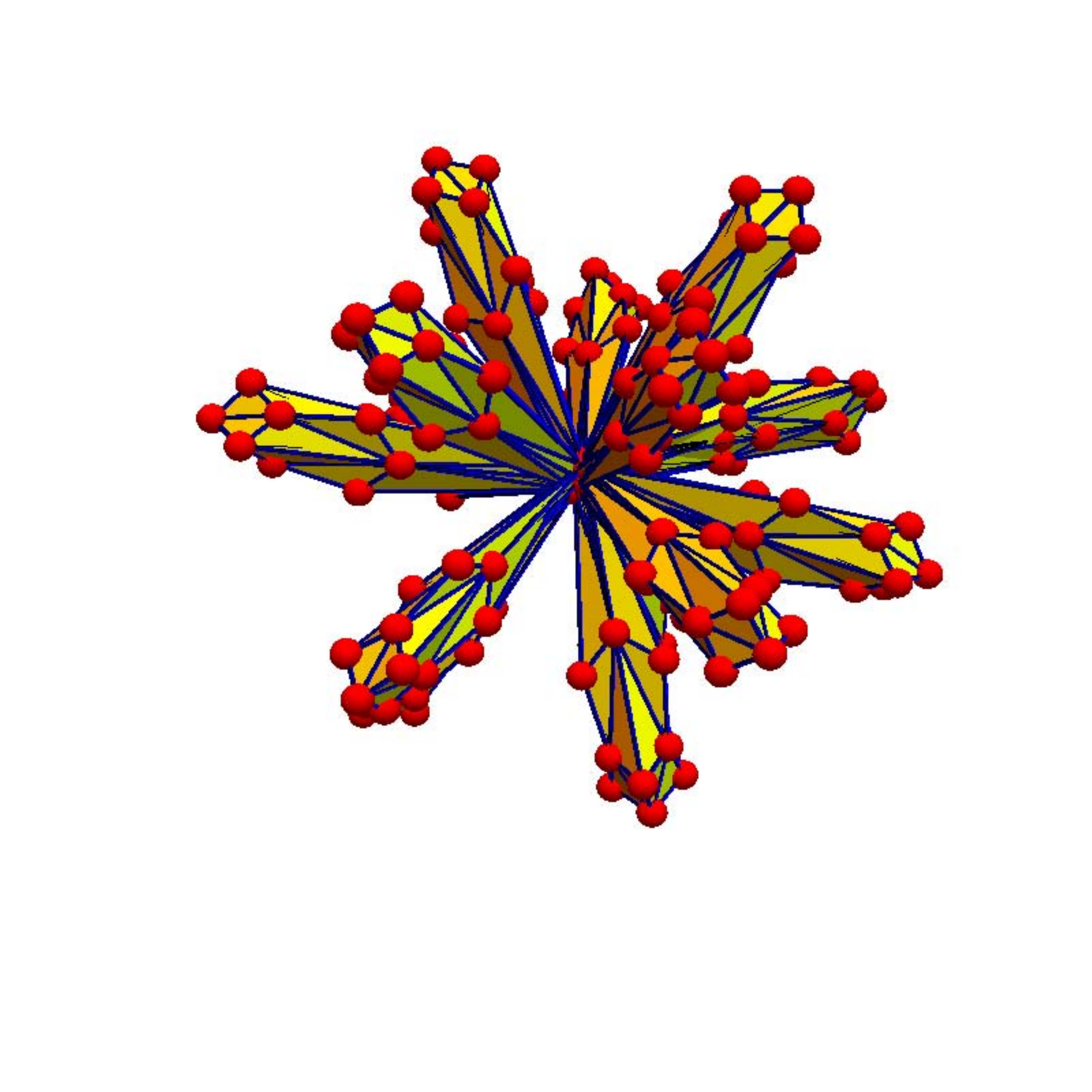}}
\caption{
When invoking the product, like with 
GraphProduct[CompleteGraph[3],StarGraph[8]] 
the program
first computes the algebraic expressions in the ring, which are
$f=a_1 + a_2 + a_1 a_2 + a_3 + a_1 a_3 + a_2 a_3 + a_1 a_2 a_3$ and
$g=b_1 + b_2 + b_1 b_2 + b_3 + b_1 b_3 + b_4 + b_1 b_4 + b_5+ b_1 b_5+ b_6+ b_1 b_6 + b_7 + b_1 b_7 + b_8 + b_1 b_8$,
containing $7$ and $15$ polynomial monoid entries, 
then multiplies them $fg$ and derives from this again a graph 
$G_{fg} = G \times H$.
The product of the $2$-dimensional triangle $G$ and $1$-dimensional
star graph $S_8$ is the graph $G \times H$ displayed in the figure. 
It is $3$-dimensional with $v_0=105$ vertices, 
$v_1=203$ edges, $v_2=1182$ triangles and $v_3=1083$ tetrahedra. 
Because both factors are homotopic to $K_1$, also the product is
contractible and has the Poincar\'e polynomial $p_{G \times H}(x)=1$. 
}
\end{figure}

The code of the above listing can be grabbed by looking at 
the source of the ArXiv submission of this text. \\
Can we write the usual product in an algebraic way? Yes, 
if we have the ring element $f$ and take as vertices the $0$-dimensional simplices and 
as edges the $1$-dimensional edges, we recover the old graph. 
If we take the product $fg$ and take as vertices the pairs $xy$ and as edges
the triples $xyz$, we regain the standard Cartesian product. 
For example, if $f=xy+x+y$ represents $K_2$ and 
$g=u v + v w + w u + u +v + w$ represents $K_3$, then the quadratic and cubic terms
of $fg$ are $ux + vx + wx + uy + vy +wy  uvx + uwx + vwx + uvy + uwy + vwy +  uxy + vxy + wxy$
give the product graph which has $6$ vertices and $9$ edges. 

\section{Construction}

In this section, we describe that the vertices of a graph define a ring in which every element $f$ can be 
seen as its own geometric object which carries cohomology, homotopy, Euler 
characteristic, curvature, dimension and a Dirac operator. As we could run the wave or heat equation
on such a ring element $f$, the chain $f$ should be considered a geometric object with physical context. 
Unlike the category of graphs, the category of these
chains has an algebraic ring structure, is closed under boundary formation as well as taking 
quotients which leads to orbifolds in the continuum. Other constructs in the continuum like 
discrete stratifolds are already implemented as suspensions of geometric graphs and are represented by classical graphs.
We called them exotic in \cite{knillgraphcoloring2}, where we asked whether they might lead to exotic discrete spheres 
(a $k$-dimensional graph with geometric unit spheres which is homeomorphic to a $k$-dimensional homotopy sphere) 
and looked at discrete varieties in \cite{knillgraphcoloring2}.
Somehow, chains form are part of a  list of class of structures which resemble structures in the continuum
geometric graphs $\subset$ stratifolds $\subset$ varieties $\subset$ graphs $\subset$ orbifolds $\subset$ chains.
The product goes all the way to chains and can also be used to lift the 
classical Cartesian product to $1$-dimensional chains: it is obtained by taking the 
product and disregarding higher dimensional parts of the chain: kind of projecting the result onto curves. \\

We use multi-index notation $a_k x^k = a_{k_1, \dots ,k_n} x_1^{k_1} x_2^{k_2} \dots x_n^{k_n}$. 
A finite simple graph $G$ with vertices $x_1, \dots ,x_n$ defines a ring
generated by all non-constant polynomial monoids $x = x_{k_1} x_{k_2} \cdots x_{k_l}$ 
and the chain ring. With a given orientation, 
graphs are always represented by functions of the form $\sum_x x^n$ with $n_i \in \{0,1\}$
and $\sum_i n_i \neq 0$. The star graph for example is $x+y+z+w+xw+yw+zw$. 
The ordering of the terms in the monoid
allows in a convenient way define an orientation of the simplices in the graph $G$ which 
means fixing a basis for discrete differential forms $\Omega^k(G)$. \\

We could add constants to get a ring with $1$ but we don't yet see a use of
the constants for now as we have no geometric interpretation for it. The Euler characteristic formula 
$\chi(f)=-f(-1,\dots,-1)$ shows that the Euler characteristic of $1$ is $-1$. 
The $1$ element can't be interpreted as the empty graph, because the empty graph is 
represented by $0$, and is a $(-1)$-dimensional sphere with Euler characteristic $0$. With a $1$ element in the ring, 
one can produce terms like $(1+x) (1+y) = 1+x + y + xy$, an 
object of Euler characteristic $0$ or $(2+xy) (1+zw) = 2+xy+2zw+xyzw$, an object of Euler characteristic $-6$.  \\

To add an other footnote, we see that some chains represent already special differential forms, but these are
forms taking values in the integers. The triangle $xyz+xy+yz+zx+x+y+z$ is a sum of a $2$-form, a $1$-form and 
$0$-form which are all constant $1$.  
In this discrete setup, geometric objects and differential forms are already very similar, as it is custom 
for quantum calculus setups, where Stokes is just the statement $\langle \delta f,g \rangle = \langle f, d g \rangle$
and where the boundary operation is truly the adjoint of the exterior derivative and both geometric objects and
forms are in the same function space. For classical differential forms and geometric objects, 
the geometric objects are distributions (as curves and 
surfaces for example are infinitely thin) and differential forms 
are smooth or the dual setup used in geometric measure theory where differential forms are distributions and 
smooth functions are geometric objects. Its only on the level where
both parts (geometric objects and differential forms) are represented by the same type of $L^2$ data that we have
true symmetry, but then we are in a quantum setup. \\

Every element $f = \sum_n a_n x^n$ defines a graph $G_f$: the vertices of $G_f$
are the monomials of $f$ and two monomials of $f$ are connected by an edge if one is a factor of the other. 
The graph of $f=4 x y + 2 x + y - 3x$ for example is the union of a line graph with three vertices where $4xy$ represents
the middle vertex, as well as a single $K_1$ represented by $3x$. 
The graph $G$ on the other hand defines the ring element $f_G=\sum_{x} x$, 
where $x=x_{j_1} \dots x_{j_k}$ is a simplex in $G$. The choice of the sign or permutation 
when writing down the polynomial monoid components corresponds to a choice of basis and is irrelevant
for most considerations like for computing cohomology or when running discrete differential equations
like \cite{IsospectralDirac,IsospectralDirac2}.

\begin{figure}[h]
\scalebox{0.12}{\includegraphics{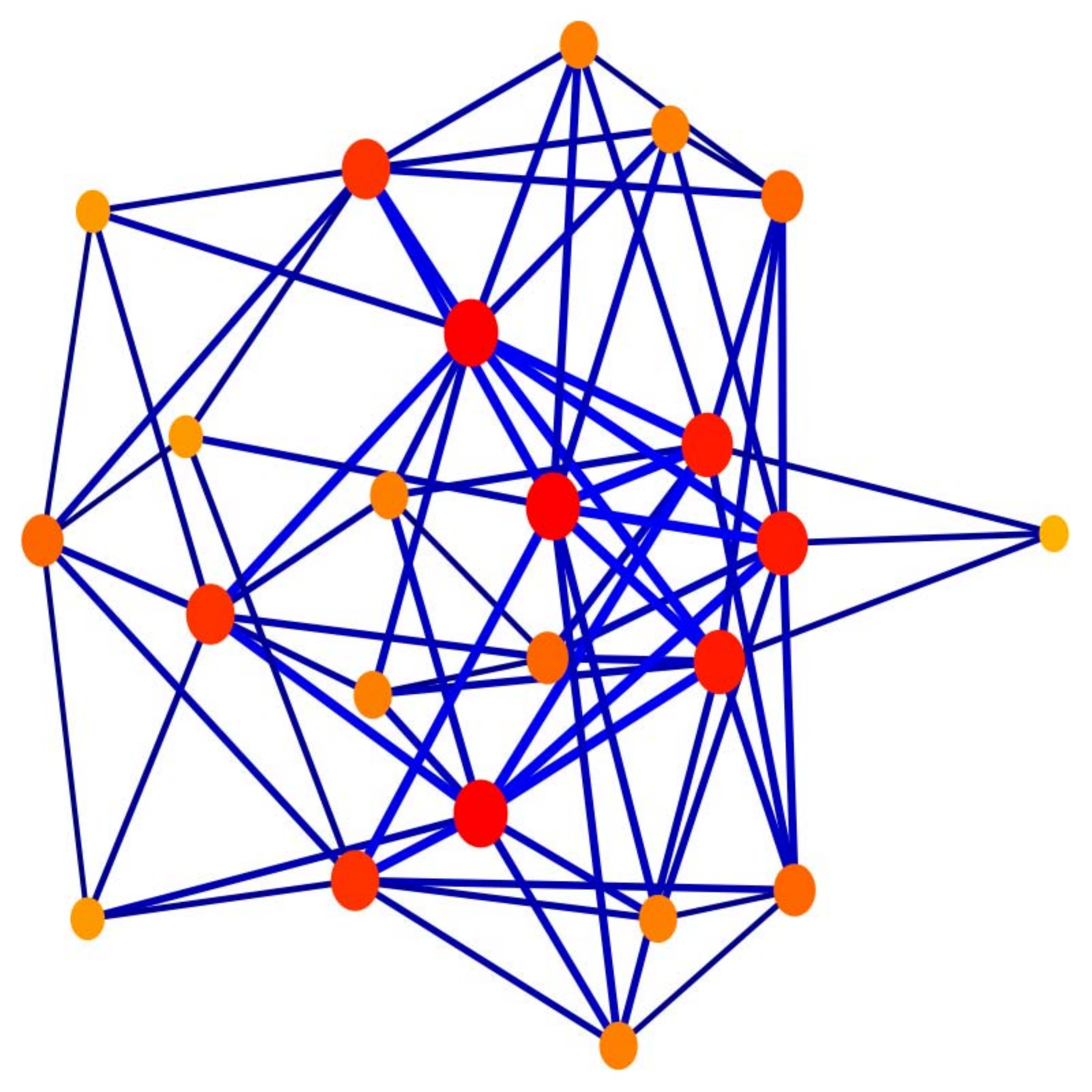}}
\scalebox{0.12}{\includegraphics{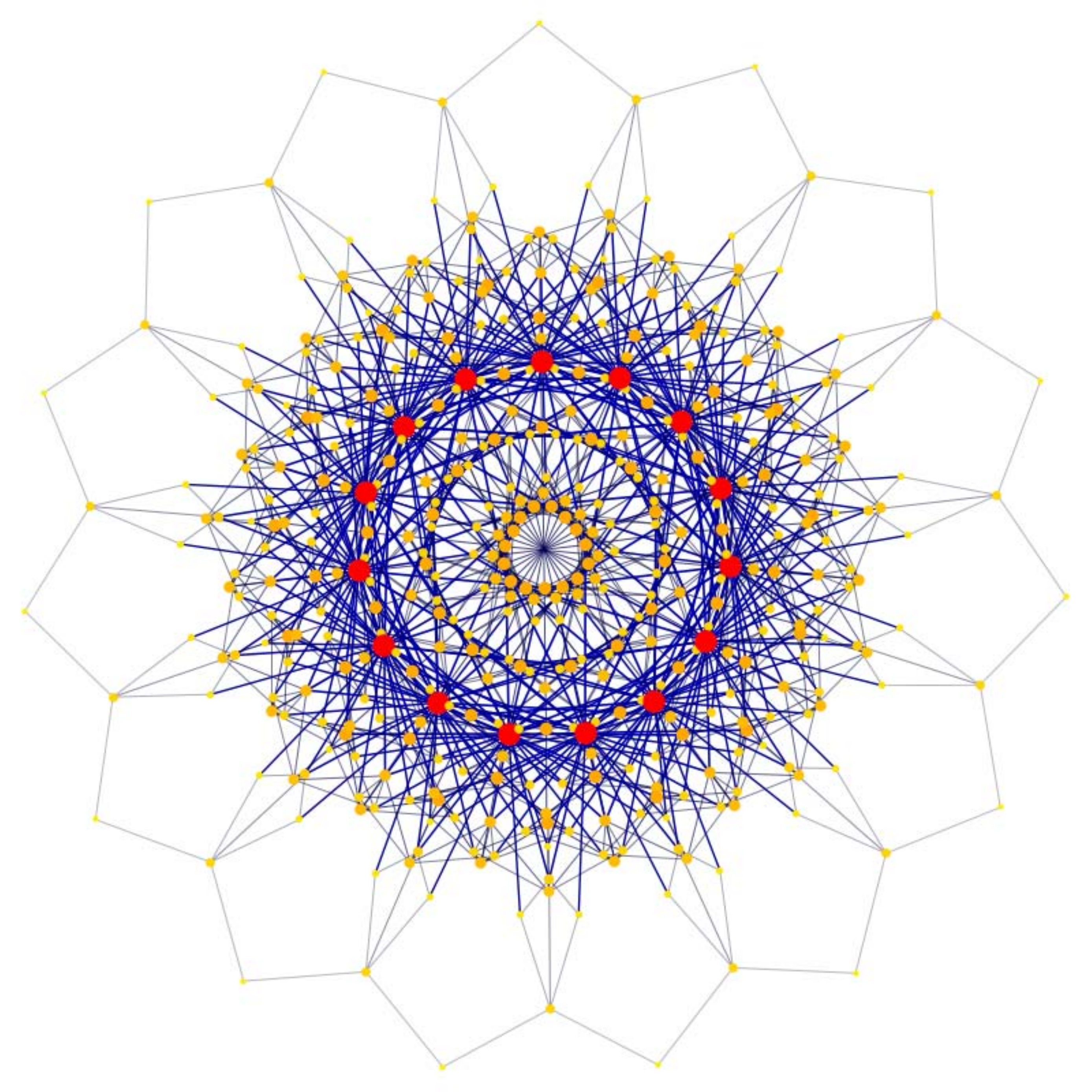}}
\scalebox{0.12}{\includegraphics{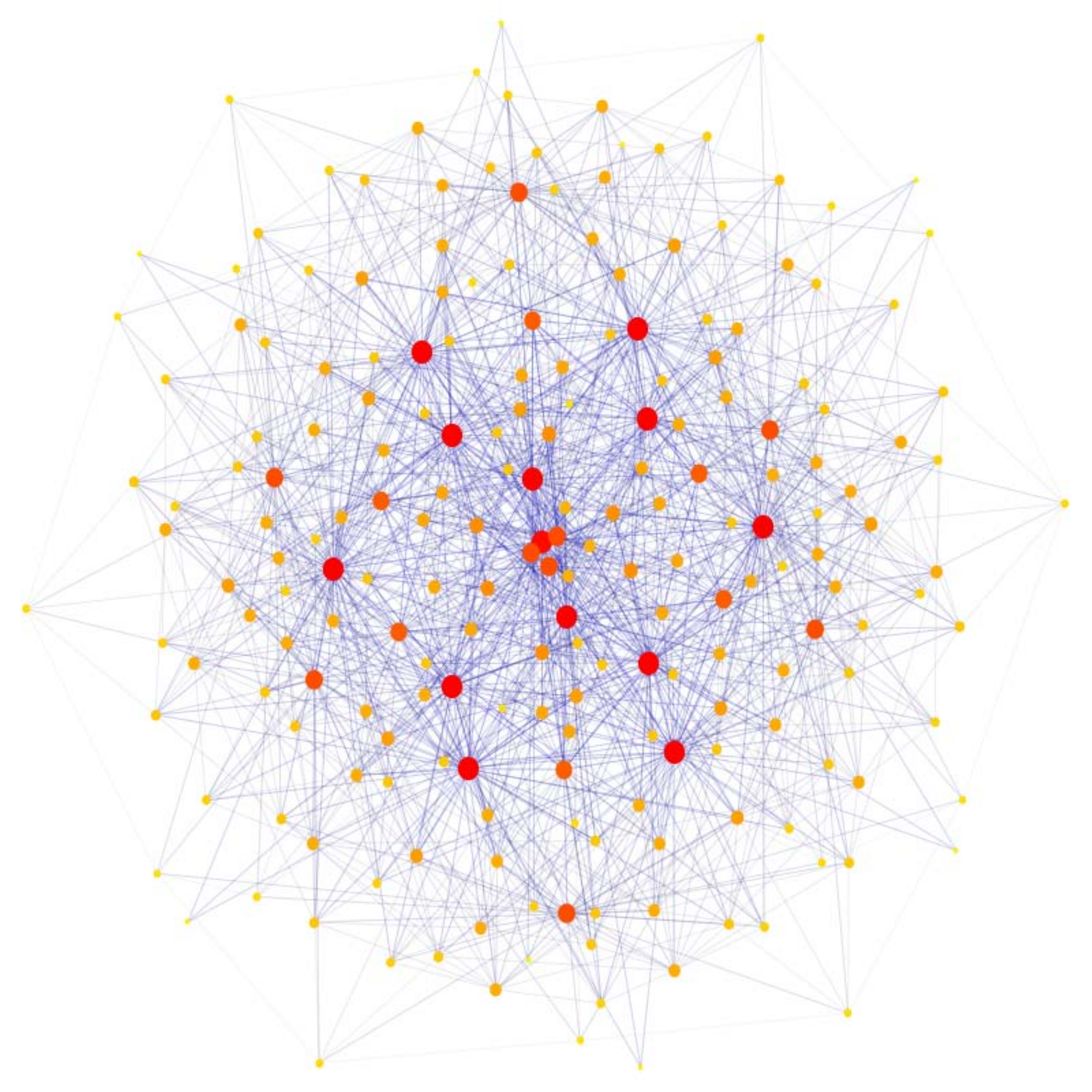}}
\caption{
The idea to associate a graph from a ring element can be done more generally. Let $G$ be a graph.
It defines a ring element $f$ in the ring $Z[x_1,\dots,x_n]$, which is now abelian. 
We can form $f^2$ and look at the graph which this function defines. 
The figures above show examples for the triangle $K_3$, for the circle $G=C_{15}$ and for the octahedron $G=O$. 
For $G=K_3$ in particular, we get the ring element $f=a + b + c + a b + a c + b c + a b c$ and 
compute $f^2=a^2 b^2 c^2+2 a^2 b^2 c+a^2 b^2+2 a^2 b c^2+4 a^2 b c+2 a^2 b+a^2 c^2+2 a^2 c+a^2+2 a b^2
c^2+4 a b^2 c+2 a b^2+4 a b c^2+6 a b c+2 a b+2 a c^2+2 a c+b^2 c^2+2 b^2 c+b^2+2 b
c^2+2 b c+c^2$ which has has $23$ monoidal summands. The incidence relations through 
factorization produces a graph with $23$ vertices. 
}
\end{figure}

After fixing an orientation for each simplex, we get
incidence matrices $d_k$. They implement the exterior derivative. They also determine 
the  Dirac operator $D=d+d^*$, and all such matrices are  unitarily equivalent.
Independent of the basis is the form Laplacian $L=D^2$ which has a block decomposition into 
Laplacians  $L_k$ on $k$-forms. 
The enhanced graph $G_1=G_{f_G}$ is a refinement of $G$ and as we will see, if $G$ is geometric, then the refinement 
is geometric again of the same dimension. For general networks $G$ we will show that the dimension 
of $G_1$ can only increase, the reason being that higher dimensional parts will spawn off more ``new vertices". 
For a triangle $G$ for example, the ring is $Z[x,y,z]$ and
$f_G = x+y+z+xy+yz+xz+xyz$. This ring element defines the graph $H=G_{f_G}$ for which the monomials
$x,y,z,xy,yz,xz,xyz$ are the vertices. The divisor incidence condition leads to the
wheel graph $W_6$ which has the same topological features as $G$. Of course, the refinement process can be repeated, 
leading to larger and larger graphs with the same automorphism group. 
The sequence of graphs $G_1,G_2,G_3,\dots$ define so larger and larger rings generated by more and more variables. 
We will see that the dimension converges to an integer and that for large $n$, the graph is close to a geometric
graph as the larger simplices will overtake all others. See Figure~(\ref{euclideanlimit}).  \\

Given two graphs $G,H$, define the rings $X_G,X_H$,  
take the product $fg$ in the tensor product $X \otimes Y$ of rings and translate that product ring element 
back to a graph. The dimension of $G$ is defined inductively as $1$ plus the average of the dimensions 
of the unit spheres and uses the foundation assumption that the empty graph has dimension $-1$.
A graph is geometric, if every unit sphere is a homotopy sphere. A homotopy sphere is a
geometric graph for which removing any vertex renders the graph contractible. Inductively, a graph $G$
is called contractible, if there exists a vertex $x$ such that $S(x)$ and $G \setminus \{x\}$ 
are both contractible, using the inductive assumption that the $1$-point graph is contractible. 
Homotopy can be defined algebraically. Inductively, a ring element $f_G$ is contractible if there 
exists $x_k$ such that both ring elements $A(x_1,\dots\hat{x}_k,\dots,x_n)) = f(x)  - f(x_1,\dots,x_{k-1},0,x_{k+1},\dots,x_n)$ 
and $B(x) = f(x_1,\dots,x_{k-1},1,x_{k+1},\dots,x_n)$ are contractible. 
The Euler characteristic of $G$ is $\chi(G) = \sum_k (-1)^k v_k(G)$, where $v_k(G)$ is the number of $k$-dimensional
simplices $K_{k+1}$ in $G$. More generally, the Euler characteristic of a ring element $f$ is
$\chi(f) = -f(-1,-1,\dots,-1)$. The Euler characteristic of the triangle $G=x+y+z+xy+yz+zx+xyz$ for example is
$-(-3+3-1)=1$. The Euler characteristic of the chain $3x+5y+4xy$ is $-3-5+4=-4$. 
The boundary of a ring element $f$ is defined as 
$\delta f = \sum_i \partial_{x_i} f$, where $\partial_{x_i}$ are the usual partial derivatives and the result is
projected onto functions satisfying $f(0)=0$. 
Due to the orientation assumption, this gives rise to sign changes. For example,
$\delta (x y z) = y z - x z + x y$ if the edges basis $xy,yz,zx$ was chosen.
We have $\delta \delta f=0$.
The exterior derivative on $\Omega$ is defined as $dF(x) = F(\delta x)$. As $d^2=0$, we have cohomology groups
for any ring element. Let $b_k = {\rm dim}(H^k(G))$ be the Betti numbers. 
The Euler-Poincar\'e formula $\chi(G) = \sum_{k=0} (-1)^k b_k(G)$ follows from linear algebra.
The boundary $\delta f(x_1 x_2 \dots x_k) = \sum_l (-1)^l f(x_1, \dots,\hat{x_l}, \dots,x_k)$
is no more a graph in general. The star graph $g=x+y+z+w+wx+wy+wz$ for example has 
the boundary $\delta g = 3w-x-y-z$ which is a chain.
The space of differential forms $\Omega$ is a direct sum $\Omega = \oplus \Omega^k$, where $\Omega^k$ 
is generated by functions supported on $k$-simplices, polynomial monoid parts in the ring of degree $k+1$. 
Since $\Omega^k(G)$ are finite dimensional, the maps $d_k$ are represented by finite matrices. 
They are called the incidence matrices and were considered by Poincar\'e already for triangulations 
of manifolds.  
The cohomology groups $H^k(G)$ and more generally $H^k(f) = {\rm ker}(d_k)/{\rm im}(d_{k-1})$ 
are independent of the chosen signs, when defining the chain ring element $f_G$. 
The matrix $D=d+d^*$ is called the Dirac matrix of $f$. Its square
$L=D^2$ is the form-Laplacian. It decomposes into matrices $L_k: \Omega^k(f) \to \Omega^k(f)$. 
By Hodge theory, the dimension of the kernel of $L_k$ is the $k$'th Betti number $b_k(G)$. 
Given a ring element $f$ and a monoid part $x$ in $f$, its unit sphere $S(x)$ is the unit sphere of $x$ in the
graph defined by $f$. It consists of all monoids dividing $x$ or which are multiples of $x$, without $x$.
It is again a ring element. 
The dimension ${\rm dim}_f(x)$ is inductively defined as $1$ plus the dimension of the unit sphere.
The dimension of $f$ finally is the average of the dimensions of all monoids in $f$. 
The dimension of the triangle $xyz+xy+yz+zx+x+y+z$ for example is 
$1+[{\rm dim}(S(xyz))+{\rm dim}(S(xy))+{\rm dim}(S(yz))+{\rm dim}(S(zx))+{\rm dim}(S(x))+{\rm dim}(S(y))+{\rm dim}(S(z))]/7$.
We have for example $S(xyz) = xy+yz+zx+x+y+z$ which defines the graph $C_6$ of dimension $1$
and $S(xy) = xy+x+y$ which is a line graph $P_2$ of dimension $1$ etc. We see that the dimension of the triangle is $2$. 
Of course, in the graph case, the dimension can be better computed on a graph level. The point is that the 
dimension extends to a nonnegative functional on the entire ring in such a way that the average of the 
dimensions of unit sphere $S(x)$ of a monoids or generalized vertices of $f$ is the dimension of $f$ minus $1$.  \\
We also need an inner product $\langle f,g \rangle $ on $\Omega^k$ which is defined as
$\langle a_n x^n, b_n y^n \rangle$ $= \sum_n a_n b_n$. It obviously satisfies all properties of
an inner product and especially defines a length $|f| = \sqrt{ \langle f,f \rangle }$.
Of course, the incidence matrices $d,d^*$ are adjoint to each other with respect to this product. 
As graphs are special functions, we could use the inner product for example to define an angle
between two graphs $G,H$ on the same vertex set it is the $\arccos$ of the fraction 
${\rm Cov}[G,H]/(\sigma[G] \sigma[H])$, where ${\rm Cov}[X,Y]$ is the number of common 
simplices of $G,H$ and $\sigma[G]$ is the square root of the number of simplices in $G$. 
Lets summarize the main point: 

\begin{propo}
a) Every graph $G=(V,E)$ defines a ring element $f_G = \sum_{x} x$, a sum over all complete subgraphs of $G$. \\
b) Every element $f=\sum a_{k}x^k$ defines a graph
$G_f$, where $V$ are the monoid entries $a_k x^k$ in $f$, with $|k|>0$ and where two entries are connected 
if one divides the other. \\
c) If $f$ comes from a graph $G$, then $G_1=G_f$ is a graph for which the original simplices are the points and 
which has the same topological features than $G$ and which additionally has a natural digraph structure. 
\end{propo}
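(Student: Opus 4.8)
Parts (a) and (b) amount to checking that the two constructions are well defined. For (a) I note that a finite simple graph $G$ has only finitely many complete subgraphs, each complete subgraph on vertices $\{x_{i_1},\dots,x_{i_k}\}$ being recorded by the squarefree monomial $x_{i_1}\cdots x_{i_k}$; summing these over all simplices produces a well-defined element $f_G\in{\bf Z}[x_1,\dots,x_n]$ with every coefficient $1$ and no constant term, so there is nothing further to prove. For (b) I assume $f=\sum a_k x^k$ is written with collected terms and drop its constant term; the nonconstant monomials form a finite set which serves as $V(G_f)$, and the relation ``$x^k$ divides $x^l$ with $k\neq l$'' is irreflexive and -- because $x^k\mid x^l$ with $k\neq l$ forces $|k|<|l|$ -- antisymmetric, so its symmetrization is the edge set of a finite simple graph. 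Since the coefficients $a_k$ do not enter this edge set (they are only weights), $G_f$ depends on $f$ only through its monomial support, as claimed.

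The real content is (c), and here the plan is to identify $G_1$ with a classical object. Putting $f=f_G$, the vertices of $G_1=G_f$ are precisely the monomials of $f_G$, that is, precisely the complete subgraphs (simplices) of $G$, and two are adjacent exactly when one monomial divides the other, i.e.\ when one simplex is a face of the other. Hence $G_1$ is the comparability graph of the face poset of the Whitney complex of $G$ -- equivalently the $1$-skeleton of its order complex, i.e.\ the barycentric refinement of $G$. This gives the first clause of (c) (the simplices of $G$ are the vertices of $G_1$) for free, and reduces the rest to the statement that barycentric refinement preserves the topological invariants: the graded cohomology ring $H^*$, the Betti numbers, the Euler characteristic, and the homotopy type.

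For the invariance I would compare the Whitney cochain complexes of $G$ and $G_1$ directly, by the usual argument for barycentric subdivision. A vertex map assigning to each simplex of $G$ (i.e.\ each vertex of $G_1$) one of its own vertices induces a simplicial map $G_1\to G$ and hence a cochain map; paired with the subdivision operator going the other way, the two composites are chain-homotopic to the identities because every carrier that arises is a cone over a simplex and therefore acyclic -- this is the acyclic-carrier / method-of-acyclic-models argument. It delivers $H^*(G)\cong H^*(G_1)$ as graded rings, and then $\chi(G)=\chi(G_1)$ follows from the Euler--Poincar\'e identity already recorded above. For the homotopy claim in the inductive graph sense used in the excerpt I would argue by a separate induction: replacing one top-dimensional simplex of $G$ by its star subdivision is an elementary graph homotopy -- it cones off a contractible subgraph, so the new vertex has contractible unit sphere and contractible complement -- and the full barycentric refinement is a finite composition of such moves, the unit-sphere recursion carrying the induction down to the links.

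The digraph structure is then immediate: orient each edge of $G_1$ from the divisor to the multiple, i.e.\ from the lower-dimensional face toward the higher-dimensional simplex. Because $x^k\mid x^l$ with $k\neq l$ forces $|k|<|l|$, this orientation has no directed cycles, so $G_1$ is a directed acyclic graph -- the Hasse-type digraph of the face poset -- and since no arbitrary choice enters, the digraph structure is natural; it also records the dimension grading of the refinement. I expect the main obstacle to be exactly the ``same topological features'' clause at the level of graph homotopy rather than of mere numerical invariants: the acyclic-model bookkeeping (or, equivalently, the inductive verification that a single star subdivision is a graph homotopy) is the one step that needs genuine care, while everything else is routine.
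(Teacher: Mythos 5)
Your parts (a) and (b) are fine, and your identification of $G_1$ with the barycentric refinement (the comparability graph of the face poset, whose cliques are exactly the chains, so its Whitney complex is the order complex) is exactly the right picture; the paper itself states this proposition without a formal proof and defers the substance to later results. For the cohomology clause your route genuinely differs from the paper's: you invoke the classical subdivision operator and acyclic carriers to get a chain homotopy between the Whitney complexes of $G$ and $G_1$ directly, whereas the paper never does this — it derives cohomology invariance from the homotopy equivalence $G\sim G_1$, which it proves separately. Your route is self-contained and standard for simplicial complexes, and it is a perfectly valid alternative for the $H^*$ and $\chi$ claims.

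The gap is in the graph-homotopy step, and it is exactly where you suspected trouble. Your plan is to realize the barycentric refinement as a finite composition of star subdivisions, each an ``elementary graph homotopy.'' But observe that in $G_1$ the original vertices of $G$ (the degree-one monomials) form an \emph{independent set}: for distinct vertices $x,y$ neither monomial divides the other, so the original edges of $G$ are not edges of $G_1$. Passing from $G$ to $G_1$ therefore requires destroying every edge of $G$, and edge deletion is not among the admissible graph-homotopy moves, which only add or remove a \emph{vertex} with contractible unit sphere. Consequently a single stellar subdivision is not an elementary move in this category (it is an expansion followed by edge removals), and your induction does not get off the ground as written. The paper's own proof of $G\sim G_1$ (given later for $G_1=G\times K_1$) sidesteps this entirely: it inducts on the number of vertices of $G$, writing $H=G\cup(S(x)\star x)$ and $H_1=G_1\cup(S_1(x)\cup N)$ and applying a gluing lemma — if $A\sim C$, $B\sim D$ and $A\cap B\sim C\cap D$ then $A\cup B\sim C\cup D$ — rather than trying to connect $G$ to $G_1$ by local moves. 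To repair your argument you would either need to adopt that global induction or prove separately that each stellar subdivision preserves the graph homotopy type despite involving edge deletions (e.g.\ by exhibiting a common expansion), which is precisely the nontrivial content you cannot wave through with the cone observation alone.
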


In other words, there is a functor $f \to G_f$ from the ring $R$ to the
category of directed graphs given by the division properties (even so we often forget about the
directions) and that there is a second functor $G \to f_G$ from the category of undirected graphs to 
the ring.  They are not inverses of each other but the cohomology agrees. 
For other functorial relations, see the recent paper \cite{HuangYau2014}. \\

{\bf Remarks:} \\
{\bf 1)} Every ring element $f$ also defines its own geometric object which has Euler characteristic, cohomology, dimension,
homotopy as well as curvature. \\
{\bf 2)} When forgetting about the anti-commutativity within the graph which is irrelevant for the graph product,
the ring could be replaced with a more general integral domain. It would allow to 
see the graph product $G \times H$ as an element in $R[y_1,\dots,y_m]$ where $R=Z[x_1,\dots,x_n]$. 
This possibility can be useful when studying fibre bundles as one can work in a ring of the fibre graph. \\
{\bf 3)} The fact that the product of two graphs is obtained by writing the graphs algebraically using different generators 
and producing from it again a graph:
$$ G \times H = G_{f_G \times f_H} \;  $$
is not unfamiliar to us. If we take the Cartesian product of two spaces, we use different variables for the different
directions. If we don't take new variables and take $f \cdot f$ in the ring, then this is in general a chain. 
For a triangle $G=x y + y z + z x + x + y + z + x y z$ for example, we get (using $x^2=y^2=z^2=0$)
the chain $G \cdot G = 2 x y + 2 y z + 2 z x+6 xyz$. The graph which belongs to this chain is the star graph $S_3$
as $6xyz$ is the central vertex and the others the outer points (as they divide the central point). \\
The imposed Pauli principle imposed by anticommutativity $xy=-yx$ 
is irrelevant for the Euler characteristic, both for chains as well as for the graphs 
$G_f$ derived from the ring elements $f$.  \\
{\bf 4)} The graph $G \times K_2$ can be seen as a self-cobordism of $G_1$ with itself as it is a graph of
one dimension more which has two copies of $G_1$ as boundary. It is in general true that any geometric 
graph is self cobordant to itself as we can sandwich two copies $G$ with a completed dual graph $\hat{G}$.
but for $G_1$ we don't have to work on a construction. It is given. \\
{\bf 5)} Denote by $\Pi_{kl}$ the projection onto the linear subspace generated by $k$ until $l$-dimensional
simplices. As pointed out before, we can recover $G$ from $f_G$ by building $h=\Pi_{01} f_G$ and then 
building $G_h$ which is $G$. With $h =\Pi_{23} (f_G f_H)$, then $G_h$ is the classical standard 
Cartesian product of $G$ and $H$. 

\section{Some geometry}

The results for the graph product mirror results in the continuum. First we look at some basic 
constructions which deal with the notion of homotopy sphere or simply $k$-sphere in graph theory.
The definition of a $k$-sphere in graph theory is recursive: 
a $k$-sphere is a $k$-dimensional geometric graph for which every unit 
sphere $S(x)$ is a $(k-1)$-sphere and such that after removing any of its vertices, we get
a graph which is contractible. A $k$-ball is a $k$-dimensional geometric contractible graph with boundary which 
has a $(k-1)$-sphere as its boundary. We call the interior of a ball the part of $B$ which is not in the
boundary. A suspension $SG$ of a graph $G$ is the join $G \star S_0$, a double pyramid construction: 
add two new points $x,y$ and connect the points to all the vertices of $G$. A pyramid construction 
itself is the join $G \star K_1$. From the Cartesian product, we have construct joins 
by just building a product and identifying some variables in the algebraic representation of the graph. 
Examples are given at the end. 

\begin{lemma}[Suspension] 
The join $G \star K_1$ of a $k$-sphere $G$ is a $(k+1)$-ball.
The suspension $G \star K_2$ of a $k$-sphere $G$ with a $0$-sphere $K_2$ is a $(k+1)$-sphere. 
\end{lemma}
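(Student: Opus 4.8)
The plan is to prove both statements by induction on $k$, using the recursive definitions of $k$-sphere, $k$-ball, geometric graph, and contractibility given above. The base cases are concrete: for $k=-1$, the $(-1)$-sphere is the empty graph $0$, and $0 \star K_1 = K_1$ is a $0$-ball (a single contractible point with empty, hence $(-1)$-sphere, boundary), while $0 \star K_2 = K_2$, which one checks directly is a $0$-sphere (it is $0$-dimensional geometric, each unit sphere $S(x)$ is the empty graph $= S_{-1}$, and removing either vertex leaves $K_1$, which is contractible). For $k=0$ one can also verify directly that $S_0 \star K_1 = P_2$ is a $1$-ball with boundary $S_0$, and $S_0 \star K_2 = C_4$ is a $1$-sphere.

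For the inductive step of the first claim, suppose $G$ is a $k$-sphere and let $B = G \star K_1$, where $p$ is the new cone point. First I would verify that $B$ is $(k+1)$-dimensional and geometric by computing unit spheres: $S_B(p) = G$ itself, which is a $k$-sphere (hence a $k$-dimensional homotopy sphere, as required for geometricity), and for $x \in V(G)$, $S_B(x) = S_G(x) \star K_1$, which by the inductive hypothesis is a $k$-ball — so every unit sphere is either a $k$-sphere or a $k$-ball, and in particular a $k$-dimensional homotopy sphere or contractible graph as needed. Next, $B$ is contractible: collapsing toward $p$ works because $S_B(p) = G$ is a homotopy sphere, so after removing $p$ we are left with $G \star K_1$ minus its cone point... more cleanly, one removes a vertex $x$ of $G$: $S_B(x) = S_G(x)\star K_1$ is contractible by induction, and $B \setminus \{x\} = (G \setminus \{x\}) \star K_1$, which is a cone, hence contractible. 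Finally I must identify the boundary of $B$ — the subgraph generated by vertices whose unit sphere is not a sphere — as exactly $G$: the cone point $p$ has $S_B(p) = G$, a sphere, so $p$ is interior; a vertex $x \in V(G)$ has $S_B(x) = S_G(x) \star K_1$, a $k$-ball (not a $k$-sphere, since it is contractible), so $x$ lies in the boundary; and the boundary subgraph generated by $V(G)$ is just $G$ (no new edges, since $p$ is excluded). Hence $\partial B = G$, a $k$-sphere, so $B$ is a $(k+1)$-ball.

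For the second claim, write $SG = G \star K_2 = G \star S_0$ with the two new poles $p, q$ (not adjacent to each other). One checks $SG$ is $(k+1)$-dimensional geometric via unit spheres: $S_{SG}(p) = G = S_{SG}(q)$, both $k$-spheres; and for $x \in V(G)$, $S_{SG}(x) = S_G(x) \star S_0 = S(S_G(x))$, the suspension of the $(k-1)$-sphere $S_G(x)$, which by the inductive hypothesis is a $k$-sphere. So every unit sphere is a $k$-sphere, confirming geometricity. It remains to show that removing any vertex of $SG$ yields a contractible graph. Removing a pole, say $q$, leaves $G \star K_1$ (the cone on $G$ with apex $p$), which is contractible (it is a cone, or invoke the first part: a $(k+1)$-ball is contractible). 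Removing a vertex $x \in V(G)$ leaves $(G \setminus \{x\}) \star S_0$; since $G$ is a $k$-sphere, $G \setminus \{x\}$ is contractible, and the suspension (join with $S_0$) of a contractible graph is contractible — which itself I would justify by the algebraic/inductive contractibility criterion, collapsing first within the contractible $G\setminus\{x\}$ and then the resulting double pyramid over a point, $K_1 \star S_0 = P_2$, down to a point. This establishes that $SG$ is a $(k+1)$-sphere.

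The main obstacle is the bookkeeping around the boundary computation in the first part: one must be careful that "unit sphere is not a sphere" is the right criterion and that $S_G(x) \star K_1$ is genuinely a ball and not accidentally a sphere — this is where the fact that a $k$-ball is contractible (hence distinct from a $k$-sphere, which has nontrivial top cohomology $H^k = \mathbb{Z}$) is used, so I would want the earlier facts that spheres have the cohomology of spheres and balls are contractible firmly in hand. A secondary subtlety is that the join operation $\star$ must be shown to interact correctly with taking unit spheres — the identity $S_{A \star B}(x) = S_A(x) \star B$ for $x \in V(A)$ — which is immediate from the definition of join (every vertex of $B$ is adjacent to every vertex of $A$) but should be stated explicitly since both inductive steps lean on it.
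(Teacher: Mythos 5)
Your proof is correct and follows the same basic route as the paper --- direct verification of the recursive definitions of ball and sphere --- but the paper's own proof is only a two-line sketch (it asserts that the boundary of $G \star K_1$ is $G$ and that the cone is contractible, and for the sphere case only checks removal of one pole), whereas you additionally run the induction on $k$, verify the unit-sphere conditions via the identity $S_{A \star B}(x) = S_A(x) \star B$, and check removal of arbitrary vertices, which is what the definitions actually require. The one point worth tightening is your justification that the suspension of a contractible graph $A$ is contractible: the cleanest argument is to remove a pole $p$, since $S(p) = A$ is contractible by hypothesis and the remainder is the cone $A \star K_1$, which is itself contractible (remove the apex, whose unit sphere is $A$ and whose complement is $A$), avoiding the step-by-step lifting of collapses you sketch.
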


\begin{proof}
a) By definition, the boundary of $S \star K_1$ is $S$ which is a sphere. Also, the graph $S \star K_1$ is 
contractible. \\
b) Removing the second point $y$ produces the ball $S \star K_1$ by a). 
\end{proof} 

{\bf Remark:} \\
{\bf 1)} More generally, as in the continuum, and shown below,
the join $S^k \star S^l$ of two spheres is a sphere $S^{k+l+1}$. \\
{\bf 2)} Also as in the continuum, the definition of the join needs the product as the join $A \star B$ is a quotient
of $A \times B \times I$. It generalizes that the 3-sphere can be written as $S^1 \star S^1$, which has an 
interpretation of gluing two solid tori along a torus. \\

{\bf Examples:}  \\
{\bf 1)} The wheel graph $W_n$ is a 2-ball. It is the join $C_n \star K_1$, where $C_n$
is the cyclic graph with $n$ vertices.  \\
{\bf 2)} The $k$-dimensional cross polytope is $S_2 \star S_2 \star \dots \star S_2$, where we have $(k+1)$
factors. The square $C_4$ is equal to $S_2 \star S_2$, the octahedron is $S_2 \star S_2 \star S_2$ etc.

\begin{lemma}[Glueing ball]
Assume $B_i$ are $k$-balls with boundaries $S_i$ 
and assume that $B_3=B_1 \cap B_2$ is a $(k-1)$ ball with boundary $S_3 = S_1 \cap S_2$.
Then $B= B_1 \cup B_2$ is a $k$-ball with boundary $S \subset S_1 \cup S_2$. 
\end{lemma}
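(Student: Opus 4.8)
The plan is to verify the three defining properties of a $k$-ball for $B = B_1 \cup B_2$ — namely that $B$ is $k$-dimensional geometric, that it is contractible, and that its boundary is a $(k-1)$-sphere — using the corresponding properties of $B_1$, $B_2$ and their common face $B_3$. The natural order is: first handle the local geometry (unit spheres), then the boundary, then contractibility, since the contractibility argument will want to know that the pieces overlap in a contractible set.

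First I would check that $B$ is geometric of dimension $k$, i.e. that every unit sphere $S_B(x)$ is a $(k-1)$-sphere or a $(k-1)$-ball depending on whether $x$ is interior or boundary. For a vertex $x$ lying in only one of the $B_i$ (in its interior as a subgraph of that $B_i$), the unit sphere is unchanged by the gluing, so we are done by hypothesis. The delicate vertices are those on $B_3 = B_1 \cap B_2$. For such an $x$, the unit sphere $S_B(x)$ decomposes as $S_{B_1}(x) \cup S_{B_2}(x)$ glued along $S_{B_3}(x)$. Since $B_3$ is a $(k-1)$-ball with boundary $S_3 = S_1 \cap S_2$, a vertex $x \in B_3$ is either in the interior of $B_3$ (so $x \notin S_1 \cup S_2$) or on $S_3$. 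In the first case $S_{B_1}(x)$ and $S_{B_2}(x)$ are $(k-1)$-balls whose intersection $S_{B_3}(x)$ is a $(k-2)$-ball with the matching boundary condition, so by the inductive hypothesis of this very lemma (in dimension $k-1$) the union is a $(k-1)$-ball — but here $x$ is an interior vertex of $B$, so we actually need $S_B(x)$ to be a $(k-1)$-sphere; this forces us to observe that $S_{B_1}(x)\cup S_{B_2}(x)$ closes up: the two $(k-1)$-balls share their full boundary $\partial S_{B_1}(x)=S_{B_3}(x)=\partial S_{B_2}(x)$, and two $k$-balls (here $k$ replaced by $k-1$) glued along their entire common boundary sphere form a $(k-1)$-sphere (the Suspension/gluing-of-hemispheres principle, cf. the Suspension lemma). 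In the second case, $x \in S_3$, so $x$ is a boundary vertex of $B$, and $S_B(x)$ should be a $(k-1)$-ball; here $S_{B_3}(x)$ is a genuine $(k-2)$-ball sitting inside the boundaries of the two $(k-1)$-balls $S_{B_i}(x)$ as a proper face, so induction on the lemma gives that the union is a $(k-1)$-ball. Thus the whole argument is an induction on $k$, with the base case $k=1$ (gluing two arcs along a point, or an arc and a point) checked by hand.

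Next I would identify the boundary. A vertex of $B$ lies in $\partial B$ exactly when its unit sphere in $B$ is a ball rather than a sphere; by the case analysis above this happens precisely for vertices in $(S_1 \setminus S_3) \cup (S_2 \setminus S_3)$, together with the vertices of $S_3$ themselves. So $\partial B = (S_1 \cup S_2) \setminus (\text{interior of } S_3)$, which is $S_1$ and $S_2$ glued along the $(k-2)$-sphere $\partial S_3$ — again two $(k-1)$-balls glued along their entire common boundary $(k-2)$-sphere, hence a $(k-1)$-sphere. This is where I would invoke the remark following the Suspension lemma that the join, and more to the point the doubling of a ball along its boundary, produces a sphere. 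Finally, contractibility of $B$: since $B_1$ and $B_2$ are contractible and $B_1 \cap B_2 = B_3$ is contractible, one collapses $B_2$ onto $B_3$ along a sequence of vertex deletions that never touches $B_1$ (a vertex in the interior of $B_2$ away from $B_3$ can be removed while keeping both its own link and the remaining graph contractible, by the definition of contractibility and the ball structure of $B_2$ relative to $B_3$), reducing $B$ to $B_1$, which is contractible by hypothesis; hence $B$ is contractible.

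The main obstacle I anticipate is the unit-sphere step for vertices on $B_3$: one must set up the induction on $k$ so that the statement being proved is strong enough — it has to simultaneously assert the "two balls glued along all of their common boundary sphere give a sphere" fact and the "two balls glued along a proper boundary face give a ball" fact — and one must be careful that the combinatorial intersection $S_B(x) = S_{B_1}(x) \cup S_{B_2}(x)$ really does have intersection exactly $S_{B_3}(x)$ with the correct boundary matching, which uses that $B_3 = B_1 \cap B_2$ is an honest induced subgraph and $S_3 = S_1 \cap S_2$. The rest (boundary computation and contractibility) is then bookkeeping built on the Suspension lemma and the inductive definition of contractibility.
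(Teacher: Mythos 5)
Your proposal follows essentially the same route as the paper: induction on $k$, reducing all the work to the vertices of $B_3$ (for interior unit spheres) and of $S_3$ (for boundary unit spheres), identifying the boundary as $S_1$ and $S_2$ glued along $\partial S_3$, and proving contractibility by collapsing one piece onto the contractible overlap. In fact your write-up supplies details the paper's own sketch leaves unfinished --- in particular the observation that the induction hypothesis must be strengthened to cover both cases, ``two balls glued along a proper boundary face give a ball'' and ``two balls glued along their entire common boundary sphere give a sphere,'' the latter being what is needed at a vertex interior to $B_3$.
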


\begin{proof} 
This is proven by induction with respect to $k$. There are four things to show: \\
a) $B$ is contractible.  \\
b) every unit sphere in the interior of $B$ is a $(k-1)$-sphere. \\
c) every unit sphere in $S$ is a $(k-2)$-sphere. \\
d) when removing a vertex from $S$, we get a contractible graph.  \\
For a) take a point $y$ in $B_3$. We can retract everything in $B_1,B_2,B_3$ to $y$. \\
For b) we only have to look at a vertex $x$ in $B_3$. The unit ball $B(x)$ decomposes
For c), we only have to look at a vertex $z$ in $S_3$ and see whether its unit sphere in $S$
is a homotopy sphere.  For d), we can retract a pointed part to $S_3$.
\end{proof}

The next statement is the discrete analogue of the classical statement that the boundary of 
the product of two balls $d (B_1 \times B_2)$ is a sphere and that it
can be written as as $dB_1 \times B_2 + B_1 \times dB_2$ which is the union of two 
solid tori glued at a torus. Also in the discrete, we can use the intuition from the continuum:

\begin{lemma}[Cylinder lemma]
If $B_i$ are $k_i$-balls with $(k_i-1)$-spheres $S_i=\delta B_i$ as boundary,
then $(B_1 \times S_2) \cup (B_2 \times S_1)$ is a $(k_1+k_2-1)$-sphere
provided $B_1 \times S_2 \cap B_2 \times S_1 \subset S_1 \times S_2$. 
\end{lemma}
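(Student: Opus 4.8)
The plan is to prove this by induction on $k_1+k_2$, running it in parallel with the Glueing ball lemma and the Suspension lemma already established, and to imitate the continuum picture: $\delta(B_1 \times B_2) = (\delta B_1 \times B_2) \cup (B_1 \times \delta B_2)$, two ``solid tori'' glued along $S_1 \times S_2$. The base cases are small: when $k_1=k_2=0$ both $B_i$ are $K_1$, the $S_i$ are empty, and $(B_1\times S_2)\cup(B_2\times S_1)$ is the empty graph, which is the $(-1)$-sphere; when one factor is a point, say $B_2=K_1$ with $S_2=\emptyset$, the union collapses to $K_1 \times S_1 \simeq S_1$, a $(k_1-1)$-sphere, using that $G \times K_1$ is a refinement of $G$ and geometric of the same dimension. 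So the inductive step handles $k_1,k_2 \ge 1$.

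For the inductive step, write $X = (B_1 \times S_2) \cup (B_2 \times S_1)$ and set $d = k_1+k_2-1$. I would verify the four defining properties of a $d$-sphere. First, \emph{dimension and geometry}: every monoid (generalized vertex) of $X$ is a pair $(\sigma,\tau)$ of simplices, and by the pointwise dimension formula $\mathrm{dim}(G\times H)(x,y)=\mathrm{dim}(G_1)(x)+\mathrm{dim}(H_1)(y)$ from the introduction, $X$ is $d$-dimensional. To see each unit sphere is a $(d-1)$-sphere, note that the unit sphere of $(x,y)$ in a product has the form $\delta B(x) \times B(y) \cup B(x) \times \delta B(y)$ structure mentioned in the introduction; for a vertex in the interior of $B_1 \times S_2$ this is a unit sphere inside $B_1 \times S_2$, which is a product of a ball with a sphere and hence geometric by the last clause of the abstract (or by the $k=0$ reduction above); for a vertex of $B_2 \times S_1$ symmetrically; and for a vertex on the overlap $S_1\times S_2$ its link in $X$ is exactly a smaller instance of the cylinder construction $(B(x)\cap S_1) \times S(y) \cup (B(y)\cap S_2)\times S(x)$ — more precisely, of the Cylinder lemma for balls of dimensions $k_1-1$ and $k_2-1$ — and is a $(d-1)$-sphere by the induction hypothesis. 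Second, \emph{contractibility after vertex removal}: removing a vertex from $B_1 \times S_2$ (its interior) leaves a graph deformation-retracting onto $(B_2 \times S_1) \cup (\partial(B_1)\times S_2 \text{ part})$, which is a ball (one collapses $B_1$ toward a face), hence contractible; here I would invoke that $G \times U \sim H \times V$ when $G \sim H$, $U \sim V$ to push the retraction of $B_1$ through the product. Third, \emph{gluing}: I would apply the Glueing ball lemma to the two pieces $B_1 \times S_2$ and $B_2 \times S_1$. Each is a product of a $k_i$-ball with a $(k_j-1)$-sphere; by the ``geometric times geometric'' statement and an auxiliary check (or a separate sub-lemma) each is a $d$-ball with boundary $S_1 \times S_2$, and their intersection is contained in $S_1 \times S_2$ by hypothesis and equals $S_1 \times S_2$ (a $(d-1)$-sphere, being a product of spheres), so the Glueing ball lemma gives that $X$ is closed up correctly — although here one must be careful: $X$ is the union of two \emph{balls} glued along their \emph{common boundary}, which yields a sphere, not a ball, so the right tool is really the doubling statement ``two $d$-balls glued along their boundary $(d-1)$-sphere form a $d$-sphere,'' which follows from the Glueing ball lemma applied inside a suspension, or is proved alongside it.

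The main obstacle I expect is the claim that $B_1 \times S_2$ is a $d$-ball with boundary $S_1 \times S_2$: the abstract only asserts that the product of geometric graphs without boundary is geometric, and a ball has boundary, so this needs a genuine argument. I would handle it by another induction, peeling $B_1$ as a union of a slightly smaller ball and a collar, using the Glueing ball lemma and the fact that $(\text{ball})\times(\text{sphere})$ near an interior point of the ball looks like $(\text{ball})\times(\text{ball})$ locally, whose boundary structure is understood. A second, more bookkeeping-type obstacle is the precise identification of unit spheres of the union $X$ at vertices lying on $S_1\times S_2$: one has to show that the two ``halves'' of the link, coming from the two tori, meet exactly along the link within $S_1\times S_2$, so that the induction hypothesis applies verbatim; this is where the hypothesis $B_1\times S_2 \cap B_2\times S_1 \subset S_1\times S_2$ does the real work, and I would state it as a short combinatorial sub-claim before invoking it.
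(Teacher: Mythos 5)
Your plan is correct and follows essentially the same route as the paper: induction on dimension, with the key observation that the unit sphere of a point $(x,y)$ decomposes as $B_1(x)\times S_2(y) \cup S_1(x)\times B_2(y)$ meeting in $S_1(x)\times S_2(y)$, i.e.\ a smaller instance of the same cylinder construction. The paper's own proof is only a three-line sketch of exactly this induction, so the extra checks you flag (the contractibility-after-vertex-removal condition, and the sub-claim that $B_1\times S_2$ is a $d$-ball with boundary $S_1\times S_2$) are details it silently leaves to the reader rather than divergences from its argument.
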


\begin{proof}
Use induction with respect to dimension. 
The union is a graph of dimension $k_1 + k_2-1$. A unit sphere $S((x,y))$
is in $B_1(x) \times S_2(y)$ or $S_1(x) \times B_2(y)$ whose intersection is 
$S_1(x) \times S_2(y)$. 
\end{proof}

\begin{figure}[h]
\scalebox{0.30}{\includegraphics{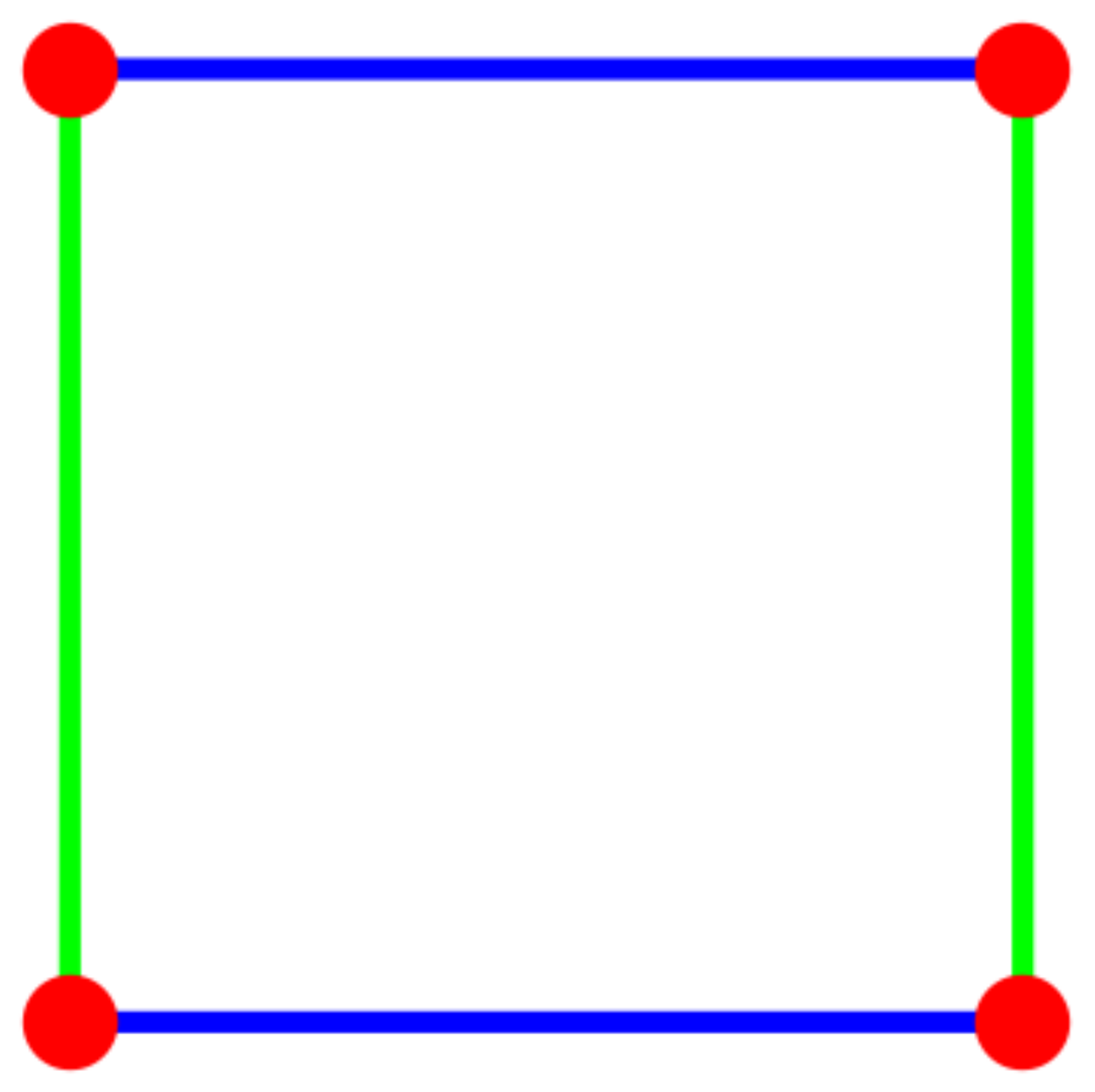}}
\scalebox{0.30}{\includegraphics{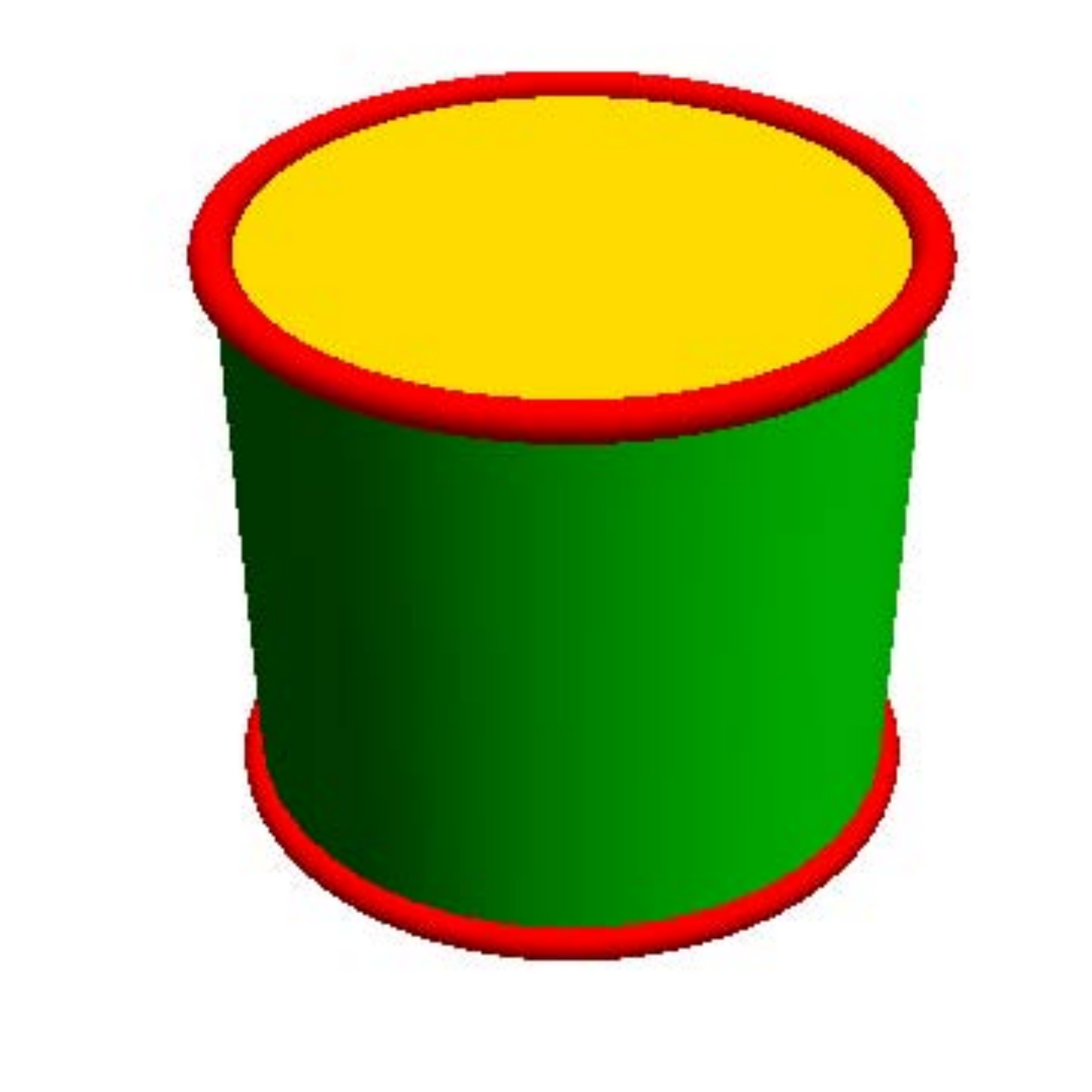}}
\caption{
The Cylinder Lemma tells that ``the union of complementary solid tori glued together at a torus
is a sphere." 
To the left, we see $B_1 \times S_0 \cup S_0 \times B_1$ which 
consist of the union of a bottom-top and left-right pair.
The intersection $B_1 \times S_0 \cap B_1 \times S_0$ is equal to $S_0 \times S_0$. 
The second example seen to the right shows $B_2 \times S_0  \cup S_1 \times B_1$ which is the union of
top-bottom discs and a mantle. The intersection $B_2 \times S_0  \cap  S_1 \times B_1$ agrees
with $S_0 \times S_1$ which is the union of two circles. We are interested
in these type of spheres because unit spheres of product graphs are of this 
``Hopf fibration" type. 
}
\end{figure}

{\bf Examples:}  \\
{\bf 1)} If $B_i$ are two line graphs with boundary $S_i$, then $B_1 \times S_2$ is
a union of two line graphs. Similarly $B_2 \times S_1$ is the union of two
line graphs. The union is a square, the intersection consists of the four
points $S_1 \times S_1$.  \\
{\bf 2)} If $B_1$ is a ball with $2$-dimensional sphere $S_1$ and
$B_2$ is a $1$-dimensional ball with $1$-dimensional sphere $S_2$, then 
$B_1 \times S_2$ is the mantle of the cylinder and $B_2 \times S_1$ 
are the top and bottom cover.  \\
{\bf 3)} If $B_1$ is $1$-dimensional and $B_0$ is $0$-dimensional, then $dB_1 \times B_0$
is a $0$-sphere. \\
{\bf 4)} If $B_1,B_2$ are two-dimensional balls, we can see 
$dB_1 \times B_2 + B_1 \times dB_2 + dB_1 \times dB_2$ is the union of two 
solid tori glued along a $2$-torus. This Hopf fibration is 
classically given as the split of $S^3 = \{ (z,w) \in C^2 \; | \; |z|^2 +|w|^2=1 \}$ 
into two solid tori $B_1 = \{ |z| \geq |w| \}$ 
and $B_2 = \{ |z| \leq |w| \}$ intersecting in the $2$-torus $\{ |z|=|w|=1 \}$. 

\section{De Rham cohomology for graphs} 

Among various other flavors of cohomologies, there are three equivalent cohomologies for 
compact $n$-manifolds: simplicial cohomology, de Rham cohomology and 
\v{C}ech cohomology.  For simplicial cohomology, 
the manifold is triangulated into finitely many $n$-simplices leading to a differential complex.
In de Rham cohomology, one works with the complex of differential forms, partial
derivatives tap into the local product structure of the manifold,
for \v{C}ech cohomology, the manifold is covered with a finite cover of open sets so that the
nerve graph determines the cohomology.
Each of the these cohomologies have advantages over the others: simplicial cohomology
is the computer science or combinatorial point of view which sees space as a mesh of small simplicial building blocks, 
the de Rham cohomology is the analysis or calculus approach, which taps into the bag of techniques used
in calculus. In this flavour, the basic building blocks are cubes obtained from a local product structure
and the different directions are accessed with partial derivatives which we think of the exterior derivatives 
in each factor. The \v{C}ech cohomology finally is the homotopy or topologal point of view which relies 
on the fact that cohomology is more robust and transcends dimension. Simplicial cohomology is 
straightforward and the simplest. De Rham cohomology requires local charts which are products
and taps into the differential structure of the manifold, allowing for an efficient computation of cohomology. 
\v{C}ech cohomology finally is a flexible variant which illustrates best the homotopy invariance of 
cohomology. It also can lead to considerable complexity reduction as on can for example retract a 
space to a much smaller dimensional set. A solid torus for example can be retracted to a circle. 
The equivalence of simplicial cohomology with de Rham cohomology is due to 
de Rham. Later proofs of this theorem use the equivalence of \v{C}ech cohomology with 
simplicial cohomology. \\

All three cohomologies have analogue constructions in graph theory. It is our goal to introduce 
the analogue of de Rham cohomology and give a discrete analogue of the de Rham theorem. Of course
de Rham cohomology emerges only in the discrete if one has a product which is compatible. \\

The simplicial cohomology of a graph $G$ is the clique cohomology of $G$ using the Whitney complex 
of all complete subgraphs. It is the oldest and has already been considered by Poincar\'e, even so 
not in the language of graphs.  \\

The \v{C}ech cohology is the cohomology of the nerve graph of a good cover,
where ``good" in the discrete means that the nerve is homotopic to $G$. \v{C}ech cohomology has
first been considered for graphs in  \cite{KnillTopology}, a paper which proposes a notion of what
a continuous map between graphs is, which is more tricky than one might think at first,
as classical topology badly fails as the topology generated by the distance is discrete 
making it unsuitable. What is important for a good notion of continuity is to merge homotopy with dimension.
The equivalence of discrete \v{C}ech cohomology with discrete simplicial cohomology is there by definition
just relying on the fact that homology is a homotopy invariant. \\

What was missing so far in graph theory is an analogue
of de Rham cohomology, where ``cubes" rather than ``simplices" play the fundamental role. But one can not
really look at a de Rham cohomology for general graphs, if one does not have a Cartesian product for which 
there is compatibility. 
A de Rham type cohomology has been mentioned in \cite{HuangYau2014}, where a generalized path cohomology 
introduced by \cite{GLMY2013} is considered for digraphs.
As their approach uses a functor from graphs to CW complexes and pulls back results from the product of 
CW complexes to digraphs, there is no relation with what we do here. Other takes on discrete de Rham cohomology 
study discrete notions for numerical purposes \cite{TaiWinther} or \cite{Boffi}.
Our approach to de Rham cohomology is purely combinatorial and restricted to finite constructions. \\

As in the continuum, the de Rham complex for a product graph $G \times H$ 
can use some derivatives also in the discrete, but this is merely language: 
while in simplicial cohomology, we write $\delta (x_1 x_2 \dots x_n )$
$= \sum_i (-1)^k x_1  \dots \hat{x}_i \dots x_n$ for the boundary of a simplex $x_1 x_2 \dots x_n$ 
in the de Rham approach, we can write $\delta x = \sum_i \partial_{x_i} x$ in the algebra 
which is the same thing. It just uses the derivative notion in a formal way. 
The de Rham connection will be needed in the K\"unneth connection, where we
look at the kernel of form Laplacians. K\"unneth will then be quite obvious. Without linking de Rham with 
simplicial cohomology the relation is nontrivial, as the dimension of the space of differential 
forms on the product graph is much larger than the product of the dimensions of the space of differential
forms on the factors. Already K\"unneth had to work though such difficulties and needed dozens of pages
of linear algebra reductions to tackle the issue. Our situation is also different in that we look at the
product of two arbitrary networks $G,H$ which by no means have to be geometric. \\

The analytic de Rham approach allows the derivative $d(f g)$ to be written 
as a sum of products $(df) g + (-1)^{|f|} f (dg)$ which is Leibniz formula and which reduces the exterior
derivative of the product to the exterior derivative of the factors in the same way than 
the gradient, curl or divergence reduces the exterior derivative to partial derivatives, which are the 
exterior derivatives in the $1$-dimensional factors. To illustrate this with school calculus: 
infinitesimally, the curl of $F=(P,Q)$ is $Q_x - P_y$. As a graph theorist we look 
at $P$ (for fixed $y$) as the $1$-form restricted to the first coordinate $x$ which means a function
on edges of the first graph and at $Q$ (for fixed $x$) as the $1$-form restricted to the second coordinate
which corresponds to edges in the second graph. The curl is so a line integral along 
a square. When reducing this to simplicial cohomology, the square needs to be 
broken up into triangles. Our product does that very explicitly even so it needs some care as the 
tensor product of finite dimensional algebras has a completely different dimension in general 
than the Cartesian product. K\"unnneth needed dozen of pages of rather messy linear algebra reductions 
to achieve this. The language of polynomials and the de Rham connection 
will allow us to make this more clear.  \\

When we start with a triangulated picture, there are no more two distinguished directions
present. In graph theory, we only can distinguish two directions if we look at the product $G \times H$ of two graphs.
As for manifolds, this structure could be allowed to be present locally only; its important however
that a product structure must be present
before we can even talk about discrete de Rham cohomology. As mentioned before, there is the possibility to see a 
graph as a triangularization of a manifold and use the Euclidean product structure to emulate a discrete de Rham 
cohomology. Notice however that this does not tickle down to the discretization. The relation would only 
exist functorially and is pretty useless when working with concrete networks.
We will not leave the discrete realm and show that $k$-forms on the product space can be related to 
products of forms in the two factors. And also, we do not only work with geometric graphs, 
which can be seen as discretizations of manifolds; we work with general finite simple graphs. \\

A finite simple graph $G$ naturally comes with a simplicial complex, given by the set
of all the complete subgraphs of $G$. This so called Whitney complex can be 
encoded algebraically in the ring of polynomials. For a triangular graph for example, we have the 
ring element $g = xyz+xy+yz+zx+x+y+z$, where the choice of the orientations of all the 
simplices is done arbitrarily. The ring element $g$ in turn defines a new graph $G_1$ 
in which the polynomial monoids form the vertices and two vertices are connected 
if one divides the other. It is important that it actually can be seen as a digraph, 
the direction is given which part is a factor of the other. In some sense, going from $G$ to $G_1$
frees us from having to chose an arbitrary orientation as the structure is now built in. By the way,
$G_1$ appears to have other nice features like having the Eulerian property allowing therefore a 
geodesic dynamical system \cite{knillgraphcoloring2}. \\

In our triangular graph example $G$, we get a graph $G_1$ with $7$ 
vertices because there are $7$ complete subgraphs of the triangle.
The graph $G_1$ is a wheel graph which shares the topological and cohomological 
properties with the triangle. It is even homeomorphic to the triangle in the sense of \cite{KnillTopology}
as we can find a $2$-dimensional open cover whose nerve is the triangle. Indeed:  \\
the \v{C}ech cohomology of $G_1$ is equivalent to the graph cohomology of $G$.  \\
The observation that $G$ and $G_1$ are homotopic proves that the \v{C}ech cohomology 
of a graph with respect to a good cover in the sense of \cite{KnillTopology}
is the same than the graph cohomology. As in the continuum, the relation between simplicial and
de Rham cohomology is not completely obvious as we will just see. \\

Now, if we take the product $G \times H$ of two graphs, like for example two complete graphs $G=K_2, H=K_2$
(illustrated in Figure~(\ref{figurederham})),
where the ring elements $g=(x+y+xy)$ and $h=u+v+uv$ encode the graph, then the product
$G \times H$ is encoded by a ring element $gh=xu+xv+xuv+yu+yv+yuv+xyu+xyv+xyuv$ which by looking at
division properties of the polynomial monoids produces the graph $G \times H$ with 
$9$ elements. It is the wheel graph $J=W_{10}$, which is a discrete square
with $16$ edges and $8$ triangles. The Laplacian $L=(d+d^*)^2$ 
with the usual exterior derivative is a block matrix decomposing into 
a $9 \times 9$ block $L_0$, a $16 \times 16$ block $L_1$ and a $8 \times 8$ block $L_2$. 
The algebraic representation of $f(J)$ is a polynomial element with $9+16+8=33$
monoid terms. The graph $J_1$ associated with $f(J)$ would have $33$ vertices already. 
From topological, algebraic or homotopical considerations, the graphs $J$ and $J_1$
are equivalent: they are homeomorphic, they are homotopic and have the same cohomology
and dimension. \\

A boundary on the product is defined by the Leibniz formula:
$$  \delta (f g) = (\delta f) g + (-1)^{|f|} f (\delta g) \;  $$
which then defines an exterior derivative $d$. 
As for \v{C}ech cohomology, we will see that there is an advantage in that we can work 
with a smaller dimensional vector spaces: the spaces $\Omega^k(G)$ and $\Omega^k(H)$
and even the tensor product $\Omega^k(G) \otimes \Omega^k(H)$ have in general 
smaller dimension than the vector spaces $\Omega^k(G \times H)$. See Figure~(\ref{poster}).
Having equivalence of cohomology can be a blessing when doing computations. \\

This will be useful when looking at discrete manifolds: 
De Rham cohomology can be used more generally also
when gluing product graphs to build fibre bundle, this product structure
does not have to be global. It is the same situation as for 
manifolds have in general only locally neighborhoods which can be written 
as products. \\

\begin{figure}[h]
\scalebox{0.32}{\includegraphics{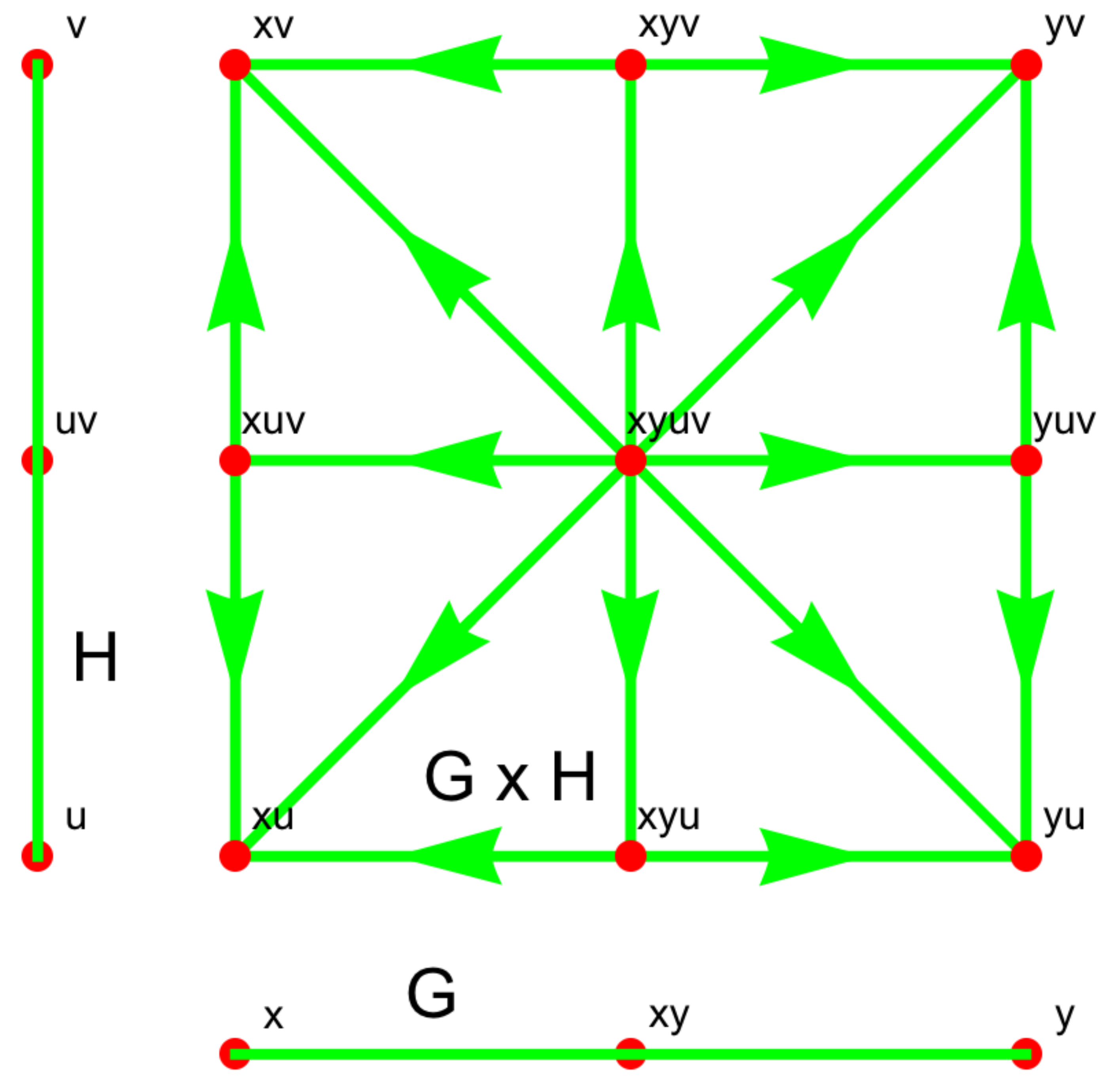}}
\caption{
\label{figurederham}
The product of $G=K_2=x+y+xy$ with $H=K_2=u+v+uv$ is the wheel graph 
$G \times H = xu+xv+xuv+yu+yv+yuv+xyu+xyv+xyuv$. It has a natural digraph
structure in which the direction is given by which part divides which. 
A form $f$ in $\Omega^*(G)$ is a function $f=a x + b y + k xy$, where $(a,b)$
represents the scalar function, the $0$-form and $k$ is the value of the 1-form.
Similarly $g \in \Omega^*(H)$ is a function $g=d u  + e v + l uv$. 
The product $fg \in \Omega^*(G) \otimes \Omega^*(H)$ 
gives a scalar function on the $9$ vertices of $G \times H$.  
It defines an element $F=\psi(fg)$ in $\Omega^*(G \times H)$ as the digraph structure 
defines an orientation on the simplices $\sigma$ and $F(\sigma)$ is the sum of the values of
$fg$ on the vertices. The reverse chain map $\phi$ is obtained by first defining from 
$F$ a function $h$ on the vertices of $G \times H$, then finding the functions $f$ and $g$
by solving a system of equations. Now $\psi(F)=fg$ is an element in $\Omega^*(G) \otimes \Omega^*(H)$. 
We see that differential forms and geometric objects are 
treated similarly, as is typical in quantum calculus setups
and that the Eilenberg-Zilber becomes combinatorial and explicit. 
}
\end{figure}

\begin{thm}[Discrete De-Rham Theorem]
The de Rham cohomology for a product graph $G \times H$ with boundary operation defined by the 
Leibniz rule $d(fg) = (df)g + (-1)^{|f|} f(dg)$ using the exterior derivatives $d$ on each factor
$G$ and $H$, is isomorphic to the cohomology on $G \times H$ defined by the exterior 
derivative $d$ on the graph $G \times H$. In short,
$$  H_{dR}^*(G \times H) \equiv H^*(G \times H)  \; . $$
\end{thm}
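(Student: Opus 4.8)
The plan is to establish the isomorphism $H_{dR}^*(G \times H) \equiv H^*(G \times H)$ by exhibiting an explicit pair of chain maps between the two cochain complexes and showing they are homotopy inverse to each other, following the Eilenberg--Zilber strategy already advertised in the introduction. The two complexes in question are, on the one hand, the simplicial (Whitney) complex $\Omega^*(G \times H)$ with its exterior derivative $d$, and on the other hand the de Rham complex, which I take to be the tensor product complex $\bigl(\Omega^*(G) \otimes \Omega^*(H),\, d \otimes 1 + (-1)^{|\cdot|} 1 \otimes d\bigr)$ whose cochains are built from the polynomial products $f g$ with $f \in \Omega^*(G)$, $g \in \Omega^*(H)$, and whose differential is the Leibniz rule in the statement. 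Since both complexes are complexes of finite-dimensional vector spaces, it suffices to produce chain maps $\psi : \Omega^*(G) \otimes \Omega^*(H) \to \Omega^*(G \times H)$ (the analogue of the Eilenberg--Zilber / shuffle map) and $\phi : \Omega^*(G \times H) \to \Omega^*(G) \otimes \Omega^*(H)$ (the analogue of the Alexander--Whitney map) together with a chain homotopy $\phi \psi \simeq \mathrm{id}$ (and the other composite a quasi-isomorphism), as Figure~(\ref{figurederham}) sketches on the $K_2 \times K_2$ example.

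The steps, in order: (1) \emph{Define $\psi$.} Given a simplex $\sigma$ in $G \times H$, its vertices are monomials $x_i y_j$ with $x_i$ a simplex of $G$ and $y_j$ a simplex of $H$; the digraph (division) structure of $G \times H$ linearly orders the vertices of $\sigma$, and projecting onto the two factors realizes $\sigma$ as a nondegenerate shuffle of a chain in $G$ with a chain in $H$. Dually, for a product form $f g$ of bidegree $(p,q)$, set $\psi(f g)(\sigma) = \sum (\pm)\, f(\sigma_G)\, g(\sigma_H)$ summed over the prisms/shuffles that decompose $\sigma$ into a $p$-face of the $G$-direction and a $q$-face of the $H$-direction, with Koszul signs dictated by the chosen orientations. (2) \emph{Verify $\psi$ is a chain map}, i.e. $d \circ \psi = \psi \circ (d \otimes 1 + (-1)^{|\cdot|} 1 \otimes d)$; this is the combinatorial identity that the boundary of a shuffle is the shuffle of the boundaries, and it is exactly where the Leibniz normalization $(\delta f)g + (-1)^{|f|} f(\delta g)$ is forced. (3) \emph{Define $\phi$} by the Alexander--Whitney front-face/back-face formula adapted to the digraph order: for a simplex $\sigma = v_0 < v_1 < \dots < v_n$ of $G\times H$, restrict the front $p$-face to the $G$-coordinates and the back $q$-face to the $H$-coordinates, summing over $p+q=n$; check it is a chain map. (4) \emph{Compute $\phi \circ \psi = \mathrm{id}$} on the tensor complex (this composite is the identity on the nose, as in the classical case, once signs and the normalization are right). (5) \emph{Construct the chain homotopy} $h$ with $\psi \phi - \mathrm{id} = dh + hd$ on $\Omega^*(G \times H)$, using the standard acyclic-models / explicit prism-operator formula; here I can lean on the fact that $\Omega^0$ in each factor contains the constants, so the relevant "base cases" are the vertex-level computations, and on the additivity of everything over connected components. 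Hodge theory (already invoked in the excerpt) then lets me pass from cochain-level homotopy equivalence to the asserted isomorphism on cohomology, and in fact identifies representatives: harmonic forms $f,g$ on the factors give, via $\psi$, harmonic representatives on the product.

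The main obstacle I expect is Step (2) together with Step (5): getting the signs in the shuffle map and the prism operator to cohere with the fixed, orientation-dependent choices of basis for $\Omega^k(G)$, $\Omega^k(H)$ and $\Omega^k(G\times H)$, and then verifying that the homotopy $h$ really lands in the Whitney complex of $G \times H$ (that the "filled-in" chambers of the product are precisely the simplices that appear). The classical Eilenberg--Zilber theorem handles this for simplicial sets with all degeneracies available; here the product graph has no degeneracies, so one must check that the nondegenerate shuffles alone suffice, which is exactly the content of the remark in the excerpt that $G\times H$ is built by the division relation on monomials of $f_G f_H$ — i.e. that the simplices of $G \times H$ are in bijection with chains of $(x_i, y_j)$ increasing in the product order, which is the partial order underlying the shuffle decomposition. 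Once that bijection is in hand, Steps (2)--(5) reduce to the usual (if bookkeeping-heavy) Eilenberg--Zilber manipulations, and I would present them as such rather than grinding through every sign.
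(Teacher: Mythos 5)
Your plan is sound and would prove the theorem, but it is a genuinely different route from the paper's, so a comparison is in order. The paper likewise produces a pair of chain maps $\phi,\psi$ together with a chain homotopy $\theta$, but its $\psi$ is not a shuffle map: a de Rham form $F=f(x)g(y)$ of total degree $k$ is read as a function on the vertices of $G\times H$ (which are monomials), and $\psi F$ assigns to a $k$-simplex $\sigma$ the single value of $F$ at the unique top-degree vertex of $\sigma$, or $0$ if there is none; dually, $\phi$ averages a simplicial form over the simplices meeting a vertex and then adds a lower-degree correction so that the result factors as $f(x)g(y)$. Your Eilenberg--Zilber/Alexander--Whitney route is the classical one and buys a genuine rigor advantage: the chain-map identities and the homotopy $\psi\phi\simeq\mathrm{id}$ follow from acyclic models, using that the models $K_p\times K_q$ are cones on their top monomial and hence contractible, whereas the paper's verifications are only sketched and its factorization step for $\phi$ is asserted rather than proved. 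One step in your plan needs more than the advertised bookkeeping: the de Rham complex is $\Omega^*(G)\otimes\Omega^*(H)$, indexed by pairs of simplices of $G$ and $H$, while your (correct) identification of the simplices of $G\times H$ with monotone chains in the product of the face posets exhibits $G\times H$ as the staircase product of the barycentric subdivisions $G_1$ and $H_1$, not of $G$ and $H$ themselves. The classical shuffle and front/back-face formulas therefore naturally relate $\Omega^*(G\times H)$ to $\Omega^*(G_1)\otimes\Omega^*(H_1)$, and you must compose with a subdivision chain equivalence $\Omega^*(G_1)\simeq\Omega^*(G)$ (for instance the last-vertex map, which is in effect what the paper's single-term $\psi$ builds in); after that composition $\phi\psi$ is only chain homotopic to the identity rather than equal to it, which is still sufficient. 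Finally, the appeal to Hodge theory at the end is unnecessary: a chain homotopy equivalence induces an isomorphism on cohomology directly.
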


After de Rham \cite{DeRham1931}, new proofs of the de Rham theorem in the continuum were
given by A. Weil (1952) \cite{Weil1952} and H. Whitney \cite{Whitney1957}. 
Weil had outlined a proof already 
in 1947 in a letter to H. Cartan, a letter which initiated Cartan's theory of sheaves. 
Weil's argument used a staircase argument in a double complex.
The textbook proof of Bredon
\cite{Perez2010} is worked out in more detail in \cite{Perez2010}. See also \cite{Prasolov,Bott,Hatcher}. 
In the discrete, we can directly construct the 
chain maps $\phi$ and $\psi$. As we will see below, our goal will be
to find a one-to-one correspondence between harmonic forms on the
large simplex Laplacian $L=(d+d^*)^2$, which is a $v(G \times H) \times v(G \times H)$ matrix
and the de Rham Laplacian $L = (d_{dR}+d_{dR}^*)^2$,
which is a $(v(G) \cdot v(H)) \times (v(G) \cdot v(H))$ matrix.  This is done by a rather
explicit chain homotopy. 

\begin{proof}
Assume $G,H$ have vertices $x_1,\dots,x_n$ and  $y_1,\dots,y_m$.
The de Rham cohomology on the ring generated by functions $fg$ 
is defined by using the exterior derivatives on each 
product using the Leibniz rule. We have to relate it with the Whitney chain complex 
$\Omega^*(G \times H)$ which features functions on the simplices of $G \times H$.
The linear spaces $\Omega^k_{dR}(G \times H)$ and 
$\Omega^k(G \times H)$ have different dimensions $v_k(G \times H)$ and $\sum_{m+l=k} v_m(G) v_l(H)$
so that $\phi$ is not the inverse of $\psi$ (they are chain homotopic only, which essentially means that
they are equivalent modulo coboundaries on each side).
In both cases, denote by $d: \Omega_{dR}^{n} \to \Omega^{n+1}_{dR}$
and by $d: \Omega^n \to \Omega^{n+1}$ the exterior derivatives.
We will construct two linear maps $\phi,\psi$ and check that they are chain maps:
$d \psi = \psi d$ and $d \phi = \phi d$. 
This will establish the isomorphism of cohomology
$H^*(G \times H)$ to $H^*_{dR}(G \times H)$. \\

{\bf Construction of $\psi:\Omega_{dR}^k \to \Omega^k$:} \\
Start with an $k$-form $F \in \Omega^k_{dR}$. As it can be written as $f(x) g(y)$
in the variables $x_1,\dots,x_n,y_1,\dots,y_m$, it defines a function 
on the vertex set of $G \times H$. Given a $k$-simplex $\sigma$ in the graph 
$G \times H$.  Assigned to it the value of the vertex which belongs to a monomial
of degree $k$. If there is one, it is unique. If there is none, assign the value $0$. \\

{\bf Construction of $\phi:\Omega^k \to \Omega^k_{dR}$:} \\
For every $k$-form $F$ in $\Omega^k(G \times H)$ we 
build a polynomial $\phi(F)=f(x) g(y)$ as follows:
the function $F$ defines a value $\overline{F}$ to the vertices of $G \times H$
by averaging the $F$ values of the $k$-simplices hitting $x$. We have
now a function $\overline{F}$ on the vertices of $G \times H$, written
as a polynomial $\overline{G}(x,y)$ in the variables $x_1,\dots,x_n,y_1,\dots,y_m$. 
One can now add a polynomial $P$ of smaller degree so that $\overline{G}(x,y) + P(x,y)$
can be factored as $f(x) g(y)$.  \\

{\bf $\psi$ is a chain map: $\psi d = d \psi$}: \\
Proof: Given a $(k-1)$-de Rham form $F=f(x)g(y)$, build the $k$-form 
$dF$ using the Leibniz rule, then use $\psi$ to get from this 
a $k$-form $\psi dF$ on $G \times H$. By linearity, we can assume
that $F = A x^i y^j$ with $|i|+|j|=k-1$. The function $dF(x,y)$ 
takes the value $\pm A$ on each polynomial $x^{i'} y^{j'}$ with $|i'|+|j'|=k$. 
Now, $\psi dF$ is a $k$-form on $G \times H$ which takes the value $\pm A$
on those vertices $x^{i'} y^{j'}$. But $\psi F$ is a $k-1$ form which assigs
the value $A$ to every simplex having $x^i y^j$ as a vertex. Now
$d\psi(F)(x,y)$ is exactly $A$ if $x y$ is of the form $x^{i'} y^{j'}$. \\

{\bf $\phi$ is a chain map: $\phi d = d \phi$}: \\
Proof: given a $k-1$-form $F$ on the product graph $G \times K$, we build 
the $(k+1)$-form $dF$ and produce from this a polynomial $\psi(dF)$ in the de 
Rham complex. We check that this is the same than $d \psi(F)$. 
by linearity we can look at a $k-1$-form $F$ which assigns 
the value $A$ to a $(k-1)$ simplex $\sigma$ of $G \times H$. Now take a
$k$-simplex which has $\sigma$ in its boundary, then $dF(\sigma) = A$. 
But now, $\phi(dF)$ assigns the value $A$ to the vertex $z$ which is in 
$\sigma$ and has degree $k$. Now, $\phi(F)$ assigns the value $A$ to 
the vertex $z'$ which is in $\sigma$ and has degree $k-1$. This is 
connected to $z$ so that $d \phi(F)$ is the same value.
\end{proof}

It would be nice to have a more intuitive understanding of the function
$\theta: \Omega^k_{dR}(G \times H) \to \Omega^{k-1}(G \times H)$ satisfying
the chain homotopy condition
$$ \phi \psi - \psi \phi = d \theta +\theta d  \; . $$
The map $\theta$ is related to 
the lower degree polynomial $P$ added to $F$ so that $F(x,y) + P(x,y)=f(x) g(y)$. 
This involves solving a system of linear equations. The dimension of the space of 
scalar functions on $G \times H$ is $v(G) \times v(H)$, where $v(G)$ is the number of 
simplices in $G$. The space of functions $f(x) g(y)$ has dimension $v(G) + v(H)$. 
The image of $\theta$ consists of all lower degree polynomials (as usual satisfying $f(0,0)=0$)
which is a space of dimension $n=(v(G)-1) v(H) + v(G) (v(H)-1)$. To solve for $F(x,y)=f(x) g(y)$
we have to solve a system of equations for $n$ variables and this solution gives us $P$.
$$
 \xymatrix{\Omega_{dR}^{k-1} \ar[r]^d  \ar[d]^\psi       & \Omega_{dR}^k \ar[d]^\psi \ar[ld]_\theta \\
           \Omega^{k-1}      \ar[r]^d  \ar@<1ex>[u]^\phi & \Omega^k      \ar@<1ex>[u]^\phi           }
$$

\begin{figure}[h]
\scalebox{0.32}{\includegraphics{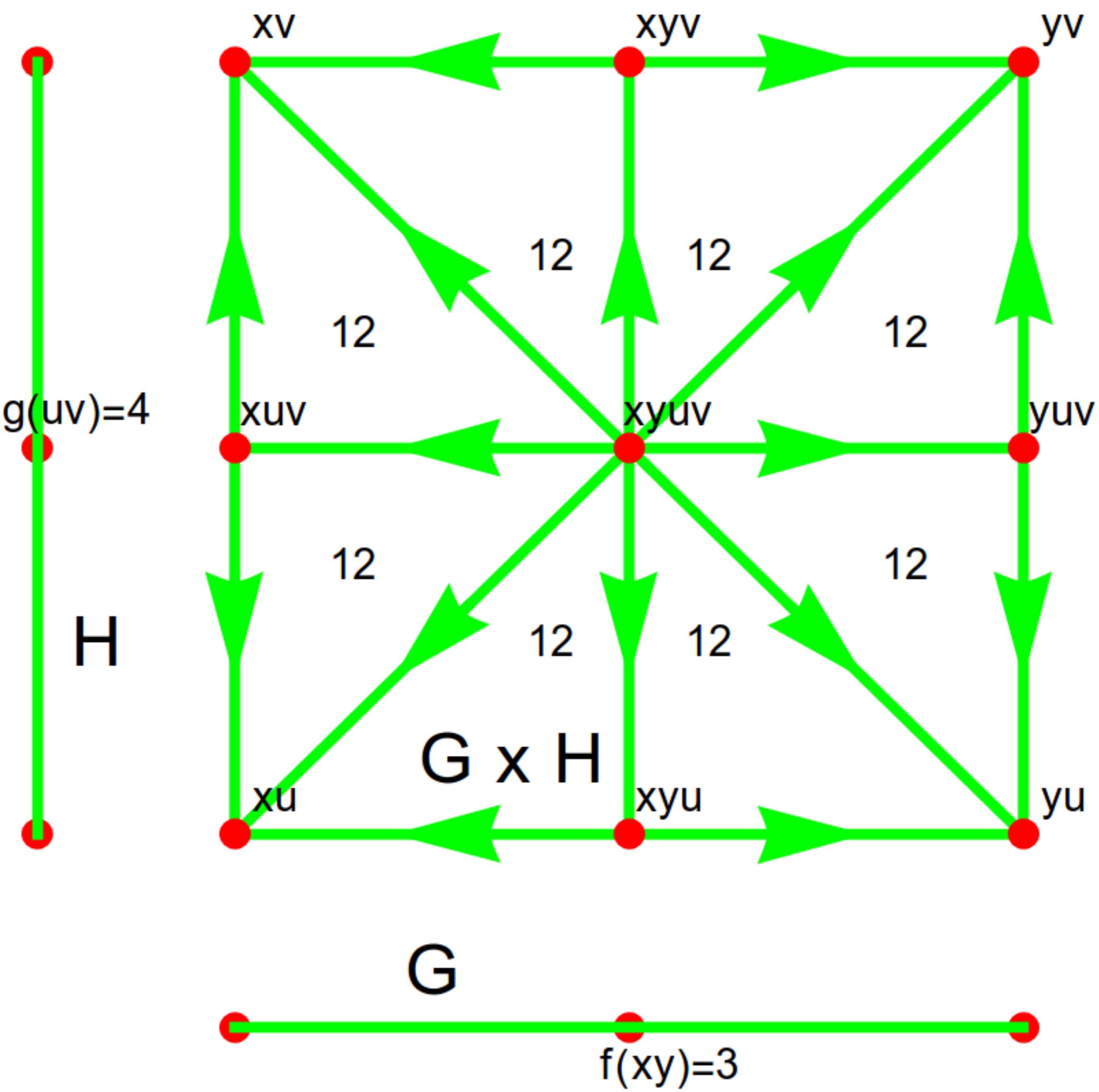}}
\caption{
\label{derhamexample1}
An example illustrating the chain homotopy in the case $G=K_2,H=K_2$:
a $2$-form in $\Omega^2_{dR}(G \times H)$ is given as a function
$f(xy) g(uv)$ as there are no $2$-forms in $G,H$. The chain map $\psi$
constructs from this a 2-form in $\Omega^2(G \times H)$, that is a 
function on the triangles of $G \times H$, as follows: 
the function $F(x,y,u,v) = f(x,y) g(u,v) = 12 x y u v$ is a function
on the vertices of $G \times H$ which is $12$ on the center point and
zero else. The $2$-form $\psi(F)$ assigns now the value
$12$ to all triangles having $xyuv$ as a vertex. The backwards chain map $\phi$
assigns to the node $xyuv$ the average of the values of the $2$-form on
adjacent triangles. There is no need here to add lower degree polynomial $d \theta$ 
as this already factors $f(x) g(y)$.
}
\end{figure}

\begin{figure}[h]
\scalebox{0.32}{\includegraphics{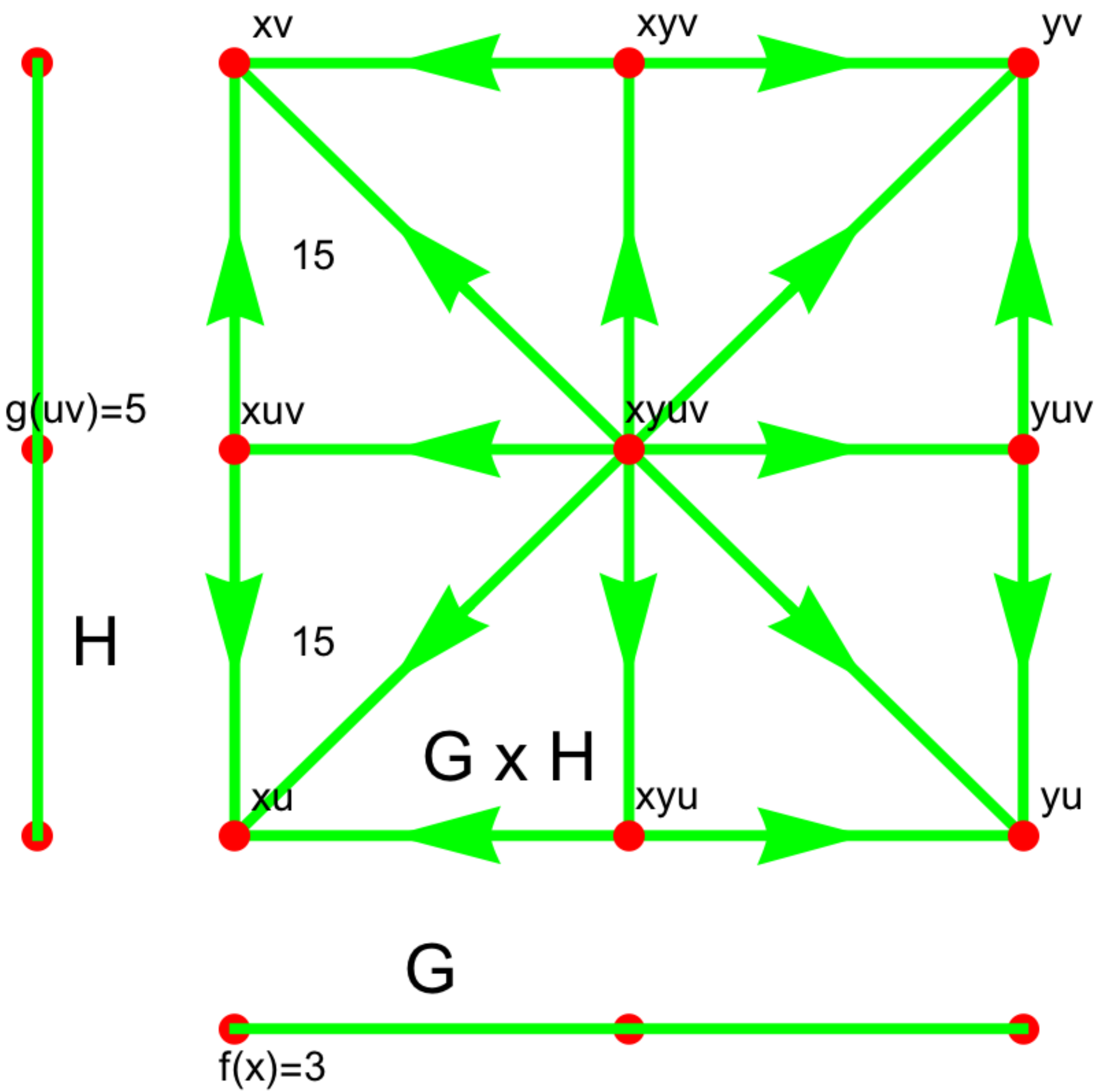}}
\caption{
\label{derhamexample2}
An example illustrating the chain maps in the case $G=K_2,H=K_2$:
a $1$-form $F$ in $\Omega^1_{dR}(G \times H) = \Omega^0(G) \otimes \Omega^1(H)
\oplus \Omega^1(G) \otimes \Omega^0(H)$ is represented algebraically as
$F=c xy (a u + b v) + C uv (A x + B y) = (5uv) (3x) = 15 xuv$. 
By looking at the coefficients, we get a function on the vertices of $G \times H$.
We have now assigned 
values to the four ``$1$-dimensional vertices" represented by 
monomials of degree $3$, in this example to $xuv$ only. The $1$-form $\psi F$ 
on $G \times H$ assigns values to all edges emanating from $xuv$ and assigns there the
value $15$ attached to the node $yuv$. In this case we already have a factorization $f=3x,g=5uv$.
In general, we have to add a gradient to $d\theta$ in such a way that the chain homotopy 
$\phi \psi - \psi \phi = d \theta + \theta d$ holds. 
}
\end{figure}

{\bf Examples:} \\
{\bf 1)} If $G$ is a forest with $n$ trees and $H$ is a forest with $m$ trees,
then the product graph $G \times H$ has $n*m$ components. The Betti vectors
are $b(G)=(n,0,\dots),b(H)=(m,0,\dots)$ and $b(G \times H)=(nm,0,0,\dots)$. Every
harmonic form on the product space $G \times H$ is of the form $f(x) g(y)$, where
$f,g$ are both locally constant. Because the number of vertices in $G \times H$
and $0$-simplices in $G \times H$ are the same, we don't have to translate between
simplicial and de Rham $0$-forms. \\
{\bf 2)} Let $G=H=K_2$, then $G \times H$ is a $2$-dimensional ball with $9$ vertices
$\{ xu,xv,yu,yv,xyu,xyv,xyuv,xuv,yuv \}$. 
Given a $1$-form $F$ in the product graph $G \times H$. It is a function on the 
edges of $G \times H$. Given a vertex which is the product of a vertex and edge
like $xuv$, we assign the sum over all edge values hitting the vertex. \\

{\bf Remarks:} \\
{\bf 1)} In the complement of the set $R$ of chains which are
a product of graphs, the equivalence does hold any more. 
Take the chain $f= 7 xy + 3 x + 5 y$.
Its cohomology is different from the corresponding graph $G_f$ which is 
the $3$ vertex graph without edges.  \\
{\bf 2)} The graph $G_f$ of an abstract chain $f$ 
has as many vertices as there are simplices in $f$. 
The simplicial complex of $G_f$ therefore is already large. 
But the linear space of all $k$-forms on $G \times H$
is huge: if $v(G)$ is the number of simplices of $G$ and $v(H)$ the number of simplices
in $H$, then ${\rm Binomial}(v(G) \cdot v(H),k)$ is the dimension 
of the set of $k$-forms on $G \times H$. 
The de Rham complex is more manageable on a computer.

\section{Results}

\begin{thm}[Geometry of product]
If $G,H$ are geometric graphs of dimension $k,l$ then $G \times H$ is a 
geometric graph of dimension $k+l$. 
\end{thm}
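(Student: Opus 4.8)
The plan is to induct on the total dimension $k+l$ and to reduce everything to the structure of a unit sphere $S_{G\times H}(v)$ at a vertex $v=(\sigma,\tau)$, where $\sigma$ is a $d_1$-simplex of $G$ and $\tau$ a $d_2$-simplex of $H$. Recall that the vertices of $G\times H$ are the pairs of simplices and that $(\sigma',\tau')$ is adjacent to $(\sigma,\tau)$ exactly when the monomial $\sigma'\tau'$ divides $\sigma\tau$ or vice versa, i.e. when either $\sigma'\subseteq\sigma$ and $\tau'\subseteq\tau$, or $\sigma\subseteq\sigma'$ and $\tau\subseteq\tau'$ (containment of simplices, read off separately on the two groups of variables). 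A direct check then shows that
$$ S_{G\times H}((\sigma,\tau)) \;=\; D \star U $$
is a \emph{genuine join}: $D$ is spanned by the ``descending'' pairs $(\sigma',\tau')\neq(\sigma,\tau)$ with $\sigma'\subseteq\sigma,\ \tau'\subseteq\tau$, and $U$ by the ``ascending'' pairs $(\sigma',\tau')\neq(\sigma,\tau)$ with $\sigma\subseteq\sigma',\ \tau\subseteq\tau'$; every descending vertex is adjacent to every ascending one because $\sigma_1\subseteq\sigma\subseteq\sigma_2$ and $\tau_1\subseteq\tau\subseteq\tau_2$ force $\sigma_1\tau_1\mid\sigma_2\tau_2$.

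First I would identify the descending part. Adding back $(\sigma,\tau)$, which is adjacent to all of $D$, gives $D\cup\{(\sigma,\tau)\}=\bar\sigma\times\bar\tau=K_{d_1+1}\times K_{d_2+1}$, the product of two closed simplices. Applying the induction to these two balls shows $\bar\sigma\times\bar\tau$ is a geometric $(d_1+d_2)$-ball whose unique interior vertex is $(\sigma,\tau)$, so its boundary sphere is exactly $D$; equivalently one invokes the Cylinder Lemma with $B_1=\bar\sigma$, $B_2=\bar\tau$ to see $D$ is a $(d_1+d_2-1)$-sphere, and the Suspension Lemma re-confirms $\bar\sigma\times\bar\tau=D\star K_1$ is a ball. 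The degenerate cases $d_1=0$ or $d_2=0$ reduce to the already-known fact that $K_1\times H=H_1$ is a geometric refinement of $H$.

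Next the ascending part $U$. In a geometric graph the link of a $d_1$-simplex $\sigma$ is a homotopy $(k-d_1-1)$-sphere $L_1$, and similarly $L_2$ is a homotopy $(l-d_2-1)$-sphere; their total dimension $(k-d_1-1)+(l-d_2-1)$ is strictly below $k+l$, so the inductive hypothesis applies to $L_1,L_2$. After the same bookkeeping as for $D$ — passing through the closed upper stars $\overline{\mathrm{St}}_G(\sigma)=\bar\sigma\star L_1$ and $\overline{\mathrm{St}}_H(\tau)=\bar\tau\star L_2$, each a ball by the Suspension Lemma, and using the Cylinder Lemma once more — one gets that $U$ is a $(k+l-d_1-d_2-1)$-sphere, namely a refinement of the join $L_1\star L_2$. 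Finally, since the join of two spheres is a sphere ($S^p\star S^q=S^{p+q+1}$, established earlier), $S_{G\times H}((\sigma,\tau))=D\star U$ is a $\big((d_1+d_2-1)+(k+l-d_1-d_2-1)+1\big)=(k+l-1)$-sphere. As this holds at every vertex, $G\times H$ is geometric, and its dimension is $1+(k+l-1)=k+l$, consistent with the pointwise dimension-additivity of the product.

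The main obstacle will be making the two identifications precise. The new product does not commute naively with forming the boundary of a simplex, because $\partial\Delta^{d}$ is not a flag complex and hence is not literally the Whitney complex of any graph; so the equalities ``$D=(\partial\bar\sigma\times\bar\tau)\cup(\bar\sigma\times\partial\bar\tau)$'' and its analogue for $U$ hold only up to a common refinement / the homeomorphism notion of \cite{KnillTopology}, and one must know that ``geometric'', ``$k$-sphere'' and the Cylinder and Suspension Lemmas are all refinement-invariant. The cleanest bookkeeping is to run a single induction on $k+l$ for geometric graphs \emph{with boundary}, carrying along the formula $\partial(G\times H)=(\partial G\times H)\cup(G\times\partial H)$; then the simplex case and the link case become genuine instances of the same statement, and the ascending part — which feeds the lower-dimensional links back into the induction — is where all the content lies, the descending part being comparatively routine.
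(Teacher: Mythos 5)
Your argument is correct in outline and in fact more explicit than the paper's, but it takes a genuinely different route. The paper's proof is three lines: it asserts that the unit sphere $S((x,y))$ in $G\times H$ is a union of two ``cylinders'' meeting in a lower-dimensional piece of $S(x)\times S(y)$ and appeals directly to the Cylinder Lemma, i.e.\ it uses the ``two solid tori glued along a torus'' picture at every vertex and leaves the identification of the two cylinders implicit. You instead split $S((\sigma,\tau))$ as the \emph{join} $D\star U$ of the descending and ascending links in the divisibility order — the standard order-complex decomposition of the link of a face in a simplicial product — then recognize $D$ as $\partial(\bar\sigma\times\bar\tau)$ with $(\sigma,\tau)$ the unique interior vertex, recognize $U$ as a triangulation of $\mathrm{lk}_G(\sigma)\star \mathrm{lk}_H(\tau)$ so that the strictly lower-dimensional links feed the induction on $k+l$, and finish with $S^p\star S^q=S^{p+q+1}$. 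What your route buys is precision: the join decomposition is an exact combinatorial identity (every down-vertex really is adjacent to every up-vertex), the inductive structure is explicit, and it isolates where the content sits (the ascending part). What the paper's route buys is brevity and a single appeal to one lemma. Both arguments ultimately rest on the same Cylinder/Suspension machinery and share the same unfinished business, which you rightly flag: $\partial\Delta^{d}$ is not a flag complex, so the identifications of $D$ and $U$ with boundaries and joins hold only up to refinement, and one must know the notions of sphere/ball and the lemmas are refinement-invariant. Your proposal to run the induction over geometric graphs \emph{with boundary}, carrying $\partial(G\times H)=(\partial G\times H)\cup(G\times\partial H)$, is the right way to close that gap — a gap the paper's own one-line proof does not address either.
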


\begin{proof}
We have only to show that $S((x,y))$ is a homotopy sphere. 
Depending on $(x,y)$ the graph is the union of two cylinders 
intersecting in a lower dimensional graph which is part of 
the sphere $S(x) \times S(y)$. Use the cylinder lemma. 
\end{proof}

\begin{coro}
If $G,H$ are geometric spheres of dimension $k,m$ then the join $G \star H$ is a geometric sphere
of dimension $n=k+m+1$.
\end{coro}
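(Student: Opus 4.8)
The plan is to reduce the corollary to the Cylinder Lemma combined with the Suspension Lemma, exactly mirroring the classical fact that $S^k \star S^m \cong S^{k+m+1}$, and to use the identity $A \star B = \overline{B_A \times B_B}$ expressing a join as (the boundary of) a product of cones. First I would recall that, since $G$ and $H$ are geometric spheres, each is the boundary of a ball: writing $B_G = G \star K_1$ and $B_H = H \star K_1$, the Suspension Lemma gives that $B_G$ is a $(k+1)$-ball with $\delta B_G = G$ and $B_H$ is an $(m+1)$-ball with $\delta B_H = H$. The join $G \star H$ should then be realized as the ``rim'' $ (B_G \times H) \cup (G \times B_H)$ inside the product $B_G \times B_H$: a point of the join lies over a cone point of one factor and on the sphere of the other, so $G \star H$ is precisely $(\delta B_G \times B_H) \cup (B_G \times \delta B_H)$ once we check the cone points of $B_G$ and $B_H$ get identified correctly. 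This is the standard picture that $B_G \times B_H$ is a $(k+m+2)$-ball and its boundary $\partial(B_G \times B_H) = \partial B_G \times B_H \cup B_G \times \partial B_H$ is a $(k+m+1)$-sphere, which is the join.

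Concretely, the key steps in order are: (1) invoke the Geometry-of-product theorem to see $B_G \times B_H$ is a geometric graph of dimension $(k+1)+(m+1) = k+m+2$; (2) check it is contractible, using the product-homotopy statement from the abstract (if $G \sim H$ and $U \sim V$ then $G \times U \sim H \times V$), since a ball is contractible and the product of contractibles is contractible; (3) identify its boundary --- the subgraph of vertices whose unit sphere is not a sphere --- and show it equals $(\delta B_G \times B_H) \cup (B_G \times \delta B_H)$, whose unit spheres are handled by the Cylinder Lemma, giving that this boundary is a $(k+m+1)$-sphere; and (4) verify that this boundary graph is isomorphic, as a graph, to $G \star H$ by matching vertices: simplices of $G \star H$ are pairs of simplices, one from $G \star K_1$-minus-apex-or-with-apex and similarly for $H$, which is exactly the simplex set of $(B_G \times B_H)$ restricted to the boundary. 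Since the product here is the new Cartesian product of the paper, whose simplices are pairs of complete subgraphs of the factors, this matching is essentially the definition of the join obtained by identifying variables in the algebraic representation, as promised in the ``Some geometry'' section.

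The main obstacle I expect is step (4): making precise the claim that the graph-theoretic join $G \star H$ (add all edges between $V(G)$ and $V(H)$) coincides with the boundary graph of $B_G \times B_H$ under the new product, and in particular checking the hypothesis of the Cylinder Lemma, namely that $(B_G \times \delta B_H) \cap (\delta B_G \times B_H) \subseteq \delta B_G \times \delta B_H$. One must verify that a simplex of the product lying in both pieces uses only boundary simplices in each factor; this follows because a common simplex is a pair $(\sigma,\tau)$ with $\sigma$ avoiding the apex of $B_G$ (hence $\sigma \in G$) and $\tau$ avoiding the apex of $B_H$ (hence $\tau \in H$), but one should spell out that the apex is exactly the vertex whose removal detects the boundary. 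A secondary subtlety is dimension bookkeeping: $G \star H$ has dimension $k+m+1$ because a maximal simplex is a maximal simplex of $G$ joined to one of $H$, of sizes $k+1$ and $m+1$, giving a clique of size $k+m+2$, i.e. dimension $k+m+1$; this is consistent with the Cylinder Lemma output and should be stated as the final line. Once these identifications are in place, the corollary is immediate.
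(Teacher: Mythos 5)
Your route differs from the paper's: the paper realizes $G \star H$ as a quotient of $G \times H \times L_n$ and checks unit spheres directly (using the Cylinder Lemma only for the vertices affected by the identification), whereas you realize the join as the boundary of the product of cones $B_G \times B_H$. That is a legitimate classical picture, and steps (2)--(3) of your plan do line up with the Cylinder Lemma. But there is a genuine gap at your step (4), and it is not merely a bookkeeping subtlety. The vertices of $B_G \times B_H$ under the paper's product are \emph{pairs of simplices} of $B_G$ and $B_H$, so the subgraph $(\delta B_G \times B_H) \cup (B_G \times \delta B_H)$ has on the order of $(2v(G)+1)(2v(H)+1)-(v(G)+1)(v(H)+1)$ vertices, where $v(\cdot)$ counts simplices; for $G=H=C_4$ that is $208$ vertices, while $G \star H$ has $8$ and even the refinement $(G\star H)\times K_1$ has $80$. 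So the boundary of $B_G\times B_H$ is not isomorphic to $G\star H$ as a graph; it is a strictly finer subdivision. Your argument therefore shows at best that \emph{some refinement} of the join is a geometric sphere, and since ``geometric sphere'' is a local condition on unit spheres that is not invariant under homotopy equivalence or subdivision in general, the conclusion for $G\star H$ itself does not follow without an additional argument identifying the two objects (or a separate proof that the coarser graph inherits the property).

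A secondary issue is step (1): the ``Geometry of product'' theorem is stated for geometric graphs, meaning every unit sphere is a homotopy sphere, and the balls $B_G, B_H$ fail this at their boundary vertices (whose unit spheres are balls). So you cannot invoke that theorem for $B_G\times B_H$ as stated; you would need a with-boundary version, which is essentially what the Cylinder Lemma supplies anyway, making step (1) either circular or superfluous. For what it is worth, the cleanest repair is to bypass the product entirely and induct on $k+m$ using the identity $S_{G\star H}(x)=S_G(x)\star H$ for $x\in G$ (a $(k-1+m+1)=(n-1)$-sphere by induction) together with $(G\star H)\setminus\{x\}=(G\setminus\{x\})\star H$, which is contractible because $G\setminus\{x\}$ is; this proves the statement for the literal graph join, which is what the cross-polytope examples in the paper presuppose.
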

\begin{proof}
It is a graph of dimension $n+k+1$ as it is a quotient of a graph $G \times H \times L_n$,
where $L_l$ is a line graph with $l \geq 2$. This assures that for all points which are not affected by 
the identification, we have a geometric unit sphere of dimension $n-1$. For the vertices
$y$ which were subject of identification, the unit sphere $S(y)$ is a union of two $(n-1)$-dimensional 
cylinders which by the cylinder lemma is a sphere.
Besides verifying that every vertex $y$ has a unit sphere $S(y)$
which is a $(n-1)$-sphere, we have also to check that taking away a vertex $x$ from $G \star H$,
leads to a remaining graph which is a contractible ball of dimension $n$ which has a $(n-1)$-dimensional
boundary. 
\end{proof} 

Take $G_1=G \times K_1$. Given a vertex $x \in G_1$. It corresponds to a simplex in $G$. 
The set $S(x)$ consists of all simplices in $G$ containing $x$ united with
the set of all simplices inside $x$. \\

{\bf Examples:}  \\
{\bf 1)} Let $G$ be a cyclic graph $C_n$. Lets look at the sphere of the vertex $x$
which is now also a vertex in $G_1$. It becomes now the set of two vertices $\{ yx,xz \}$ in $G_1$
corresponding to two edges in $G$. This is a $0$ dimensional sphere. \\
{\bf 2)} Let $G=C_n$ again and let $e=xy$ be an edge in $G$ which becomes now a simplex in $G_1$. 
Its sphere consists of the two simplices $\{x,y\}$ which correspond to two vertices in $G$. 
The sphere is $S_0$, again a geometric graph. \\
{\bf 3)} In the case $d=2$ like if $G$ is an ecosahedron,  assume that 
$S(x)=(y_1,y_2,\dots ,y_k)$ is the old unit sphere of a vertex $x$.
The new sphere of $x$ in $G_1$ contains the points $xy_k$ which correspond to edges
in the old graph $G$. The triangles $x y_k y_{k+1}$ are the new connections in that
sphere. We have again a geometric graph. 
{\bf 4)} Now assume that $e=xy$ is an edge in the old two dimensional graph $G$ which becomes
a vertex in $G_1$. The sphere $S(e)$ consists of the vertices $x,y$ which are the 
third points in the triangles $xyz_k$ as well as the two triangles $xyz_k$. 
These two original vertices and triangles form  4 points in the new graph $G_1$ which form a
circular graph $C_4$. This is our unit sphere. \\
{\bf 5)} Now assume that $t=xyz$ is a triangle in the two dimensional graph $G$. 
It becomes a vertex in the new graph $G_1$. The unit sphere $S(t)$ consists
of all simplices $x,y,z,xy,yz,xz$ which is a $6$-gon. 
Examples 3)-5) together show that that $G_1$ is always geometric if $G$ is two dimensional. 
The unit spheres always are $C_4$ and $C_6$. In particular, $G_1$ is Eulerian. \\
{\bf 6}) In the case $d=3$, if $x$ is an original vertex, then the sphere consists
of all edges and triangles in the original sphere by induction this is
$S(x) \times K_1$. If $e=(x,y)$ is an edge. Then the sphere consist of the points
$x,y$ as well as all triangles and tetrahedra containing $e$. This is a double
suspension of a $1$-dimensional sphere and so $2$-dimensional. If $t=(a,b,c)$
is a triangle, then the unit sphere is a suspension of the $6$-gon $a,b,c,ab,ac,bc$. \\

A theorem from 1964 assures that only spheres are manifolds which are joins
\cite{KwunRaymond}. Since geometric graphs naturally define compact manifolds,
where unit balls are filled up to become the charts, this 
holds also for geometric graphs. Intuitively, the reason is clear. In order
that the unit spheres of the identified end points are spheres, we better need 
the factors $A,B$ of the join $A \star B$ to be spheres, so that $G$
is of the form $A \star B$ where $A,B$ are spheres. \\

\begin{coro}[Kwun-Raymond]
If $G$ is a $n$-dimensional geometric graph which is the join $A \star B$
for two other graphs, then $G$ is a homotopy sphere. 
\end{coro}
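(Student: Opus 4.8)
The plan is to reduce to the Corollary already established, that the join of two geometric spheres is a geometric sphere. Write $G = A \star B$; if one factor is empty the join is trivially the other factor, so assume $A$ and $B$ are both nonempty. It then suffices to prove that $A$ and $B$ are themselves homotopy spheres. (There is also a soft route: a geometric graph determines a closed manifold by filling in its unit balls as charts, a join decomposition of the graph induces one of that manifold, and the classical theorem \cite{KwunRaymond} forces the manifold, hence the graph, to be a sphere. But I would rather argue internally to the graph category, by induction on $n = \dim G$.)

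\textbf{The induction.} The engine is the identity that for $a \in A$ the $G$-neighbours of $a$ are its $A$-neighbours together with all of $B$, and symmetrically for $b \in B$, so that
$$ S_G(a) = S_A(a) \star B, \qquad S_G(b) = A \star S_B(b) . $$
Since $G$ is geometric these are homotopy spheres of dimension $n-1$, hence geometric graphs, and they are themselves joins. If some $b \in B$ has $S_B(b) \neq \emptyset$, then $S_G(b) = A \star S_B(b)$ is a join of two nonempty graphs of dimension $n-1 < n$, so the induction hypothesis gives that $A$ (and $S_B(b)$) is a homotopy sphere. If instead every vertex of $B$ is isolated, then $B$ is $0$-dimensional; here one checks that geometricity of $G$ forces $B$ to be a $0$-sphere --- if $B$ had a single vertex then $G = K_1 \star A$ would be a pyramid, i.e. a ball with boundary, not geometric, and if $B$ had three or more vertices then some $S_G(a) = S_A(a) \star B$ would be a join one of whose factors, $B$, is not a homotopy sphere, again contradicting geometricity via the induction hypothesis in dimension $n-1$. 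In that remaining case $S_G(b) = A$ for every $b$, so $A$ is directly a homotopy sphere. Either way $A$ is a homotopy sphere, and by the symmetric argument so is $B$; the dimensions add by the join/product dimension formula.

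\textbf{Conclusion and main obstacle.} With $A$ and $B$ now known to be geometric spheres, the Corollary on joins of spheres yields that $G = A \star B$ is a geometric homotopy sphere, as claimed. The one genuinely delicate point is organizing the induction so that one never has to upgrade ``geometric with sphere-like cohomology'' to ``homotopy sphere'' directly --- a step that would be exposed to a discrete Poincar\'e-sphere phenomenon. The set-up above avoids this because it only ever applies the induction hypothesis to the \emph{unit spheres} $S_G(a), S_G(b)$, which are already homotopy spheres by the hypothesis that $G$ is geometric, so the combinatorial (vertex-removal) sphere property is inherited downward and then transported back up the relations $S_G(a) = S_A(a) \star B$ rather than re-derived from cohomology. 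The rest --- pinning down the low-dimensional base case ($n \le 1$, where the only nonempty join that is geometric is $C_4 = S^0 \star S^0$) and the bookkeeping of isolated vertices and one- or three-point factors --- is routine.
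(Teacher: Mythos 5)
Your argument is correct, and it supplies something the paper does not: the paper gives no proof of this corollary at all, only a two-sentence justification that (i) transfers the statement to the continuum via the manifold obtained by filling in unit balls and quoting Kwun--Raymond there, and (ii) remarks informally that ``in order that the unit spheres of the identified end points are spheres, we better need the factors $A,B$ to be spheres.'' Your ``soft route'' is exactly (i); your main argument is a rigorous version of (ii), carried out internally by induction on dimension using the identity $S_G(a)=S_A(a)\star B$, $S_G(b)=A\star S_B(b)$, and then closing the loop with the already-proved corollary that a join of geometric spheres is a geometric sphere. What your route buys is independence from the continuum (no appeal to the manifold realization, which the paper never makes precise) and a clean localization of where the sphere property comes from: you only ever invoke the inductive hypothesis on unit spheres, which are homotopy spheres by the geometricity of $G$, so you never need to promote cohomological information to a homotopy statement. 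Two points deserve to be made explicit if this is written up. First, the statement you actually induct on is strictly stronger than the corollary as stated: you need ``if $A\star B$ is geometric with $A,B$ nonempty, then $A$ \emph{and} $B$ are homotopy spheres,'' since the corollary's conclusion about $G$ alone would give you nothing when applied to $S_G(b)=A\star S_B(b)$; you should state that strengthened hypothesis up front. Second, the identity $S_G(a)=S_A(a)\star B$ presumes the Zykov reading of $\star$ (disjoint union plus all cross edges), which is what the suspension and wheel examples in the paper use; under the paper's alternative definition of the join as a quotient of $A\times B\times K_2$ the unit spheres are instead the glued cylinders of the Cylinder Lemma, and the argument would have to be routed through that decomposition. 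Neither point is a gap, but both are where a careless write-up would break.
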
 

Now, we come to the main result as announced in the title: 

\begin{thm}[K\"unneth]
Given two finite simple graphs $G,H$.
The cohomology groups of $G \times H$ are related to the cohomology groups
of $G$ and $H$ by 
$$  H^k(G \times H) = \sum_{i+j=k} H^i(G) \otimes H^j(H) \; . $$
\end{thm}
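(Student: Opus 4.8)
The plan is to reduce the K\"unneth formula to the De-Rham theorem proved above, then establish the ``de Rham K\"unneth'' statement $H^k_{dR}(G \times H) = \bigoplus_{i+j=k} H^i(G) \otimes H^j(H)$ by elementary Hodge theory on the tensor product complex. First I would invoke the Discrete De-Rham Theorem to replace $H^k(G \times H)$ with $H^k_{dR}(G \times H)$, the cohomology of the complex $\Omega^*_{dR}(G \times H) = \Omega^*(G) \otimes \Omega^*(H)$ equipped with the differential $d(f \otimes g) = (df) \otimes g + (-1)^{|f|} f \otimes (dg)$. So everything takes place on a tensor product of two finite-dimensional cochain complexes, and the claim becomes the classical algebraic K\"unneth theorem, which over a field (here we may tensor with $\mathbb{R}$ or $\mathbb{Q}$, since Betti numbers are dimensions of kernels of Laplacians) has no $\mathrm{Tor}$ term and is just an isomorphism $H^*(A^\bullet \otimes B^\bullet) \cong H^*(A^\bullet) \otimes H^*(B^\bullet)$.

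The cleanest route, and the one the abstract advertises, is Hodge-theoretic. On each factor put the inner product $\langle f, g\rangle$ from the Construction section, giving Dirac operators $D_G = d_G + d_G^*$, $D_H = d_H + d_H^*$ and form Laplacians $L_G = D_G^2$, $L_H = D_H^2$. On $\Omega^*(G) \otimes \Omega^*(H)$ one checks that the de Rham Laplacian splits as $L_{dR} = L_G \otimes 1 + 1 \otimes L_H$; this is the key computation, and it follows because the two differentials (anti-)commute appropriately, so the cross terms in $D_{dR}^2$ cancel — more precisely, writing $d_{dR} = d_G \otimes 1 + \epsilon \otimes d_H$ with $\epsilon$ the degree-parity sign operator, a direct expansion of $(d_{dR} + d_{dR}^*)^2$ collapses to $L_G \otimes 1 + 1 \otimes L_H$. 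Then by the spectral theorem for commuting non-negative self-adjoint operators, $\ker L_{dR} = \ker(L_G \otimes 1) \cap \ker(1 \otimes L_H) = (\ker L_G) \otimes (\ker L_H)$, since $L_G \otimes 1$ and $1 \otimes L_H$ are simultaneously diagonalizable with non-negative eigenvalues summing to the eigenvalue of $L_{dR}$. Restricting to degree $k$, $\ker L_{dR}^{(k)} = \bigoplus_{i+j=k} \mathcal{H}^i(G) \otimes \mathcal{H}^j(H)$ where $\mathcal{H}$ denotes harmonic forms.

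Finally, by discrete Hodge theory (as used in the excerpt and in \cite{knillmckeansinger}), $H^k(G) \cong \mathcal{H}^k(G) = \ker L_G^{(k)}$ for any graph, and likewise for $H$ and for the de Rham complex of the product: $H^k_{dR}(G \times H) \cong \ker L_{dR}^{(k)}$. Stringing these isomorphisms together with the previous paragraph yields
$$ H^k(G \times H) \cong H^k_{dR}(G \times H) \cong \bigoplus_{i+j=k} \mathcal{H}^i(G) \otimes \mathcal{H}^j(H) \cong \bigoplus_{i+j=k} H^i(G) \otimes H^j(H), $$
which is the assertion. The isomorphism is moreover explicit: if $f$ is a harmonic $i$-form on $G$ representing a class in $H^i(G)$ and $g$ a harmonic $j$-form on $H$, then $f \otimes g$ (written $f(x) g(y)$ in the polynomial notation) is harmonic on the de Rham complex of $G \times H$, and applying the chain map $\psi$ from the De-Rham theorem transports it to an honest cocycle on the Whitney complex of $G \times H$ — this is exactly the construction sketched in the caption of Figure~(\ref{poster}).

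I expect the main obstacle to be the Laplacian splitting identity $L_{dR} = L_G \otimes 1 + 1 \otimes L_H$: one must be careful with the sign operator $\epsilon$ built into the Leibniz-rule differential so that $d_{dR}$ really squares to zero and the adjoint $d_{dR}^*$ has the matching form $d_G^* \otimes 1 + \epsilon \otimes d_H^*$; getting these signs right (and confirming that $\epsilon$ is an isometry commuting with $L_H$ and anticommuting appropriately with $d_H$) is where the bookkeeping lives. Everything downstream — the spectral argument and the three Hodge isomorphisms — is then routine finite-dimensional linear algebra. An alternative to the Hodge route, should one prefer it, is to cite the purely algebraic K\"unneth theorem for complexes of free (here, finite-dimensional vector) spaces over a field, for which the long exact sequence / spectral sequence collapses because the field coefficients kill $\mathrm{Tor}$; but the Hodge version has the advantage of producing the explicit harmonic representatives used elsewhere in the paper.
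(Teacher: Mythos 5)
Your proposal is correct and follows the same overall strategy as the paper: reduce to the de Rham complex $\Omega^*(G)\otimes\Omega^*(H)$ via the discrete de Rham theorem, then identify the harmonic forms of the product with tensor products of harmonic forms by Hodge theory. The one genuine difference lies in how the key Hodge step is executed. The paper expands the Laplacian on a decomposable element as $L(fg)=(Lf)g+f(Lg)+2(df)(d^*g)+2(d^*f)(dg)$ and then argues that the two cross terms must vanish separately because they live in polynomial components of different bidegree; it then runs a somewhat ad hoc case analysis to show that $L(fg)=0$ forces $Lf=0$ and $Lg=0$. You instead prove the operator identity $L_{dR}=L_G\otimes 1+1\otimes L_H$ (the cross terms cancel identically once the sign operator $\epsilon$ is built into $d_{dR}$ and its adjoint, since $D_G\epsilon=-\epsilon D_G$), and then invoke the spectral theorem for commuting non-negative self-adjoint operators to get $\ker L_{dR}=\ker L_G\otimes\ker L_H$. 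Your route buys two things. First, it is sign-clean: the paper's formula with the $+2(df)(d^*g)$ terms is what one gets without tracking $\epsilon$, and the subsequent bidegree argument is needed precisely to repair that. Second, and more substantively, your spectral argument determines the \emph{entire} kernel of $L_{dR}$ in each degree, including non-decomposable elements $\sum_\alpha f_\alpha\otimes g_\alpha$, whereas the paper's computation only addresses pure tensors $fg$ and thus, as written, establishes containment of $\bigoplus_{i+j=k}H^i\otimes H^j$ in $H^k_{dR}$ but leaves the reverse inclusion incompletely justified. In that sense your version closes a gap in the paper's surjectivity direction while producing the same explicit harmonic representatives $f(x)g(y)$ that the paper uses elsewhere.
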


\begin{proof}
We know from Hodge theory that the de Rham Laplacian $L=(d+d^*)^2 = d d^* + d^* d$ 
of $G$ restricted to $k$-forms has a kernel ${\rm ker}(L_k)$ of dimension is $H^k(G)$. \\ 
We have $d(f g) = (df)g + (-1)^{|f|} f(dg)$ and $d^*d(f g) = (d^*df) g + f (d^* dg)
+ (df) (d^*g) + (d^*f) (d g)$, so that 
$$ L(f g) = (Lf) g + f (Lg) + 2 (df) (d^* g) + 2 (d^* f) (dg) \; . $$
Hodge theory also assures that that $L f = 0$ is equivalent to $df=d^* f = 0$, so that if $f,g$
are harmonic forms in $G$ or $H$ respectively, then $fg$ is a harmonic form in $G \times H$. 
We see that we can construct the cohomology classes of the product $H^i(G) \otimes H^j(H)$ 
from the cohomology classes of the factors $G,H$. It is obvious therefore that
$H_{dR}^k(G \times H)$ contains the vector space $\sum_{i+j=k} H^i(G) \otimes H^j(H)$. \\
Now assume that $fg$ is harmonic in $G \times H$. We want to show that
$Lf=0$ or $Lg=0$. 
Again we use that the kernel of $L$ is the intersection of the kernel of 
$d$ and the kernel of $d^*$. Now look at the equation 
$$ L(fg) = (Lf) g + f (Lg) + 2 (df) (d^* g) + 2 (d^* f) (dg)   = 0 \; ,$$
where $f$ is a $i$-form and $g$ is a $j$-form. When we look at the terms, then
$f,(Lf)$ are polynomials in $x_k$ of degree $i+1$ and $Lg,g$ are polynomials in $y_k$ of 
degree $j+1$. But $(df) (d^*g)$ consists of polynomials of degree $i+2$ in $x_k$ and of degree $j$ in $y_k$
and $(d^*f g)$ consists of polynomials of degree $i+2$ in the $x_k$ and of degree $j+1$ in $y_k$. Because
the degrees are different, we see that $2 (df) (d^* g) = 2 (d^* f) (dg)=0$. 
Now, either $df=d^*f=0$ which implies $Lf=0$. But then, 
$f (Lg)=0$ so that also $(Lg)=0$. An other possibility is $dg=d^*g=0$ which implies $Lg=0$ and in turn 
implies $Lf=0$. A third possibility is $df=0=dg=0$. But from $Lf g + f Lg = 0$ 
follows then $(d d^* f) g + f d d^* g =0$. 
Now take the inner product with $fg$ to get $\langle f,dd^*f \rangle \langle g,g \rangle  + 
\langle f,f \rangle \langle g,dd^* g \rangle =0$ which implies that $f=g=0$, demonstrating  that the
third case is not possible. We have seen that $Lf=0$ and $Lg=0$ and verified that every kernel element
in $L(fg)$ necessarily has to have the property that both $Lf=0$ and $Lg=0$.  \\
Having established that the de Rham cohomology of $G \times H$ satisfies the K\"unneth formula
and that the de Rham theorem assures that it is equivalent to graph cohomology, we are done.
\end{proof}

{\bf Remark:} In the continuum, the equivalence of the chain complex $\Omega^*(X \times Y)$ 
with the tensor product $\Omega^*(X) \otimes \Omega^*(Y)$ on which the exterior derivative
is given by the Leibniz formula is subject of the
Eilenberg-Zilber theorem from 1953 \cite{EilenbergZilber}. They use the nerve of 
product coverings and so \v{C}ech type ideas to verify
the equivalence of the cohomologies and establish so the K\"unneth formula more elegantly. 
The above computation just reflects the fact that the tensor product of two chain complexes with exterior
derivative given by the Leibniz formula is again a chain complex and that we have an 
Eilenberg-Zilber formulation as in the continuum,
where two chain complexes $A,B$ are chain homotopic, if there are chain maps $\phi:A \to B,
\psi B \to A$ such that the composition $\psi \phi$ is the identity 
and $\phi \psi$ is chain homotopic to the identity in the sense that there is 
a linear map $\theta: \Omega^n(G \times H) \to \Omega^{n-1}_{dR}(G \times H)$ satisfying
$\phi \psi - \psi \phi = d \theta +\theta d$. 

\begin{coro}[Eilenberg-Zilber]
\label{eilenbergzilber}
Given two finite simple graphs $G,H$. 
There is a chain map $\psi$ from  $\Omega^*_{dR}(G \times H) = \Omega^*(G) \otimes \Omega^*(H)$
to the chain complex $\Omega^*(G \times H)$ and a chain map $\phi$ in the reverse direction
such that $\phi$ and $\psi$ are chain homotopic. 
\end{coro}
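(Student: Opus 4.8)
The plan is to recognize that this corollary is a repackaging of the Discrete De-Rham Theorem just proved, so almost all of the construction is already in hand. In that proof the two linear maps $\psi:\Omega^k_{dR}(G\times H)\to\Omega^k(G\times H)$ and $\phi:\Omega^k(G\times H)\to\Omega^k_{dR}(G\times H)$ were written down explicitly and shown to commute with the differentials, $d\psi=\psi d$ and $d\phi=\phi d$, which is exactly what makes them chain maps, and they were exhibited there as realizing the isomorphism $H^*_{dR}(G\times H)\cong H^*(G\times H)$. The identification $\Omega^*_{dR}(G\times H)=\Omega^*(G)\otimes\Omega^*(H)$ with the Leibniz differential $d(fg)=(df)g+(-1)^{|f|}f(dg)$ is the definition of the de Rham complex of the product: in degree $k$ it is $\bigoplus_{i+j=k}\Omega^i(G)\otimes\Omega^j(H)$, and $d^2=0$ on each factor forces the Leibniz operator to square to zero, so $(\Omega^*_{dR},d)$ is a genuine cochain complex. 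Thus the only content left is to promote $\phi$ and $\psi$ to a chain homotopy equivalence.

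First I would check that the composition landing back in the de Rham complex, $\Omega^*_{dR}\xrightarrow{\psi}\Omega^*(G\times H)\xrightarrow{\phi}\Omega^*_{dR}$, is the identity: starting from $F=f(x)g(y)$, the map $\psi$ reads the value of $F$ off the appropriate vertex of each simplex of $G\times H$, the map $\phi$ averages these back to vertices, and the lower-degree correction polynomial built into $\phi$ is precisely what returns the averaged function to the space of polynomials that factor as $f(x)g(y)$, giving $\phi\psi(F)=F$. The reverse composition $\psi\phi$ on the larger Whitney complex $\Omega^*(G\times H)$ cannot be the identity — the de Rham side $\bigoplus_{i+j=k}\Omega^i(G)\otimes\Omega^j(H)$ has strictly smaller dimension in general — so instead I would exhibit a homotopy operator $\theta:\Omega^k(G\times H)\to\Omega^{k-1}(G\times H)$ with $\psi\phi-\mathrm{id}=d\theta+\theta d$, built by packaging the correction polynomial coherently across degrees. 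As a structural fallback one can instead invoke the homological-algebra fact that a cochain map between complexes of finite-dimensional vector spaces which induces isomorphisms on cohomology is automatically a chain homotopy equivalence; choosing Hodge decompositions into harmonic, exact and coexact parts on both complexes then makes the inverse map and the homotopies explicit.

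The main obstacle is the construction and verification of $\theta$, the point already flagged above before the homotopy diagram: the correction polynomial comes from a non-canonical system of linear equations, so one must choose its solutions consistently in every degree $k$ so that $\theta$ satisfies the cochain homotopy identity on the nose and not merely in cohomology. The bookkeeping is the same degree-counting used in the K\"unneth proof above: the monomials occurring in $f$, $df$, $d^*f$ have degrees in the $x$-variables different from those occurring in $g$, $dg$, $d^*g$ in the $y$-variables, and this separation controls the cross terms and pins down the correction. Everything else is finite-dimensional linear algebra together with formal manipulation of the Leibniz rule, so once $\theta$ is available the corollary follows.
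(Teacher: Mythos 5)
Your proposal is correct and follows essentially the same route as the paper, which gives no separate proof of this corollary: it is treated as a repackaging of the Discrete De-Rham Theorem, whose proof already exhibits $\phi$ and $\psi$ as chain maps, together with the remark that one composition is the identity and the other is chain homotopic to it via an operator $\theta$ tied to the lower-degree correction polynomial. Your observation that the explicit, degree-consistent construction of $\theta$ is the real remaining work is well taken -- the paper itself leaves $\theta$ only sketched -- and your fallback (a cochain map of finite-dimensional complexes inducing isomorphisms on cohomology is automatically a chain homotopy equivalence, made explicit by Hodge decomposition) is the cleanest rigorous way to close that gap.
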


{\bf Examples:}  \\
{\bf 1)} Let $G$ be the house graph and $H$ the sun graph $S_{1,0,0,0}$. Both have the 
Betti vector $(1,1)$ and the product has the cohomology of the $2$-torus as it is homotopic
to the 2-torus. The graph $G_1$ has dimension $37/24$, has $v_0(G) =12$ vertices, $v_1(G)=18$
edges and $v_2(G)=6$ triangles. 
the graph $H_1$ has dimension $1$ and $v_0(H)=10$ vertices and $v_1(H)=10$ edges. The product 
$G \times H$ is a graph of dimension 
$61/24$. It has 120 vertices and 480 edges. The kernel of the Laplacian $L_1$ on $1$-forms of $G_1$
is spanned by $(8,-8,8,8,-3,-2,-3,0,-3,-1,-1,-3,-1,0,-8,2,-8,1)$, \\
the null space of $L_1$ on $1$-forms of $H_1$ is spanned by 
$(-1,1,-1,-1,1$ $,1,1,1,0,0)$. The Laplacian $L_1$ on the product graph $G \times H$ is a
$480 \times 480$ matrix. The Laplacian $L_1$ on the simplicial set $\Omega^1(G) \otimes \Omega^0(H)
\oplus \Omega^0(G) \otimes \Omega^1(H)$ is much smaller as it works
on a space whose dimension is the set of degree 3 monoids in $fg$ which is $v_0(G)*v_1(H) + v_1(G)*v_0(H)$.  \\
{\bf 2)} The cohomology of $C_4$ has the Betti vector $(1,1)$. The
cohomology of $C_4 \times C_4$ is $(1,2,1)$. \\
{\bf 3)} The cohomology $G \times K_n$ is the same as the cohomology of $G$. Similarly, the
cohomology of the cylinder $T \times C$ obtained by taking the products of a couple of
circles with the product of a couple of interval graphs is the same than of the torus $T$
because the cube, the product of interval graphs is contractible. \\
{\bf 4)} The Betti vector of $G=C_{n_1} \times \cdots \times C_{n_k}$ with $n_i \geq 4$ is the k'th row
of the Pascal triangle $(B(k,0),B(k,1),B(k,2), \dots,B(k,k))$. Easier to state, $p_G(x)=(1+x)^k$. \\

The reason for the following inequality is that higher dimensional simplices spawn more
vertices in the graph $G_1$. If the dimension is not 
uniform, then the dimension of the enhanced graph $G_1=G \times K_1$ will increase.
First of all, the construction of $G_1$ allows to define a local dimension for all
simplices $x$ inside $G$: it is just the usual dimension $1+{\rm dim}(S(x))$ 
in the enhanced graph $G_1$, where the simplices like $x$ are vertices. 

\begin{lemma}[Dimension inequality]
Given a finite simple graph $G$. Then
${\rm dim}(G \times K_1) \geq {\rm dim}(G)$.
\end{lemma}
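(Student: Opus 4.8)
The plan is to identify $G_1 = G \times K_1$ with the barycentric refinement of $G$: its vertices are the complete subgraphs (simplices) $\sigma$ of $G$, with $\sigma \sim \tau$ in $G_1$ iff one strictly contains the other. Since the vertex set of $G_1$ is the set of \emph{all} simplices of $G$, the quantity $\dim(G_1) = \frac{1}{v(G)}\sum_\sigma \dim_{G_1}(\sigma)$ is an average of the local dimensions $\dim_{G_1}(\sigma) = 1+\dim(S_{G_1}(\sigma))$ over all simplices, whereas $\dim(G) = \frac{1}{v_0(G)}\sum_{v\in V(G)}\dim_G(v)$ is the same kind of average taken only over the vertices; the lemma is thus the assertion that enlarging the averaging set from vertices to all simplices cannot lower this average.

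First I would record the unit-sphere structure in $G_1$. For a vertex $\{v\}$, the simplices comparable to it are exactly the simplices containing $v$, so $S_{G_1}(\{v\}) = (S_G(v))_1$. For a $k$-simplex $\sigma$ with $k\ge 1$, the simplices comparable to $\sigma$ split into the proper faces of $\sigma$ and the proper cofaces, and every face lies below every coface, so $S_{G_1}(\sigma)$ is the join of the order complex of $\partial\sigma$ — the barycentric subdivision of the boundary of a $k$-simplex, hence a $(k-1)$-sphere of dimension $k-1$ — with $(\mathrm{lk}_G(\sigma))_1$. Using the join identity $\dim(A\star B)=\dim(A)+\dim(B)+1$ (a short induction, consistent with the Cylinder Lemma) this gives $\dim(S_{G_1}(\sigma)) = k + \dim((\mathrm{lk}_G(\sigma))_1)$, a formula that also covers $k=0$ via $\mathrm{lk}_G(\{v\})=S_G(v)$.

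Then I would run an induction on the number of vertices of $G$ (the cases of $\le 1$ vertex being trivial, and the case of no edges giving $G_1=G$). Applying the inductive hypothesis to the smaller graphs $S_G(v)$ gives $\dim((S_G(v))_1)\ge \dim(S_G(v))$, so each ``old'' vertex satisfies $\dim_{G_1}(\{v\}) = 1+\dim((S_G(v))_1)\ge 1+\dim(S_G(v)) = \dim_G(v)$; hence $\frac{1}{v_0(G)}\sum_v \dim_{G_1}(\{v\})\ge \dim(G)$. Splitting the average defining $\dim(G_1)$ into a vertex part and a higher-simplex part, the lemma then reduces to showing that the higher-dimensional simplices do not pull the average down, i.e. $\frac{1}{N}\sum_{\dim\sigma\ge 1}\dim_{G_1}(\sigma)\ge \dim(G)$ where $N=v(G)-v_0(G)$. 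Using the sphere formula together with the inductive hypothesis applied to the links $\mathrm{lk}_G(\sigma)$ (each of which is a smaller graph), this in turn reduces to a purely combinatorial inequality for the function $D(\sigma)=\dim(\sigma)+\dim(\mathrm{lk}_G(\sigma))$ on the face poset of $G$: its average over positive-dimensional simplices should be at least its average over vertices. Here the useful structure is that $D$ obeys the mean-value recursion $D(\sigma)=\mathrm{avg}\{D(\tau):\tau\gtrdot\sigma\}$ (average over the covers of $\sigma$) at non-maximal simplices and $D(\sigma)=\dim(\sigma)-1$ at maximal ones, from which one gets $D\ge 0$ on all positive-dimensional simplices and the identity $\sum_{w\sim v}D(\{v,w\}) = \deg(v)\,D(\{v\})$; propagating these relations up the poset is meant to yield the claim.

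The main obstacle is precisely this last averaging step. The crude bounds one extracts from $D\ge 0$ and the edge identity lose a factor which is harmless only when the ratio $N/v_0(G)$ is small; when $G$ is clique-rich (so $N$ greatly exceeds $v_0(G)$) one must instead exploit the fact that the $D$-values at the many higher simplices are then correspondingly large and nearly uniform, so that the bookkeeping has to track the mean-value recursion on the face poset quantitatively rather than merely discarding nonnegative terms. Everything else — the refinement picture, the join computation of unit spheres, and the pointwise vertex bound $\dim_{G_1}(\{v\})\ge\dim_G(v)$ — is routine.
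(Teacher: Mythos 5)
Your structural preparation is sound: the identification of $G_1$ with the barycentric refinement, the formula $S_{G_1}(\sigma)=(\partial\sigma)_1\star(\mathrm{lk}_G(\sigma))_1$ for a $k$-simplex with $k\ge 1$, the exact additivity $\dim(A\star B)=\dim(A)+\dim(B)+1$ of the averaged inductive dimension under joins, and the pointwise vertex bound $\dim_{G_1}(\{v\})\ge\dim_G(v)$ obtained by applying the induction hypothesis to the unit spheres $S_G(v)$ are all correct. But the proof is not complete, and the missing piece is exactly the heart of the lemma. Since $\dim(G_1)$ averages the local dimensions over \emph{all} simplices while $\dim(G)$ averages only over vertices, the vertex bound says nothing about $\dim(G_1)$ until you control the contribution of the positive-dimensional simplices; your reduction to the claim that their average local dimension is at least $\dim(G)$ is a statement of essentially the same nature and difficulty as the lemma itself (it is plausible, and holds in the examples one checks, but a maximal $k$-simplex has local dimension exactly $k$ in $G_1$, so when $\dim(G)$ is driven up by large cliques while many maximal edges and triangles are present, one needs a genuine counting argument). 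You acknowledge this yourself: the tools you propose ($D\ge 0$, the edge identity, the mean-value recursion on the face poset) are stated but not shown to close the argument, and you concede that the crude estimates lose a factor precisely in the clique-rich regime. As written, the proposal therefore has a genuine gap at its central step.

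For comparison, the paper avoids this global averaging comparison altogether by strengthening the statement: it introduces, for an arbitrary chain $f$, a \emph{small} dimension $\underline{\dim}(f)$ (recursively averaged over the degree-one monomials, i.e. the vertices) and a \emph{large} dimension $\overline{\dim}(f)=\dim(G_f)$, and proves $\overline{\dim}(f)\ge\underline{\dim}(f)$ by induction in the lexicographic order given first by the number of variables and then by the number of monomials. The point of passing to chains is that the induction closes: the unit sphere of a vertex is a chain in strictly fewer variables, and the unit sphere of a simplex is a chain with fewer monomials, so every sphere occurring in either recursion is strictly smaller in that order. If you want to salvage your approach within the category of graphs, you would either have to prove your intermediate averaging claim quantitatively (tracking the mean-value recursion up the face poset, as you indicate), or imitate the paper and generalize the statement so that the objects appearing in the recursion (links and spheres of simplices) are again admissible instances of the induction hypothesis.
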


For $K_2$, or any two point graph, we have equality. For all three point graphs different from
the three point graph $xy+x+y+z$, we have equality too, while for $xy+x+y+z$ we have
a first inequality ${\rm dim}(G)=2/3$ and $\delta(G)={\rm dim}(G_1)-{\rm dim}(G)=1/3$. 
Denote by ${\rm dim}(S(x))$ the dimension of the sphere in $G_1$
and by ${\rm dim}(x)=(1+{\rm dim}(S(x)))$ the dimension of the vertex $x$ in $G_1$. 
The average of all these numbers is ${\rm dim}(G_1)$. We can not use induction as the
unit spheres $S(x)$ are not of the form $H \times K_1$ with a smaller dimensional graph
in general. However, we can do it with a chain. We therefore prove a stronger statement
for chains $f = \sum_i x^i$: let the degree $1$ monomials be the vertices and the 
sphere $S(x)$ of a vertex $x$ consist of the sum $g$ of degree $2$ monomials in $f$ which contain
$x$ divided by $x$. The small dimension $\underline{{\rm dim}}(f)$ of a chain $f$
is now recursively defined as the average of the small dimensions of the unit spheres 
of the vertices. The large dimension $\overline{{\rm dim}}(f)$ of the chain 
is the usual dimension of the graph $G_f$. For a chain $f$ which comes from a
graph $G$, the small dimension is the dimension of $G$, the large dimension is the dimension
of $G_1$ which is the average over all large dimensions of the unit spheres $S(x)$, 
over all vertices $x$ in $G_1$. The dimension inequality follows from the more general
statement: 

\begin{lemma}[Generalized dimension inequality]
If $f$ is a chain then $\overline{{\rm dim}}(f) \geq \underline{{\rm dim}}(f)$. 
\end{lemma}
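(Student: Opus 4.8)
The plan is to prove the statement by induction on the number $|M|$ of monomials of $f$, working entirely through the recursive definitions of $\overline{{\rm dim}}$ and $\underline{{\rm dim}}$; the dimension inequality ${\rm dim}(G \times K_1) \geq {\rm dim}(G)$ then follows by taking $f = f_G$, since $G_{f_G} = G \times K_1$. The base case, where $f$ has no monomial of degree $\geq 2$ (so $G_f$ is edgeless), is immediate: both dimensions equal $0$, and both equal $-1$ when $f = 0$.

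For the inductive step the key observation is a join decomposition of unit spheres in $G_f$. Given a monomial $m$ of $f$, write $f_m^{\downarrow}$ for the chain of proper divisors of $m$ occurring in $f$, and $f_m^{\uparrow}$ for the chain of cofactors $m'/m$ over proper multiples $m'$ of $m$ in $f$. Because $a \mid m$ and $m \mid b$ force $a \mid b$, every vertex of $G_{f_m^{\downarrow}}$ is joined to every vertex of $G_{f_m^{\uparrow}}$ inside $S_{G_f}(m)$, so $S_{G_f}(m) = G_{f_m^{\downarrow}} \star G_{f_m^{\uparrow}}$. Both $f_m^{\downarrow}$ and $f_m^{\uparrow}$ have strictly fewer monomials than $f$, so the inductive hypothesis gives $\overline{{\rm dim}}(f_m^{\downarrow}) \geq \underline{{\rm dim}}(f_m^{\downarrow})$ and likewise for $f_m^{\uparrow}$. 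Using the identity ${\rm dim}(A \star B) = {\rm dim}(A) + {\rm dim}(B) + 1$ for the graph join (itself reducible by a short induction to the suspension case, and consistent with ${\rm dim}(\emptyset) = -1$), together with ${\rm dim}(G_{f_m^{\downarrow}}) = \overline{{\rm dim}}(f_m^{\downarrow})$ and ${\rm dim}(G_{f_m^{\uparrow}}) = \overline{{\rm dim}}(f_m^{\uparrow})$, one gets
$$ \overline{{\rm dim}}(f) = \frac{1}{|M|} \sum_{m} \bigl( 2 + \overline{{\rm dim}}(f_m^{\downarrow}) + \overline{{\rm dim}}(f_m^{\uparrow}) \bigr) \geq \frac{1}{|M|} \sum_{m} \bigl( 2 + \underline{{\rm dim}}(f_m^{\downarrow}) + \underline{{\rm dim}}(f_m^{\uparrow}) \bigr) \; . $$

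Next I would isolate the degree-one monomials. If $x$ has degree $1$, then $f_x^{\downarrow} = 0$ and $f_x^{\uparrow}$ is exactly the small unit sphere $S(x)$, so the term for $x$ equals $1 + \underline{{\rm dim}}(S(x))$; summing over the set $M_1$ of degree-one monomials gives $\sum_{x \in M_1} (1 + \underline{{\rm dim}}(S(x))) = |M_1| \, \underline{{\rm dim}}(f)$ by the very definition of $\underline{{\rm dim}}$. Hence it remains to prove that the higher-degree monomials push the average up rather than down, i.e.
$$ \sum_{m : \deg m \geq 2} \bigl( 2 + \underline{{\rm dim}}(f_m^{\downarrow}) + \underline{{\rm dim}}(f_m^{\uparrow}) \bigr) \geq \bigl( |M| - |M_1| \bigr) \, \underline{{\rm dim}}(f) \; . $$

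This last inequality is the heart of the argument and the precise content of the informal principle that higher-dimensional simplices spawn more vertices: when $f$ comes from a graph, a monomial $m$ of degree $k$ sits above its full subdivided boundary $f_m^{\downarrow}$, contributing $\underline{{\rm dim}}(f_m^{\downarrow}) = k - 2$, so its term is $k + \underline{{\rm dim}}(f_m^{\uparrow})$, a genuine gain which should outweigh the degree-one average. My plan for this step is a secondary induction on the maximal degree of $f$, reducing the claim for $m$ of degree $k$ to the already established inequality applied to the links $S_G(m')$ of the faces $m'$ of $m$; alternatively, one may rewrite both $\overline{{\rm dim}}(f)$ and $\underline{{\rm dim}}(f)$ as weighted averages of lengths of maximal chains in the divisibility poset on $M$ and compare the two weightings directly. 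I expect this bookkeeping — matching the uniform average over all monomials defining $\overline{{\rm dim}}$ against the nested, vertex-rooted average defining $\underline{{\rm dim}}$, so that the extra weight carried by the many vertices spawned by a high-dimensional simplex is correctly accounted for — to be the main obstacle. As a consistency check, the whole argument collapses to an equality when $G$ is geometric of dimension $d$, because then $k - 2 + \underline{{\rm dim}}(f_m^{\uparrow}) = d$ for every simplex $m$, in agreement with the Geometry-of-product theorem.
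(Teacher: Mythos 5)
Your structural preparation is fine, and in fact more explicit than anything the paper writes down: the identification $S_{G_f}(m)=G_{f_m^{\downarrow}}\star G_{f_m^{\uparrow}}$ is correct, the join identity ${\rm dim}(A\star B)={\rm dim}(A)+{\rm dim}(B)+1$ does hold for the averaged inductive dimension (it follows by induction on the total number of vertices, not really by reduction to suspensions, but it is true), and the degree-one bookkeeping $\sum_{x\in M_1}(1+\underline{{\rm dim}}(S(x)))=|M_1|\,\underline{{\rm dim}}(f)$ is right. (For comparison, the paper's own argument is a different and much terser induction, lexicographic in the number of variables and then the number of monomials, using that vertex spheres lose a variable and simplex spheres lose monomials; it too never carries out the actual comparison of averages.) But your argument stops exactly where the content of the lemma begins: the displayed inequality for the degree $\geq 2$ monomials is only announced, with two candidate strategies and the admission that the bookkeeping is ``the main obstacle''. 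As written this is a reduction, not a proof.

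More seriously, the reduced claim is false, so no bookkeeping will close it in this form. Take $f=f_G$ with $G=K_3\sqcup K_{3,3}$. Then $\underline{{\rm dim}}(f)={\rm dim}(G)=4/3$, while each of the nine bipartite edges contributes $2+0+(-1)=1$ and each of the four simplices of the triangle contributes $2$, so the left-hand side of your inequality is $17$, which is less than $(|M|-|M_1|)\,\underline{{\rm dim}}(f)=13\cdot 4/3$. The failure is not an artifact of your route: computing directly, $\overline{{\rm dim}}(f)={\rm dim}(G\times K_1)=(7\cdot 2+15\cdot 1)/22=29/22<4/3$, because the dense one-dimensional part spawns quadratically many new vertices and gains weight in the average — the opposite of the ``higher simplices spawn more vertices'' heuristic, which is valid pointwise (the vertex of $G_1$ corresponding to a simplex $m$ has dimension at least $\deg(m)-1$) but not for the global average. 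The same phenomenon occurs for genuine chains, e.g. $f=xy+f_{K_n}$, where the monomial $xy$ without its divisors is an isolated vertex of $G_f$. So your consistency check for geometric $G$ (where everything is an equality) is misleading about the general case: to complete an argument along your lines you must either restrict the class of chains (connected, divisor-closed, with simplex counts dominated by the top-dimensional part) or replace the averaged inequality by its local version; the step you deferred is a genuine gap, and as formulated it would fail.
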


\begin{proof}
The more general statement can now be proven with induction with respect
to a lexicographic ordering which looks first at the
the number of variables appearing in the chain and then at the number of monomials.
This is a total ordering so that induction works. 
Unlike for graphs, the unit sphere of a vertex $x$ is now a chain 
with one variable less for which we can look at the small and large dimension.
Also the unit sphere of a simplex $x$ is a chain for which we can look at the
small and large dimension. The unit sphere of a simplex has in general the same
number of variables but less monomials. 
\end{proof}

{\bf Example.} \\
The chain $f=xyzw + xyz + xy + xw + x + y$ has two vertices $x,y$.
(A) Lets look at $x$ first: the unit sphere of $x$ is $S_f(x)=yzw+yz+y+w$ which has two vertices $y,w$. 
(i) The vertex $w$ in $S_f(x)$ has the unit sphere $S(w)=yz$ which has no vertices so that $w$ has dimension $0$. \\
(ii) The vertex $y$ in $S_f(x)$ has the sphere $S(y)=zw+z$,
a chain with one vertex $z$ which has a $0$-dimensional sphere $w$ so that $z$ has dimension $1$,
$S(y)=z+zw$ has dimension $1$ and $y$ has dimension $2$ in $f$. \\
The dimension of the vertex $x$ is therefore $(0+2)/2=1$. \\
(B) The unit sphere of $y$ in $f$ is $-xzw-xz$ which has no vertex so that $y$ has dimension $0$.
(C) The chain has small dimension $(0+1)/2=1/2$. \\
The two computations (A)-(C) show that $\underline{{\rm dim}}(f)=1/2$. 
The large dimension of the chain is the average of the large dimensions of the monomials in $f$.
It is the dimension of a graph $G_f$ with 6 vertices and 11 edges. It has dimension ${\rm dim}(G_f)=2.7333$
which is $\overline{{\rm dim}}(f)$.  \\

{\bf Remarks:} \\
{\bf 1)} The inequality looks like the known inequality between Hausdorff dimension and 
inductive dimension in the continuum. But we do not see any relation yet. \\
{\bf 2)} When iterating the refinement construction, we get a sequence of graphs $G_n$
with a dimension which converges to an integer. 

Lets elaborate on this remark a bit more. 
The largest simplex will be the seed which grows fastest and take over. 

\begin{propo}[Refinement asymptotics]
Start with a finite simple graph $G=G_0$ and define recursively $G_n=G_{n-1} \times K_1$, 
then ${\rm dim}(G_n) \to d$, if $K_{d+1}$ is the largest clique in $G$. 
\end{propo}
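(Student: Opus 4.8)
The plan is to analyse the inductive dimension of $G_n = B^n(G)$ one refinement at a time, where $B(K) = K \times K_1$ is the barycentric refinement of the Whitney complex of $K$. First the easy part: a simplex of $B(K)$ is a flag $\sigma_0 \subsetneq \cdots \subsetneq \sigma_r$ of simplices of $K$, so $B(K)$ contains $K_{d+1}$ exactly when $K$ does and never $K_{d+2}$; hence every $G_n$ has combinatorial dimension $d$, and a one-line induction over unit spheres (a clique in $S(x)$ together with $x$ is a clique of the ambient graph, so $\dim S(x) \le d-1$) gives $\dim(G_n) \le d$. By the Dimension Inequality Lemma, $\dim(G_n) = \dim(G_{n-1} \times K_1) \ge \dim(G_{n-1})$, so $\dim(G_n)$ increases to some limit $L \le d$; the rest of the argument proves $L = d$ by estimating the \emph{defect} $d - \dim(G_n) \ge 0$ directly.

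The crucial local input is the claim that $\dim(B(M)) = m$ for every \emph{pure} $m$-dimensional complex $M$ (every maximal simplex has dimension $m$). This follows by induction on $m$ from two facts: (i) $\dim$ is additive under joins, $\dim(A \star B) = \dim(A) + \dim(B) + 1$, which comes from $S_{A\star B}(a) = S_A(a) \star B$ by a short computation; and (ii) the barycentric subdivision of the boundary $\partial\Delta^j$ of a $j$-simplex is a geometric $(j-1)$-sphere, hence of inductive dimension $j-1$, which follows from the join-of-spheres corollary by induction on $j$. Indeed, the unit sphere of the barycenter $\hat\mu$ of a $j$-simplex $\mu$ in $B(M)$ is $B(\partial\Delta^j) \star B(\mathrm{lk}_M\mu)$; purity makes $\mathrm{lk}_M\mu$ pure of dimension $m-1-j$, so by induction and (i)--(ii) this sphere has dimension $(j-1)+(m-1-j)+1 = m-1$, and averaging over vertices gives $\dim(B(M)) = m$. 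Running the same decomposition inside $G_n = B(G_{n-1})$: if a simplex $\sigma$ of $G_{n-1}$ lies in a $d$-simplex but in no maximal simplex of dimension $<d$, then $\mathrm{lk}_{G_{n-1}}\sigma$ is pure of dimension $d-1-\dim\sigma$ and $\dim S_{G_n}(\hat\sigma) = (\dim\sigma-1) + (d-1-\dim\sigma) + 1 = d-1$ exactly; such a vertex has no dimension defect.

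It remains to show that the vertices of $G_n$ \emph{not} of this kind form a vanishing fraction. Call a simplex \emph{bad} if it lies in no $d$-simplex. A simplex of $G_n$ is bad iff its top vertex, a simplex of $G_{n-1}$, is bad; counting flags over a fixed bad top of $v$ vertices shows that the vector $b^{(n)} = (\#\{\text{bad simplices of }G_n\text{ with }w\text{ vertices}\})_{1\le w \le d}$ satisfies $b^{(n)} = T b^{(n-1)}$ with $T_{wv} = w!\,S(v,w)$ (Stirling numbers of the second kind), the $d \times d$ \emph{truncated} barycentric operator, whose largest eigenvalue is $d!$; hence the number of bad simplices of $G_n$ is $O((d!)^n)$. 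On the other hand $|V(G_n)| = \#\{\text{simplices of }G_{n-1}\} \ge f_d(G_{n-1}) = ((d+1)!)^{n-1} f_d(G)$, since each $d$-simplex spawns $(d+1)!$ complete flags under refinement. A vertex $\hat\sigma$ of $G_n$ with positive defect must, by the previous paragraph, have $\sigma$ a face of a maximal simplex of $G_{n-1}$ of dimension $<d$, hence a face of a bad maximal simplex; the number of such $\sigma$ is at most $2^d$ times the number of bad simplices of $G_{n-1}$, hence $O((d!)^n)$. Since every defect $(d-1) - \dim S_{G_n}(\hat\sigma)$ lies in $[0,d]$ and those of good vertices vanish, $d - \dim(G_n) = \mathrm{avg}_\sigma\big[(d-1) - \dim S_{G_n}(\hat\sigma)\big] \le d \cdot O((d!)^n)/|V(G_n)| \to 0$, so $\dim(G_n) \to d$.

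The main obstacle is this last paragraph: the whole result hinges on separating ``good'' from ``bad'' barycenters and on recognising that bad simplices propagate under a \emph{lower-dimensional} barycentric operator (acting only on dimensions $\le d-1$), so that their count grows at rate $d! < (d+1)!$, strictly slower than the total vertex count — this is exactly what forces the averaged defect to zero. The combinatorial claim that a positive-defect barycenter can only come from a simplex trapped inside a maximal simplex of dimension $<d$, and the local lemma $\dim(B(\text{pure }m)) = m$, both have to be checked with some care; but once $\dim$ is known to be join-additive, the remaining ingredients are standard facts about barycentric subdivisions of simplices and of their boundaries.
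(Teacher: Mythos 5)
Your proof is correct, and it follows the same guiding idea as the paper's own argument: monotonicity from the dimension inequality plus the bound $\dim(G_n)\le d$ give convergence, and the limit is $d$ because the top-dimensional cluster grows at rate $(d+1)!$ and overwhelms everything of lower dimension. The difference is one of completeness. The paper's proof stops at the qualitative statement that the $K_{d+1}$-cluster ``takes over'' and that $v_l(H_n)$ dominates $v_l(G_n)$, without explaining how dominance of face counts forces the \emph{averaged inductive} dimension to converge to $d$. You supply exactly the missing machinery: (a) the local lemma that for a pure $m$-dimensional complex the refinement has every vertex dimension equal to $m$, proved via the decomposition $S_{B(M)}(\hat\sigma)=B(\partial\sigma)\star B(\mathrm{lk}_M\sigma)$ and join-additivity $\dim(A\star B)=\dim(A)+\dim(B)+1$ (which indeed follows from $S_{A\star B}(a)=S_A(a)\star B$ and an averaging computation); (b) the identification that a positive-defect barycenter must sit inside a maximal clique of dimension $<d$, hence inside a ``bad'' simplex; and (c) the quantitative comparison: bad simplices propagate under the truncated barycentric operator $T_{wv}=w!\,S(v,w)$, $1\le w\le v\le d$, with spectral radius $d!$, while $|V(G_n)|\ge((d+1)!)^{n-1}f_d(G)$, so the defective fraction, with defects bounded by $d$, vanishes. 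This is a genuine strengthening of the paper's sketch rather than a different route. One small imprecision: $B(\partial\Delta^j)$ is not itself a join of spheres (for $j=2$ it is $C_6$), so your item (ii) does not follow directly from the join-of-spheres corollary; the correct argument is an induction using the unit-sphere decomposition $S_{B(\partial\sigma)}(\hat\tau)=B(\partial\tau)\star B(\partial\sigma')$ with $\sigma'$ the complementary face, and since only the dimension value $j-1$ is used, join-additivity of $\dim$ suffices and the full geometric-sphere claim is not needed.
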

\begin{proof}
The dimensions can only grow or stay by the lemma. Since the dimensions are
bounded by $d$ for all $n$, the dimensions must converge to some value 
smaller or equal than $d$. The limiting value is $d$ because the number
$v_d(G_n)$ of $K_{d+1}$ subgraphs of $G_n$ grows exponentially and the
growth of that large geometric cluster $H_n$ formed by the union of the
$K_{d+1}$ simplices takes over: the number $v_l(G_n)$ of $K_{l+1}$ 
subgraphs of $G_n$ is dominated by $v_l(H_n)$ of $K_{l+1}$ subgraphs of $H_n$. 
\end{proof}

\begin{figure}[h]
\scalebox{0.12}{\includegraphics{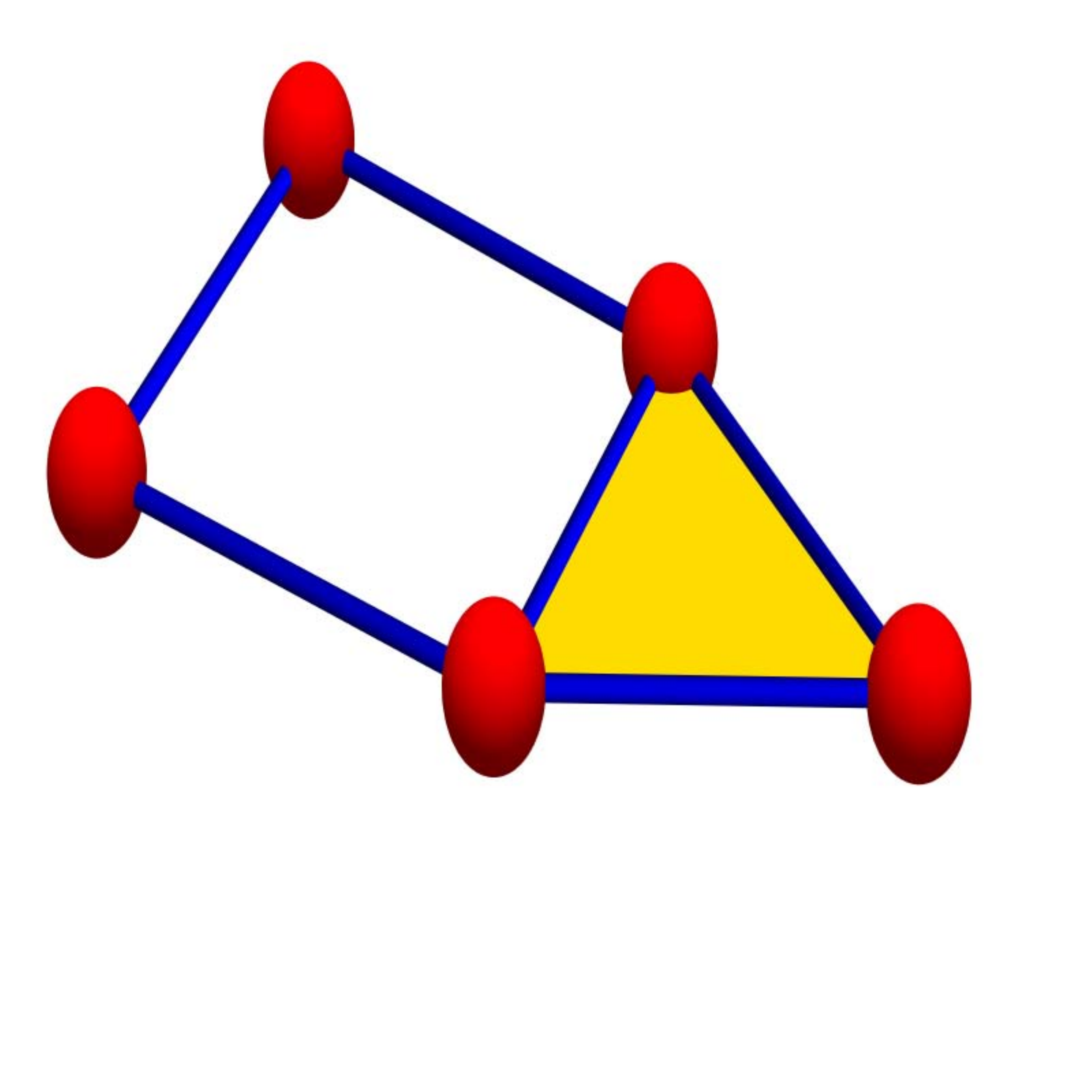}}
\scalebox{0.12}{\includegraphics{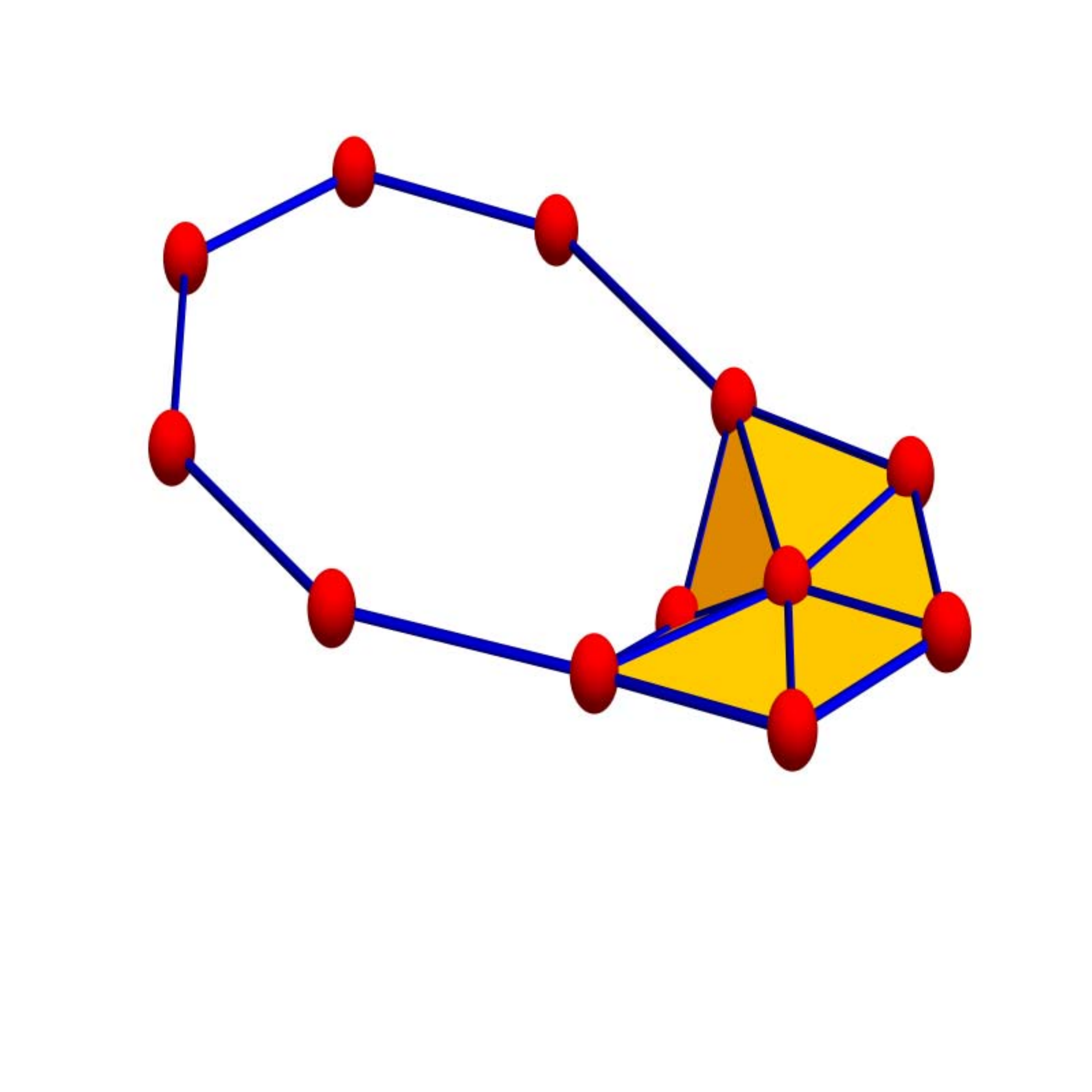}}
\scalebox{0.12}{\includegraphics{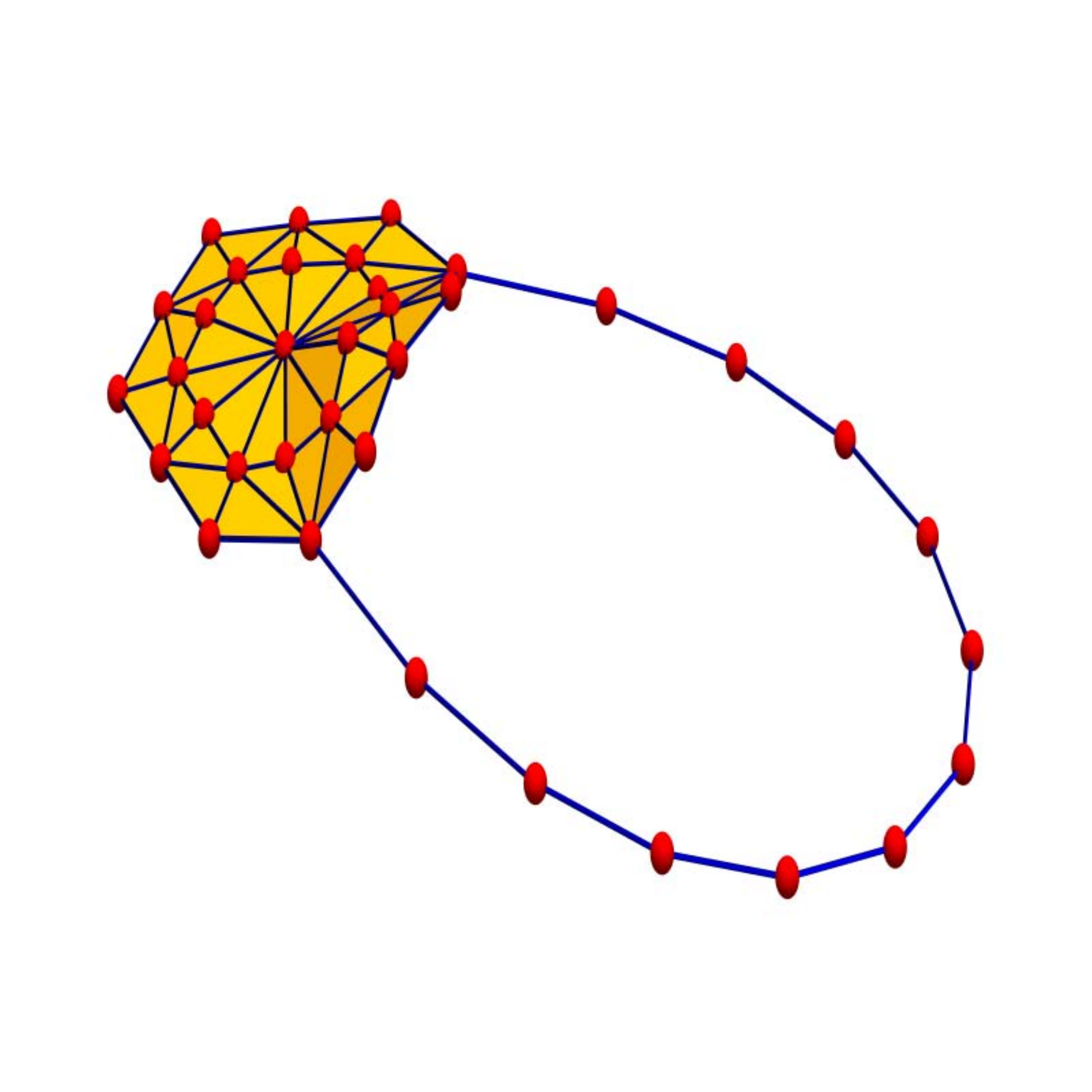}}
\caption{
\label{euclideanlimit}
Starting with the house graph $G$ to the left, we build $G_1 = G \times K_2$,
then $G_2 = G_1 \times K_1$ etc. The graph remains non-geometric
but the geometric part takes over more and more weight as higher dimensional 
simplices divide into more pieces. In the limit we get to a geometric graph. 
We see that even when starting with a random graph, the refinement process
moves the graph to a geometric graph. This is why geometric graphs are so 
natural. What happens from $G_n \to G_{n+1}$ is that the simplices of $G_n$
become the vertices of $G_{n+1}$. As we like to think about simplices as
basic ``points", this picture is a natural evolution. 
}
\end{figure}

We see that when starting with an arbitrary network, we asymptotically 
get to a geometric graph. This could be interesting if we look at a
dynamical emergence of graphs through refinement or evolution.
It would explain, why geometric and manifold-like structures with integer
dimension appear naturally: the lower dimensional structures grow
slower and are overtaken by the growth of largest structures which make sure
that a larger and larger part of $G_n$ will be geometric. The geometric part
grows fast like the novo-vacuum in Schild's ladder
where physics is described by ``Sarumpaet rules" \cite{Egan}. Egan's novel 
starts with with the words: 
{\it In the beginning was a graph, more like diamond than graphite.} 

\begin{figure}[h]
\scalebox{0.40}{\includegraphics{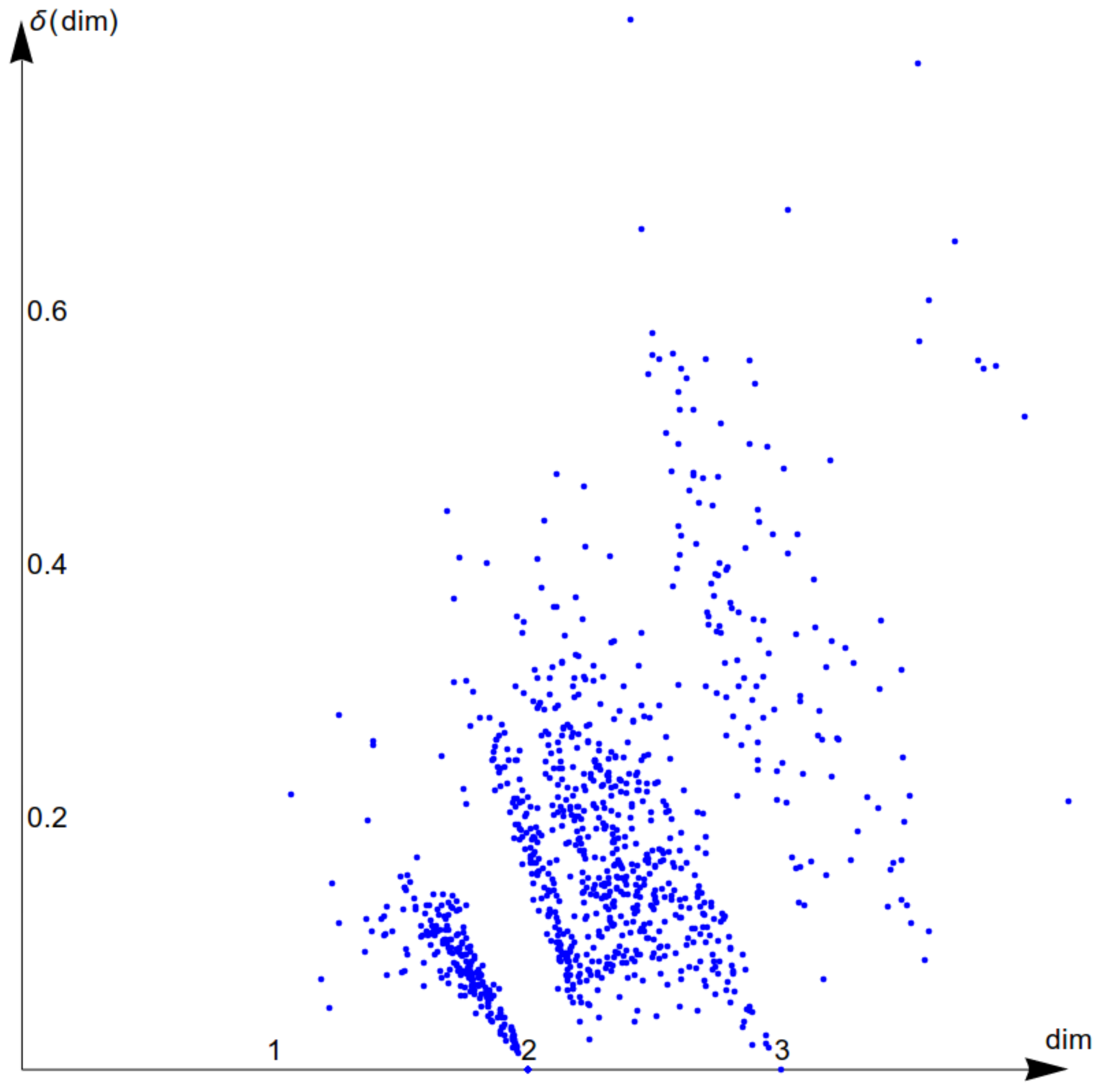}}
\caption{
\label{dimensiondiscrepancy}
The dimension inequality is illustrated quantitatively by computing the 
discrepancy $\delta = {\rm dim}(G \times K_1) - {\rm dim}(G)$ for
2000 random Erd\"os-R\'enyi graphs in $E(12,0.5)$ in relation to the dimension ${\rm dim}(G)$. 
The inequality becomes an equality for geometric graphs. There, the
dimension of $G$ is an integer and agrees with the dimension of the enhanced graph 
$G_1 = G \times K_1$.
}
\end{figure}

The dimension of $G \times K_1$ is not necessarily equal to the dimension of $G$. For example, the dimension
of the lollipop graph $G$ is $5/2=2.5$, the dimension of $G_1 = G \times K_1$ is $11/4=2.75$ is slightly higher. 
While $G \times K_1$ has a cover for which the nerve graph is $G$, the dimensions don't agree because
higher dimensional parts of $G$ contribute more to $G_1$.  \\

We have looked a bit at the statistics 
and computed the difference ${\rm dim}(G \times K_1) - {\rm dim}(G)$
which is related to the variance of the dimension random variable $x \to {\dim}_G(x)$ on a graph.
See Figure~(\ref{dimensiondiscrepancy}). \\
The preservation of dimension is an essential ingredient of a graph homeomorphisms, as it is in the
continuum. It is no surprise that in the fractal case, we have the same inequality like
Hausdorff dimension: ${\rm dim}(X \times Y)  \geq {\rm dim}(X) + {\rm dim}(Y)$. 
Note however that the equality ${\rm dim}(G \times H ) = {\rm dim}(G_1) + {\rm dim}(H_1)$ 
holds in full generality, for any finite simple graph. We use the notation ${\rm dim}(G)(x)={\rm dim}(S_G(x)$
where $S_G(x)$ is the unit sphere of the vertex $x$ in $G$. 

\begin{thm}[Dimension of product]
\label{inductivedimension}
Given two finite simple graphs $G,H$. The dimension of $G \times H$ is
equal to the sum of the dimensions of $G_1 = G \times K_1$ and $H_1 = H \times K_1$. 
More precisely, the dimension of any vertex $(x,y)$ in $G \times H$ the dimension
is the sum of the dimensions of $x$ in $G_1$ and $y$ in $H_1$:
$$   {\rm dim}(G \times H)(x,y) = {\rm dim}(G_1)(x) + {\rm dim}(H_1)(y) \; . $$
\end{thm}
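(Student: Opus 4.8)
The plan is to recognise $G\times H$ as an order complex and to reduce the whole statement to one clean dimension identity for products of posets. Let $P_G$ denote the face poset of $G$ — the set of complete subgraphs of $G$ ordered by inclusion — and, for a finite poset $P$, let $\Delta(P)$ be the graph whose vertices are the elements of $P$ and whose edges are the comparable pairs, with its Whitney complex (whose simplices are exactly the chains of $P$). Straight from the construction in Section~2 one has, as graphs-with-Whitney-complex, $G\times H=\Delta(P_G\times P_H)$ and $G_1=G\times K_1=\Delta(P_G)$: a vertex of $G\times H$ is a monomial $x_\sigma y_\tau$ with $\sigma\in P_G,\tau\in P_H$; two are joined iff one divides the other, i.e. iff $(\sigma,\tau)$ and $(\sigma',\tau')$ are comparable in $P_G\times P_H$ (product order); and a clique is a chain. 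So it suffices to prove, for all finite nonempty posets $P,Q$ and all $(p,q)\in P\times Q$, the pointwise identity
$$ {\rm dim}_{(p,q)}\Delta(P\times Q)={\rm dim}_p\Delta(P)+{\rm dim}_q\Delta(Q), $$
where ${\rm dim}_v(K)=1+{\rm dim}(S_K(v))$. Averaging over all $|P|\cdot|Q|$ vertices then also gives ${\rm dim}\Delta(P\times Q)={\rm dim}\Delta(P)+{\rm dim}\Delta(Q)$, hence both the displayed formula and the ${\rm dim}(G\times H)={\rm dim}(G_1)+{\rm dim}(H_1)$ of the header, by taking $P=P_G$, $Q=P_H$.

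Three elementary inputs drive the induction. (i) \emph{Join-additivity of inductive dimension}: ${\rm dim}(A\star B)={\rm dim}(A)+{\rm dim}(B)+1$ with ${\rm dim}(\emptyset)=-1$; this follows by induction on $|V(A)|+|V(B)|$ from $S_{A\star B}(a)=S_A(a)\star B$ and $S_{A\star B}(b)=A\star S_B(b)$, which give ${\rm dim}_{A\star B}(a)={\rm dim}_A(a)+{\rm dim}(B)+1$, and then averaging. (ii) \emph{Cone lemma}: if a vertex $v$ of a nonempty complex $K$ is adjacent to all the others, then $K=v\star(K\setminus v)$, so ${\rm dim}(K)={\rm dim}(K\setminus v)+1$. (iii) \emph{Slicing identity}: for $p\in P$ one has $S_{\Delta(P)}(p)=\Delta(P_{<p})\star\Delta(P_{>p})$ (the induced poset $P_{<p}\cup P_{>p}$ is the ordinal sum, whose order complex is the join), while $P_{\le p}=P_{<p}\cup\{p\}$ has maximum $p$ and $P_{\ge p}$ has minimum $p$; combining (i)--(iii) yields ${\rm dim}_p\Delta(P)={\rm dim}\Delta(P_{\le p})+{\rm dim}\Delta(P_{\ge p})$.

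I would then run an induction on $|P|+|Q|$, the base case $|P|=1$ or $|Q|=1$ being immediate since then $\Delta(P\times Q)\cong\Delta(Q)$ (resp. $\Delta(P)$) and ${\rm dim}_p\Delta(\{p\})=0$. For $|P|,|Q|\ge 2$ and a vertex $(p,q)$: since $(P\times Q)_{\le(p,q)}=P_{\le p}\times Q_{\le q}$ and $(P\times Q)_{\ge(p,q)}=P_{\ge p}\times Q_{\ge q}$, the slicing identity for $P\times Q$ gives ${\rm dim}_{(p,q)}\Delta(P\times Q)={\rm dim}\Delta(P_{\le p}\times Q_{\le q})+{\rm dim}\Delta(P_{\ge p}\times Q_{\ge q})$. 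Unless $(p,q)$ is the maximum of $P\times Q$, the posets $P_{\le p},Q_{\le q}$ have total size $<|P|+|Q|$, so the induction hypothesis turns the first summand into ${\rm dim}\Delta(P_{\le p})+{\rm dim}\Delta(Q_{\le q})$; likewise, unless $(p,q)$ is the minimum, the second becomes ${\rm dim}\Delta(P_{\ge p})+{\rm dim}\Delta(Q_{\ge q})$. When both reductions apply, regrouping and two further uses of the slicing identity give exactly ${\rm dim}_p\Delta(P)+{\rm dim}_q\Delta(Q)$. The only exceptions are the minimum and maximum of $P\times Q$ when they exist — at most two vertices; at such a vertex the cone lemma gives ${\rm dim}_{(p,q)}\Delta(P\times Q)={\rm dim}\Delta(P\times Q)$, which by (ii) equals ${\rm dim}_p\Delta(P)+{\rm dim}_q\Delta(Q)$ iff ${\rm dim}\Delta(P\times Q)={\rm dim}\Delta(P)+{\rm dim}\Delta(Q)$; this equation is forced by averaging the already-known values at the remaining $|P|\cdot|Q|\ (\ge 4)$ vertices, the two exceptional terms cancelling between the two sides. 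That closes the induction and in particular establishes the pointwise identity at the exceptional vertices too.

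The main obstacle is exactly that the inductive (averaging) dimension is not a homotopy invariant and behaves badly under unions, so one cannot simply invoke that $|\Delta(P\times Q)|$ is homeomorphic to $|\Delta(P)|\times|\Delta(Q)|$, and $S_{\Delta(P\times Q)}((p,q))$ is genuinely \emph{not} the join of the two spheres $S_{\Delta(P)}(p)$ and $S_{\Delta(Q)}(q)$ — it merely has the same dimension as that join. The slicing identity ${\rm dim}_p\Delta(P)={\rm dim}\Delta(P_{\le p})+{\rm dim}\Delta(P_{\ge p})$ is what makes the link computation recurse on strictly smaller posets, and the one delicate point is the handling of the at-most-two poset-extremal vertices, where the naive recursion does not strictly decrease the size and the missing value has to be pinned down by the averaging identity instead.
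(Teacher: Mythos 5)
Your proposal is correct, and it takes a genuinely different route from the paper. The paper's proof fixes $(x,y)$, inducts ``on the sum of the dimensions'', describes the unit sphere $S((x,y))$ in $G\times H$ as the union of two cylinders $B(x)\times S(y)\cup S(x)\times B(y)$ overlapping in $S(x)\times S(y)$, and then appeals to a lemma that a union of two graphs of dimension $c$ meeting in dimension $c-1$ has dimension $c$. You instead identify $G\times H$ with the comparability graph $\Delta(P_G\times P_H)$ of the product of face posets and reduce everything to join-additivity of the averaged inductive dimension, the slicing identity ${\rm dim}_p\Delta(P)={\rm dim}\,\Delta(P_{\le p})+{\rm dim}\,\Delta(P_{\ge p})$, and an induction on $|P|+|Q|$, with the at most two poset-extremal vertices recovered by the averaging identity. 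What your route buys is rigor: the actual unit sphere of $(x,y)$ is $\Delta\bigl((P_{\le x}\times Q_{\le y}\cup P_{\ge x}\times Q_{\ge y})\setminus\{(x,y)\}\bigr)$, the join of the punctured down-set and up-set complexes, and the paper's set-theoretic description is too large when read literally (already for $K_2\times K_2$ at the vertex $xuv$, the vertex $xyu$ lies in $S(x)\times B(uv)$ but is not adjacent to $xuv$, since neither monomial divides the other); moreover the union-dimension lemma the paper invokes is false for arbitrary graphs (the path $xy,yz$ and the edge $xz$ both have dimension $1$ and meet in dimension $0$, yet their union is a triangle of dimension $2$), so it would need the special structure of the pieces, which the paper does not supply. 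Your down-set/up-set bookkeeping sidesteps both issues, and the cone-point-plus-averaging device is exactly the right way to handle the vertices where the recursion does not shrink; the price is a longer, more combinatorial argument, whereas the paper's sketch is shorter and keeps the continuum intuition (boundary of a product of balls as two solid tori glued along a torus) in the foreground. One could even extract from your argument a clean standalone statement, the poset identity ${\rm dim}_{(p,q)}\Delta(P\times Q)={\rm dim}_p\Delta(P)+{\rm dim}_q\Delta(Q)$, which implies the theorem upon taking $P=P_G$, $Q=P_H$.
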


\begin{proof}
Use induction with respect to the sum of dimension. Fix $x \in G_1, y \in H_1$ 
and assume ${\rm dim}(B(x))=a$, ${\rm dim}(B(y))=b$ so that
$a-1={\rm dim}(S(x))$ and $b-1={\rm dim}(S(y))$. 
We only have to show that the unit sphere of $(x,y)$ in $(G \times H)$
has dimension $a+b-1$ so that $(x,y)$ has dimension $a+b$. 
The sphere $S(x,y)$ is the union of two graphs $B(x) \times S(y) \cup S(x) \times B(y)$, 
which overlap in $S(x) \times S(y)$. 
The two graphs have both dimension $c=a+b-1$ and $S(x) \times S(y)$ has dimension $c-1=a+b-2$. 
Lemma: if two graphs $A,B$ have dimension $c$ and their intersection has dimension $c-1$ 
then $A \cup B$ has dimension $c$. 
\end{proof}

For example, if $G$ is the {\bf tadpole graph} and $H$ is the house graph,
then ${\rm dim}(G_1) = 71/44$ and ${\rm dim}(G_2) = 37/24$. 
The product graph $G \times H$ has dimension $833/264 = 71/44 + 37/24$.

\begin{coro}[Dimension inequality]
If $G,H$ are finite simple graphs, then ${\rm dim}(G \times H) \geq {\rm dim}(G) + {\rm dim}(H)$. 
\end{coro}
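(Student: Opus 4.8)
The plan is to derive this inequality immediately from the two preceding results: the pointwise dimension identity of Theorem~\ref{inductivedimension} together with the lemma ${\rm dim}(G \times K_1) \geq {\rm dim}(G)$. Recall that the dimension of a finite simple graph is, by definition, the average over its vertices of the local dimensions ${\rm dim}(G)(x) = 1 + {\rm dim}(S_G(x))$, so the whole argument is just a matter of averaging the pointwise statement over the vertex set of $G \times H$.

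First I would record the combinatorial fact that the vertices of $G \times H = G_{f_G f_H}$ are precisely the monomials of the product $f_G f_H$, and that each such monomial factors uniquely as a monomial of $f_G$ times a monomial of $f_H$. Hence the vertex set of $G \times H$ is in canonical, weight-preserving bijection with the product of the vertex set of $G_1 = G \times K_1$ (the monomials of $f_G$, i.e.\ the simplices of $G$) and the vertex set of $H_1 = H \times K_1$; under this bijection a vertex of $G \times H$ is a pair $(x,y)$ with $x$ a vertex of $G_1$ and $y$ a vertex of $H_1$, each pair occurring exactly once.

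Next, applying Theorem~\ref{inductivedimension} to each vertex gives ${\rm dim}(G \times H)(x,y) = {\rm dim}(G_1)(x) + {\rm dim}(H_1)(y)$. Averaging over all $v_0(G_1)\cdot v_0(H_1)$ vertices and using that the average of a sum $\alpha(x)+\beta(y)$ over a product index set splits as ${\rm avg}_x\,\alpha(x) + {\rm avg}_y\,\beta(y)$, I obtain the exact identity
$$ {\rm dim}(G \times H) = {\rm dim}(G_1) + {\rm dim}(H_1) . $$
Finally, the dimension inequality lemma gives ${\rm dim}(G_1) \geq {\rm dim}(G)$ and ${\rm dim}(H_1) \geq {\rm dim}(H)$; adding these yields ${\rm dim}(G \times H) \geq {\rm dim}(G) + {\rm dim}(H)$.

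There is essentially no hard step here, since the content already sits in the theorem and the lemma proved above; the one point requiring care is the bookkeeping in the second paragraph, namely checking that the bijection between vertices of $G \times H$ and pairs of simplices is genuinely weight-preserving so that the averaging factorizes — this is immediate from the algebraic description of the product. It is worth remarking that the identity ${\rm dim}(G \times H) = {\rm dim}(G_1) + {\rm dim}(H_1)$ holds with no hypotheses whatsoever, so the inequality against ${\rm dim}(G) + {\rm dim}(H)$ is strict precisely when $G$ or $H$ fails to have uniform local dimension (for instance when it is non-geometric), mirroring the strictness of the Hausdorff-dimension product inequality in the continuum.
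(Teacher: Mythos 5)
Your argument is correct and is essentially the paper's own proof: combine the pointwise identity of Theorem~\ref{inductivedimension} (averaged over the product vertex set, which as you note is the product of the vertex sets of $G_1$ and $H_1$, to get ${\rm dim}(G\times H)={\rm dim}(G_1)+{\rm dim}(H_1)$) with the lemma ${\rm dim}(G_1)\ge {\rm dim}(G)$ applied to each factor. The only caveat is your closing remark: the claim that the inequality is strict \emph{precisely} when a factor has non-uniform local dimension is not established in the paper and is not needed for the corollary.
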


\begin{proof}
We have ${\rm dim}(G_1) \geq {\rm dim}(G)$ and ${\rm dim}(H_1) \geq {\rm dim}(H)$.
The result follows from Theorem~(\ref{inductivedimension}).
\end{proof}

Given a geometric graph $G$. A subgraph $U$ of $G$ is called convex if 
between any two vertices $x,y$ in $U$, there exists a shortest connection between $x,y$
which is inside $U$. Note that shortest connections are not necessarily unique. 
The convex hull of a finite set $Y$ of vertices in $G$ is the intersection of all convex
subgraphs of $G$ which contain $Y$. 

\begin{thm}[Topology]
a) If $G$ is geometric, then $G_1=G \times K_1$ is homeomorphic to $G$ in the sense of \cite{KnillTopology}. \\
b) If $G$ has no triangles, then $G_1 = G \times K_1$ is homeomorphic to $G$ in the classical sense of
topological graph theory. 
\end{thm}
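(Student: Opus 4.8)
The plan is to handle the two parts by different routes: part (b) is the classical barycentric-subdivision argument, while part (a) requires exhibiting a good cover of $G_1$ in the combinatorial sense of \cite{KnillTopology}.

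For part (a), I would build an explicit good cover of $G_1=G\times K_1$ whose nerve graph is $G$. Recall that the vertices of $G_1$ are the simplices of $G$ and that two are adjacent exactly when one contains the other. For each $v\in V(G)$ put $U_v=B_{G_1}(v)$; since a singleton can only be \emph{contained in} a simplex, $U_v$ is the induced subgraph of $G_1$ on $\{\sigma\in G: v\in\sigma\}$, the closed star of $v$. The one computation that matters is that for vertices $v_1,\dots,v_m$ of $G$,
$$U_{v_1}\cap\dots\cap U_{v_m}=\{\sigma\in G:\{v_1,\dots,v_m\}\subseteq\sigma\},$$
which is nonempty precisely when $\{v_1,\dots,v_m\}$ spans a simplex $\tau$ of $G$, and in that case the induced subgraph is the pyramid $A\star\{\tau\}$, where $A$ is the set of proper supersets of $\tau$; being a join with $K_1$ it is contractible. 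Hence $\{U_v\}_{v\in V(G)}$ covers $G_1$, every element and every nonempty finite intersection is contractible — so the cover is good — and its nerve is exactly the Whitney complex of $G$, i.e.\ $G$. Since $G$ is geometric of dimension $k$, the earlier theorem on the geometry of products (with the second factor $K_1$ of dimension $0$) shows $G_1$ is again $k$-dimensional geometric, so the cover is $k$-dimensional; having matching nerve and matching dimension is what the definition of homeomorphism in \cite{KnillTopology} asks for, and this yields $G_1\cong G$.

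For part (b), if $G$ has no triangles then its only complete subgraphs are its vertices and its edges, so $G_1$ has vertex set $V(G)\sqcup E(G)$, and an edge of $G_1$ is a pair consisting of a vertex $v$ and an edge $e$ of $G$ with $v\in e$ — there are no others, since neither two vertices nor two edges of $G$ are ever related by strict inclusion and no higher simplices exist. Thus $G_1$ is precisely the graph obtained from $G$ by inserting one new vertex in the interior of each edge and splitting that edge into two, i.e.\ the edge-subdivision of $G$. Since subdividing edges is a homeomorphism of geometric realizations in classical topological graph theory, $|G_1|\cong|G|$.

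The routine parts are the intersection identity and the remark about subdivisions. The genuine obstacle is in part (a): one must check that the constructed datum — a good cover of $G_1$ with nerve $G$ of the correct dimension — is literally what the homeomorphism definition of \cite{KnillTopology} requires, and, if that definition is phrased via compatible covers on both graphs, pair it with the tautological self-presentation of $G$. It is worth noting that the cover and the identification of its nerve with $G$ go through for an \emph{arbitrary} finite simple graph; the geometric hypothesis in (a) is used only to prevent the dimension from increasing (cf.\ the dimension inequality), which is exactly the point where homeomorphism is strictly stronger than homotopy equivalence.
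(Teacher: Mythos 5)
Part (b) is exactly the paper's argument (with no triangles, $G_1$ is the edge subdivision of $G$), so that half is fine. Part (a), however, has a genuine gap, and it sits precisely at the point you flagged as ``the genuine obstacle'' and then waved through. The notion of homeomorphism in \cite{KnillTopology} uses a dimension-sensitive nerve: as the paper's own proof remarks, intersections $U(x)\cap U(y)$ that are of \emph{lower} dimension than the cover elements do not count as edges of the nerve graph. Your cover by closed stars $U_v=B_{G_1}(v)=\{\sigma: v\in\sigma\}$ is too small for this. For $G$ geometric of dimension $k$, each $U_v$ is $k$-dimensional, but $U_x\cap U_y=\{\sigma:\{x,y\}\subseteq\sigma\}$ is the cone with apex $xy$ over the simplices strictly containing the edge $xy$, which has dimension $k-1$. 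Concretely, for $G=C_n$ the intersection $U_x\cap U_y$ is the single vertex $\{xy\}$, a $0$-dimensional set inside a $1$-dimensional graph. Under the dimension-sensitive nerve, none of these intersections produce edges, so the nerve of your cover is a discrete set of $n$ points, not $G$. Your closing remark that the construction works for arbitrary graphs and that geometricity only controls dimension is the symptom: the classical (dimension-blind) nerve of the star cover is indeed always the Whitney complex of $G$, but that is not the nerve the theorem is about.

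The paper repairs this by enlarging the cover elements: $U(x)$ is taken to be the convex hull in $G_1$ of the vertices of the unit sphere $S_G(x)$, which is the ball of radius $2$ in $G_1$ around $x$ (for $C_n$ this is the path $y,xy,x,xz,z$). Then $U(x)\cap U(y)$ is $k$-dimensional exactly when $x\sim y$ in $G$ (for $C_n$ it is the path $x,xy,y$ of diameter $2$), and lower-dimensional or empty otherwise, so the dimension-sensitive nerve recovers $G$. This is also where the geometric hypothesis genuinely enters: one needs the unit spheres of $G$ to be $(k-1)$-spheres to guarantee that these fatter intersections attain full dimension $k$. If you want to salvage your write-up, replace the closed stars by these radius-$2$ balls (equivalently, the convex hulls of the $S_G(x)$) and verify the dimension of the pairwise intersections; the contractibility and nerve-identification bookkeeping you did would then need to be redone for the larger sets.
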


\begin{proof}
a) Assume the graph has dimension $k$. 
Let $x_1,\dots,x_n$ be the vertices of $G$. We have to find a cover $\{ U(x_i) \}$
of $G_1$ such that the nerve graph of the cover is the original graph $G$. As
$U(x_i)$ we look at the unit sphere $S_G(x_i)$ in the original graph and build the
smallest convex hull in $G_1$ containing all the vertices of this sphere. 
The intersection of two $U(x),U(y)$ is $k$-dimensional and nonempty if and only if
$x,y$ were connected in the original graph. 
Note that according to the definition \cite{KnillTopology},
intersections $U(x) \cap U(y)$ which are lower dimensional do not count as
edges in the nerve graph.  \\
b) The graph $G_1$ is a refinement, where every edge has got an additional vertex. 
\end{proof} 

{\bf Examples.} \\
{\bf 1)} If $G=C_n$ then the $U(x_i)$ are balls of radius $2$ in the new graph $G_1$ 
centered at the original vertices. Two sets $U(x),U(y)$ overlap in line graphs of
diameter $2$ if $x$ and $y$ were connected in $G$. \\

\begin{thm}[Continuity]
If $G$ and $H$ are homeomorphic geometric graphs and $K$ is a third geometric graph, then
$G \times K$ is homeomorphic to $H \times K$. 
\end{thm}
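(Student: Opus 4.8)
The plan is to reduce the statement to the homeomorphism definition of \cite{KnillTopology}: a homeomorphism $G \to H$ is witnessed by a good cover of each side whose nerves are isomorphic and whose charts match up dimension-respectingly. So I would start from a homeomorphism $G \to H$, given by covers $\{U(x_i)\}$ of $G$ and $\{V(x_i)\}$ of $H$ indexed by the same nerve graph $N$, with $\dim(U(x_i) \cap U(x_j)) = \dim(V(x_i) \cap V(x_j)) = \dim(G) = \dim(H)$ exactly when $i \sim j$ in $N$, and lower-dimensional otherwise. The goal is to build a matching cover of $G \times K$ and of $H \times K$ with a common nerve. The natural candidate is the product cover: for a chart $U$ of $G$ and a chart $W$ of $K$ (say the stars $S_K(z)$ of vertices $z$, as used in the Topology theorem part a), take $U \times W$ inside $G \times K$, and correspondingly $V \times W$ inside $H \times K$.

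The key steps, in order, would be: (1) Recall that $G \times K$ is geometric of dimension $\dim(G) + \dim(K)$ by Theorem (Geometry of product), and likewise $H \times K$, so both sides live in the category where the homeomorphism notion applies, and the relevant ``full-dimensional intersection'' threshold is $\dim(G)+\dim(K)$. (2) Show the family $\{U(x_i) \times S_K(z)\}$ is a good cover of $G \times K$: it covers all simplices of $G \times H$ because every simplex of the product is a product of a simplex of $G$ and a simplex of $K$ (this is exactly how $G \times K$ is built algebraically from $f_G f_K$), and each $U(x_i) \times S_K(z)$ is a product of two contractible geometric pieces, hence contractible with the right dimension. (3) Compute intersections: $(U(x_i) \times S_K(z)) \cap (U(x_j) \times S_K(w)) = (U(x_i) \cap U(x_j)) \times (S_K(z) \cap S_K(w))$, and its dimension is $\dim(U(x_i)\cap U(x_j)) + \dim(S_K(z)\cap S_K(w))$ by the product-dimension theorem. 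So this is full-dimensional ($=\dim(G)+\dim(K)$) iff both factors are full-dimensional, i.e. iff $x_i \sim x_j$ in $N$ and $z \sim w$ in the auxiliary cover structure of $K$. Hence the nerve of the product cover is the (categorical) product of the nerve $N$ with the nerve of the $K$-cover — and crucially this does not depend on whether we used $U$'s or $V$'s. (4) Conclude: since the $U$-cover and $V$-cover have the same nerve $N$, the product covers of $G \times K$ and $H \times K$ have the same nerve, with matching charts of matching dimension on intersections, which is exactly a homeomorphism $G \times K \to H \times K$ in the sense of \cite{KnillTopology}.

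The main obstacle I anticipate is step (3), and more precisely verifying that the product cover genuinely satisfies the good-cover conditions of \cite{KnillTopology} rather than just having the right nerve combinatorially. Two subtleties need care: first, one must check that intersections $U(x_i) \cap U(x_j)$ that were \emph{lower}-dimensional in $G$ (and hence ignored as edges) do not, after taking the product with a full-dimensional piece of $K$, sneak back up to full dimension in $G\times K$ — the additivity of dimension under products (Theorem on dimension of the product) is exactly what rules this out, so this is where that theorem does the real work. Second, one needs the intersections $U(x_i)\times S_K(z) \cap \cdots$ to themselves be contractible (not merely full-dimensional), which follows because a product of two convex/contractible geometric subgraphs is again contractible by the homotopy-product statement asserted in the abstract ($G \sim H$, $U \sim V$ $\Rightarrow$ $G \times U \sim H \times V$), applied with one factor contractible. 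Once these two points are nailed down, the rest is the bookkeeping that the nerve of a product cover is the product of the nerves, which is routine. A cleaner alternative, if the cover bookkeeping gets heavy, would be to invoke transitivity of the homeomorphism relation together with $G_1 = G \times K_1$ being homeomorphic to $G$ (the Topology theorem) to first normalize $G$ and $H$ to a common model, but the direct product-cover argument above is the most transparent.
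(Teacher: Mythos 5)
Your proposal follows essentially the same route as the paper: the paper's proof also takes covers $U_i$ of $G$ and $V_i$ of $H$ with a common nerve together with a cover $W_j$ of $K$, and passes to the product covers $(U_i\times W_j)$ and $(V_i\times W_j)$ of $G\times K$ and $H\times K$. Your write-up is considerably more careful than the paper's two-line argument --- in particular your step (3), using dimension additivity to rule out lower-dimensional intersections becoming full-dimensional, and the contractibility of product charts, are exactly the verifications the paper leaves implicit.
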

\begin{proof}
We use that $G$ and $G_1$ are homeomorphic and that $H$ and $H_1$ are homeomorphic. 
if $U_i$ is a cover of $G_1$ and $V_i$ is a cover of $H_1$ such that the nerve graphs
of are the same and $W_j$ is a cover of $K_1$, then we have a cover $(U_i \times W_j)$
and $(V_i \times W_j)$ of $G \times K$ and $H \times K$. 
\end{proof}

{\bf Remark:} We don't yet know whether this can be proven for arbitrary finite simple graphs. One
first has to show that in full generality that $G$ and $G_1$ are homeomorphic. This is not
yet done. \\

Let $G$ be a finite simple graph. Define the {\bf curvature of a simplex $x$ in $G$}
as the usual curvature \cite{cherngaussbonnet} of the vertex $x$ in the graph $G \times K_1$.

\begin{thm}
The simplex curvature $K(x)$ satisfies the Gauss-Bonnet relation $\sum_{x} K(x) = \chi(G)$.
The function $K(x)$ is equal to the expectation $E[i_f(x)]$ when integrating over all 
colorings $f$ on the simplex graph.
\end{thm}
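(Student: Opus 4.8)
The plan is to establish both assertions by reducing them to known facts about the enhanced graph $G_1 = G \times K_1$, for which curvature, index, and Gauss--Bonnet have already been developed in the cited work \cite{cherngaussbonnet, poincarehopf}. First I would recall the precise setup: vertices of $G_1$ are exactly the simplices $x$ of $G$, so the simplex curvature $K(x)$ is literally the standard curvature of a vertex of the graph $G_1$. The usual Gauss--Bonnet theorem for graphs says that for any finite simple graph, the sum of the curvatures over all vertices equals the Euler characteristic of that graph. Applying this to $G_1$ gives $\sum_x K(x) = \chi(G_1)$. Since $G_1 = G \times K_1$ is homotopic to $G$ (stated in the abstract and used throughout), and Euler characteristic is a homotopy invariant of the cohomology (it equals $p_G(-1) = \sum_k (-1)^k b_k$, and $G, G_1$ have the same Betti numbers), we conclude $\chi(G_1) = \chi(G)$, hence $\sum_x K(x) = \chi(G)$. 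This takes care of the first sentence with essentially no new work.

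For the second assertion, the plan is to invoke the Poincar\'e--Hopf / index-expectation framework of \cite{poincarehopf, cherngaussbonnet}. In that theory, for a finite simple graph $\Gamma$ and a locally injective function (a ``coloring'') $f$ on its vertices, one defines the index $i_f(v)$ of $f$ at a vertex $v$ via the local structure of the sublevel sphere $S^-_f(v)$, and Poincar\'e--Hopf states $\sum_v i_f(v) = \chi(\Gamma)$. The key structural result I would cite is that when one averages the index over all orderings/colorings $f$ with respect to the uniform measure, the expectation $E[i_f(v)]$ equals the curvature $K(v)$ of $\Gamma$ at $v$ --- this is exactly the ``index expectation'' theorem relating Poincar\'e--Hopf and Gauss--Bonnet. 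Applying this to $\Gamma = G_1$, whose vertices are the simplices of $G$, gives $K(x) = E[i_f(x)]$ where $f$ ranges over colorings of the simplex graph $G_1$, which is precisely the claimed identity. So the second sentence is again a direct transcription of an existing theorem applied to the particular graph $G_1$.

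The steps, in order, are: (1) identify $K(x)$ as the vertex curvature of $G_1$; (2) apply graph Gauss--Bonnet to $G_1$ to get $\sum_x K(x) = \chi(G_1)$; (3) use homotopy invariance of Euler characteristic together with $G_1 \simeq G$ to replace $\chi(G_1)$ by $\chi(G)$; (4) apply the index-expectation theorem to $G_1$ to obtain $K(x) = E[i_f(x)]$ over colorings of the simplex graph. The main obstacle, such as it is, is not analytic but definitional: one must make sure the notions of ``curvature of a simplex'', ``index $i_f$ on the simplex graph'', and ``coloring on the simplex graph'' are set up so that they genuinely coincide with the vertex-level notions on $G_1$ rather than something subtly different (for instance, one should check that the measure on colorings is the natural uniform one on orderings of the simplex poset, so that the index-expectation identity transfers verbatim). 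Once that identification is pinned down, both statements follow immediately from the previously established Gauss--Bonnet and Poincar\'e--Hopf / index-expectation results without any fresh computation.
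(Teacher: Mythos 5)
Your proposal is correct and follows exactly the route the paper intends: the paper's entire ``proof'' is the citation to the Gauss--Bonnet and index-expectation papers, and since $K(x)$ is \emph{defined} as the vertex curvature of $G_1 = G\times K_1$, the statement is just those two results applied to $G_1$ together with $\chi(G_1)=\chi(G)$ (which follows from the homotopy $G_1\simeq G$ proved elsewhere in the paper, or directly from invariance of Euler characteristic under barycentric refinement). Your write-up simply makes explicit the steps the paper leaves implicit.
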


See \cite{indexexpectation,colorcurvature}. \\

{\bf Remark:} 
We have already a natural "curvature" for a graph located on simplices, it is constant
$1$ on the even dimensional simplices and $-1$ on the odd dimensional simplices. But this
has little to do with the usual curvature. \\

Now, we look at symmetries: 

\begin{lemma}
Given two finite simple graphs $G,H$. The automorphism group ${\rm Aut}(G)$ 
is a subgroup of the automorphism group of $G \times H$. 
\end{lemma}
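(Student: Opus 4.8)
The plan is to show that every $\sigma \in {\rm Aut}(G)$ induces, in a completely explicit way, an automorphism $\widehat{\sigma}$ of the graph $G \times H$, and that $\sigma \mapsto \widehat{\sigma}$ is an injective group homomorphism (in fact with a commuting copy of ${\rm Aut}(H)$, which gives the stronger ${\rm Aut}(G) \times {\rm Aut}(H) \leq {\rm Aut}(G \times H)$ advertised in the abstract). Recall that $G \times H = G_{f_G f_H}$, where $f_G \in Z[x_1,\dots,x_n]$ is multilinear with monomial support ${\rm supp}(f_G)$ equal to the set of cliques of $G$, similarly $f_H \in Z[y_1,\dots,y_m]$ with ${\rm supp}(f_H)$ the cliques of $H$, and the vertices of $G_{f_G f_H}$ are the monomials occurring in $f_G f_H$, two of them joined by an edge exactly when one divides the other. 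Since the two variable sets are disjoint and both factors are multilinear, ${\rm supp}(f_G f_H) = \{\, m m' : m \in {\rm supp}(f_G),\ m' \in {\rm supp}(f_H)\,\}$, so a vertex of $G \times H$ can be written $x^I y^J$ for a clique $I \subseteq \{1,\dots,n\}$ of $G$ and a clique $J \subseteq \{1,\dots,m\}$ of $H$, and $x^I y^J$ divides $x^{I'} y^{J'}$ iff $I \subseteq I'$ and $J \subseteq J'$.

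First I would let $\sigma$ act on the polynomial ring by relabelling variables, $x_i \mapsto x_{\sigma(i)}$ and $y_j \mapsto y_j$; write $\Phi_\sigma$ for the resulting ring endomorphism of $Z[x_1,\dots,x_n,y_1,\dots,y_m]$. Because $\sigma$ is a graph isomorphism $G \to G$, it sends cliques to cliques bijectively, so $\Phi_\sigma$ maps $f_G$ to $\pm f_G$ term by term, and in particular permutes ${\rm supp}(f_G)$; the signs coming from the orientation conventions are irrelevant here, since the functor $f \mapsto G_f$ sees only the monomial support. Fixing the $y$-variables, $\Phi_\sigma$ therefore permutes ${\rm supp}(f_G f_H)$, sending the vertex $x^I y^J$ to $x^{\sigma(I)} y^J$. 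Define $\widehat{\sigma}$ to be this permutation of $V(G \times H)$.

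Next I would check that $\widehat{\sigma}$ preserves adjacency. Since $\sigma$ is a bijection on $\{1,\dots,n\}$ it preserves inclusion of index sets, so $x^{\sigma(I)} y^J$ divides $x^{\sigma(I')} y^{J'}$ iff $I \subseteq I'$ and $J \subseteq J'$ iff $x^I y^J$ divides $x^{I'} y^{J'}$. Hence $\widehat\sigma$ maps edges to edges, and the same applies to $\widehat\sigma^{-1} = \widehat{\sigma^{-1}}$, so $\widehat\sigma \in {\rm Aut}(G \times H)$. That $\sigma \mapsto \widehat\sigma$ is a homomorphism follows from $\Phi_\sigma \Phi_\tau = \Phi_{\sigma\tau}$, and it is injective: if $\sigma(i) \neq \tau(i)$ for some $i$, then (picking any vertex $y_1$ of $H$, which exists as $H$ is nonempty) $\widehat\sigma$ and $\widehat\tau$ differ on the vertex $x_i y_1$. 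This embeds ${\rm Aut}(G)$ into ${\rm Aut}(G \times H)$ and proves the lemma.

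Finally, for the abstract's stronger form I would run the symmetric argument on the $y$-variables to embed ${\rm Aut}(H)$ via maps $\widehat\tau'$ relabelling only the $y_j$; the images of $\widehat{\,\cdot\,}$ and $\widehat{\,\cdot\,}'$ commute since they act on disjoint blocks of variables, and they meet only in the identity (an element fixing every $x_i y_1$ and every $x_1 y_j$ is trivial), so together they generate a copy of ${\rm Aut}(G) \times {\rm Aut}(H)$. I do not expect a genuine obstacle anywhere; the one point that needs care, and the reason I would phrase everything in terms of the monomial support of ring elements rather than the ring elements themselves, is that $f_G$ is only well defined up to the sign and orientation conventions, which the construction $f \mapsto G_f$ discards by design.
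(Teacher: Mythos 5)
Your proof is correct and follows essentially the same route as the paper's: an automorphism of $G$ acts by permuting the variables $x_1,\dots,x_n$, hence permutes the monomial support of $f_G f_H$ while preserving divisibility, and so induces an automorphism of $G \times H$. You simply carry out in detail the verification the paper leaves implicit (divisibility preservation, the homomorphism property, injectivity, and the commuting copy of ${\rm Aut}(H)$), all of which checks out.
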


\begin{proof}
Each element $T$ of ${\rm Aut}(G)$ produces a permutation of the variables $x_1,\dots,x_n$
appearing in the algebraic description $g$ of $G$. If $h$ is the algebraic description of $H$
using variables $y_1,\dots,y_m$, then $A$ acts on $fg$ by permutations of the elements $x_1,\dots,x_n$
and produces a symmetry of $G \times H$. 
\end{proof} 

{\bf Remarks:} \\
{\bf 1)}  The case $G_1 = G \times K_1 \sim C_8$ with $G=C_4$ shows that 
the symmetry group can become bigger.  \\
{\bf 2)}  If the vertices of $G_1 = G \times K_1$ form a group and 
also the vertices of $H_1 = H \times K_1$ have this property then 
the vertices of $G \times H$ form a group, the product group. \\

Next we look at homotopy: 

\begin{lemma}
If $G=A \cup B$ and $H=C \cup D$ are two finite simple graphs
and the subgraph $A$ of $G$ is homotopic to the subgraph $C$ of $H$ 
and $B$ is homotopic to $D$ and $A \cap B$ is homotopic to $C \cap D$,
then $A \cup B$ is homotopic to $C \cup D$. 
\end{lemma}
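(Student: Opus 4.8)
The plan is to reduce the statement to the elementary homotopy moves that generate the homotopy relation on graphs --- deleting a vertex $x$ whose unit sphere $S(x)$ is contractible, and the inverse operation of attaching a new vertex along a contractible subgraph --- and to run an induction on the total number of such moves needed to pass from $A$ to $C$, from $B$ to $D$ and from $A\cap B$ to $C\cap D$. Write $Z=A\cap B$ and $W=C\cap D$. The structural fact that makes everything go is \emph{locality}: since $G=A\cup B$ means $E(G)=E(A)\cup E(B)$ with $A,B$ induced, there is no edge joining $A\setminus B$ to $B\setminus A$, so for a vertex $x\in A\setminus B$ one has $S_{A\cup B}(x)=S_A(x)$, and symmetrically for $B$. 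Consequently an elementary move on $A$ performed at a vertex lying \emph{outside} $Z$ is automatically a legal elementary move on $A\cup B$ and changes $A\cup B$ in exactly the way it changes $A$; the same holds for $B$.

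First I would dispose of the two easy families of moves. If $D$ is obtained from $B$ by one elementary move at a vertex of $B\setminus Z$ (so $Z$ is untouched, $W=Z$, $C=A$), then by locality the identical move is legal on $A\cup B$ and yields $A\cup B\sim A\cup D$; the case of a move on $A\setminus Z$ is symmetric. Iterating, we may assume these moves have been carried out and reduce to the core case in which the only remaining moves act on the shared part. Here a move that attaches a vertex $v$ with neighbour set $N\subseteq Z$ for which the induced graph on $N$ is contractible is simultaneously legal in $A$, in $B$, and --- because every neighbour of $v$ then lies in $N\subseteq Z\subseteq A\cup B$ --- in $A\cup B$, with the same contractible $S(v)=N$; deletions of such vertices behave dually. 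So every move on $Z$ that is realized \emph{compatibly} inside both $A$ and $B$ lifts to $A\cup B$ and simultaneously to $C\cup D$.

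The one point that needs care, and which I expect to be the main obstacle, is exactly this compatibility: a priori the homotopy equivalence witnessing $A\sim C$ need not restrict to the one witnessing $Z\sim W$. To handle it I would route both sides through the discrete double mapping cylinder, which is now available because we have a Cartesian product: form the graph $M(A,Z,B)$ obtained from $A$, $Z\times K_2$ and $B$ by identifying $Z\subseteq A$ with $Z\times\{p\}$ and $Z\subseteq B$ with $Z\times\{q\}$, where $K_2=\{p,q\}$, and likewise $M(C,W,D)$. Collapsing the prism $Z\times K_2$ onto one of its ends is an elementary-homotopy sequence of the same flavour as the Cylinder Lemma, so $M(A,Z,B)\sim A\cup_Z B$ and $M(C,W,D)\sim C\cup_W D=C\cup D$. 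On the cylinder the $A$-end, the central prism and the $B$-end are separated by collars $Z\times\{p\}$ and $Z\times\{q\}$, so the three given homotopies can be run independently slice by slice without interfering --- the locality principle applies to each collar --- giving $M(A,Z,B)\sim M(C,W,D)$ and hence, by transitivity, $A\cup B\sim C\cup D$. The routine verifications left over are that the prism collapses really are elementary-homotopy sequences and that the collars genuinely decouple the moves; both are of the same type as the arguments already used for the Gluing ball and Cylinder lemmas.
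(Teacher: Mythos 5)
The paper states this lemma without any proof, so there is nothing to compare your argument against; it has to stand on its own, and it does not. You correctly identify the crux --- that the homotopy equivalence $A\sim C$ need not restrict to the one witnessing $A\cap B\sim C\cap D$ --- but the double mapping cylinder does not dissolve this difficulty, it merely relocates it. The graph $M(A,Z,B)$ is not determined up to homotopy by the homotopy types of $A$, $B$ and $Z$ alone: it depends on the homotopy classes of the inclusions $Z\hookrightarrow A$ and $Z\hookrightarrow B$, i.e.\ on the attaching data. Concretely, your locality principle only applies to vertices all of whose neighbours lie in a single piece, but the vertices of the collar $Z\times\{p\}$ are identified with $Z\subseteq A$ and therefore have neighbours both in $A$ and in the prism; an elementary move there is not ``decoupled'' from $A$, and this is exactly where the three homotopies would have to be made to agree. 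The earlier reduction has the same problem in embryonic form: a homotopy $A\sim C$ is an arbitrary sequence of expansions and contractions passing through intermediate graphs, and there is no reason it should factor into moves supported off $Z$ followed by moves on $Z$ that are simultaneously legal in $A$ and in $B$.

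In fact no proof can succeed, because the statement as literally given is false. Take graph triangulations of the lens spaces $L(5,1)$ and $L(5,2)$, each presented by its genus-one Heegaard splitting into two solid tori meeting along a $2$-torus. The four solid-torus pieces are all homotopic (each collapses to a cycle graph), the two intersections are homotopic $2$-tori, yet the unions are not homotopic, since graph homotopy implies homotopy equivalence of the underlying Whitney complexes and $L(5,1)\not\simeq L(5,2)$. So the ``routine verification'' that the collars decouple cannot be carried out. The lemma becomes true --- and this is all the paper needs for the proposition that $G\times K_1\sim G$, where the homotopies arise from a common construction --- if one adds the hypothesis that the equivalences are compatible, i.e.\ that the homotopy $A\sim C$ and the homotopy $B\sim D$ restrict on $A\cap B$ to one and the same homotopy with $C\cap D$; under that hypothesis your cylinder argument, or a direct induction on elementary moves, does go through.
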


\begin{propo}
The graph $G_1 = G \times K_1$ is homotopic to $G$. 
\end{propo}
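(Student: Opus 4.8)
The plan is to induct on the number of vertices $|V(G)|$, using the gluing lemma immediately above (if $G=A\cup B$, $H=C\cup D$, $A\sim C$, $B\sim D$ and $A\cap B\sim C\cap D$, then $A\cup B\sim C\cup D$) together with the fact that $G_1=G\times K_1$ is the barycentric refinement of $G$: its vertices are the complete subgraphs (simplices) of $G$, and two simplices are joined precisely when one is strictly contained in the other. First I would dispose of the degenerate cases. If $G$ has no vertices, or no edges, then $G_1=G$ and there is nothing to prove. If $G=K_n$ is complete, then picking any vertex does not reduce size, so I treat it by hand: $K_n=K_{n-1}\star K_1$ is a cone, and in $K_n\times K_1$ the vertex attached to the top simplex $\{1,\dots,n\}$ is adjacent to every other vertex, so $K_n\times K_1$ is a cone as well; since a cone $A\star K_1$ over an arbitrary finite simple graph is contractible (delete a non-apex vertex and induct), both $K_n$ and $K_n\times K_1$ are contractible, hence homotopic.

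So assume $G$ has $n\ge 2$ vertices and is not complete, and pick a vertex $x$ of $G$ that is not adjacent to all other vertices. Write $G=A\cup B$ with $A=G\setminus x$ the vertex-deleted subgraph and $B=B(x)$ the unit ball (closed star) of $x$, so $A\cap B=S(x)$. On the refinement side, let $\tilde A$ be the subgraph of $G_1$ induced on those simplices of $G$ that do not contain $x$, and let $\tilde B$ be the subgraph of $G_1$ induced on those simplices $\tau$ for which $\tau\cup\{x\}$ is again a simplex of $G$. The key combinatorial checks are: $\tilde A\cup\tilde B=G_1$ — every simplex of $G$ either misses $x$ (a vertex of $\tilde A$) or contains $x$ (then $\tau\cup\{x\}=\tau$, a vertex of $\tilde B$), and an edge $\sigma\subsetneq\tau$ of $G_1$ lies in $\tilde A$ if $x\notin\tau$, in $\tilde B$ if $x\in\sigma$, and in the mixed case $x\notin\sigma$, $x\in\tau$ we have $\sigma\cup\{x\}\subseteq\tau$ a simplex, so both $\sigma,\tau\in\tilde B$. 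Since $G\setminus x$, $B(x)$ and $S(x)$ are induced (full) subgraphs of $G$, their Whitney complexes are exactly the simplices of $G$ supported on the respective vertex sets, which identifies $\tilde A=(G\setminus x)\times K_1$, $\tilde B=B(x)\times K_1$, and $\tilde A\cap\tilde B=S(x)\times K_1$ (the intersection being induced on the simplices $\sigma$ with $x\notin\sigma$ and $\sigma\cup\{x\}$ a simplex, i.e. the cliques inside the neighborhood $N(x)$).

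Because $x$ is not adjacent to every vertex, each of $G\setminus x$, $B(x)$ and $S(x)$ has strictly fewer than $n$ vertices, so the inductive hypothesis gives $\tilde A=(G\setminus x)\times K_1\sim G\setminus x=A$, $\tilde B=B(x)\times K_1\sim B(x)=B$, and $\tilde A\cap\tilde B=S(x)\times K_1\sim S(x)=A\cap B$. The gluing lemma then yields $G_1=\tilde A\cup\tilde B\sim A\cup B=G$, closing the induction.

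The hard part is really only the bookkeeping in the middle paragraph: verifying that $\tilde A\cup\tilde B$ recovers all of $G_1$, including the "mixed" edges between a simplex not containing $x$ and one containing $x$, and that $\tilde A$, $\tilde B$, $\tilde A\cap\tilde B$ are exactly the barycentric refinements of $G\setminus x$, $B(x)$, $S(x)$ — all of which hinges on those being induced subgraphs, so that no simplices are lost or spuriously added. Beyond that I expect no genuine obstacle: one just has to confirm that the degenerate situations behave (an isolated $x$ gives $S(x)=\emptyset$ and $B(x)=K_1$, and the three homotopy equivalences then hold trivially), and to invoke the single black-box fact that a cone over an arbitrary finite simple graph is contractible.
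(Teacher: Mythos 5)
Your proof is correct and follows essentially the same route as the paper's: induction on the number of vertices, splitting $G$ into the vertex-deleted subgraph and the closed star $B(x)$ of a vertex, identifying the corresponding decomposition of the barycentric refinement $G_1$ into $(G\setminus x)\times K_1$ and $B(x)\times K_1$ meeting in $S(x)\times K_1$, and invoking the gluing lemma. You are in fact somewhat more careful than the paper, whose induction (adding a vertex $x$) glosses over the bookkeeping of the mixed edges and the degenerate case where $x$ is adjacent to everything, which you handle separately via the contractibility of cones.
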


\begin{proof}
Use induction with respect to the size of the graph. Its true for $K_1$. 
Given a graph $G$ with $n$ vertices. Add a new vertex $x$ to get a larger
graph $H$ with $n+1$ vertices. The unit sphere $S(x)$ is in $G$. 
Now $G_1 = G \times K_1$ is a subgraph of $H_1 = H \times K_1$. 
Let $y_1,\dots,y_k$ denote the simplices in $S(x)$, they are the vertices in $S_1(x) 
= S(x) \times K_1$.  The graph $H_1$ has new vertices $N=\{y_1x,\dots,y_kx\}$ which were 
not in $G_1$. Our induction assumption is that $G_1$ is homotopic to $G$
and since $S(x)$ is a subgraph of $G$ also $S_1(x)$ is homotopic to $S(x)$. 
But then the graph in $H$ generated by $S(x) \cup \{x\}$ is
homotopic to the graph in $H_1$ generated by $S_1(x) \cup N$. 
Now $H$ is a union of two graphs $S(x) \cup x$ and $G$ intersecting
in $S(x)$ and $H_1$ is the union of two graphs $S_1(x) \cup N$ and $G_1$
intersecting in $S_1(x)$. Use the lemma. 
\end{proof}

\begin{thm}
Let $G,H,K$ be finite simple graphs. 
If $G,H$ are homotopic, then $G \times K$ and $H \times K$ are
homotopic. 
\end{thm}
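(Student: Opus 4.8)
Here the strategy is to reduce to a single elementary homotopy move and then transport that move across the product by means of the gluing lemma. Recall that homotopy of finite simple graphs is the equivalence relation generated by the elementary moves of adjoining (or deleting) a vertex whose unit sphere is contractible. Since ``$\sim$'' is transitive, it suffices to prove the theorem when $H$ is obtained from $G$ by one such move, and up to replacing a move by its inverse we may assume $H = G \cup \{x\}$ where $S := S_H(x)$ is a contractible subgraph of $G$. Writing $Q := x \star S$ for the (contractible) pyramid over $S$ living inside $H$, we have $H = G \cup Q$ with $G \cap Q = S$ at the level of Whitney complexes.

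The second step is to check that $\times K$ distributes over this decomposition. Every clique of $H$ is a clique of $G$ unless it contains $x$, in which case it is contained in $\{x\} \cup V(S)$ and hence is a clique of $Q$; passing to the product, a clique of $H \times K$ is a chain in the product of the face posets, so the top element of the chain, being a clique of $H$, lies in $G$ or in $Q$, and then the whole chain does too. Therefore $H \times K = (G \times K) \cup (Q \times K)$ and $(G \times K) \cap (Q \times K) = (G \cap Q) \times K = S \times K$, using $S \subseteq G$. Now I apply the gluing lemma for homotopy with first pieces $A = C = G \times K$, second pieces $Q \times K$ and $S \times K$, and common intersection $S \times K$: since $A \sim C$ trivially and $A \cap (Q \times K) = S \times K = A \cap (S \times K)$, the lemma yields $H \times K \sim (G \times K) \cup (S \times K) = G \times K$ as soon as we know $Q \times K = (x \star S) \times K \sim S \times K$.

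The heart of the matter is thus the statement that coning commutes with $\times K$ up to homotopy: if $S$ is contractible then $(x \star S) \times K \sim S \times K$ for every finite simple graph $K$. I would prove this by induction on a suitable complexity (e.g. $|V(S)| + |V(K)|$), simultaneously with the companion fact that the product of two contractible graphs is contractible. For the inductive step one peels off a collapsible vertex $z$ of $S$ (so $S_S(z)$ and $S \setminus z$ are contractible), writes $S = (S \setminus z) \cup (z \star S_S(z))$ and $x \star S = (x \star (S \setminus z)) \cup (z \star (x \star S_S(z)))$, distributes $\times K$ over both, and feeds the four resulting pieces into the gluing lemma; the same decomposition may be run on the factor $K$ along an arbitrary vertex. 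Every configuration that arises reduces to instances of strictly smaller complexity except when the coned factor and $K$ have degenerated into complete graphs (iterated cones), and that residual case I would settle by an explicit collapse: in $(x \star S) \times K$ remove the extra vertices $(x\sigma, \tau)$ (those absent from $S \times K$) in order of increasing $|\sigma|$ and, within a level, increasing $|\tau|$, and check that at the moment of removal the unit sphere of $(x\sigma,\tau)$ is a product of a barycentrically subdivided simplex boundary with the closed star of $\tau$ in the refinement $K \times K_1$ — a sphere times a contractible ball, hence contractible — or, in the lowest cases, such a product with one apex deleted, again contractible; so each removal is a legitimate homotopy move and $(x \star S) \times K$ collapses onto $S \times K$.

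The main obstacle is precisely this last step: the reduction to ``coning commutes with $\times K$'' is clean, but the vertex-count induction stalls on complete-graph factors, and one must pin down the collapse order for the extra simplices of $(x \star S) \times K$ and verify that every removed vertex has a contractible link at the time of removal. Once that is in place, transitivity finishes the theorem; and combined with the evident commutativity $G \times K \cong K \times G$ it also yields the stronger assertion $G \times U \sim H \times V$ whenever $G \sim H$ and $U \sim V$, and hence a well-defined product on homotopy classes.
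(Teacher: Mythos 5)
Your reduction is fine as far as it goes: passing to a single elementary move $H=G\cup\{x\}$ with contractible link $S$, writing $H=G\cup Q$ with $Q=x\star S$ and $G\cap Q=S$, checking that cliques of $H\times K$ (chains in the product poset) lie in $G\times K$ or $Q\times K$, and invoking the paper's gluing lemma is all consistent with the toolkit the paper sets up. But the step you yourself flag as ``the heart of the matter'', namely $(x\star S)\times K\sim S\times K$ for contractible $S$, is not an auxiliary fact -- it \emph{is} the theorem for a single homotopy step (adding $x$ over $S$ is exactly the elementary move, transported into the cone), so your argument reduces the statement to an unproven claim of the same strength. The induction you sketch admittedly stalls on complete-graph factors, and the explicit collapse you propose to rescue it rests on a link computation that is neither verified nor correct as stated: in a product graph the unit sphere of a vertex $(a,b)$ is the union $B(a)\times S(b)\,\cup\,S(a)\times B(b)$ glued along $S(a)\times S(b)$ (this is precisely the content of the paper's cylinder lemma and dimension-of-product theorem), not a product; and even granting a product description, ``boundary of a simplex times a contractible ball'' is homotopy equivalent to a sphere, not contractible, so the claimed removals are not legitimate homotopy moves as justified. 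Until the collapse order and the contractibility of each link at the moment of removal are actually established, the proof has a genuine gap.

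For comparison, the paper does not route through the cone at all: it argues directly on the algebraic level that an elementary homotopy step $g\to g+k$ on the factor $G$ induces, after multiplication by the ring element of $K$, homotopy steps $xg\to x(g+k)$ carried out fibre by fibre in the product. That fibrewise bookkeeping (terse as it is in the paper) is exactly the content your proposal still owes; supplying it would simultaneously fill your missing cone lemma.
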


\begin{proof}
Show it on an algebraic level: 
if $g \to g+k$ is a homotopy step, then $xg \to x(g+k)$ 
is a homotopy step on each fibre. 
\end{proof} 

It follows that the product defines a group operation
on the homotopy classes. This monoid defines then a Grothendieck group.  \\
The Cartesian product also defines a direct sum on isomorphism classes of 
vector bundles.  \\

{\bf Example.} \\
{\bf 1)} The product of two trees is contractible. \\
{\bf 2)} The product $C_4 \times H$ is never contractible. \\

Finally, lets look at the chromatic number $c(G)$. 

\begin{lemma}[Minimal coloring]
The graph $G_1= G \times K_0$ satisfies
$$ c(G_1) \leq c(G) \; . $$
It is minimally colorable. If $K_{n+1}$ is the largest
complete subgraph of $G$, then $c(G_1)=n+1$. 
\end{lemma}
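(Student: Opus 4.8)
The plan is to produce an explicit proper coloring of $G_1 = G \times K_1 = G_{f_G}$ from the dimension function and then pin down its clique number. Recall that the vertices of $G_1$ are exactly the complete subgraphs (simplices) of $G$, and two of them are adjacent precisely when one simplex is contained in the other. First I would define $\kappa: V(G_1) \to \{0,1,\dots,n\}$ by $\kappa(x) = {\rm dim}(x) = |x|-1$, where $K_{n+1}$ is the largest complete subgraph of $G$. If $x,y$ are adjacent in $G_1$, one of them is a proper subgraph of the other, so their dimensions differ and $\kappa(x) \neq \kappa(y)$; hence $\kappa$ is a proper coloring and $c(G_1) \le n+1$. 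Since $G$ contains $K_{n+1}$, we also have $n+1 \le c(G)$, so $c(G_1) \le n+1 \le c(G)$, which is the first claim.

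Next I would describe the cliques of $G_1$: a set of its vertices is a clique iff the corresponding simplices are pairwise nested, i.e. form a chain $x_1 \subsetneq x_2 \subsetneq \dots \subsetneq x_m$ in $G$; here one uses that adjacency in $G_1$ is exactly the containment relation and that this relation is transitive. Along such a chain the dimensions strictly increase and lie in $\{0,\dots,n\}$, so $m \le n+1$, and $G_1$ has no $K_m$ with $m > n+1$. Conversely, choosing a maximal simplex $\{v_0,\dots,v_n\}$ of $G$ and taking the flag $\{v_0\} \subsetneq \{v_0,v_1\} \subsetneq \dots \subsetneq \{v_0,\dots,v_n\}$ exhibits $n+1$ pairwise-nested simplices, hence a $K_{n+1}$ in $G_1$; so the largest complete subgraph of $G_1$ is $K_{n+1}$.

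Combining the two parts, $n+1$ is at most $c(G_1)$ (a graph containing $K_{n+1}$ needs $n+1$ colors) and at least $c(G_1)$ by the coloring $\kappa$, so $c(G_1) = n+1$, which is the last assertion; moreover $c(G_1)$ equals the size of the largest clique of $G_1$, which is the meaning of $G_1$ being minimally colorable. The argument is essentially bookkeeping once the combinatorial picture of $G_1$ is available; the only step requiring attention is the equivalence between cliques of $G_1$ and chains of nested simplices, and I expect no real obstacle beyond that.
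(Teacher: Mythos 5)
Your proof is correct and follows the same route as the paper: both use the dimension function $x \mapsto \dim(x)$ as a proper coloring of $G_1$, noting that adjacent vertices (nested simplices) have distinct dimensions, so $c(G_1) \le n+1 \le c(G)$. You additionally supply the lower bound $c(G_1) \ge n+1$ by identifying cliques of $G_1$ with chains of nested simplices and exhibiting a flag inside a maximal simplex, a step the paper's very terse proof leaves implicit; this is a welcome completion rather than a different method.
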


\begin{proof}
If $x$ is a vertex in $G_1$, denote by $k={\rm Dim}(x)$ the dimension of the corresponding
simplex $x$ in $G$. The function ${\rm Dim}$ is a coloring because two adjacent
vertices have different dimension. Because ${\rm Dim}$ takes values from $0$ to $n$
if $K_{n+1}$ is the largest simplex in $G$, the chromatic number is $n+1$. 
\end{proof} 

{\bf Remarks.}  \\
{\bf 1)} We see that for graphs $G_1$ we can immediatly compute the chromatic number
by just looking at the largest clique. \\
{\bf 2)} For geometric graphs $G$, this means that $G_1$ is minimally colorable
and therefore is Eulerian in the sense of \cite{KnillEulerian}. For two dimensional 
graphs, this is equivalent to Eulerian in the classical sense. \\ 

We can actually compute the chromatic number of any product graph $G \times H$: 

\begin{thm}[Chromatic number of product]
\label{chromaticnumberofproduct}
If $G$ has a largest clique $K_{n}$ and $H$ has
a largest clique $K_m$, then $c(G \times H) = n+m-1$. 
\end{thm}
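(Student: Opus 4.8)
The plan is to pin down the vertex set of $G \times H$ explicitly and then squeeze the chromatic number between a proper coloring using $n+m-1$ colors and a clique on $n+m-1$ vertices. Recall from the construction that $G \times H = G_{f_G f_H}$, where $f_G=\sum_x x$ is the sum over the simplices of $G$, written as monomials in vertex variables $x_1,\dots,x_n$, and likewise $f_H$ in variables $y_1,\dots,y_m$. Ignoring orientations (irrelevant for $G_f$), expanding the product gives $f_G f_H = \sum_{S,T} x_S y_T$, the sum over all pairs $(S,T)$ with $S$ a nonempty simplex of $G$ and $T$ a nonempty simplex of $H$, where $x_S=\prod_{i\in S}x_i$ and $y_T=\prod_{j\in T}y_j$; since the $x$- and $y$-variables are disjoint, each such monomial occurs exactly once. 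Hence the vertices of $G \times H$ are these monomials $x_S y_T$, and two distinct ones are adjacent precisely when one divides the other, i.e. when $S\subseteq S'$ and $T\subseteq T'$ (or the reverse inclusions).

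For the upper bound I would colour the vertex $x_S y_T$ by the integer $\mu(x_S y_T)=|S|+|T|$. If $x_S y_T$ and $x_{S'}y_{T'}$ are adjacent then, say, $S\subseteq S'$ and $T\subseteq T'$ with at least one inclusion strict, so $|S|+|T|<|S'|+|T'|$; thus $\mu$ is a proper colouring. Since $S$ is a simplex of $G$ and $K_n$ is the largest clique of $G$ we have $1\le|S|\le n$, and likewise $1\le|T|\le m$, so $\mu$ takes values only in $\{2,3,\dots,n+m\}$, a set of size $n+m-1$. Therefore $c(G\times H)\le n+m-1$. (This is the same device as the ``dimension colouring'' in the Minimal Coloring Lemma, and it already shows $c(G\times H)$ does not exceed the clique number of $G\times H$.)

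For the lower bound, pick vertices $a_1,\dots,a_n$ spanning a $K_n$ in $G$ and $b_1,\dots,b_m$ spanning a $K_m$ in $H$. Then
$$ a_1b_1 \mid a_1a_2b_1 \mid \cdots \mid a_1a_2\cdots a_n b_1 \mid a_1\cdots a_n b_1b_2 \mid \cdots \mid a_1\cdots a_n b_1b_2\cdots b_m $$
is a chain of $n+m-1$ monomials of $f_G f_H$, each subset of vertices spanning a complete subgraph so each a genuine vertex of $G\times H$, and each dividing the next. Being pairwise comparable under divisibility they are pairwise adjacent, hence span a $K_{n+m-1}$ in $G\times H$, giving $c(G\times H)\ge n+m-1$. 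Combined with the previous paragraph, $c(G\times H)=n+m-1$.

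The argument is short and I do not expect a real obstacle; the one point requiring care is the exact range of the grading $\mu$. Its maximum value $n+m$ can be attained only because $G$ has no clique bigger than $K_n$ and $H$ none bigger than $K_m$ — this is exactly where the hypothesis is used — while the explicit chain above is what certifies that the value $n+m-1$, and hence every colour, is genuinely forced, so the colouring cannot be improved. As a byproduct the proof re-derives the statement from the abstract that $c(G\times H)$ equals the largest $k$ with $K_k\subseteq G\times H$.
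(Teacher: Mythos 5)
Your proposal is correct, and the upper bound is exactly the paper's argument: your grading $\mu(x_Sy_T)=|S|+|T|$ is the paper's function ${\rm Dim}$ shifted by two, and the observation that it is locally injective (hence a proper colouring with $n+m-1$ values) is all the paper's one-line proof says. Where you go beyond the paper is the lower bound: the paper's proof never exhibits the $K_{n+m-1}$ subgraph that forces equality (it is only asserted in the abstract and in a figure caption), whereas your explicit divisibility chain $a_1b_1 \mid a_1a_2b_1 \mid \cdots \mid a_1\cdots a_nb_1\cdots b_m$ of $n+m-1$ pairwise comparable monomials supplies it cleanly, using that initial segments of the cliques $K_n\subset G$ and $K_m\subset H$ are again simplices. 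So your write-up is not a different route but a completed one; as stated, the paper's proof only establishes $c(G\times H)\le n+m-1$.
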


\begin{proof}
Again just look at the function ${\rm Dim}$. It is locally 
injective and is so a coloring. 
\end{proof}

This result actually has the previous lemma as a corollary 
because $K_1$ has the largest clique $K_1$ so that 
$c(G \times K_1) = n+1-1=n$.  \\

{\bf Examples.} \\
{\bf 1} The chromatic number of a product of two trees is $3$. \\
{\bf 2} The chromatic number of $K_2 \times K_2$ is $3$. 
The chromatic number of $K_2 \times K_2 \times K_2$ is $4$. 

\section{Examples}

{\bf Example}. \\
Let $G$ be the house graph and $H$ the Lollipop graph. We have ${\rm dim}(G)=22/15,
b_0(G)=1,b_1(G)=1,b_k(G)=0,k>1$ and $b_0(H)=1,b_k(H)=0, k>0$, 
${\rm dim}(H)=5/2$. The graphs $G_1=G \times K_1, H_1=H \times K_2$ satisfy ${\rm dim}(G_1)=37/24$
and ${\rm dim}(H_1)=11/4$. The graph $G \times H$ has dimension $103/24$. The dimensions of
the individual points are given by the dimension spectrum 
${\rm dimspec}(G_1)= $\{ 1, 1, 1, 7/4, 2, 2, 2, 7/4, 1, 2, 1, 2 \}$,
{\rm dimspec}(H_1) = \{3, 11/4, 3, 3, 3, 3, 3, 3, 1, 3, 3, 3, 3, 3, 3, 3, 1\}$. 
The dimension spectrum of $G \times H$ is the Cartesian product of these two 
lists: 
${\rm dimspec}(G \times H) = 
\{$15/4, 15/4, 15/4, 4, 4, 4, 4, 4, 4, 4, 4, 4, 4, 4, 4, 4, 4, 4, 4, 4, 
4, 4, 4, 4, 2, 2, 2, 9/2, 19/4, 19/4, 19/4, 19/4, 5, 5, 5, 19/4, 5, 
5, 5, 19/4, 5, 5, 5, 19/4, 5, 5, 5, 19/4, 5, 5, 5, 19/4, 5, 5, 5, 
19/4, 5, 5, 5, 11/4, 3, 3, 3, 9/2, 15/4, 19/4, 19/4, 4, 5, 19/4, 4, 
5, 19/4, 4, 5, 19/4, 4, 5, 19/4, 4, 5, 19/4, 4, 5, 19/4, 4, 5, 11/4, 
2, 3, 15/4, 4, 4, 4, 4, 4, 4, 4, 2, 19/4, 5, 5, 5, 5, 5, 5, 5, 3, 4, 
4, 4, 4, 4, 4, 4, 4, 4, 4, 4, 4, 19/4, 5, 5, 5, 19/4, 5, 5, 5, 19/4, 
5, 5, 5, 19/4, 5, 5, 5, 19/4, 4, 5, 19/4, 4, 5, 19/4, 4, 5, 19/4, 4, 
5, 4, 4, 4, 4, 5, 5, 5, 5, 4, 4, 4, 4, 4, 4, 19/4, 5, 5, 5, 19/4, 5, 
5, 5, 19/4, 4, 5, 19/4, 4, 5, 4, 4, 5, 5, 4, 4, 4, 19/4, 5, 5, 5, 
19/4, 4, 5, 4, 5, 2, 2, 2, 11/4, 3, 3, 3, 11/4, 2, 3, 2, 3$\}$. 
This list had been computed directly by computing the dimensions of each 
unit sphere in $G \times H$. The K\"unneth formula tells that $G \times H$
has the Betti numbers $b_0=1,b_1=1, b_k=0$ again for $k>1$. \\

\begin{figure}[h]
\scalebox{0.22}{\includegraphics{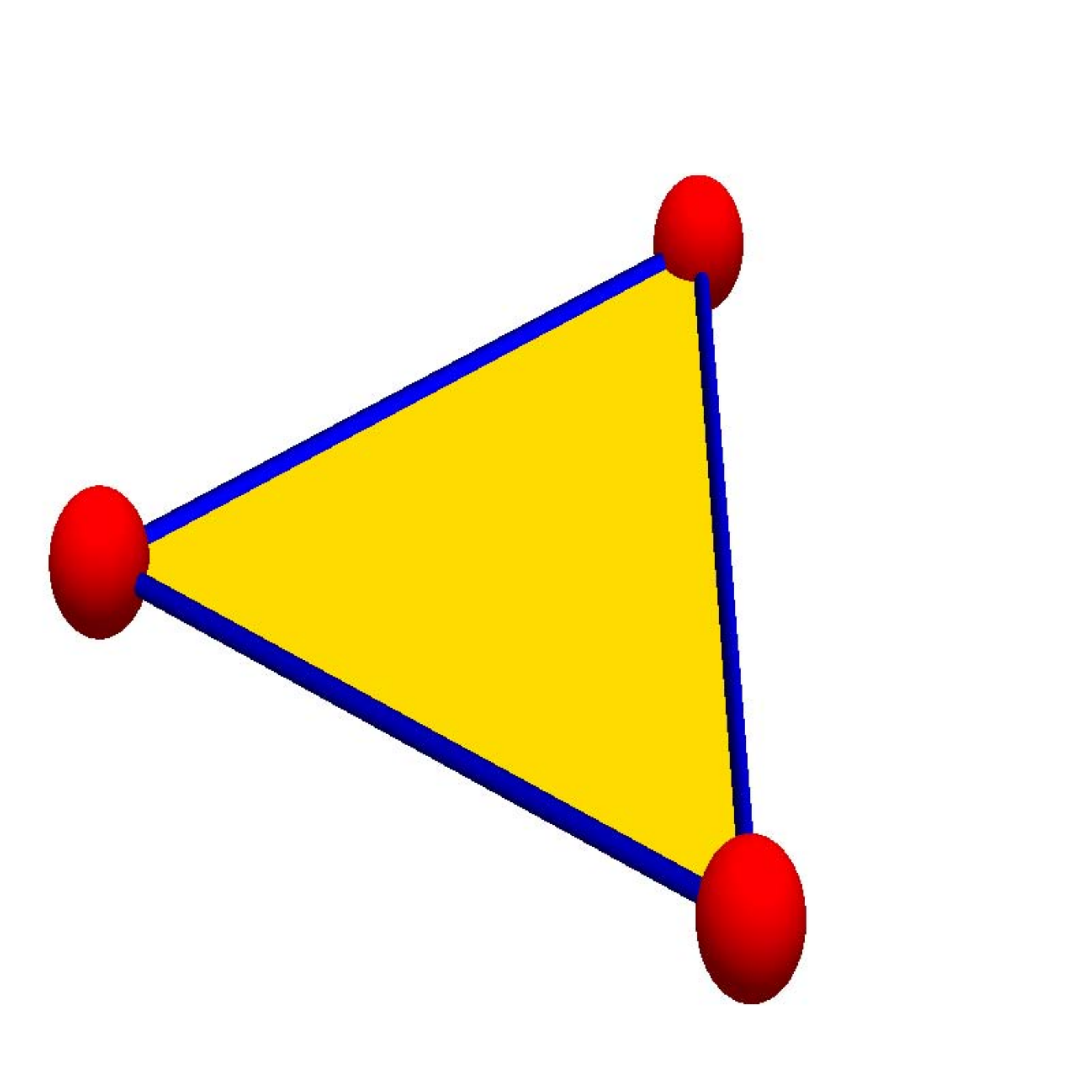}}
\scalebox{0.22}{\includegraphics{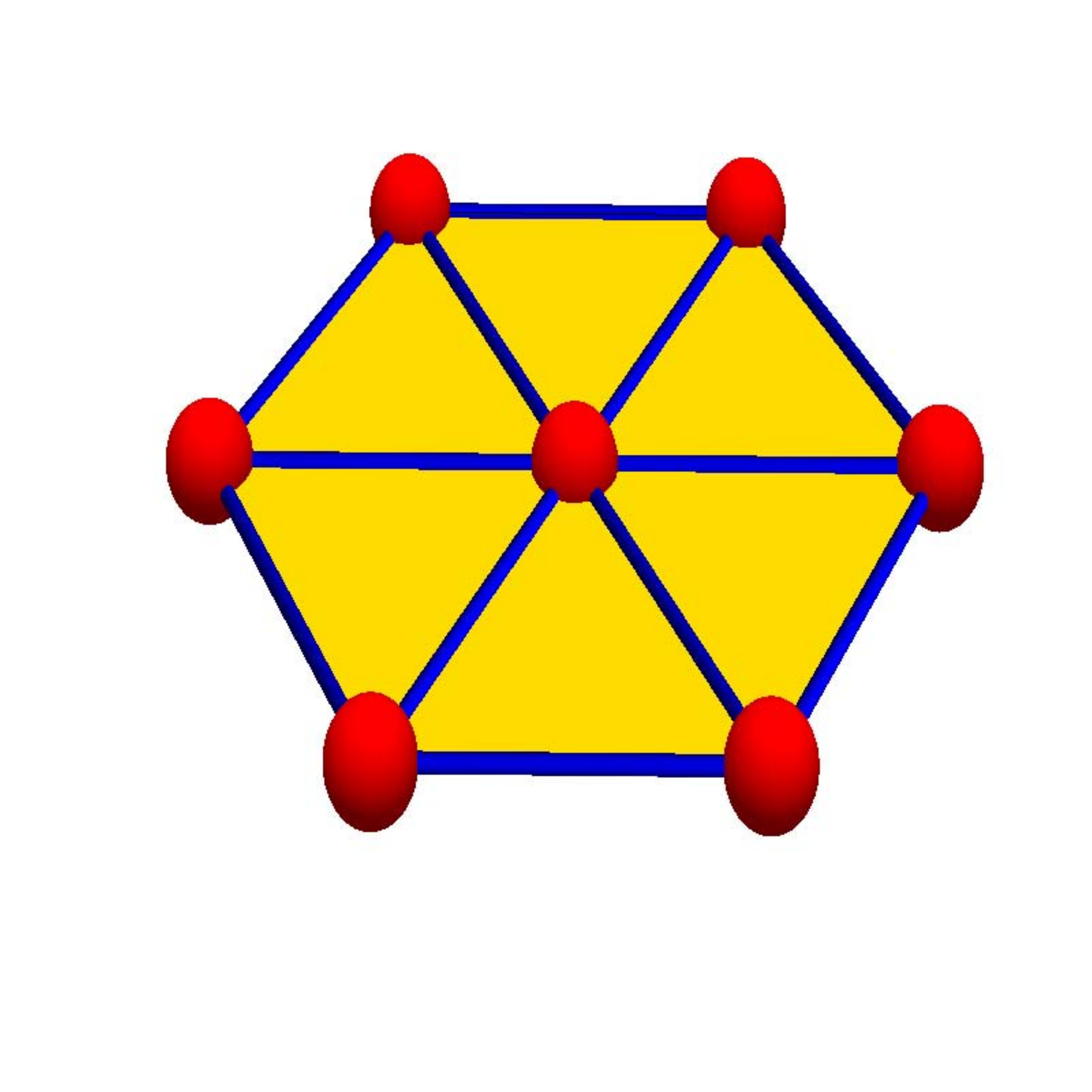}}
\scalebox{0.22}{\includegraphics{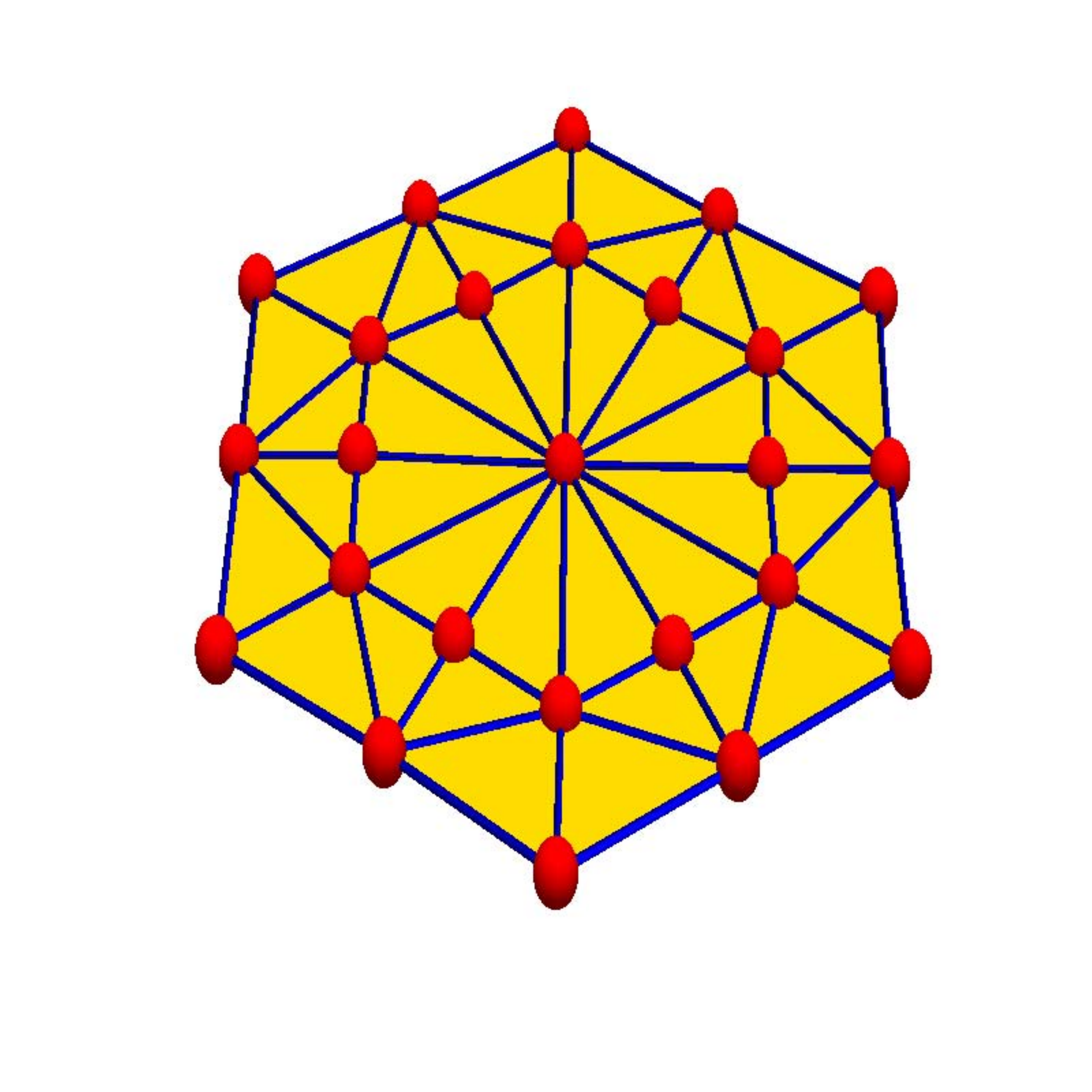}}
\scalebox{0.22}{\includegraphics{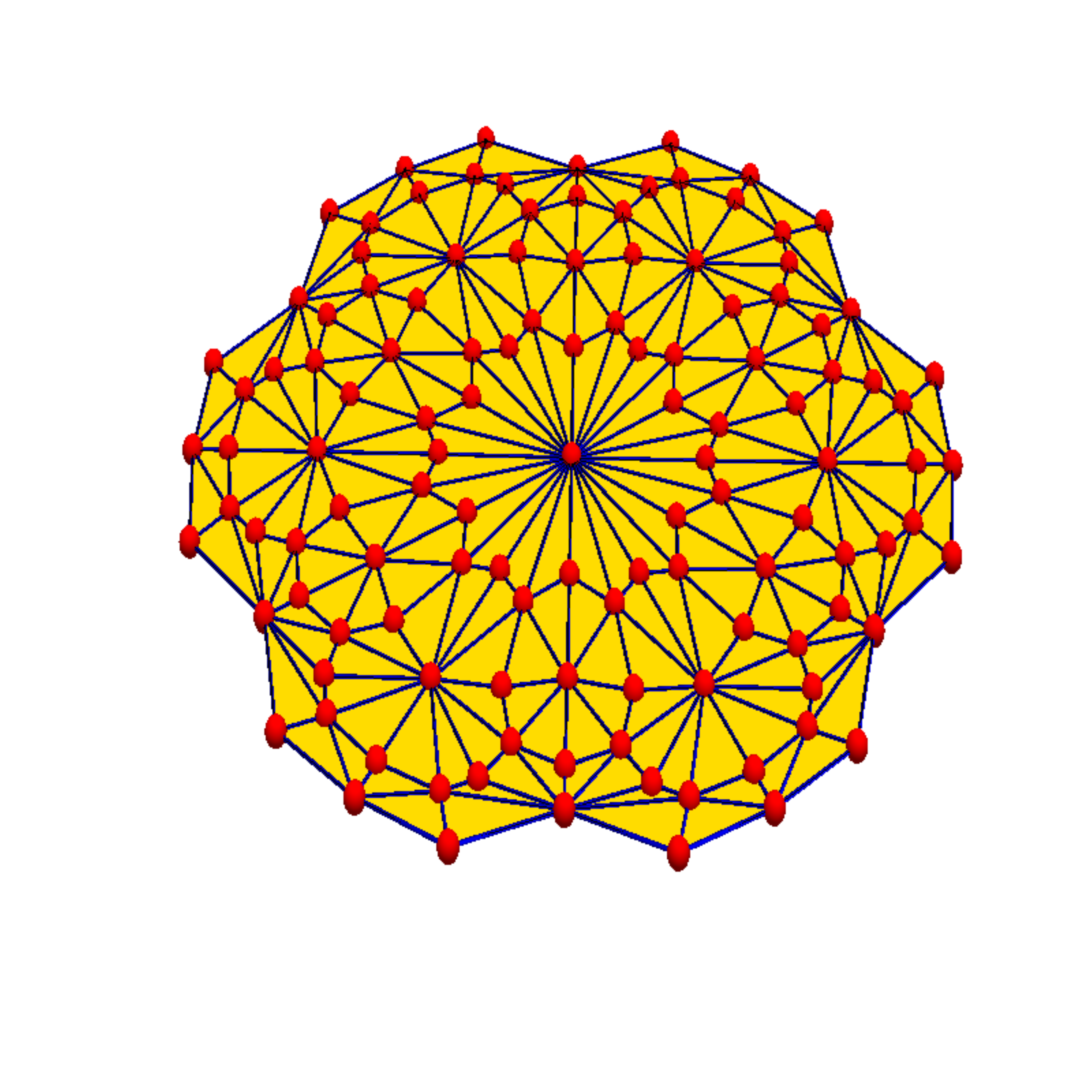}}
\caption{
$K_3, K_3 \times K_1, (K_3 \times K_1) \times K_1$, 
$((K_3 \times K_1) \times K_1) \times K_1$. 
}
\end{figure}

\begin{figure}[h]
\scalebox{0.22}{\includegraphics{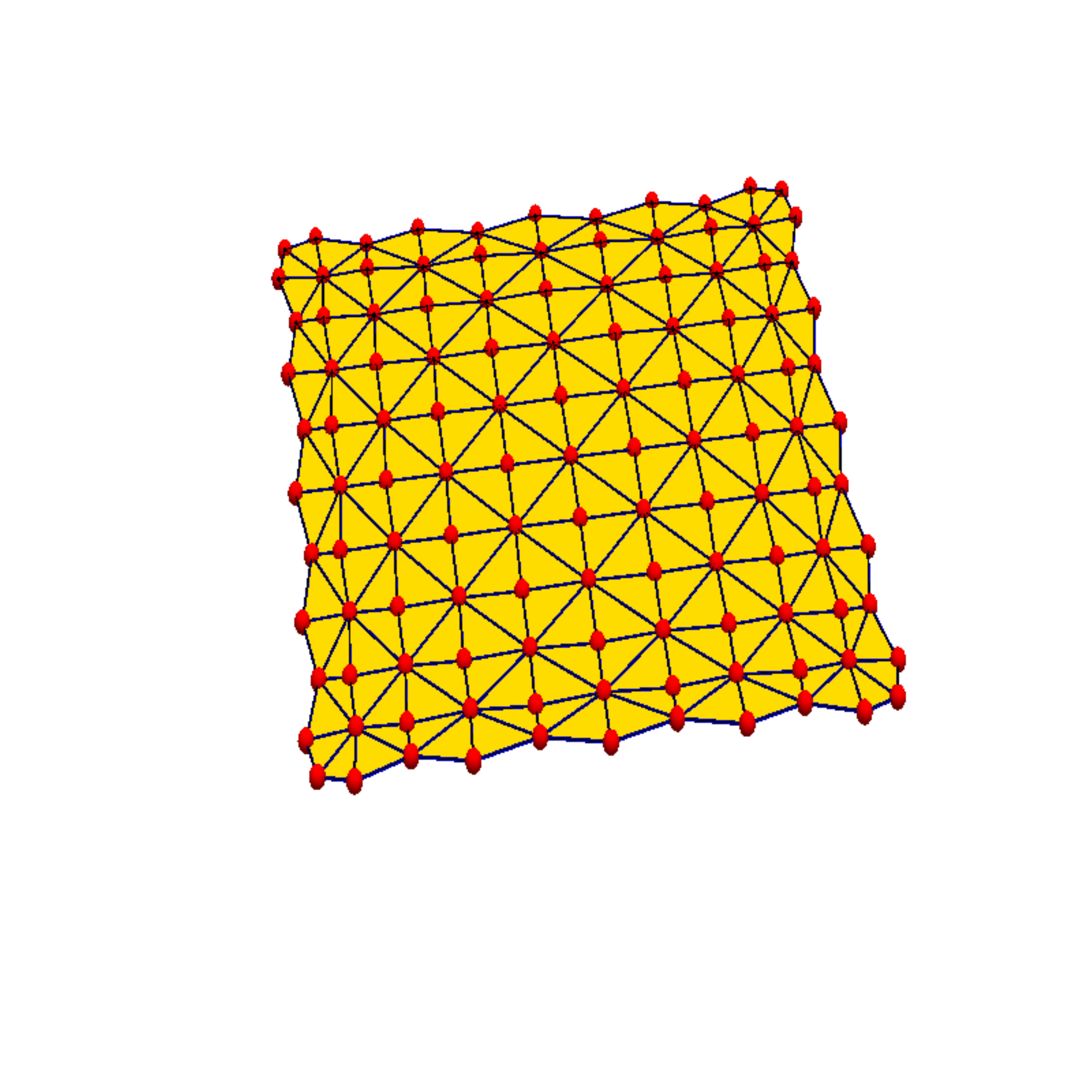}}
\scalebox{0.22}{\includegraphics{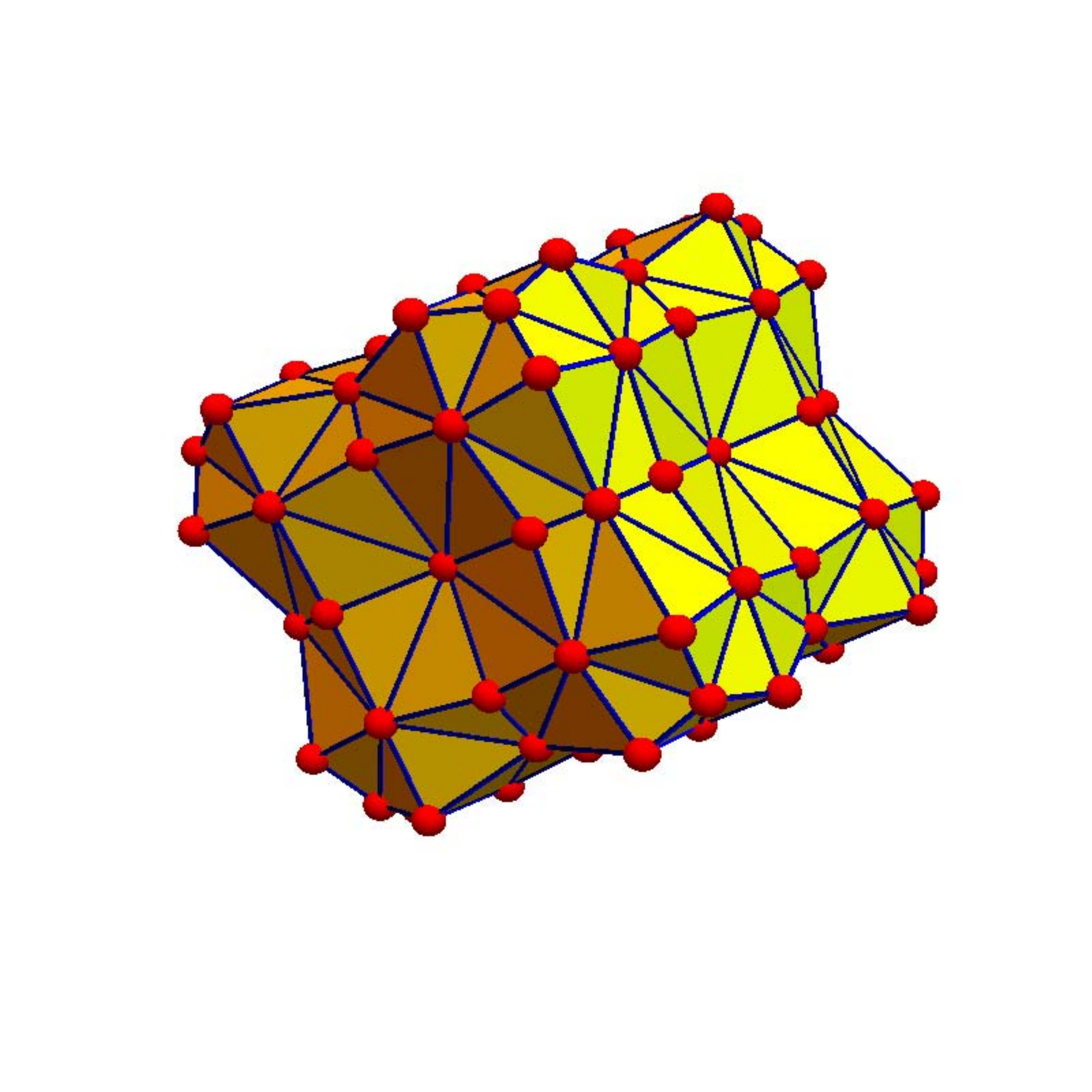}}
\caption{
The graph $L_6 \times L_6$ where $L_n$ is the line graph.
This is a discrete square. The second picture shows
$L_3 \times L_3 \times L_3$, a discrete cube. 
}
\end{figure}

\begin{figure}[h]
\scalebox{0.22}{\includegraphics{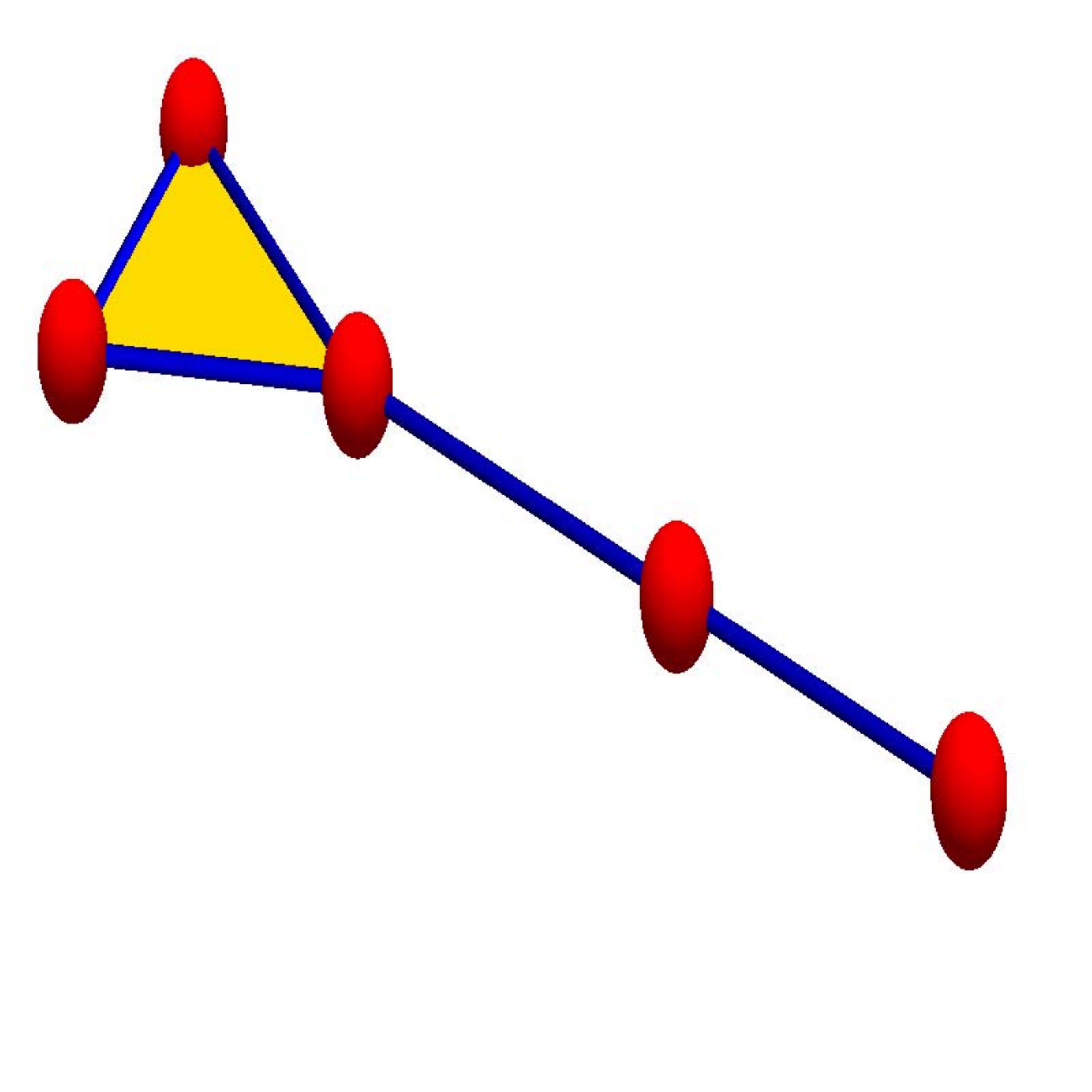}}
\scalebox{0.22}{\includegraphics{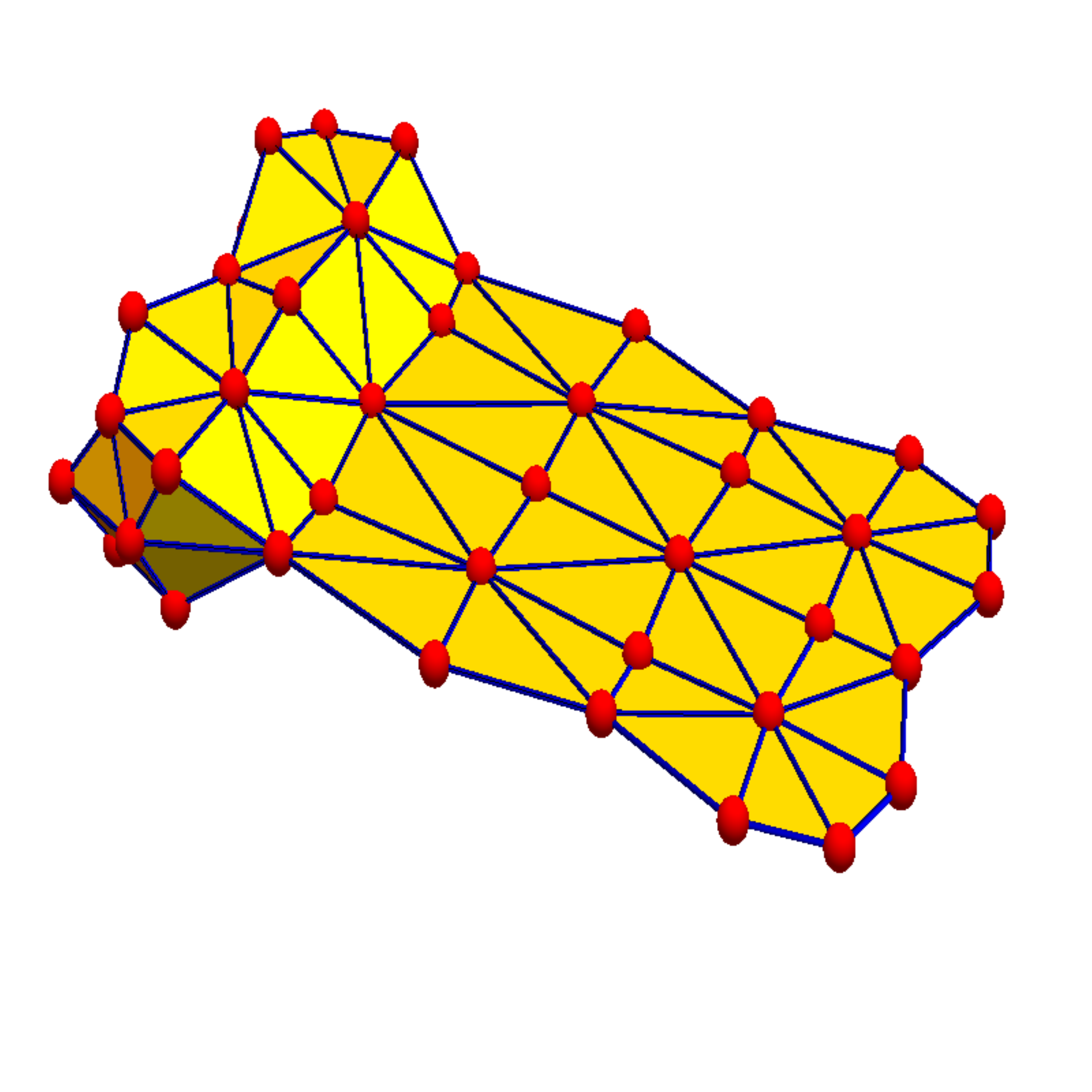}}
\caption{
The lollipop graph $G$ is partly $2$-dimensional, partly 
$1$-dimensional so that it is a graph with fractal dimension. 
The graph $G \times L_3$ is partly $2$-dimensional and partly 
$3$-dimensional, also a fractal dimension. 
}
\end{figure}

{\bf Example}. \\
{\bf 1)} For an octahedron $G$, the ring element is $f_G$ is
$a+b+ab+c+ac+bc+abc+d+ad+bd+abd+e+ae+ce$ $+ace+de+ade+f+bf+cf+bcf+df+bdf+ef+cef+def$.
The graph $G \times K_1$ is a barycentric subdivision of $G$ of the same dimension. 
For a geometric graph $G$, the graph $G \times K_1$ is geometric again.

\begin{figure}[h]
\scalebox{0.22}{\includegraphics{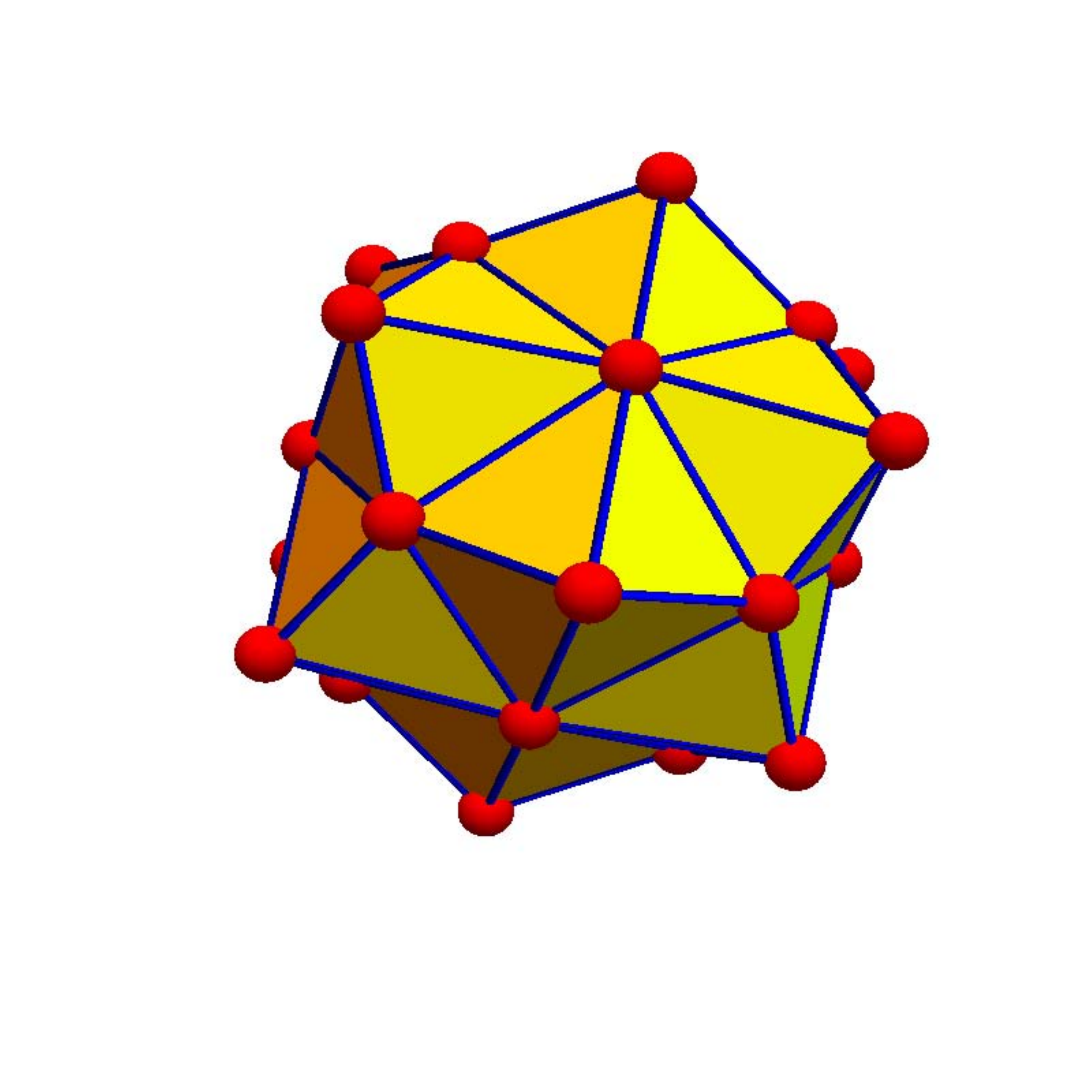}}
\scalebox{0.22}{\includegraphics{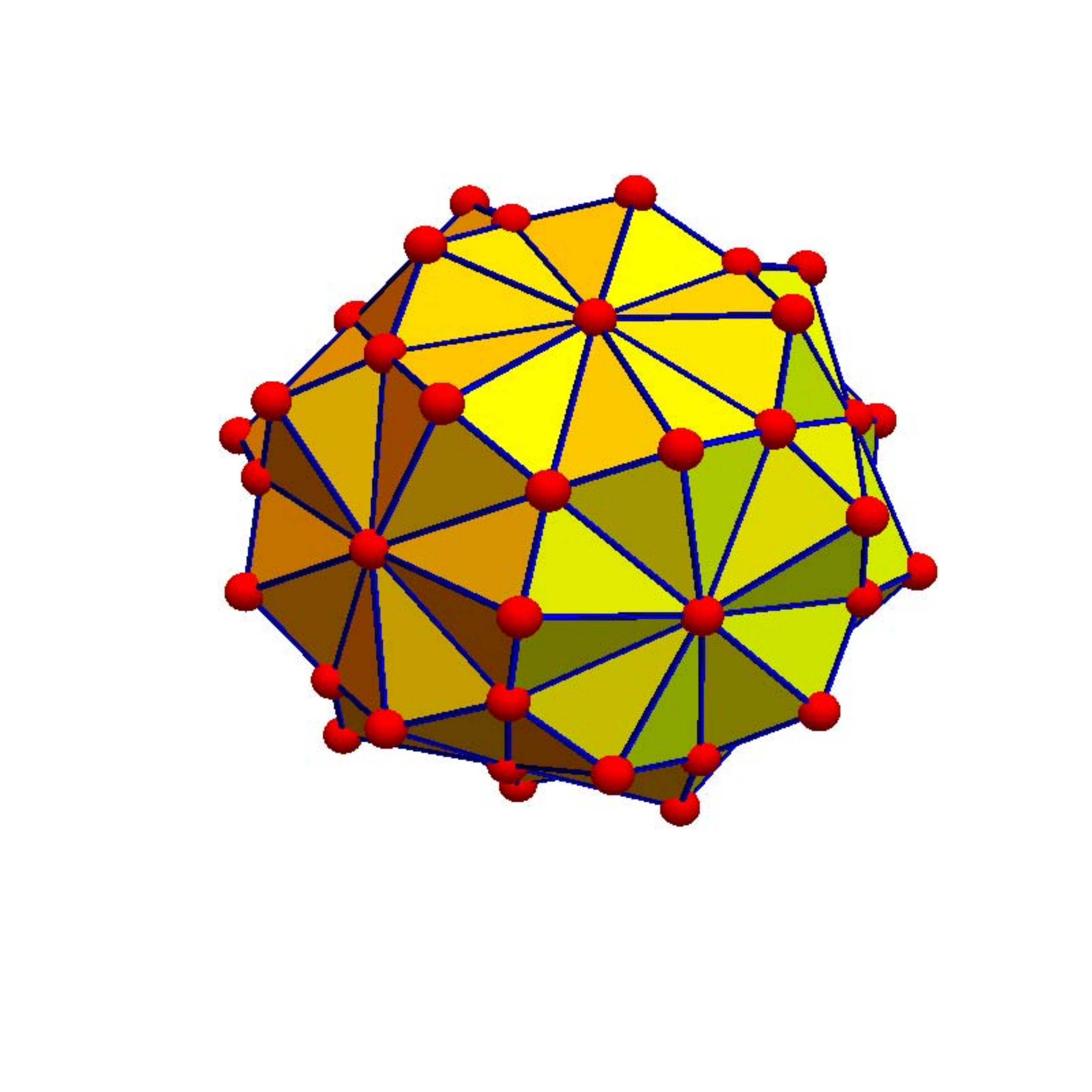}}
\caption{
The product $O \times K_1$ where $O$ is the octahedron graph.
The second picture shows $I \times K_1$, where $I$ is the icosahedron graph. 
}
\end{figure}

{\bf 2)} Let $O$ be the octahedron, the product $O \times C_4$ is a $3$-dimensional graph. It is a
discrete incarnation of the $3$-manifold $S^2 \times S^1$. 

\begin{figure}[h]
\scalebox{0.22}{\includegraphics{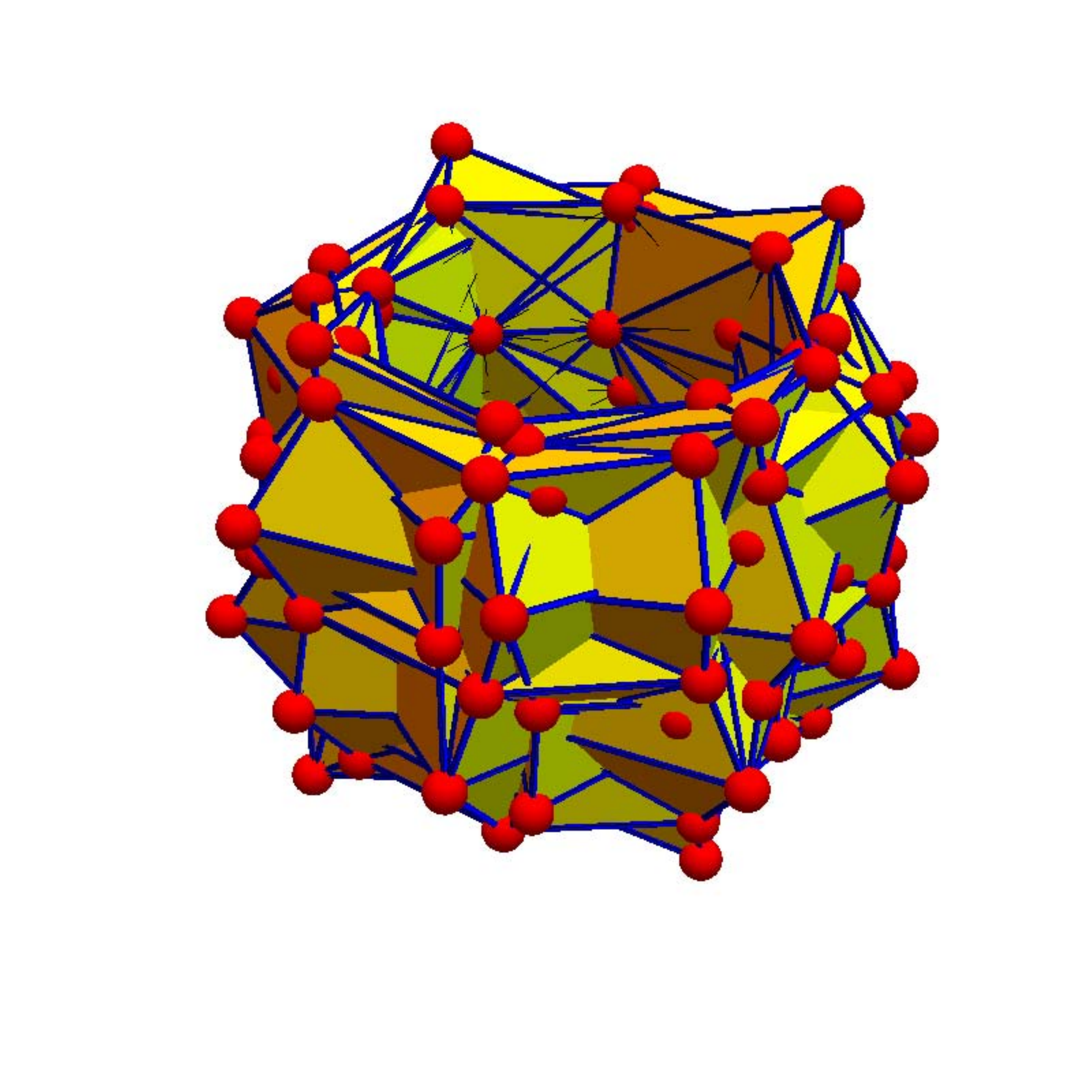}}
\scalebox{0.22}{\includegraphics{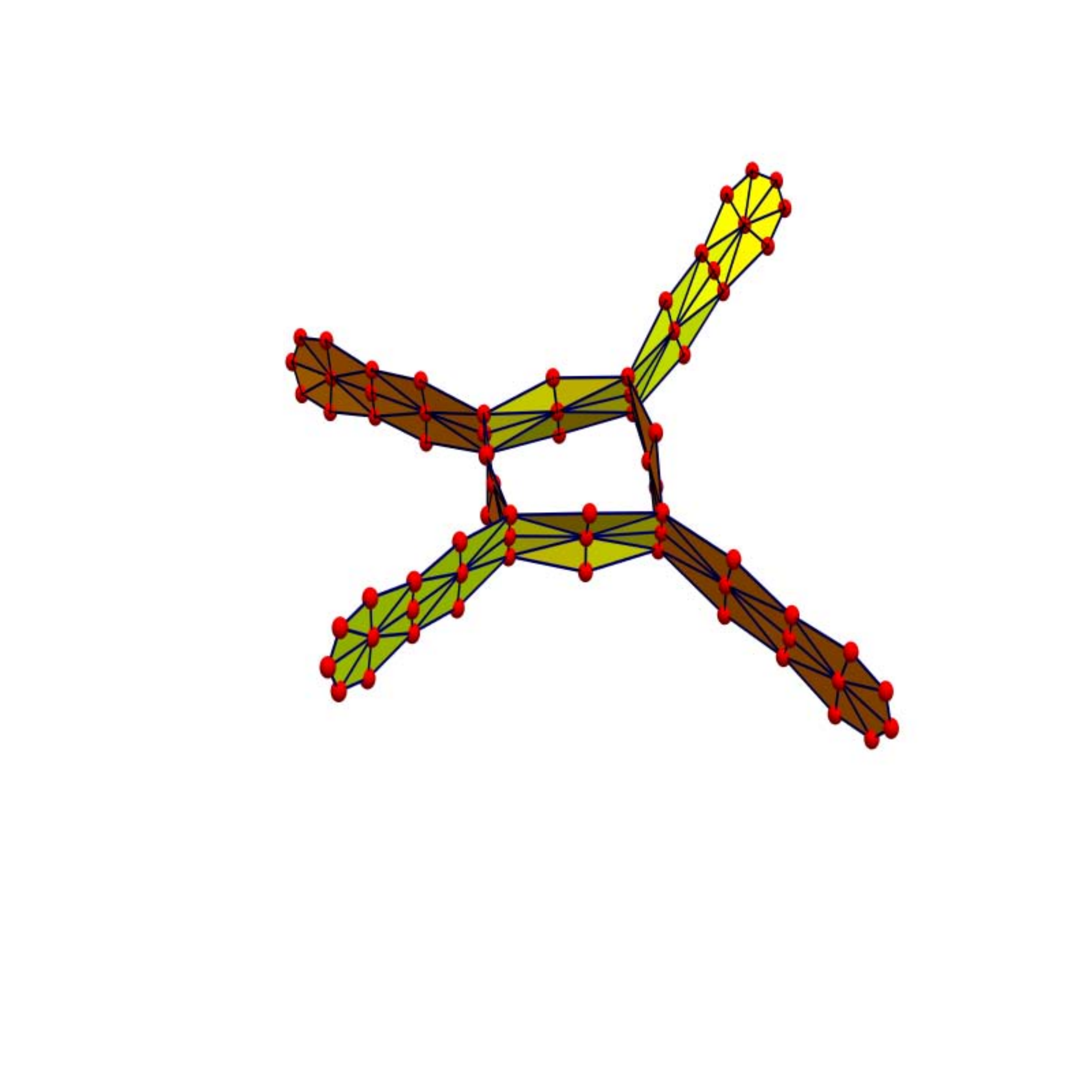}}
\caption{
The product $O \times C_4$ where $O$ is the octahedron graph. 
This is a discrete 3 manifold.
The second figure shows the product of the sun graph $S_{2,2,2,2}$ 
(a cyclic graph $C_4$ with rays) and with $K_2$. 
}
\end{figure}

\begin{figure}[h]
\scalebox{0.22}{\includegraphics{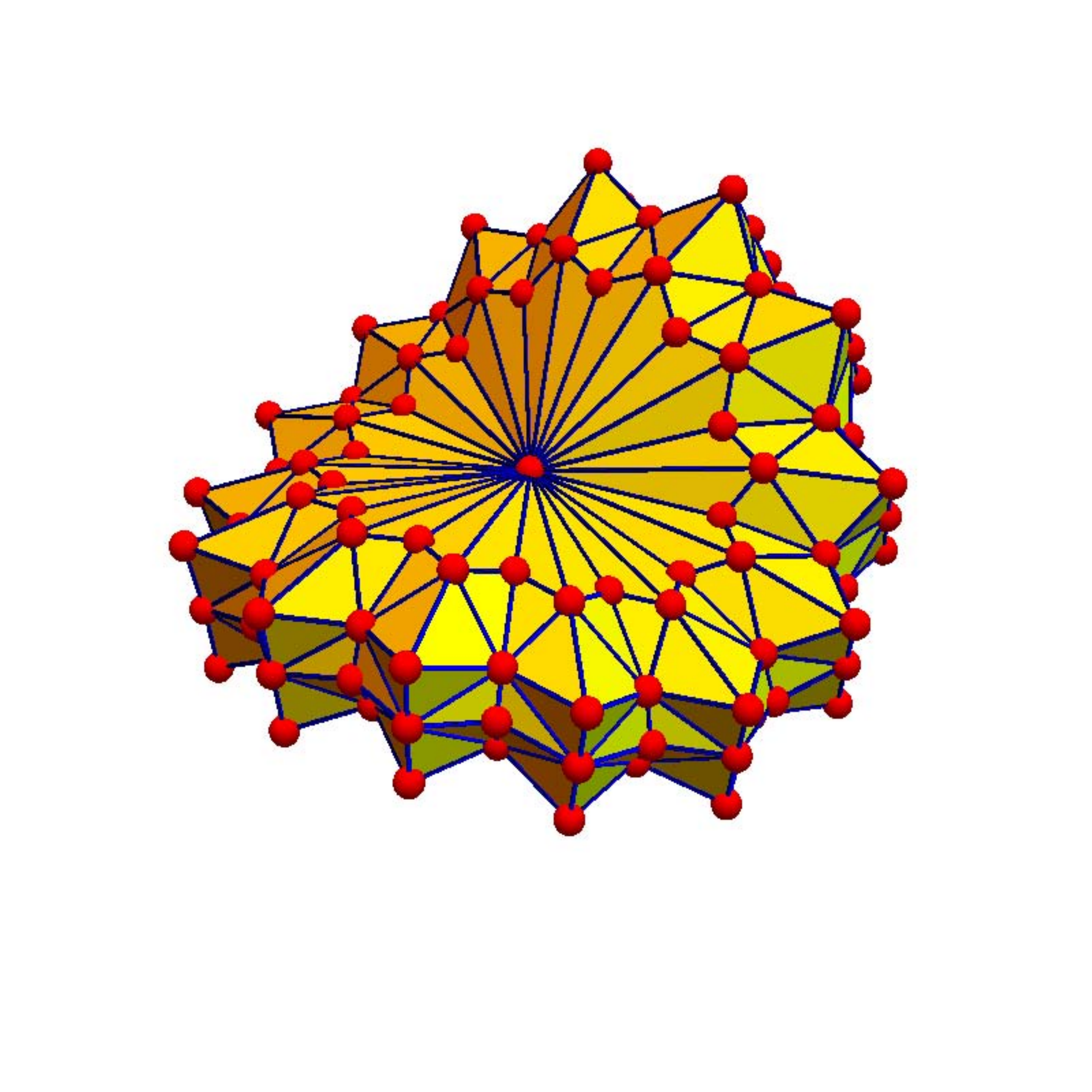}}
\scalebox{0.22}{\includegraphics{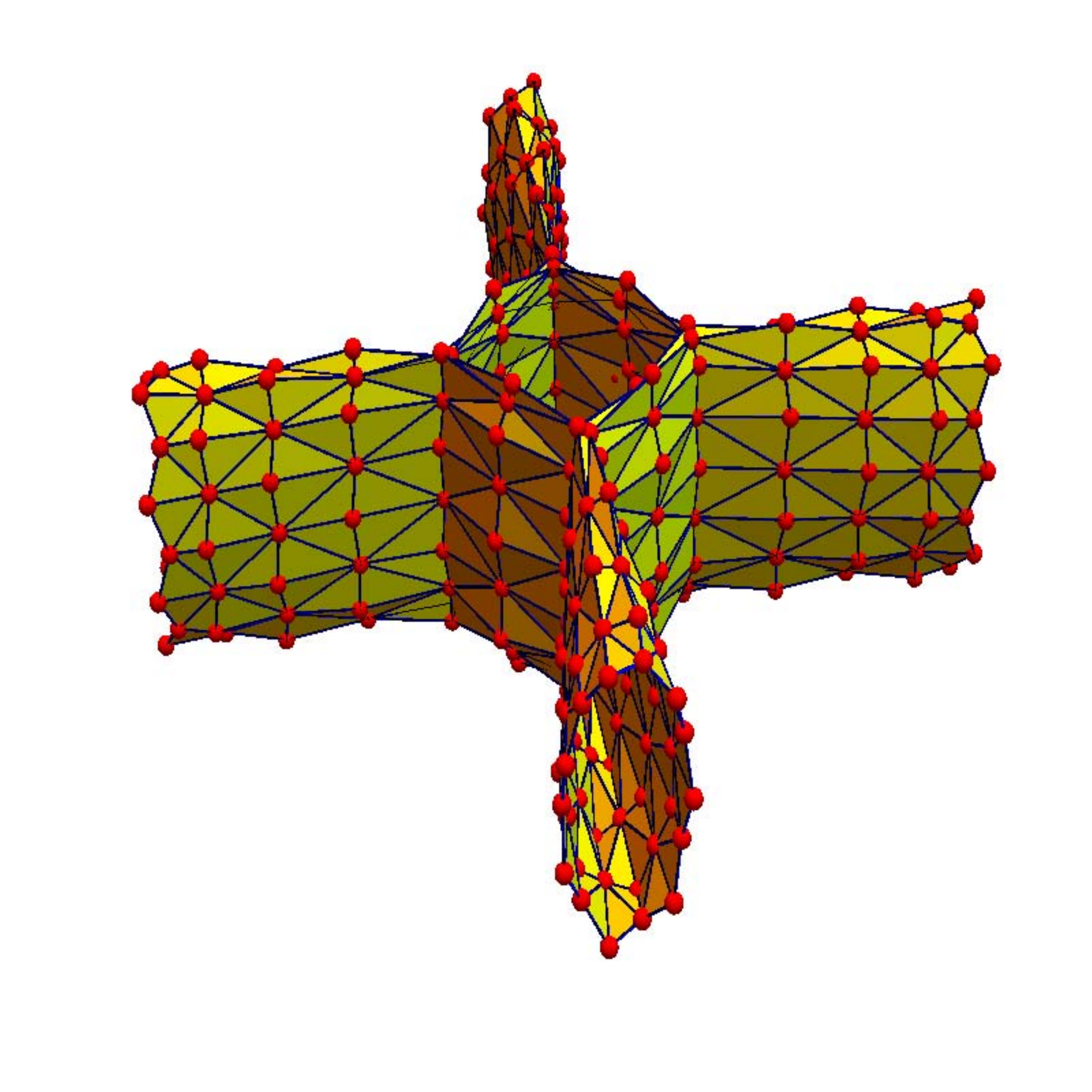}}
\caption{
To the left, the product of a wheel graph with a line graph:
both factors are homotopic to a point so that the product is 
homotopic to a point. 
To the right, the product of a sun graph $S(4,4,4,4)$ 
with a cyclic graph $C_7$: both factors are homotopic to a circle
so that the product is homotopic to a torus. 
}
\end{figure}

{\bf 3)} A triangle is given by $K_3=x + y  + z + xy + yz + zx + xyz$.
Lets take the product with $K_2$ represented by $u + v + uv$. It 
becomes a $3$-dimensional graph. 

\begin{figure}[h]
\scalebox{0.22}{\includegraphics{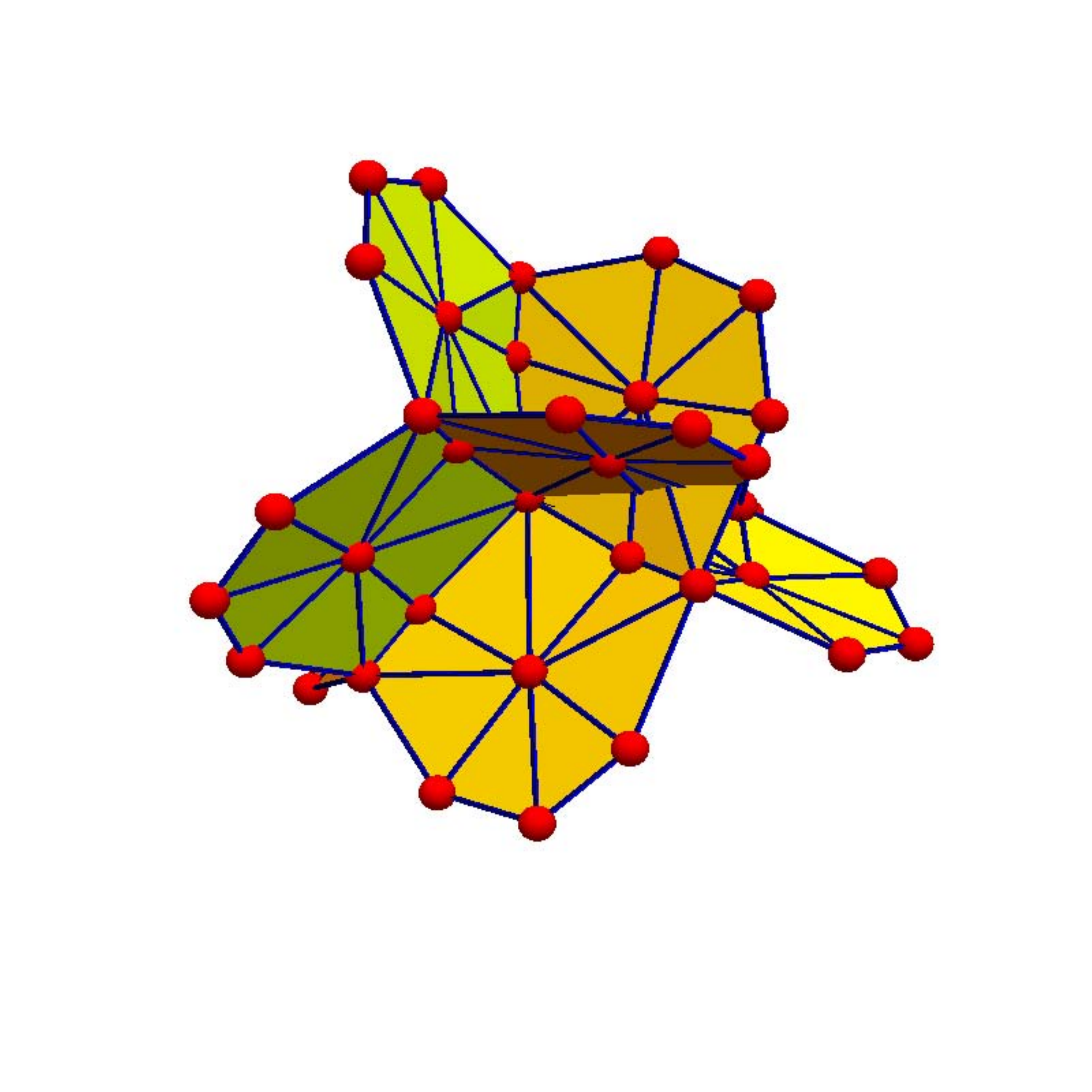}}
\scalebox{0.22}{\includegraphics{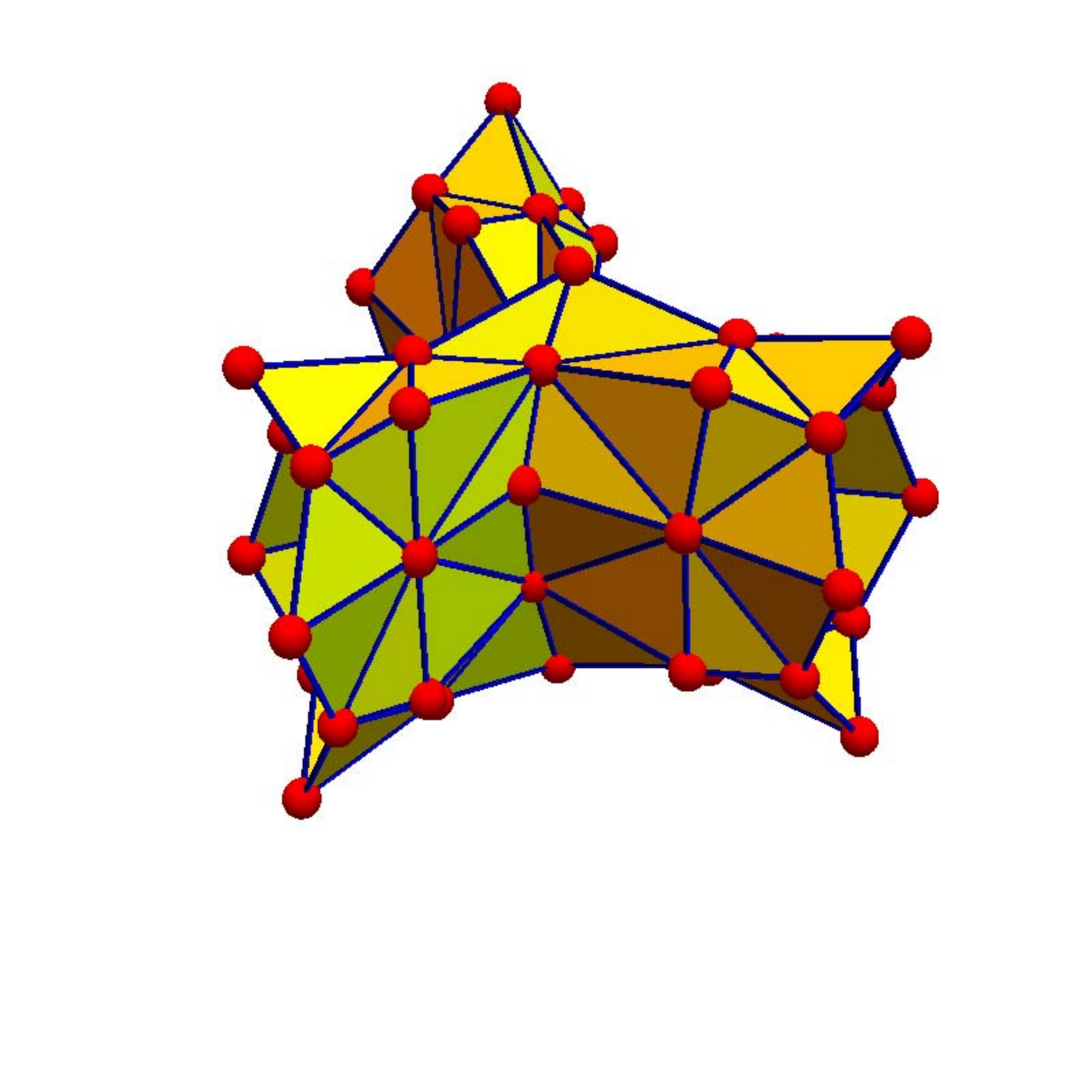}}
\caption{
The product of two star graphs $S_3$ is a $2$-dimensional graph.
The product of $C_4$ with the star graph $S_3$ is $2$-dimensional.
It is not geometric even so it looks like a cobordism
between $C_4 \cup C_4$ and $C_4$. }
\end{figure}

{\bf 4)} The product of $K_3$ with $K_2$ is a $3$-dimensional graph. 
It is a $3$-dimensional ball of radius $1$ with one interior point and all other
points on the boundary. The curvature of the interior point is $0$,
the curvature of nine of the boundary points are $1/6$ and three are $-1/6$. 
By Gauss-Bonnet, the sum of the curvatures is $1$, which is the Euler characteristic 
of the ball.  

\begin{figure}[h]
\scalebox{0.22}{\includegraphics{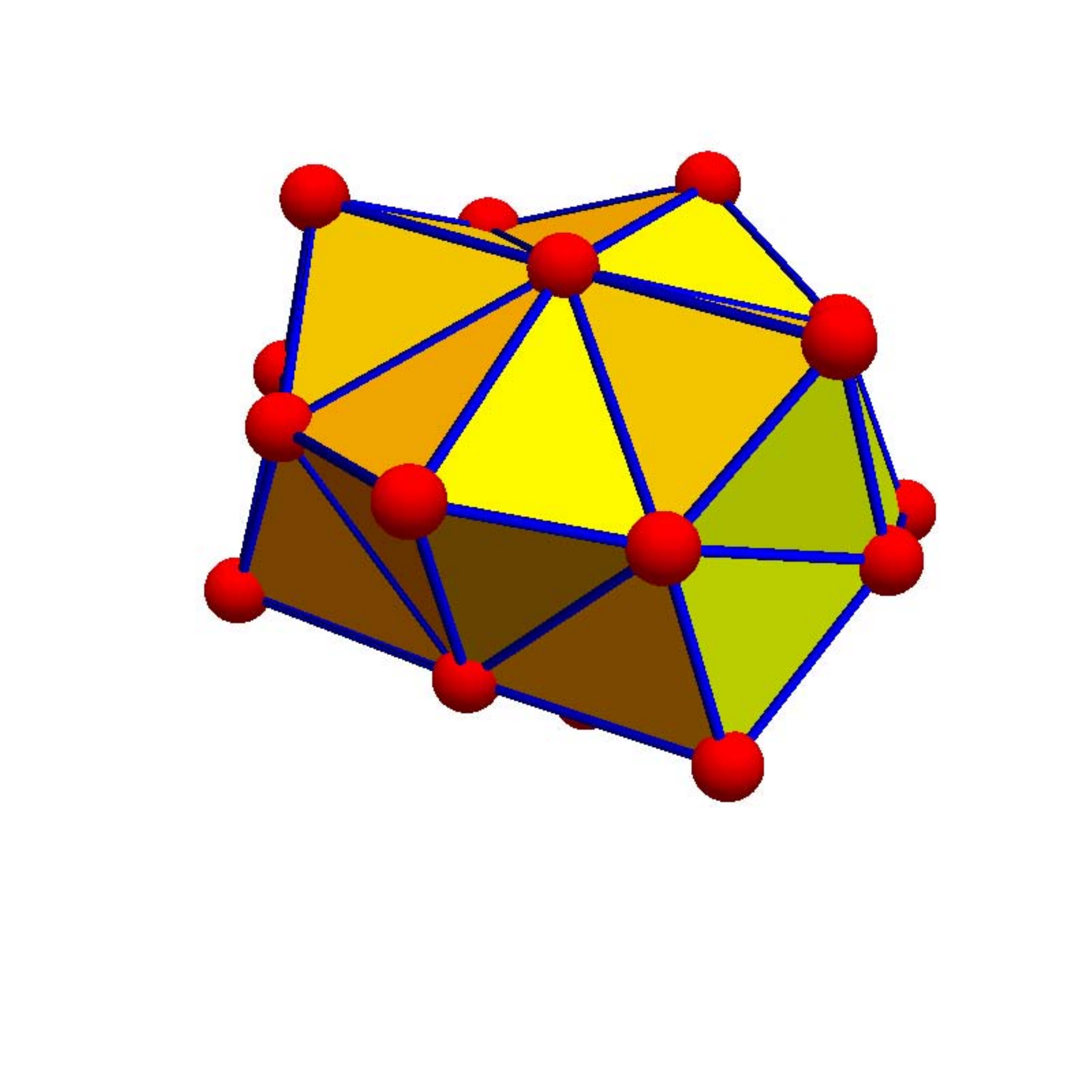}}
\scalebox{0.22}{\includegraphics{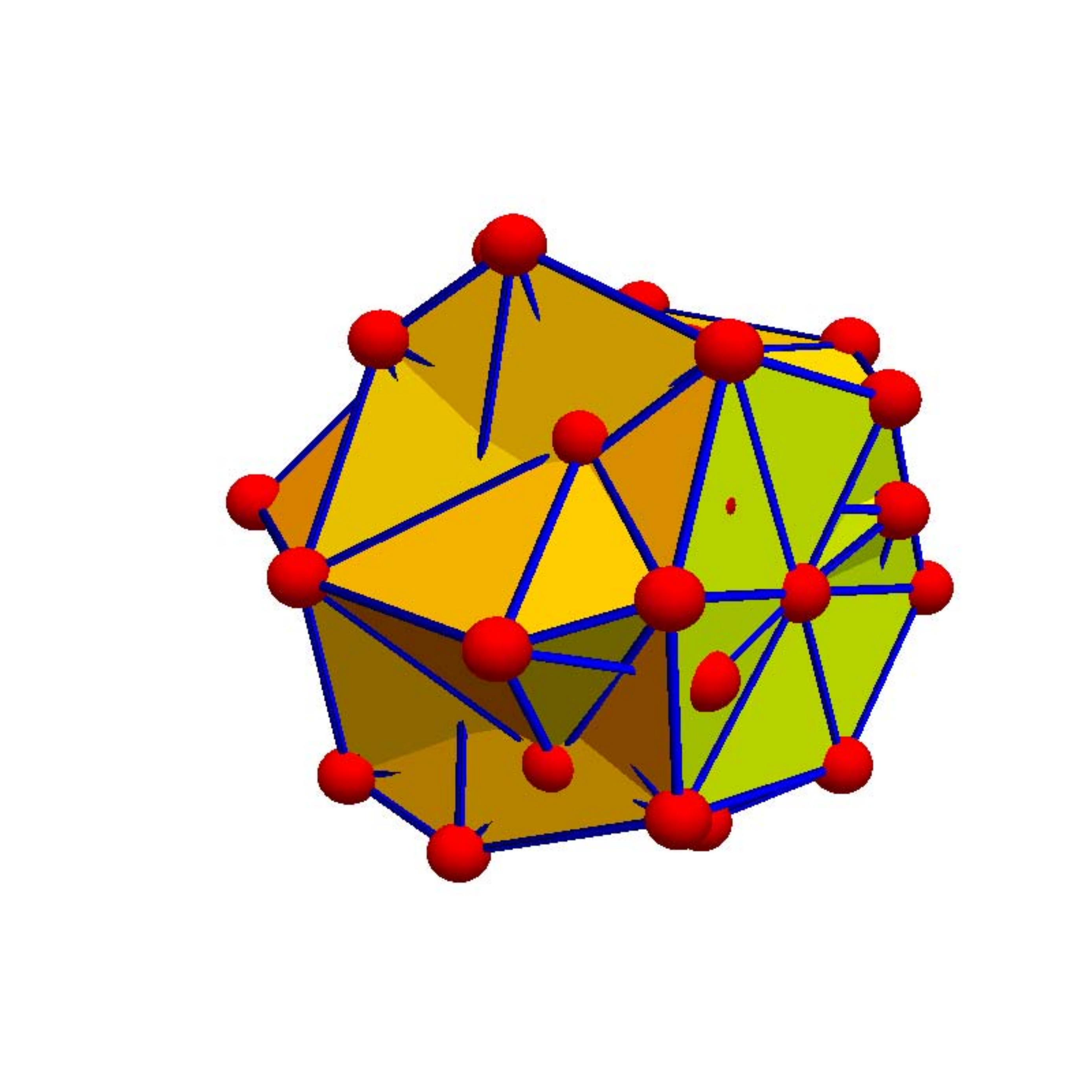}}
\caption{
The product of $K_3$ with $K_2$ is a $3$-dimensional ball of radius $1$. Its
boundary is a $2$-dimensional sphere. The second picture shows the product
$K_3$ with $K_3$. It is a $4$-dimensional contractible graph whose boundary is
a $3$-dimensional sphere. 
}
\end{figure}

\begin{figure}[h]
\scalebox{0.22}{\includegraphics{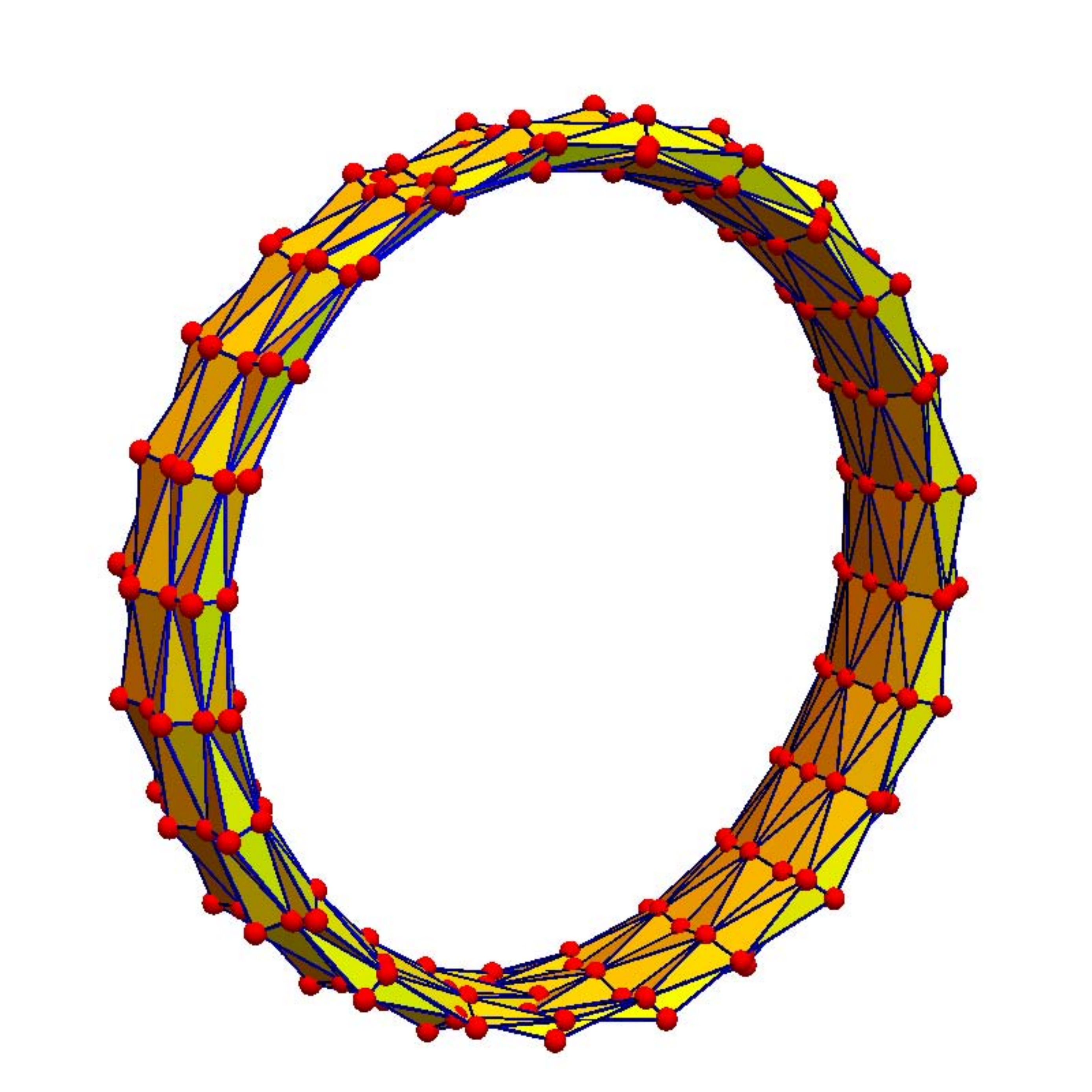}}
\scalebox{0.22}{\includegraphics{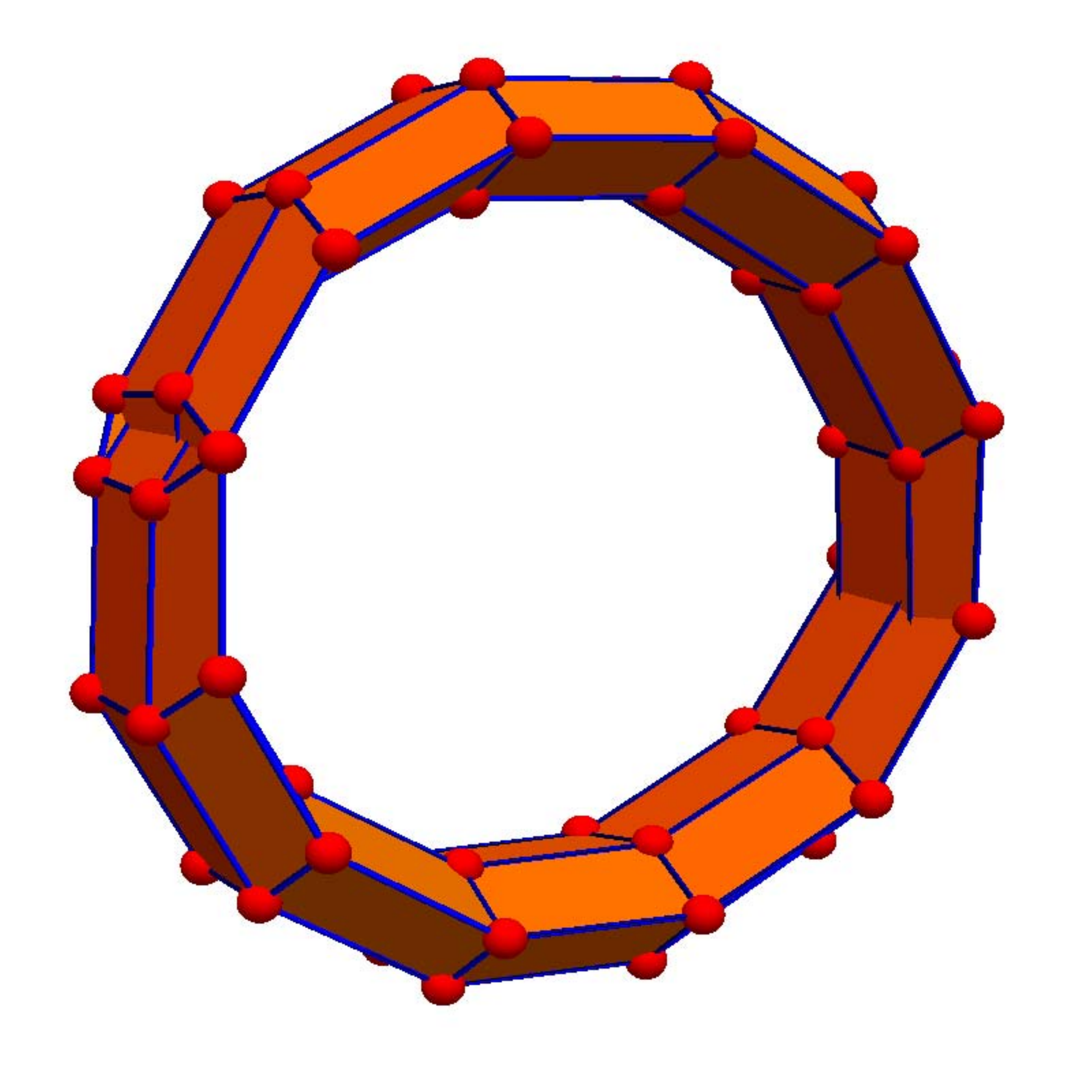}}
\caption{
The product of $C_6$ with $C_4$ is a discrete $2$-dimensional torus. 
The traditional cartesian product $C_{12}$ with $C_5$ is a $1$-dimensional graph
as it has no triangles. 
}
\end{figure}

\begin{figure}[h]
\scalebox{0.34}{\includegraphics{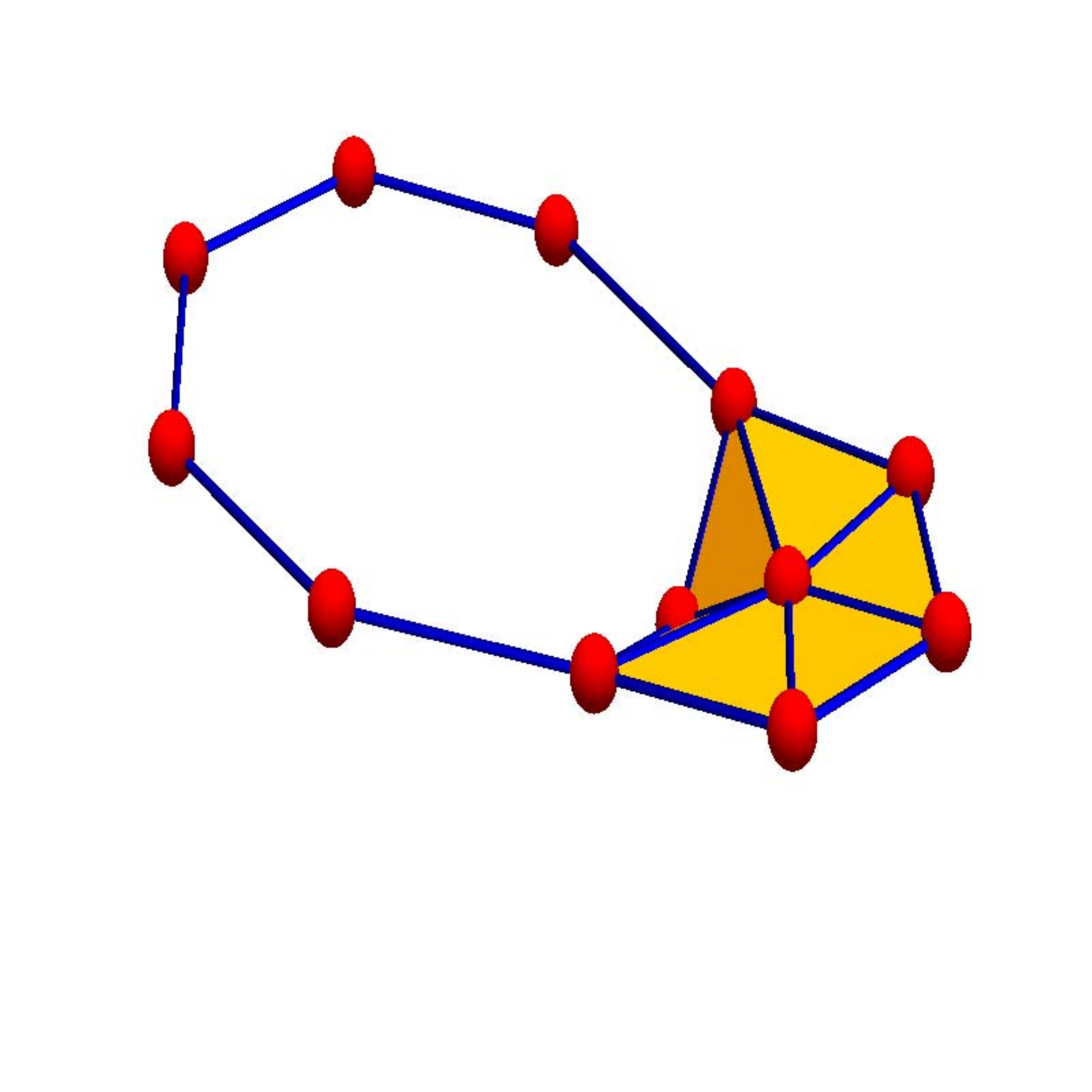}}
\scalebox{0.34}{\includegraphics{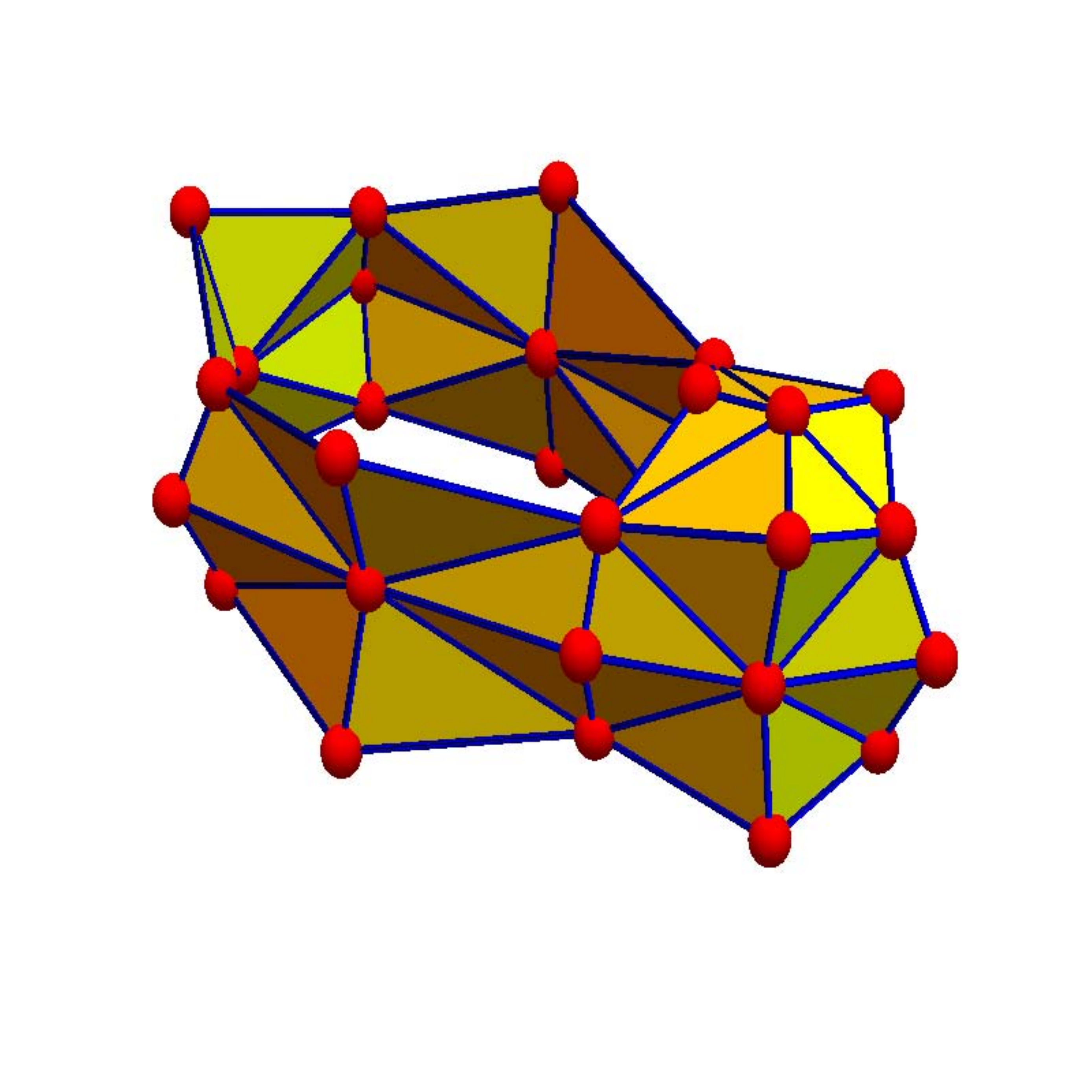}}
\caption{
The graph product of the house graph $G$ with $H=K_2$. 
We see first the enhanced graph $G_1 = G \times K_1$ and then the graph 
$G \times H$.  }
\end{figure}

\section{Lose ends}

{\bf Dimension}: \\
What is the expectation of $X(G) = {\rm dim}(G_1) - {\rm dim}(G)$ on the Erd\"os-R\'enyi probability space.
This random variable measures in some sense the distance from a geometric graph as for 
geometric graphs, the value is zero. The value of $X(G)$ is related to the variance of the dimension
spectrum, ${\rm dim}(x)$ which is a random variable on vertices of $G$. It would be nice
to quantify this more. \\

{\bf Hausdorff dimension}:\\
Due to the analogy of results with Hausdorff dimension ${\rm dim}_H$,
one can ask whether there is more to it 
and whether for any subsets $X,Y$ of Euclidean space ${\bf R}^k$, 
there are sequences of graphs $G_n,H_n$ such 
that ${\rm dim}(G_n) \to {\rm dim}_H(X)$ and ${\rm dim}(H_n) \to {\rm dim}_H(Y)$.  \\

{\bf Curvature}: \\
The construction $G \to G \times K_1$ gives a convenient natural curvature
on the simplices of a graph $G$. Gauss-Bonnet holds as they add up to Euler characteristic.
It is also true that curvature is the average over all functions \cite{indexexpectation,colorcurvature}  \\
What is the relation between the curvature of $G,H$ and $G \times H$? 
The example of $C_n \times C_m$ shows that the product of two zero curvature spaces
can develop curvature. The case $(K_3 \times K_1) \times K_1$ shows 
that the curvature can become very negative. \\

{\bf Spectrum}: \\
What is the relation between the spectrum of $G \times H$ with the spectrum 
of $G$ and $H$? We know that the Laplacians live on different spaces. 
The eigenvalues appear to correlated to the eigenvalues $\lambda \mu$, where 
$\lambda$ is an eigenvalue of $L(G \times K_1)$ and $\mu$ is an eigenvalue of $L(H \times K_1)$. \\

{\bf Chromatic number}: \\
The case $C_5 \to C_{10}$ shows that the chromatic
number can become smaller. The product of two 
geometric graphs is Eulerian in the sense \cite{KnillEulerian} which 
shows that if $G$ is a geometric graph, then $G_1 = G \times K_1$ is Eulerian. 
The map $G \to G_1$ regularizes in some way as for geometric graphs, the unit 
spheres of $G_1$ are Eulerian spheres. Of course, it would be nice to see a relation between the
chromatic polynomials $c(G)$ and $c(G_1)$ of $G$ and $G_1$. It is not always true
that $c_G(x)$ divides $c_{G_1}(x)$ but it is often the case. For the graph $G=K_3$ for
example, $c_G(x)=x^3-3x^2+2x$ and $c_{G_1}(x)=62*x - 191*x^2 + 240*x^3 - 160*x^4 + 60*x^5 - 12*x^6 + x^7$.
Now $c_{G_1}(x)/c_G(x)=31 - 49*x + 31*x^2 - 9*x^3 + x^4$. 
For $G=K_4$, this chromatic fraction of the graph $G$ is 
$-8834963 + 21070773x - 23177760x^2 + 15536807x^3 - 7055131x^4 + 2278645x^5 - 533896x^6 + 90682x^7 - 
 10932x^8 + 890x^9 - 44x^{10} + x^{11}$. Already for $G=C_4$, the chromatic fraction is not a polynomial.
Here is more random example, where the chromatic fraction is a rational function
and not a polynomial is the graph $G$ with $7$ vertices and edges
$\{ (1, 4), (1, 6), (2, 4), (2, 6), (2, 7), (3, 6), (4, 6), (5, 7), (5, 8), (6, 7)) \}$ for which 
the chromatic fraction is only rational.
Still, the chromatic number is $3$ for both graphs. 
With respect to the known relation $c(G_1) \leq c(G)$, we have inequality for all graphs $C_{2n+1}$.
For complete graphs $K_n$, both sides are $n+1$,
for trees, both sides $2$. For geometric graphs, all graphs $G_1$ have the property that $c(G_1 \times K_0)=c(G_1)$. 
On the list of all $38$ connected graphs with $4$ vertices, we have equality $c(G_1)=c(G)$, on the list of all 
$728$ connected graphs with $5$ vertices there are a dozen 
for which the chromatic number has dropped, the reason
always being a $5$-cycle of chromatic number $3$ becoming $2$-colorable after the refinement.  \\

\begin{figure}[h]
\scalebox{0.25}{\includegraphics{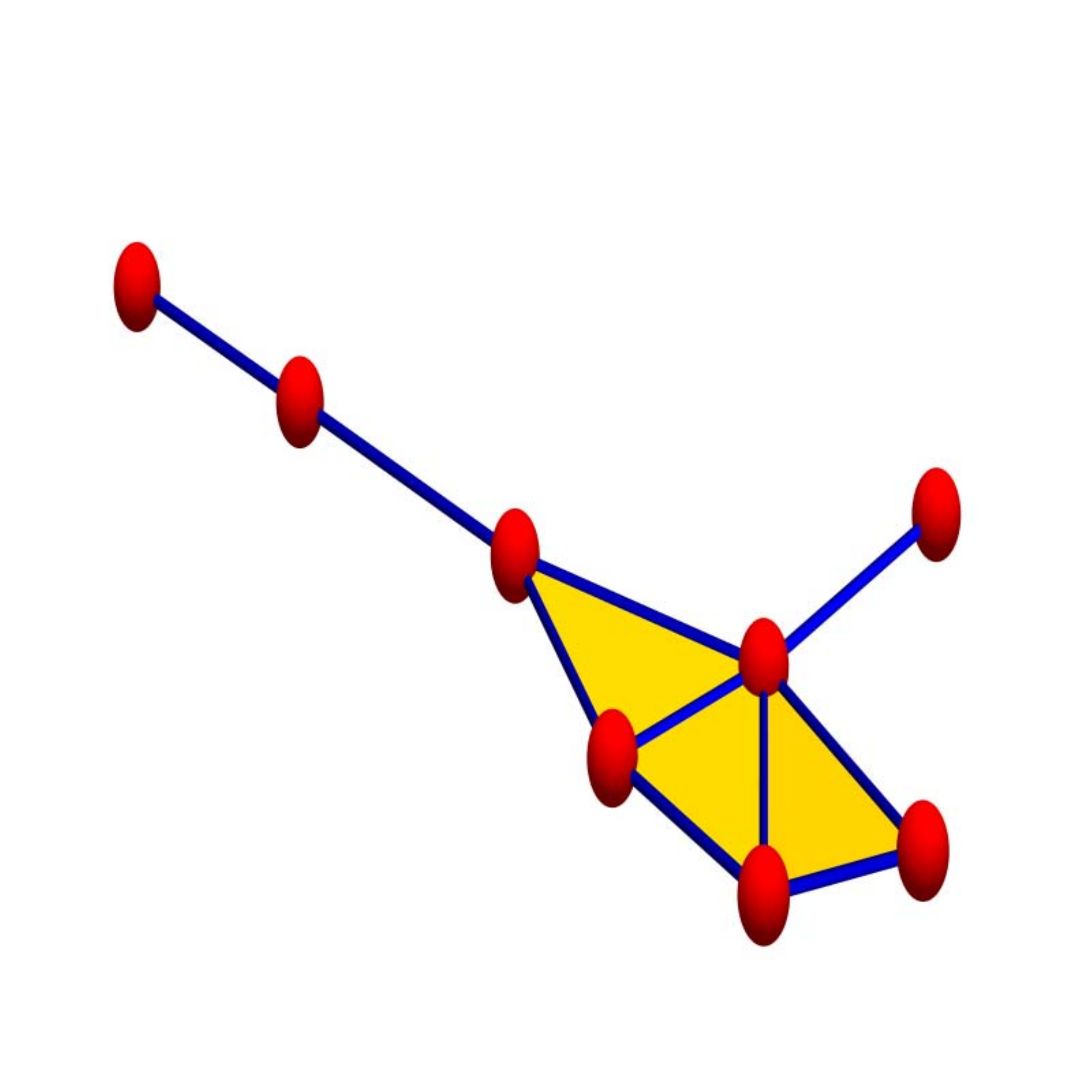}}
\scalebox{0.25}{\includegraphics{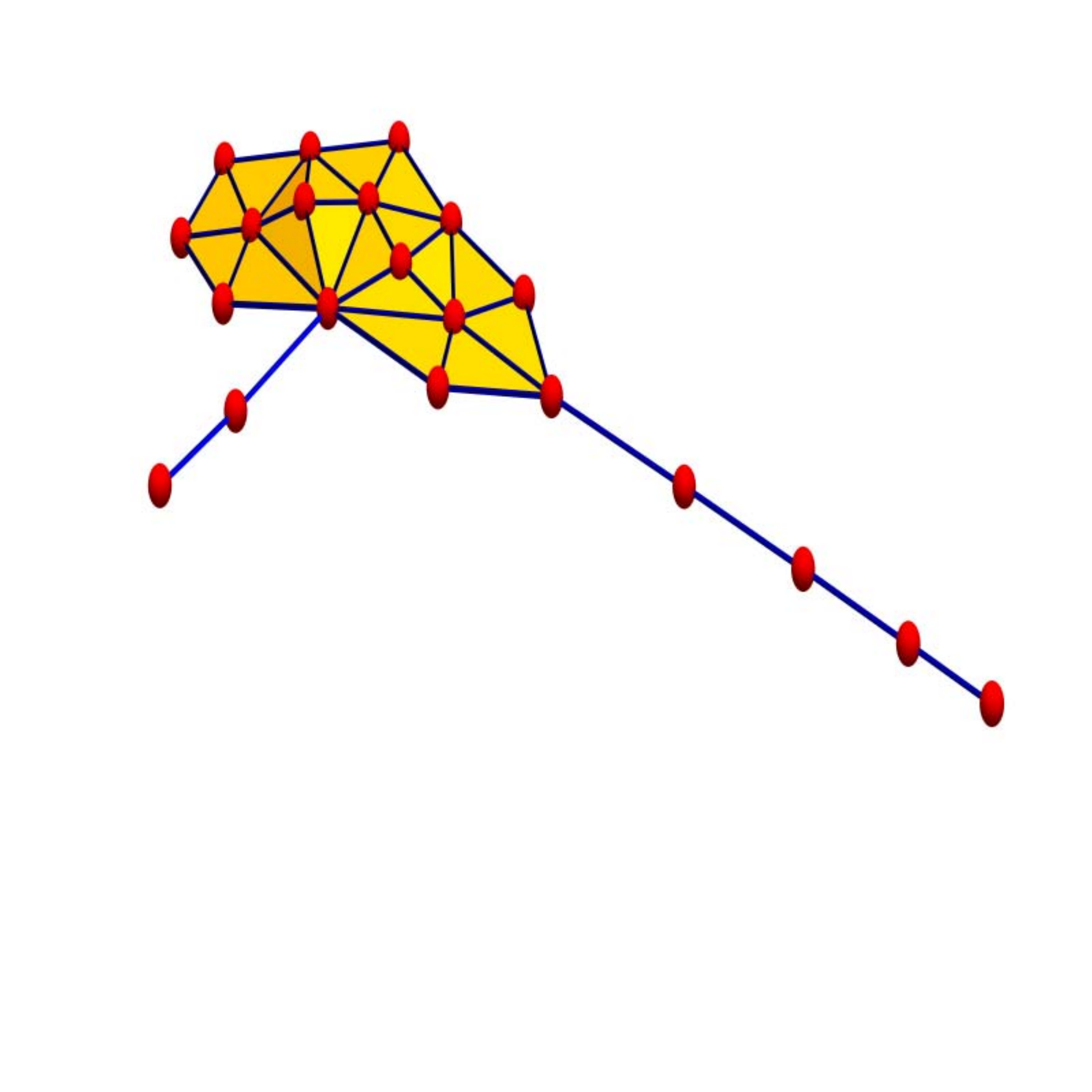}}
\caption{
A graph $G$ and the enhanced graph $G_1=G \times K_0$. Both graphs
have chromatic number $3$. The chromatic polynomial of $G$ is
$(-2 + x)^3 (-1 + x)^4 x$,
the chromatic polynomial of $G_1$ is
$(-2 + x)(-1 + x)^7 x(7 - 5x + x^2)(12193 - 43730x + 72229x^2 - 
  72238x^3 + 48366x^4 - 22614x^5 + 7465x^6 - 1715x^7 + 262x^8 - 
  24x^9 + x^{10})$. The chromatic fraction is not a polynomial. 
Still, the chromatic number is $3$ for both graphs. 
}
\end{figure}

{\bf Constructing spheres}: \\
What kind of spheres appear as intersections of unit spheres in 
discrete tori $C_{n_1} \times \cdots \times C_{n_k}$ with $n_k \geq 4$. 
We have seen that they all are unions of solid cylinders glued along a torus. 
Assume we have obtained a class of spheres like that. Now take the product of 
such spheres and take again unit spheres. 
We get a larger and larger class of spheres. Can one characterize them? 
Similarly, we can ask what kind of spheres can be obtained by applying the 
join operation to spheres, starting with a smaller class of spheres.  \\

{\bf Determinant}: \\
Can we say something about the complexity ${\rm Det}(L)$ of the 
product, where ${\rm Det}$ is the pseudo determinant \cite{cauchybinet}.
If $G=K_3, H=K_1$, then the complexities of $G_1,H_1$
are $716800$ and $3$. The complexity of $G_1 \times G_2$ is already
\begin{tiny}$99446661202899312347583662774063513018614477321403222712572446349721600$. \end{tiny}
The pseudo determinant of the Laplacian of $G_1$ is $2240$ and of $H_1$ is $3$. 
The pseudo determinant of the Laplacian of $G \times H$ is $21101889958483152$. \\

{\bf Homotopy classes}: \\
If $G_1$ is homotopic to $G_2$ and $H_1$ is homotopic to $H_2$, then 
$G_1 \times G_2$ is homotopic to $H_1 \times H_2$. The sum and product therefore
produces a sum and product in the ring of homotopy classes. The contractible ring 
elements form a class representing the $1$ element. The empty graph is the $0$ element. 
Since also $\delta G_1$ is homotopic to $\delta G_2$. 
Homotopy extends to the ring of chains. Define contractibility in the same way
by telling $G$ is contractible if there exists $x$ such that both $S(x)$ and $G(x)$ 
are contractible where $G(x)$ is the non-constant part of the polynomial where $x$ is replaced by $1$.  
The unit sphere is the unit sphere of the graph $G_f$. 
Homotopy is so defined algebraically as reduction steps $f \to f - S(x)$ with contractible $S(x)$ or reverse steps. 
Contractibility is defined for $f$ in a polynomial ring as the property that there is a variable $x_i$ such 
that $S(x_i)$ and $f-S(x_i)$ are both contractible. For example a line graph $xy+yz+x+y+z$ is contractible 
as $S(z)=z+yz$ and $f-S(z)=xy+x+y$ are both contractible. The cycle graph $xy+yz+zw+wx+x+y+z+w$ is not contractible.
Also all its unit spheres like $S(x)=xy+wx$ or $S(y)=xy+yz$ which represent two point graphs are not contractible. \\

\begin{figure}[h]
\scalebox{0.36}{\includegraphics{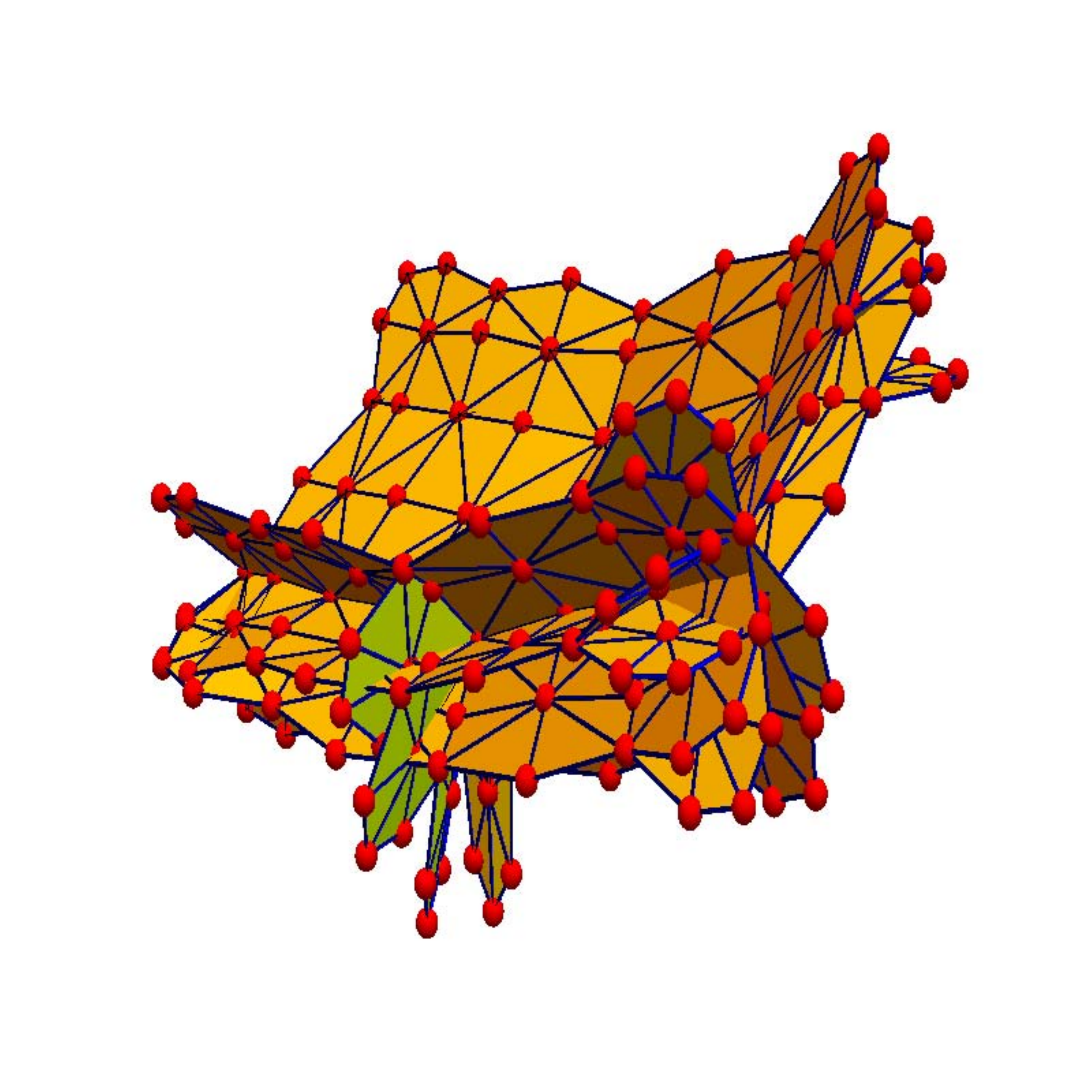}}
\caption{
The product of two trees is a $2$-dimensional graph which is contractible
as it is the product of two graphs which are homotopic to a point. 
}
\end{figure}

{\bf Suspension and join}: \\
The topological notions of ``suspension" and ``join" needs the Cartesian product. 
The definition of a suspension of a graph $G$ is
$SG = G \times I/\{ (x,0) \sim (y,0), (x,1) \sim (y,1) \}$. 
Also in the graph case, it produces from a $k$-sphere a $(k+1)$-sphere. 
More generally, the join of two graphs $G,H$ can be defined as
$$ (G \times H \times K_2)/\{ (x,y_1,0) \sim (x,y_2,0), (x_1,y,1) \sim (x_2,y,1) \} \; . $$
An obvious questions is whether results like the double suspension theorem
of Cannon-Edwards can be proven directly in 
graph theory. It certainly is true, as one can see by building corresponding
manifolds from the graphs. But it would be nice to have a direct discrete proof.
The double suspension of a graph $G$ is $S^2G = (G \star S_0) \star S_0$. A homology 
$n$-sphere is a graph $G$ which is geometric of dimension $n$, which has the Poincar\'e
polynomial $p_G(x)=1+x^n$ but which is not a $n$-sphere.
The discrete Cannon-Edwards
theorem tells that if $G$ is a homology sphere, then $H=S^2G$ is a sphere. To prove this
within graph theory, one has to show that every unit sphere in $H$ is a $(n-1)$-sphere
and that removing one vertex in $H$ produces a $n$-ball, a contractible $n$-dimensional
graph with $n-1$ dimensional sphere boundary.  \\

\begin{figure}[h]
\scalebox{0.18}{\includegraphics{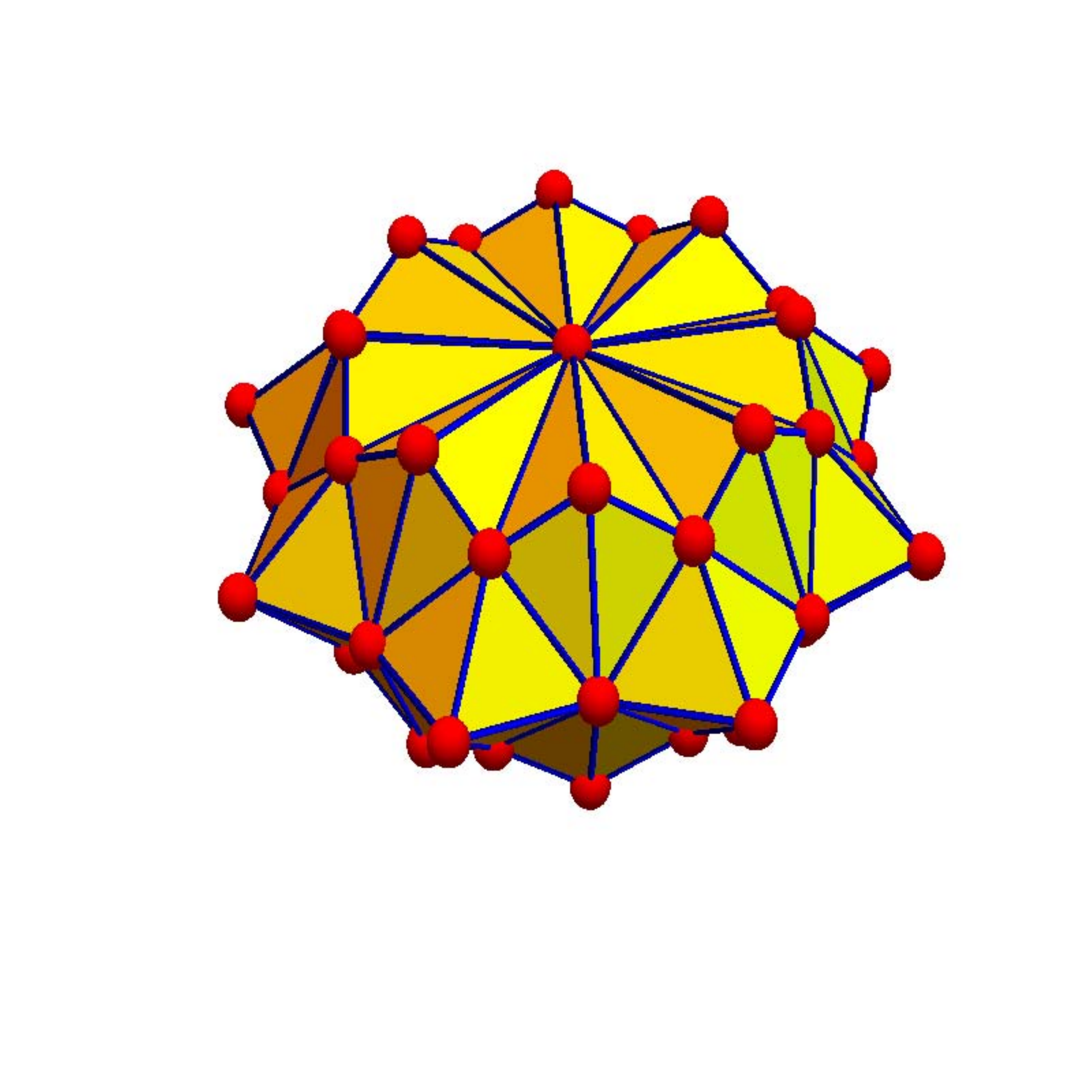}}
\scalebox{0.18}{\includegraphics{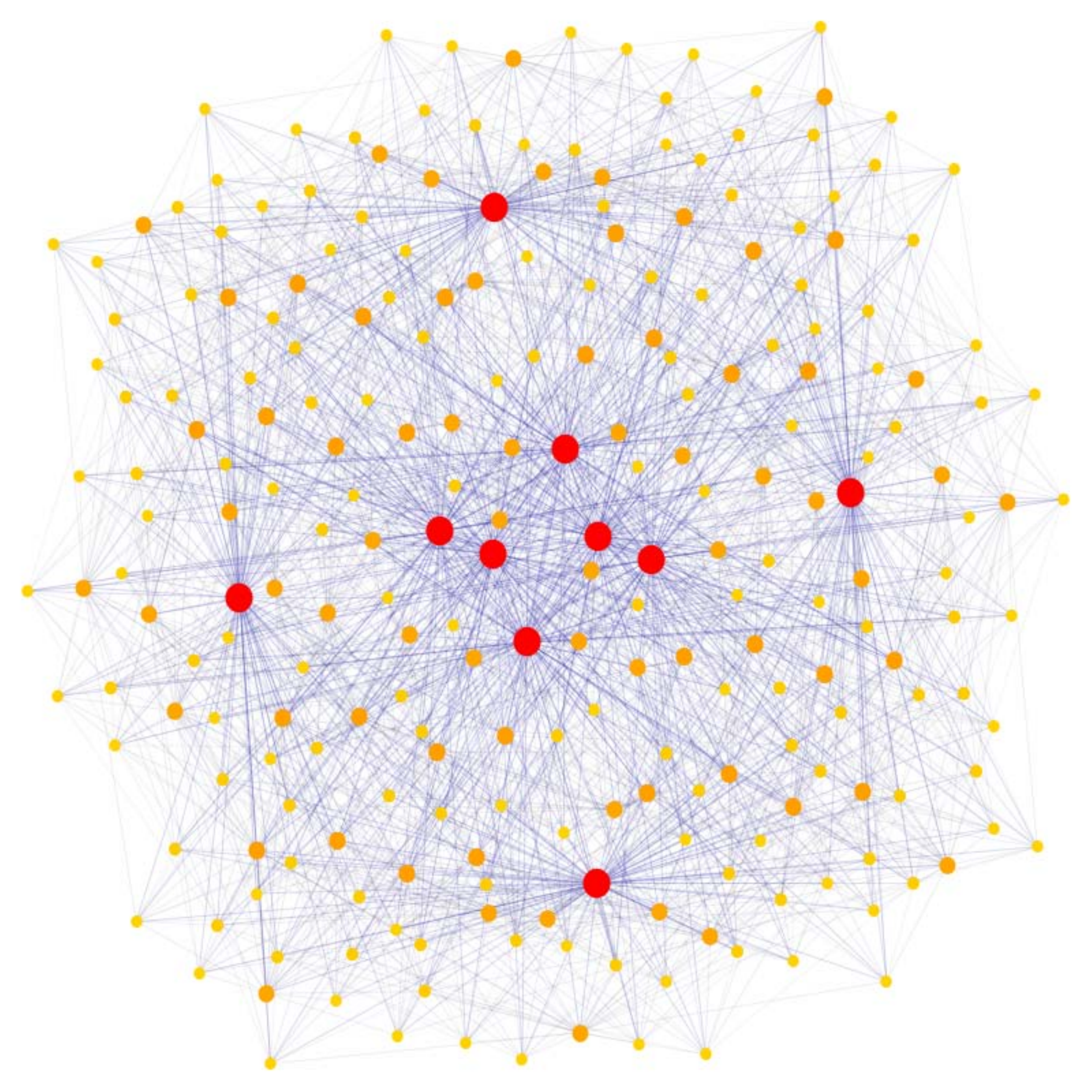}}
\scalebox{0.25}{\includegraphics{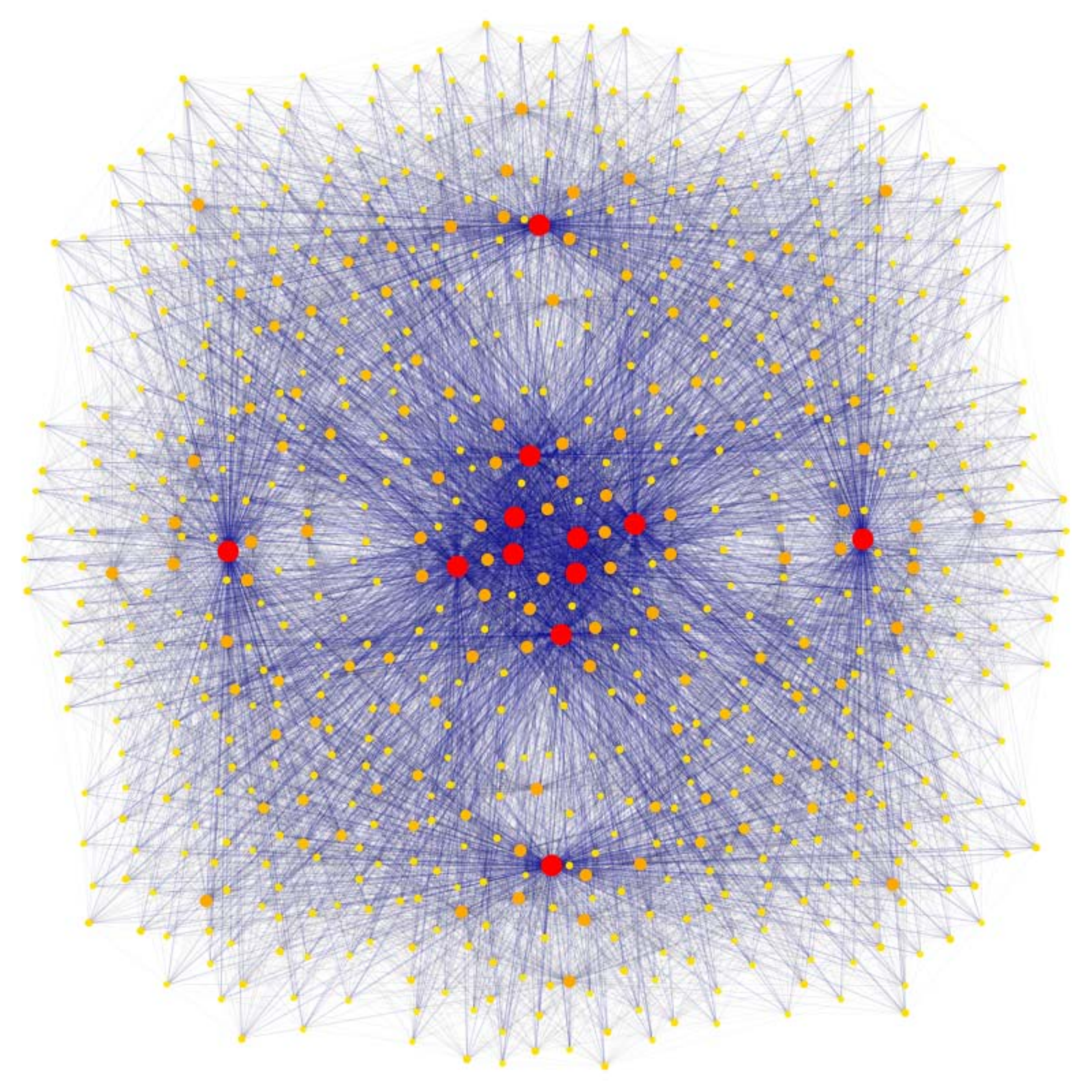}}
\caption{
In this figure, we have taken joins by taking the interval $K_2$. 
The join $C_8 \star S_0$ of a cycle graph with a $0$-dimensional 
sphere is a $2$-dimensional sphere seen to the left. 
The join $O \star S_4$ of the octahedron with a circular graph is
a $4$-dimensional geometric sphere seen to the middle. 
The join $O \star O$ of the octahedron graph $O$ with itself
is a 5-dimensional geometric sphere. It has $v_0=728$ vertices,
$v_1=14168$ edges, $72960$ triangles. }
\end{figure}

\begin{figure}[h]
\scalebox{0.24}{\includegraphics{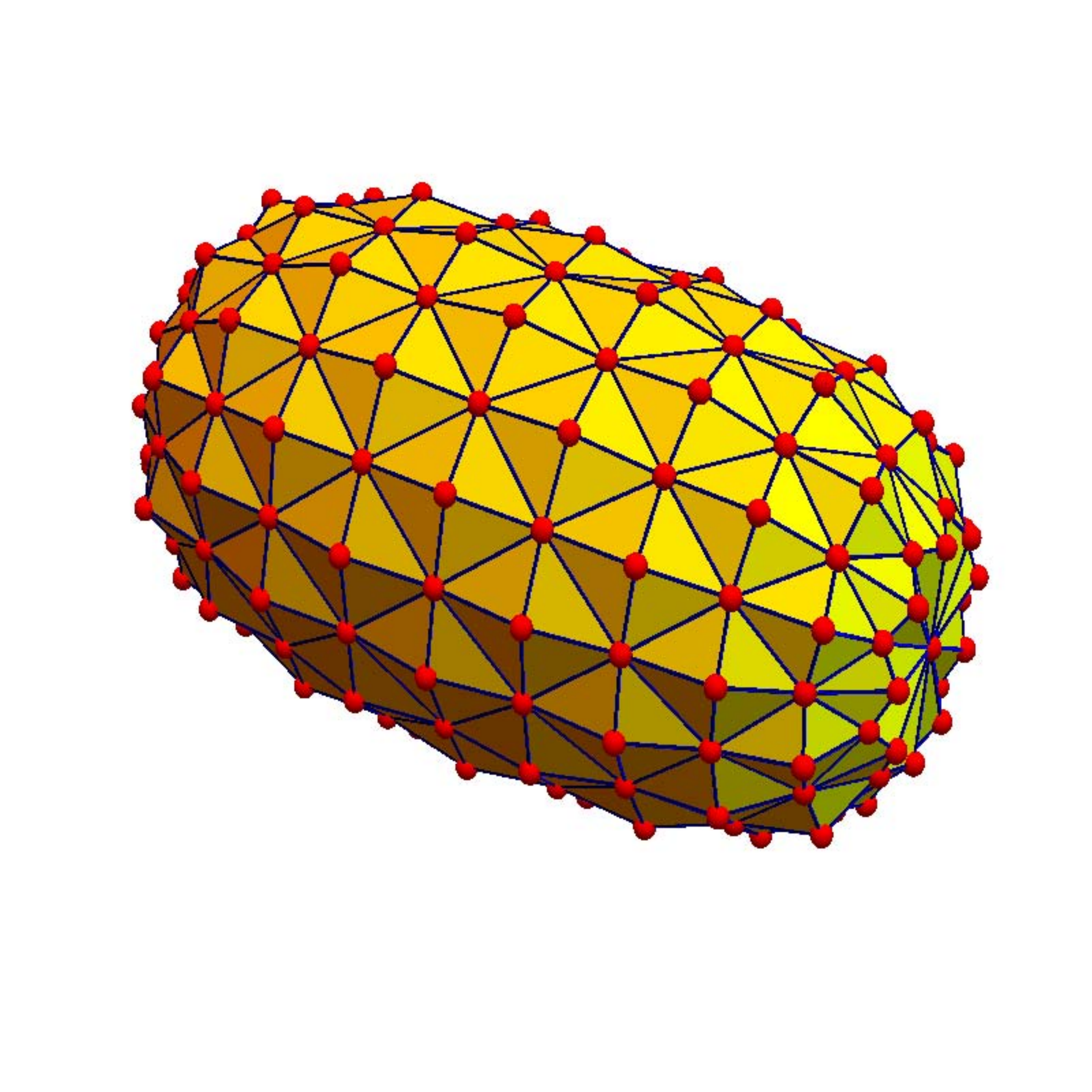}}
\scalebox{0.18}{\includegraphics{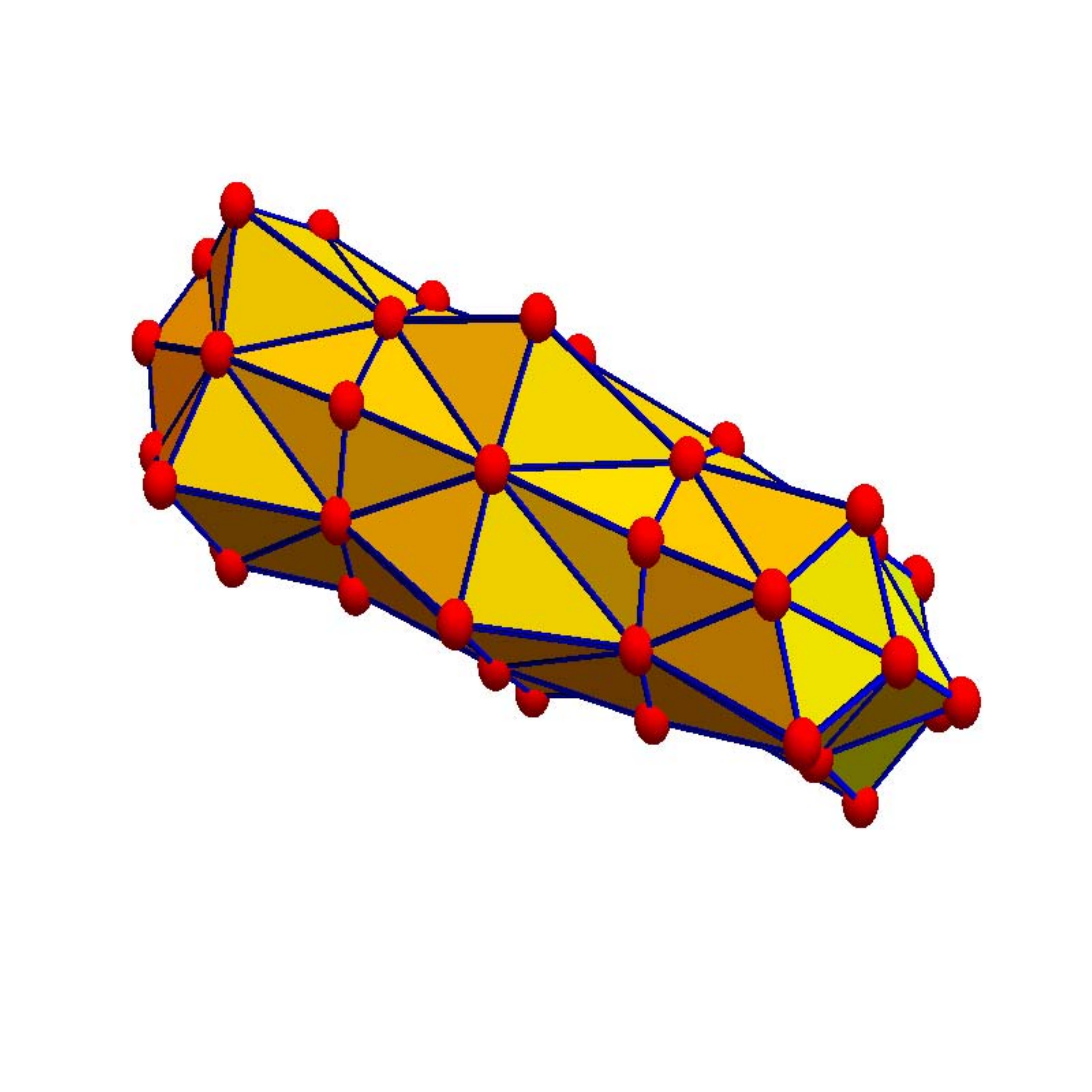}}
\scalebox{0.18}{\includegraphics{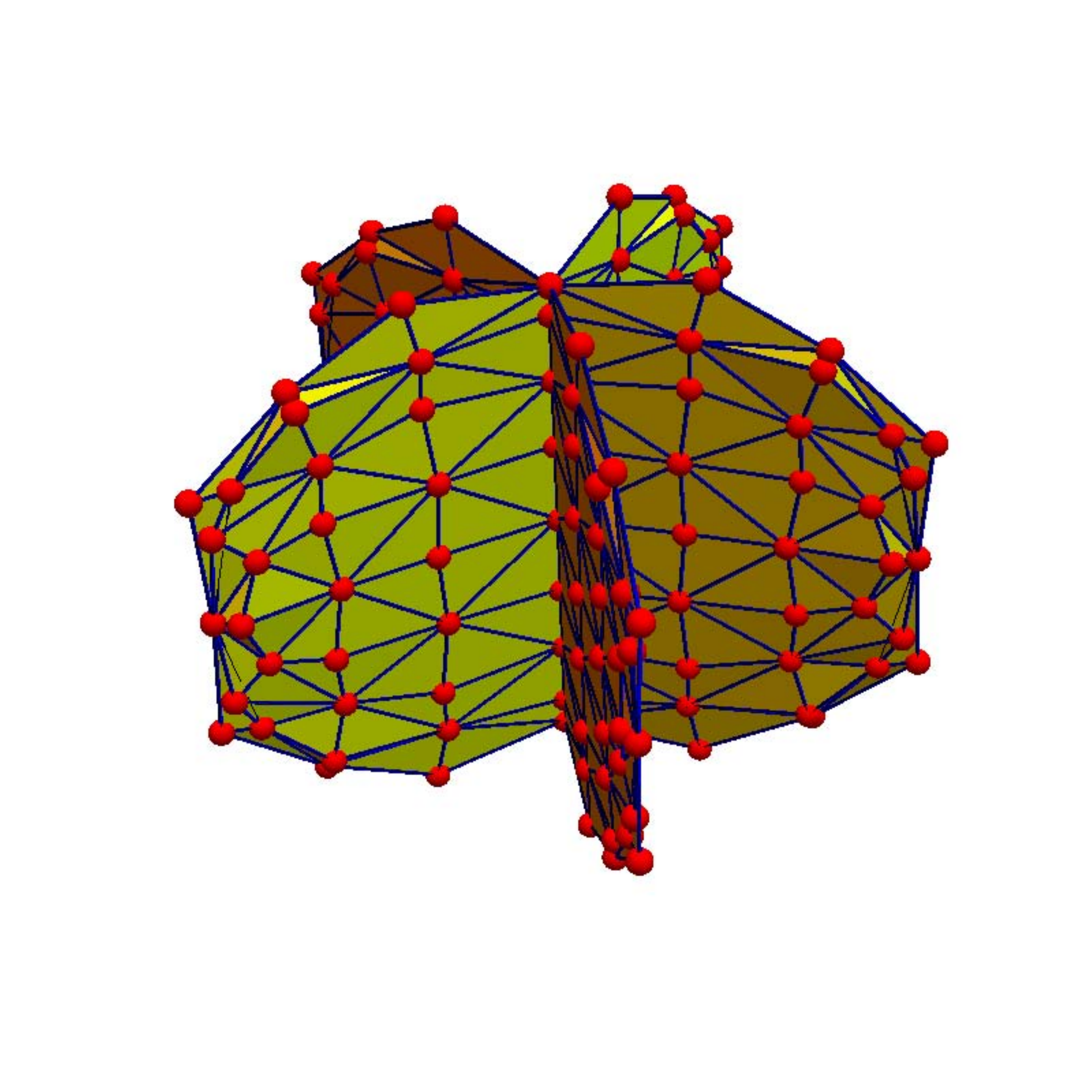}}
\caption{
This figure shows more joins. This time, we replace $K_2$ with a longer line graph $L_n$
representing an interval. The first case is $C_8 \star S_0$ with 
interval graph $L_4$. It is a $2$-dimensional sphere. The second
case is $K_2 \star K_2$ with interval graph $L_4$. It is a three dimensional
ball with two dimensional sphere as boundary. The third 
example is a product of two Erd\"os-R\'enyi graphs with 5 vertices,
edge probability $0.2$ and using the interval graph $L_3$. 
}
\end{figure}

{\bf Higher order curvature}:  \\
It is natural to inquire whether for $2$-dimensional geometric 
graphs $G_1 = G \times K_1$, the higher order curvature $K(x)=2S_1(x)-S_2(x)$ produces 
a Gauss-Bonnet result. This is not the case as for the octahedron $G$, the graph $G_1$ 
has a sum of the second order curvatures which is $0$. The culprit are the degree 
4 vertices. \\

{\bf The sequence of graphs $G_n$.} \\
What happens with $G_n$ defined recursivly by $G_n=G_{n-1} \times K_1$ is
that the minimal curvature goes to $-\infty$, at least if $G_0$ is two dimensional. 
It would be interesting to know how the
curvature spectrum $\kappa(G_n) = [{\rm min}(K_{G_n}(x),{\rm max}(K_{G_n}(x)]$ grows 
for higher dimensional graphs. 
The numerics is tough as the graphs sizes explode exponentially. L
If $G_0=K_3$, then the curvature spectra are $\kappa(G_0) = [1/3,1/3]$,
$\kappa(G_1)=[0,1/6], \kappa(G_2)=[-1,1/3], \dots$.
If $G_0=K_4$, then $\kappa(G_0)=[1/4,1/4], \kappa(G_1)=[0,1/6]$. \\

{\bf Global De Rham}: \\
We have constructed a chain homotopy between de Rham and simplicial chains. 
If we define a discrete $d$-manifold as a graph which locally can be wrtten as a product
$U_i = G_{i1} \times \cdots \times G_{id}$ of networks such that on the intersection 
$U_i \cap U_j$ there are coordinate change graph homeomorphisms, then the 
chain homotopy can be pushed through. We don't even need the graphs $G_{ij}$ to have to 
be $1$-dimensional, nor have they be to be geometric. The combinatorial 
de Rham theorem allows to deal with such discrete manifolds more effectively 
similarly as the continuum de Rham theorem does.  \\

{\bf Moving frames}: \\
While a full classical Hopf-Rynov theorem on graphs is impossible because of the finiteness of
any reasonable tangent space and therefore, a quantum flow is needed for a reasonable notion
of geodesic map,  there is weaker discrete
Hopf-Rynov theorem which tells that under some conditions, there is a global unique flow which 
locally minimizes length \cite{knillgraphcoloring2}. What is needed for a geometric graph 
of dimension $d$ is an Eulerian condition which is equivalent to the graph being 
minimally colorable with $d+1$ colors. In \cite{knillgraphcoloring2}, we made a stronger 
assumption and required all unit spheres to be projective in order to associate to an 
incoming ray an outgoing ray. There is an other approach which implements a 
``moving frame" idea of Cartan and works for all Eulerian graphs and in particular
with $G_1 = G \times K_0$ of $G \times H$ if $G,H$ are geometric. It in particular does not need the projective
assumption. A discrete moving frame is a pair $(x,\sigma)$, where $\sigma$ is a $d$-dimensional
simplex containing $x$. The frame propagation defines a geodesic flow in an Eulerian graph 
without the need of a projective involution: it is described inductively with respect to 
dimension: use the geodesic flow on the Eulerian unit sphere $S(x)$ to 
flip the frame $(x,\sigma)$ on the sphere $S(x)$ 
to the anitpodal side $(x',\sigma')$, where either the simplex $\sigma'$
or the vertex $x'$ has maximal distance (both cases are possible as the example in Figure~(\ref{antipodal})
shows), then reflect the vertex $x'$ along the face $\sigma' \setminus \{x'\}$ to a vertex $x''$ 
on the adjacent simplex $\sigma''$. This propagation $(x,\sigma) \to (x'',\sigma'')$ 
defines a variant of a geodesic flow for which the simplex $\sigma$ plays the role of the direction, 
as well (if we keep track of the frame in the simplex) produces a parallel transport on the graph. 
The point we want to make is that for all product graphs $G \times H$ and more generally 
for all discrete manifolds obtained by patching product graphs, where each factor is a 
geometric graph, there is a canonical notion of geodesic flow and parallel transport. 
In some sense, there is a unique Levi-Civita connection on such graphs. \\

\begin{figure}[h]
\scalebox{0.18}{\includegraphics{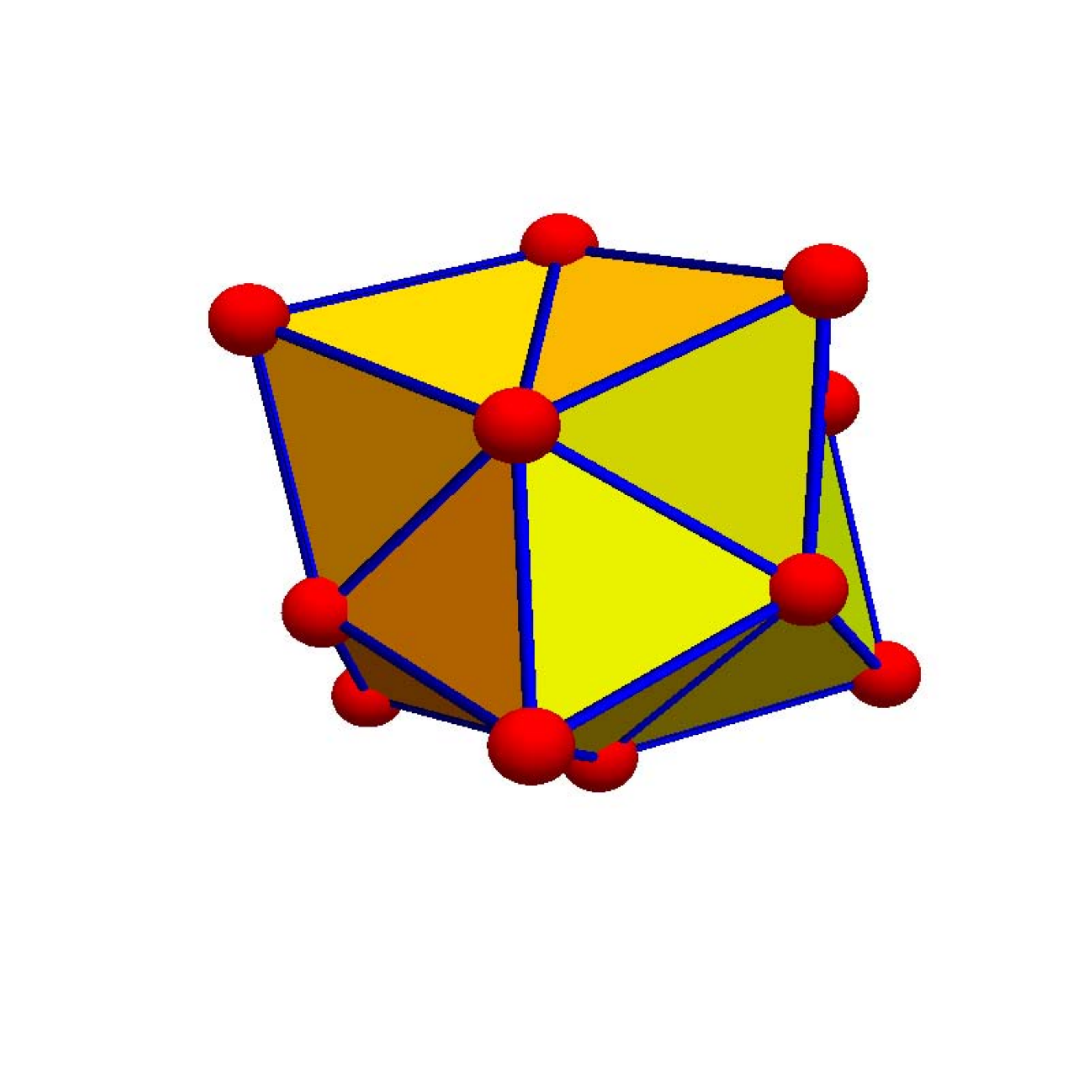}}
\scalebox{0.18}{\includegraphics{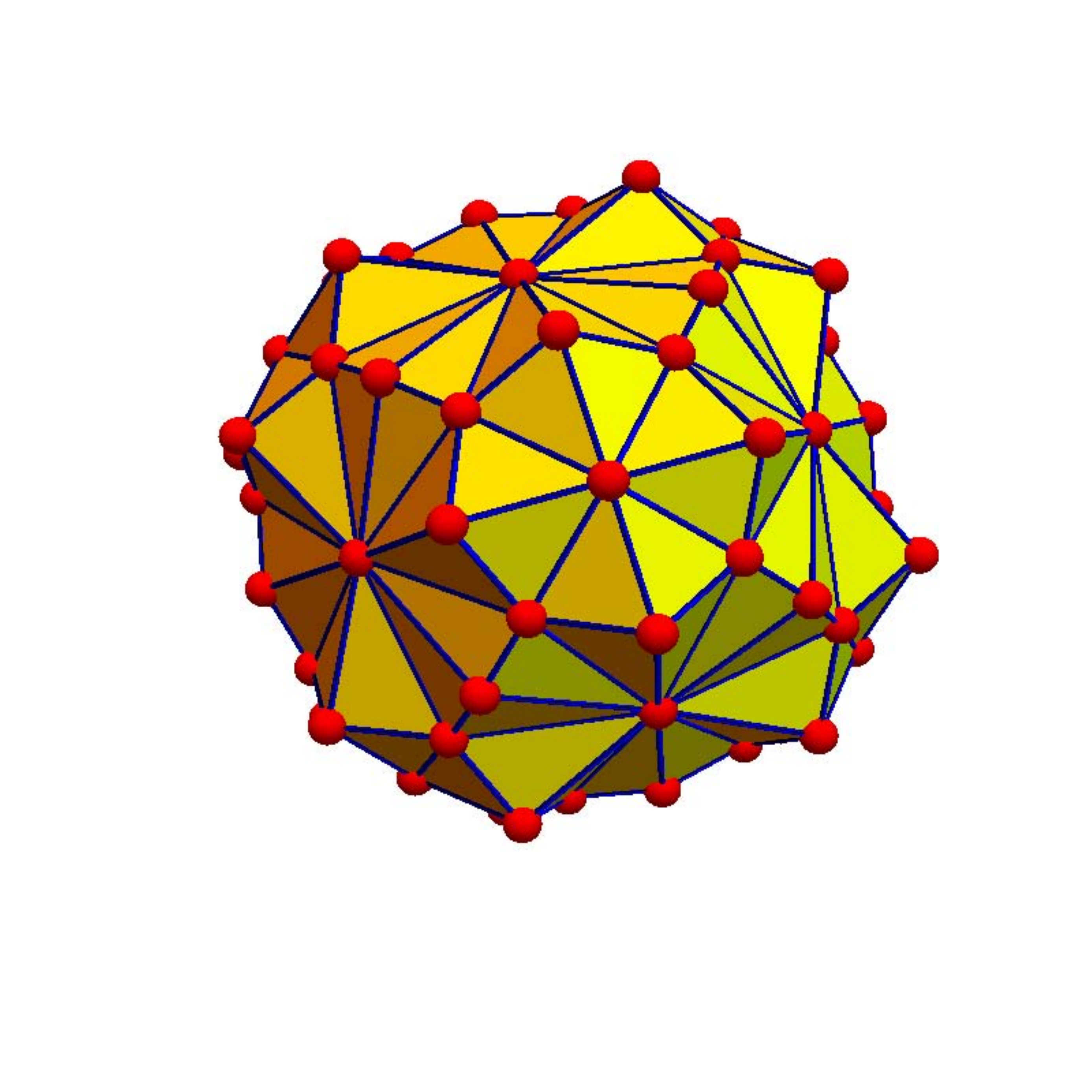}}
\caption{
\label{antipodal}
The stellated cube $G$ is a sphere with 14 vertices. It is Eulerian. 
The graph $G_1$ is also Eulerian with $74$ vertices. While $G$ has a natural
antipodal map (there is a unique opposite vertex with maximal distance), this
is no more the case for $G_1$. The antipodal point of some vertices is a triangle.
The discrete analogue of a moving fame associates to a vertex in the $d$-dimensional
graph a $d$ dimensional simplex. 
}
\end{figure}

\vspace{12pt}
\bibliographystyle{plain}

\end{document}